\newcommand{\Rbb}{\mathbb{R}}
\newcommand{\X}{\mathcal{X}}
\newcommand{\Y}{\cal Y}
\newcommand{\dist}{\mathrm{dist}}
\newcommand{\nn}{\nonumber}
\newcommand{\iprod}[2]{\left\langle #1, #2 \right \rangle}
\newcommand{\tran}{\mathsf{T}}
\newcommand{\be}{\begin{equation}}
\newcommand{\ee}{\end{equation}}
\newcommand{\prox}{\textnormal{prox}}
\newcommand{\argmin}{\mathop{\mathrm{argmin}}}
\newcommand{\diag}{\mathop{\mathrm{diag}}}
\newcommand{\mdiv}{\mathrm{div}}
\newcommand{\mvec}{\mathrm{vec}}
\newcommand{\mst}{\mathrm{s.t.}}
\newcommand{\mbf}{\mathbf{m}}
\newcommand{\Rcal}{\mathcal{R}}
\newcommand{\Lcal}{\mathcal{L}}
\newcommand{\K}{K}
\newcommand{\proj}{\mathsf{Proj}}
\newcommand{\1}{\mathbf{e}}
\newcommand{\Fsf}{\mathsf{F}}
\numberwithin{equation}{section}
\begin{document}

\title{Understanding the convergence of the preconditioned PDHG method:  a view of  indefinite proximal ADMM
\thanks{Xingju Cai is supported by the NSFC grants 12131004 and 11871279. Bo Jiang is supported by the NSFC grant 11971239 and the Natural Science Foundation of the Higher Education Institutions
of Jiangsu Province (21KJA110002). Deren Han is supported by
the NSFC grants 2021YFA1003600, 12126603.}
}

\titlerunning{Understanding PrePDHG: a view of  indefinite proximal ADMM}        

\author{Yumin Ma        \and
        Xingju Cai \and 
        Bo Jiang \and 
        Deren Han 
}

\authorrunning{Y. Ma, X. Cai, B. Jiang \& D. Han} 

\institute{Yumin Ma \at
             School of Applied Mathematics, Nanjing University of Finance and Economics, Nanjing, 210023, P.R. China. \\
              \email{mayumin@nufe.edu.cn}           
           \and
           Xingju Cai  \at
             School of Mathematical Sciences, Key Laboratory for NSLSCS of Jiangsu Province,
Nanjing Normal University, Nanjing 210023, P.R. China. \\ 
\email{caixingju@njnu.edu.cn} 
\and 
Bo Jiang \at 
School of Mathematical Sciences, Key Laboratory for NSLSCS of Jiangsu Province,
Nanjing Normal University, Nanjing 210023, P.R. China.\\ 
\email{jiangbo@njnu.edu.cn} 
\and 
Deren Han (Corresponding author) \at 
LMIB, School of Mathematical Sciences, Beihang University, Beijing 100191, P.R. China.\\ 
\email{handr@buaa.edu.cn}
}

\date{Received: date / Accepted: date}

\maketitle

\begin{abstract}
The primal-dual hybrid gradient (PDHG) algorithm is popular in solving min-max problems which are being widely used in a variety of areas. To improve the applicability and efficiency of PDHG for different application scenarios, we focus on the preconditioned PDHG (PrePDHG) algorithm, which is a framework covering PDHG,  alternating direction method of multipliers (ADMM), and other methods. We give the optimal convergence condition of PrePDHG in the sense that the key parameters in the condition can not be further improved, which fills the theoretical gap in the-state-of-art convergence results of PrePDHG, and obtain the ergodic and non-ergodic sublinear convergence rates of PrePDHG. The theoretical analysis is achieved by establishing the equivalence between PrePDHG and indefinite proximal ADMM. Besides, we discuss various choices of the proximal matrices in PrePDHG and derive some interesting results. For example, the convergence condition of diagonal PrePDHG is improved to be tight, the dual stepsize of the balanced augmented Lagrangian method can be enlarged to $4/3$ from $1$, and a balanced augmented Lagrangian method with symmetric Gauss-Seidel iterations is also explored. Numerical results on the matrix game, projection onto the Birkhoff polytope, earth mover's distance, and CT reconstruction verify the effectiveness and superiority of PrePDHG.

\keywords{Preconditioned PDHG \and Indefinite proximal ADMM \and Tight convergence condition \and Enhanced balanced ALM}
\subclass{90C08 \and 90C25 \and 90C47}

\end{abstract}

\section{Introduction}
 In this paper, we consider the convex-concave min-max problem:
 \begin{equation}\label{PD}\tag{PD}
\min_{x\in \Rbb^n} \max_{y\in \Rbb^m}\; \Lcal(x,y):= f(x)+ \iprod{\K x}{y} -g^*(y),
\end{equation}
where~$\K \in \Rbb^{m\times n},$ $f: \Rbb^n \to (-\infty, +\infty]$, and $g: \Rbb^m \to (-\infty, +\infty]$ are proper closed convex functions, $g^*$ is the convex conjugate of $g$, i.e., $g^*(y) = \sup_{z\in \Rbb^{m}}\{\iprod{z}{y}-g(z)\}.$ Here $\iprod{\cdot}{\cdot}$ denotes the standard inner product.
The primal and dual formulations of problem \eqref{PD} are, respectively,  given as
\begin{equation}\label{P}\tag{P}
\min\limits_{x \in \Rbb^n}  f(x)+g(\K x) \\
\end{equation}
and
\begin{equation}\label{D}\tag{D}
\min\limits_{y\in\Rbb^m} \  f^*(-\K^{\tran}y)+g^*(y).
\end{equation}
Such problems have wide applications in matrix completion \cite{cai2010singular}, image denoising \cite{chambolle2011first,rudin1992nonlinear}, compressed sensing \cite{haupt2008compressed}, earth mover's distance \cite{{li2018parallel}}, computer vision \cite{pock2009algorithm}, CT reconstruction \cite{sidky2012convex}, magnetic resonance imaging \cite{valkonen2014primal},  robust face recognition \cite{yang2010review} and image restoration \cite{zhu2008efficient},  etc.

An efficient method to solve \eqref{PD} is the primal-dual hybrid gradient (PDHG) algorithm which was originally proposed by Zhu and Chan \cite{zhu2008efficient} and further developed by Chambolle and Pock \cite{chambolle2011first}. The recursion of the PDHG for \eqref{PD} reads as:

\vspace{4pt}
\hspace{-6mm}
\begin{boxedminipage}{1\textwidth}
\noindent \textbf{PDHG procedure for \eqref{PD}:} Let $\tau > 0$ and $\sigma > 0$. For given $(x^k, y^k)$, the new iterate $(x^{k+1},y^{k+1})$ is generated by:
\begin{subnumcases}{\label{alg:pdhg}}
x^{k+1}=\argmin_{x \in \Rbb^{n}}\; f(x)+ \iprod{\K x}{y^k} + \frac{1}{2\tau}\left\|x-x^k\right\|^2 \label{pre-pdhg:x},\\
y^{k+1}=\argmin_{y\in \Rbb^{m}}\; g^*(y)  - \iprod{\K (2x^{k+1} - x^k)}{y} +\frac{1}{2\sigma}\|y-y^k\|^2 \label{pre-pdhg:y}.
\end{subnumcases}
\end{boxedminipage}
\vspace{4pt}

\noindent Here, $\|\cdot\|$ means the vector $\ell_2$ norm.
In \eqref{alg:pdhg}, $\tau, \sigma >0$ are the primal and dual stepsize parameters, respectively.
Chambolle and Pock \cite{chambolle2011first} and He and Yuan \cite{he2012convergence} established the convergence of PDHG under the condition $\tau \sigma \|\K\|^{2}< 1$, in which $\|\K\|$ is the spectral norm of the matrix $\K$.  This condition is improved to  $\tau \sigma \|\K\|^{2} \leq 1$ by Condat \cite{condat2013primal} and further enhanced to
 \be \label{condition:43}
 \tau \sigma \|\K\|^{2}< \frac43
 \ee very recently by He et al. \cite{he2022generalized} for a special case of \eqref{PD}, i.e., $g^*(y)=\iprod{b}{y}$ and $b\in \Rbb^m$, other than general $g^{*}(\cdot)$. Under the condition \eqref{condition:43}, the convergence of PDHG \eqref{alg:pdhg} for \eqref{PD} with general $g^{*}(\cdot)$ is established in \cite{jiang2022solving} and \cite{li2022improved}.  For more results about the convergence of PDHG, readers can refer to \cite{cai2013improved, jiang2021approximate,jiang2021first,he2014convergence}.

As observed in \cite{pock2011diagonal} that for cases when $\|\K\|$ may not be estimated easily, or it might be
very large, the practical convergence of the PDHG \eqref{alg:pdhg} significantly slows down. To overcome this issue, we are concerned in this paper with a general algorithm, i.e., the preconditioned PDHG (PrePDHG), which is given as:

\vspace{4pt}
\hspace{-6mm}
\begin{boxedminipage}{1\textwidth}
\noindent \textbf{PrePDHG procedure for \eqref{PD}:} Let $M_1 \in \Rbb^{n \times n}$ and $M_2 \in \Rbb^{m \times m}$ be symmetric matrices. For given $(x^k, y^k)$, the new iterate $(x^{k+1},y^{k+1})$ is generated by:
\begin{subnumcases}{\label{pre-pdhg}}
x^{k+1}=\argmin_{x\in\Rbb^{n}}\; f(x)+ \iprod{\K x}{y^k} + \frac{1}{2}\left\|x-x^k\right\|_{M_1}^2 \label{pre-pdhg:x},\\
y^{k+1}=\argmin_{y\in\Rbb^{m}}\; g^*(y)  - \iprod{\K(2x^{k+1} - x^k)}{y} +\frac{1}{2}\|y-y^k\|_{M_2}^2 \label{pre-pdhg:y}.
\end{subnumcases}
\end{boxedminipage}
\vspace{4pt}

\noindent Here, $\|z\|_{M_1}^2 = \iprod{z}{M_1z}$ for a vector $z \in \Rbb^n$. Obviously, the PrePDHG \eqref{pre-pdhg} reduces to the PDHG \eqref{alg:pdhg} by taking $M_1=\tau^{-1} I_n$ and $M_2=\sigma^{-1} I_m$. More importantly, by taking other specific forms of $M_1$ and $M_2$, the framework of PrePDHG can take several other algorithms as special cases, see Section \ref{S4} for more details.

The PrePDHG is first proposed by Pock and Chambolle \cite{pock2011diagonal}\footnote{The setting in \cite{pock2011diagonal} is for the general finite-dimensional vector space other than the Euclidean space. For simplicity of presentation, we focus on the Euclidean space. However, our results in this paper can be easily extended to the general finite-dimensional vector space.}.
They established the convergence of the PrePDHG \eqref{pre-pdhg} under the condition
\begin{equation} \label{condition:PC0}
\begin{pmatrix}
M_1 & -\K^{\tran} \\
-\K & M_2
\end{pmatrix}
\succ 0,
\end{equation}
see \cite[Theorem 1]{pock2011diagonal}. For a symmetric positive matrix $M$,  denote $M^{-\frac12}$ as the square root of $M^{-1}$, namely, $M^{-1/2} M^{-1/2} = M^{-1}$,
then condition  \eqref{condition:PC0} is equivalent to (see Lemma \ref{equi14})
\begin{equation}\label{condition:PC1}
M_{1} \succ 0,\quad  M_{2}\succ 0,\quad \left\|M_2^{-\frac12}\K M_1^{-\frac12}\right\|<1.
\end{equation}
Besides, \cite{pock2011diagonal}  also proposed a family of diagonal preconditioners for $M_{1}$ and $M_{2}$, which make the subproblems easier to solve and guarantee the convergence of the algorithm. From the point view of an indefinite proximal point algorithm,  Jiang et al. \cite{jiang2021indefinite} showed that the condition  \eqref{condition:PC1} can be improved to
\[
M_{1} + \frac12 \Sigma_{f} \succ 0,~  M_{2} + \frac12 \Sigma_{g^{*}}\succ 0,~ \left\| \Big(M_2+ \frac12 \Sigma_{g^{*}}\Big) ^{-\frac12}\K \Big(M_1 +  \frac12 \Sigma_{f}\Big)^{-\frac12}\right\|<1, \nn
\]
where $\Sigma_{f}$ and $\Sigma_{g^{*}}$ are symmetric semidefinite matrices related to $f$ and $g^{*}$ (see \eqref{cm}).

Since min-max problems are equivalent to constrained or composite optimization problems under certain conditions, some literatures focus on understanding PDHG and PrePDHG from various perspectives.
For example, the equivalence between PDHG and linearized  alternating direction method of multipliers (ADMM) is discussed in \cite{esser2010general,o2020equivalence}.
Similarly,  Liu et al. \cite{liu2021acceleration} established the equivalence between PrePDHG for \eqref{PD} and positive semidefinite proximal ADMM (sPADMM) for an equivalent problem of \eqref{D}.
Based on the equivalence and the convergence analysis of the first-order primal-dual algorithm in \cite{chambolle2016ergodic}, Liu et al. \cite{liu2021acceleration} established  the ergodic convergence result (but without sequence convergence) of  PrePDHG under the  condition
\begin{equation} \label{condition:LiuYin}
\begin{pmatrix}
M_1 & -\K^{\tran} \\
-\K & M_2
\end{pmatrix}
\succeq 0,
\end{equation}
and also considered some inexact versions of PrePDHG.  Note that
 a similar condition of  \eqref{condition:LiuYin} is extended for infinite dimensional Hilbert space in \cite{briceno2021split}.
Very recently, under condition \eqref{condition:LiuYin}, Jiang and Vandenberghe \cite{jiang2022bregman} showed convergence of iterates for Bregman PDHG, of which PrePDHG is a special case.

As mentioned above, when $M_1=\tau^{-1}I_n$ and $M_2=\sigma^{-1}I_m$, the PrePDHG \eqref{pre-pdhg} reduces to the original PDHG \eqref{alg:pdhg}. However, the convergence condition \eqref{condition:LiuYin}  degrades into $\tau \sigma\|\K\|^{2}\leq 1 $ other than \eqref{condition:43}. This raises a natural question: {\it can we obtain a tighter convergence condition of PrePDHG to fill this gap?}

Motivated by \cite{liu2021acceleration}, we intend to investigate PrePDHG from the perspective of proximal ADMM.
A known result is that indefinite proximal ADMM (iPADMM), with weaker convergence conditions, outperforms positive semidefinite proximal ADMM (sPADMM) \cite{chen2021equivalence,he2020optimally,gu2015indefinite, li2016majorized, chen2019unified, zhang2020linearly, ma2021majorized,han2022survey,cai2022developments}.  In this paper, we restudy the PrePDHG \eqref{pre-pdhg} from the point view of iPADMM other than sPADMM as done in \cite{liu2021acceleration}
and give positive answers to the above question.
The main contributions of this paper are as follows:

Firstly, we establish the equivalence between PrePDHG for \eqref{PD} and iPADMM for an equivalent problem of \eqref{P}. Based on the equivalence, we improve the convergence condition \eqref{condition:PC1} of the PrePDHG to
\begin{equation*}
\begin{pmatrix}
\frac43\left(M_1 + \frac12 \Sigma_f\right) & \K^\tran\\
\K &  M_2
\end{pmatrix}
\succ 0,
\end{equation*}
which can be rewritten as (see Lemma \ref{equi14})
\be \label{equ:M1M2:condtion}
M_{1}  + \frac12 \Sigma_{f}\succ 0,\quad  M_{2}\succ 0,\quad \left\|M_2^{-\frac12}\K \Big(M_1 + \frac12 \Sigma_{f}\Big)^{-\frac12}\right\|^{2}< \frac43.
\ee
Note that \eqref{equ:M1M2:condtion} is exactly \eqref{condition:43} when PrePDHG reduces to the original PDHG and $\Sigma_f$ is taken as a zero matrix. Some counter-examples are given in Section \ref{subsection:tight} to illustrate that condition \eqref{equ:M1M2:condtion} is tight in the sense that the constants 4/3 and 1/2 can not be replaced by any  larger numbers, namely, the inequality sign ``$<$'' can not be replaced by ``$\leq$''.

Secondly, we establish the ergodic and non-ergodic sublinear convergence rate results of the PrePDHG both in the sense of the KKT residual and the function value residual.  To the best of our knowledge, the sublinear convergence rate based on the KKT residual is new for PDHG-like methods since the existing results mainly focus on the function value residual. And for the function value residual measurement, our sublinear rate result is the first non-ergodic result since the existing results are all ergodic. The numerical experiments in Section \ref{S5} show that the KKT residual is more practical than the function value residual.

Thirdly, we discuss some practical choices of $M_{1}$ and $M_{2}$ and get some interesting results. For example, condition \eqref{condition:43} is tight for PDHG \eqref{alg:pdhg}; the sharp range of parameters for diagonal PrePDHG is given, and the dual stepsize of the balanced ALM (BALM) \cite{he2021balanced} can be enlarged to 4/3 from 1, and we rename it an enhanced BALM (eBALM). Besides, we explore the eBALM with symmetric Gauss-Seidel iterations (eBALM-sGS), which can be understood as a special case of PrePDHG.

Finally, we perform four groups of numerical experiments on solving the matrix game, projection onto the Birkhoff polytope, earth mover's distance, and CT reconstruction problems. We choose proper $M_{1}$ and $M_{2}$ and
 the numerical results verify the effectiveness of the choices of $M_{1}$ and $M_{2}$ and the superiority of the PrePDHG (with tighter convergence condition).

This paper is organized as follows. Some notations and preliminaries are presented in Section \ref{S2}. In Section \ref{section:vmpdhg}, we first establish the equivalence between PrePDHG and iPADMM and then develop the global convergence of PrePDHG from the iPADMM point of view. The existing convergence condition of PrePDHG is improved to be tight, as shown by counter-examples. Then, the sublinear convergence rate of the PrePDHG is obtained. We revisit the choices of $M_1$ and $M_2$ in Section \ref{S4} and get some new results. In Section \ref{S5}, we perform numerical experiments on four practical problems to verify the effectiveness of the PrePDHG. Some concluding remarks are made in Section \ref{S6}.

\section{Notations and Preliminaries}\label{S2}
 We use $\|x\|_1,\|x\|$, and $\|x\|_\infty$ to denote the $\ell_1, \ell_2$ and $\ell_\infty$ norm of the vector $x$ respectively, and $\|A\|$ to denote the spectral norm of the matrix $A$. We use $\mvec(A)$ to denote a vector formulated by stacking the columns of $A$ one by one, from first to last.  We slightly
abuse the notation $\|x\|_M^2:= \iprod{x}{Mx}$ as long as $M$ is symmetric. When $M$ is symmetric positive semidefinite, we use $M^{\frac12}$ to represent the square root of $M$, namely, $M^{\frac12} M^{\frac12} = M$.
 For symmetric matrices $A$ and $B$, $A \succeq (\succ)\ B$ means that $A - B$ is positive semidefinite (positive definite). For a symmetric matrix $P \in \Rbb^{n \times n}$,  we can always decompose it as
\begin{equation}P = P_{+} - P_{-} \nn
\end{equation}  with $P_{+}, P_{-} \succeq 0$. We name this decomposition a {\it DC  decomposition} of $P$.  Note that the DC decomposition of a symmetric matrix is not unique.

We adopt some standard notations in convex analysis; see \cite{rockafellar2015convex} for instance. The distance from a point $x$ to a nonempty convex closed set $\mathcal{S}\subseteq \Rbb^{n}$ is denoted as $\dist(x,\mathcal{S}) = \min_{y\in \mathcal{S}}\,\|y - x\|$.  For any proper closed convex function $f: \Rbb^n \to (-\infty, +\infty]$ and $\bar x \in \mathrm{dom} f:= \{x \in \Rbb^n \mid f(x) <+ \infty \}$,  the subdifferential at $\bar x$ is defined as $\partial f(\bar x) := \{\xi\in\Rbb^n \mid f(x)\geq f(\bar x)+\iprod{\xi}{x-\bar x},\;\forall x \in \Rbb^{n}\}$, in which any $\xi$ is a subgradient at $\bar x$. Moreover, there exists a symmetric positive semidefinite matrix $\Sigma_f$ such that for all $x_1, x_2 \in \Rbb^n$ and $\xi_1 \in \partial f(x_1), \xi_2 \in \partial f(x_2)$,
\begin{equation}\label{cm}
\iprod{\xi_1 - \xi_2}{x_1 - x_2} \geq \|x_1 - x_2\|_{\Sigma_f}^2.
\end{equation}
For any proper closed convex function $f$, the convex conjugate of $f$ is defined as $f^*(y) := \sup_{x\in \Rbb^{n}}\{\iprod{x}{y}-f(x)\}$, and we have
\begin{equation}\label{equ:f:closed}
y\in \partial f(x)\Leftrightarrow x \in \partial f^*(y). 
\end{equation}

Given a symmetric matrix $M$ with $M + \Sigma_f\succ 0$,   we define the generalized proximal operator as
 \begin{equation}\label{equ:proxfM}
\prox_f^M(x) := \argmin_{z\in \Rbb^n}\, f(z) + \frac12\|z - x\|_M^2.
\end{equation}
If $M = \tau^{-1} I_{n}$ for some $\tau > 0$, we simply denote $\prox_{\tau f}(x):= \prox_f^M(x)$.
Let $\tilde f(\cdot) := f(\cdot) - \frac12 \|\cdot\|_{\Sigma_f}^2$. Observing that
\[
\begin{aligned}
f(z) + \frac12\|z - x\|_M^2 ={}& \tilde f(z) + \frac12 \left\| z - (M + \Sigma_{f})^{-1} Mx\right\|^{2}_{M + \Sigma_{f}} + \frac12 \|x\|_{M}^{2} \\ 
{}& - \frac12 \|(M + \Sigma_{f})^{-1} Mx\|_{M + \Sigma_{f}}^{2},
\end{aligned}
\]
we have an equivalent characterization of $\prox_f^M(x)$ as
\begin{equation}\label{equ:f:prox:2}
\prox_f^M(x) = \prox_{\tilde f}^{M + \Sigma_f}\left( \left(M+ \Sigma_f\right)^{-1} Mx\right).
\end{equation}
We now present a generalization of Moreau's identity, see \cite[Theorem 1 (ii)]{combettes2013moreau} or \cite[Lemma 3.3]{becker2019quasi}, which is very useful in our analysis.
\begin{lemma}\label{lemma:MI}
Let $f:\Rbb^n\to (-\infty, +\infty]$ be a proper closed convex function. Suppose $M \succ 0$, then we have
\begin{equation*}
x=\prox_{f}^M(x)+M^{-1} \prox_{f^*}^{M^{-1}}(M x), \quad \forall x \in \Rbb^n.
\end{equation*}
\end{lemma}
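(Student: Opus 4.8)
The plan is to reduce the identity to a pair of first-order optimality conditions that are linked through the conjugacy relation \eqref{equ:f:closed}, exploiting that a proximal mapping defined with a positive definite metric is single-valued. First I would set $p := \prox_f^M(x)$. Since $M \succ 0$, the objective $f(z) + \frac12\|z-x\|_M^2$ is strongly convex, so $p$ is its unique minimizer and is characterized by the optimality condition
\begin{equation*}
0 \in \partial f(p) + M(p - x), \quad \text{equivalently} \quad M(x - p) \in \partial f(p),
\end{equation*}
where the subdifferential sum rule applies because the quadratic term is differentiable everywhere.

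Next I would introduce $q := M(x - p)$ and aim to show that $q = \prox_{f^*}^{M^{-1}}(Mx)$; granting this, applying $M^{-1}$ gives $M^{-1}q = x - p$, which rearranges to the claimed identity. To identify $q$ with the proximal point of $f^*$, I would write the corresponding optimality condition: a point $w$ equals $\prox_{f^*}^{M^{-1}}(Mx)$ if and only if $0 \in \partial f^*(w) + M^{-1}(w - Mx)$, i.e. $x - M^{-1}w \in \partial f^*(w)$. Substituting $w = q = M(x-p)$, the left-hand side collapses to $x - M^{-1}M(x-p) = p$, so the required condition becomes $p \in \partial f^*(q)$.

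The crucial step is then to invoke the self-duality of the subdifferential under conjugation, namely \eqref{equ:f:closed}, which gives $p \in \partial f^*(q) \Leftrightarrow q \in \partial f(p)$. This turns the desired condition $p \in \partial f^*(q)$ into exactly the relation $q = M(x-p) \in \partial f(p)$ already established for $p$. Hence $q$ satisfies the defining optimality condition of $\prox_{f^*}^{M^{-1}}(Mx)$, and by uniqueness of this proximal point (again from $M^{-1} \succ 0$) we conclude $\prox_{f^*}^{M^{-1}}(Mx) = q = M(x - p)$, completing the argument.

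I anticipate the only delicate point to be the bookkeeping of the metric in the two optimality conditions: the gradient of $\frac12\|z-x\|_M^2$ is $M(z-x)$ while that of $\frac12\|w - Mx\|_{M^{-1}}^2$ is $M^{-1}(w - Mx)$, and one must track the factors of $M$ and $M^{-1}$ so that $x - M^{-1}w$ collapses precisely to $p$. Once this cancellation is verified, everything reduces to a single application of \eqref{equ:f:closed}, with strong convexity supplying uniqueness at both ends.
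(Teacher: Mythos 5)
Your argument is correct, and it is worth noting that the paper itself does not prove Lemma \ref{lemma:MI} at all: it simply cites \cite[Theorem 1 (ii)]{combettes2013moreau} and \cite[Lemma 3.3]{becker2019quasi}, which establish (more general versions of) the identity in the literature. Your proposal therefore supplies a self-contained elementary proof where the paper delegates to external results. The verification goes through exactly as you sketch: with $p = \prox_f^M(x)$ the optimality condition $M(x-p)\in\partial f(p)$ is legitimate (the sum rule applies since the quadratic term is finite and differentiable on all of $\Rbb^n$), the candidate $q = M(x-p)$ plugged into the optimality condition $x - M^{-1}w \in \partial f^*(w)$ for $\prox_{f^*}^{M^{-1}}(Mx)$ collapses to $p \in \partial f^*(q)$, and the inversion rule \eqref{equ:f:closed} --- valid here precisely because $f$ is proper, closed, and convex, which also guarantees $f^*$ is proper closed convex so that its proximal problem is well posed --- converts this into the already-established relation $q \in \partial f(p)$. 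Strong convexity of both proximal objectives (from $M \succ 0$ and $M^{-1} \succ 0$) gives existence and uniqueness at both ends, so the identification $q = \prox_{f^*}^{M^{-1}}(Mx)$ and hence the decomposition $x = p + M^{-1}q$ follow. This is essentially the classical proof of Moreau's identity transported to the metric $M$, and it is arguably preferable pedagogically to the paper's citation-only treatment; what the cited references buy instead is greater generality (e.g., Banach-space settings and scaled or perturbed variants of the decomposition) that the paper does not need.
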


In the following lemma, the equivalence between \eqref{condition:PC0} and \eqref{condition:PC1} is established. In \cite{pock2011diagonal}, the authors proved that  \eqref{condition:PC1} implies \eqref{condition:PC0}. Here we present a simple  proof of the equivalence based on the well-known Schur complement.

\begin{lemma}\label{equi14}
Let $M_1\in \Rbb^{n\times n}$, $M_2\in \Rbb^{m\times m}$ be symmetric matrices. Then
\eqref{condition:PC0} is equivalent to \eqref{condition:PC1}.
\end{lemma}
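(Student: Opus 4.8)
The plan is to obtain the equivalence through two standard tools: the Schur complement characterization of positive definiteness for symmetric $2\times 2$ block matrices, and the spectral-norm characterization $\|N\|<1 \Leftrightarrow I-N^{\tran}N\succ 0$.

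First I would record that if the block matrix in \eqref{condition:PC0} is positive definite, then its diagonal blocks $M_1$ and $M_2$ are positive definite, being principal submatrices; this supplies the first two requirements of \eqref{condition:PC1} at no cost. The core step is then the Schur complement applied with respect to the $(2,2)$-block: assuming $M_2\succ 0$, the matrix in \eqref{condition:PC0} is positive definite if and only if its Schur complement is positive definite. Because the off-diagonal blocks are $-\K^{\tran}$ and $-\K$, the two minus signs cancel and the Schur complement equals
\[
M_1-(-\K^{\tran})M_2^{-1}(-\K)=M_1-\K^{\tran}M_2^{-1}\K.
\]
Thus \eqref{condition:PC0} is equivalent to the pair $M_2\succ 0$ and $M_1-\K^{\tran}M_2^{-1}\K\succ 0$; since $\K^{\tran}M_2^{-1}\K\succeq 0$, this pair already forces $M_1\succ 0$, so nothing is lost by listing $M_1\succ 0$ separately.

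It remains to match the inequality $M_1-\K^{\tran}M_2^{-1}\K\succ 0$ with the norm bound. Setting $N:=M_2^{-\frac12}\K M_1^{-\frac12}$, I would use $\|N\|<1\Leftrightarrow I-N^{\tran}N\succ 0$, where $N^{\tran}N=M_1^{-\frac12}\K^{\tran}M_2^{-1}\K M_1^{-\frac12}$. Applying the congruence transformation by $M_1^{\frac12}$, which preserves positive definiteness since $M_1\succ 0$ makes $M_1^{\frac12}$ invertible, turns $I-N^{\tran}N\succ 0$ into exactly $M_1-\K^{\tran}M_2^{-1}\K\succ 0$. Chaining these equivalences yields \eqref{condition:PC0}$\Leftrightarrow$\eqref{condition:PC1}. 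I do not anticipate a genuine obstacle here; the only points needing care are the sign cancellation in the off-diagonal blocks and checking that $M_1\succ 0$ is available before invoking the congruence by $M_1^{\frac12}$, which is why I extract the positive definiteness of $M_1$ early in both directions.
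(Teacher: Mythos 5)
Your proposal is correct and follows essentially the same route as the paper: a Schur complement characterization (the paper cites \cite[Theorem 1.12]{zhang2006schur} for the equivalence with $M_1 \succ 0$, $M_2 \succ 0$, $M_1-\K^{\tran}M_2^{-1}\K \succ 0$) followed by the congruence by $M_1^{-\frac12}$ and the spectral-norm characterization $\|M_2^{-\frac12}\K M_1^{-\frac12}\|<1 \Leftrightarrow I_n - M_1^{-\frac12}\K^{\tran}M_2^{-1}\K M_1^{-\frac12} \succ 0$. Your only additions---extracting positive definiteness of the diagonal blocks as principal submatrices and noting the sign cancellation in the off-diagonal blocks---are details the paper absorbs into the cited theorem.
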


\begin{proof}
By \cite[Theorem 1.12]{zhang2006schur}, we know that \eqref{condition:PC0} is equivalent to
$$
M_1 \succ 0,\quad  M_2 \succ 0, \quad M_1-\K^{\tran}M_2^{-1}\K \succ 0.
$$
Since $M_1 \succ 0$ and   $M_2 \succ 0$, we have
$$
\begin{aligned}
M_1-\K^{\tran}M_2^{-1}\K \succ 0 {}& \iff I_n-M_1^{-\frac12}\K^{\tran}M_2^{-\frac12}M_2^{-\frac12}\K M_1^{-\frac12}  \succ 0 \\
{} &
 \iff \|M_2^{-\frac12}\K M_1^{-\frac12}\|<1.
 \end{aligned}
$$
The proof is completed.
\qed \end{proof}

Throughout this paper, we assume that problem \eqref{PD} has a  saddle point $(x^\star, y^\star)$, which satisfies the optimality condition
\begin{equation}\label{equ:PD:optimality}
\Lcal(x^{\star}, y) \leq \Lcal(x^{\star},y^{\star})\leq \Lcal(x, y^{\star}), \quad \forall x \in \Rbb^{n},\quad  \forall y \in \Rbb^{m}
\end{equation}
and the KKT-type optimality condition
 \be \label{equ:opt:condition}
0 \in \partial f(x^{\star}) + \K^{\tran} y^{\star},\quad   0 \in \partial g^{*}(y^{\star}) - \K x^{\star}.
\ee
Such $x^{\star}$ and $y^{\star}$ are also optimal for \eqref{P} and \eqref{D}, respectively.
Define the KKT residual mapping $\Rcal: \Rbb^n \times \Rbb^m \rightarrow \Rbb$ as
\be \label{equ:kkt:residual}
\Rcal(x,y) =\max\left\{ \dist(0, \partial f(x) + \K^{\tran}y), \dist(0, \partial g^{*}(y) - \K x)\right\}.
\ee
Clearly, we have the following equivalent characterization of the optimality condition.
\begin{proposition}
 The KKT-type optimality condition \eqref{equ:opt:condition} holds if and only if    $\Rcal(x^\star,y^\star) = 0$.
 \end{proposition}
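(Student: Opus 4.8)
The plan is to reduce this biconditional to the elementary fact that the maximum of two nonnegative numbers vanishes precisely when both vanish, combined with the characterization of when the distance to a set is zero. First I would observe that both arguments of the maximum in the definition \eqref{equ:kkt:residual} of $\Rcal$ are nonnegative, being distances from the origin to subsets of $\Rbb^n$ and $\Rbb^m$ respectively. Hence $\Rcal(x^\star, y^\star) = 0$ holds if and only if
\[
\dist(0, \partial f(x^\star) + \K^{\tran} y^\star) = 0 \quad \text{and} \quad \dist(0, \partial g^*(y^\star) - \K x^\star) = 0.
\]

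Next I would argue that, for each of these two conditions, the vanishing of the distance is equivalent to membership of the origin in the corresponding set. The key point is that for a proper closed convex function the subdifferential $\partial f(x^\star)$ is a closed convex set; translating it by the fixed vector $\K^{\tran} y^\star$ preserves closedness, so $\partial f(x^\star) + \K^{\tran} y^\star$ is closed and convex. Since the distance from a point to a nonempty closed set is zero exactly when the point belongs to the set, we obtain
\[
\dist(0, \partial f(x^\star) + \K^{\tran} y^\star) = 0 \iff 0 \in \partial f(x^\star) + \K^{\tran} y^\star,
\]
and likewise for the second component. Chaining these equivalences yields precisely the KKT-type optimality condition \eqref{equ:opt:condition}.

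The only mild obstacle is to handle the degenerate case in which one of the subdifferentials is empty, say $\partial f(x^\star) = \emptyset$. Under the usual convention $\dist(0, \emptyset) = +\infty$, the residual $\Rcal(x^\star, y^\star)$ is then infinite and in particular nonzero, while the inclusion $0 \in \partial f(x^\star) + \K^{\tran} y^\star$ fails as well; thus both sides of the stated equivalence are false and consistency is maintained. Since the argument is symmetric in the two blocks, no further case analysis is needed, and the proof is complete once these observations are combined.
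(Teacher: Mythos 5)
Your proof is correct and is exactly the standard justification of this fact, which the paper itself states without proof (it is introduced with ``Clearly''): the maximum of two nonnegative distances vanishes iff both do, and since $\partial f(x^\star)$ and $\partial g^*(y^\star)$ are closed convex sets whose translates by $\K^{\tran}y^\star$ and $-\K x^\star$ remain closed, each distance vanishes iff $0$ belongs to the corresponding set. Your handling of the empty-subdifferential case via the convention $\dist(0,\emptyset)=+\infty$ is a careful touch the paper leaves implicit, but the approach is the same.
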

Based on this, we define the $\epsilon$-solution of problem \eqref{PD} as follows.
 \begin{definition}\label{def:epsilon:solution}
 Given  $\epsilon \geq 0$, a pair $(x,y)$ is called an $\epsilon$-solution of problem \eqref{PD} if $\Rcal(x,y) \leq \epsilon$.
 \end{definition}

Note that the KKT residual \eqref{equ:kkt:residual} may be difficult or expensive to calculate since it involves computing the distance of a point to a convex set.
However, in some practical circumstances, the upper bound of  $\Rcal(x,y)$ in \eqref{equ:kkt:residual} could be easily obtained; see the discussion in
Remark \ref{remark:upperbound:R} and Remark \ref{remark:R:upperbound}.

In the rest of this section, we present the existing convergence and sublinear convergence rate results of iPADMM developed in \cite{gu2015indefinite}, which are key to the convergence analysis of PrePDHG. Note that the algorithm in \cite{gu2015indefinite} is more general and takes iPADMM as a special case. Here we display the corresponding results of iPADMM.

Consider the convex minimization problem with linear constraints and a separable objective function
\begin{equation}\label{gup}
\begin{array}{cl}
\min\limits_{x\in \Rbb^{n_1},y\in \Rbb^{n_2}} & \theta_1(x) +  \theta_2(y)\\
\mathrm{s.t.} & Ax+By=0,
\end{array}
\end{equation}
where $A\in \Rbb^{m\times n_1}, B\in \Rbb^{m\times n_2}$, $\theta_1:\Rbb^{n_1}\to (-\infty, +\infty]$, and $\theta_2:\Rbb^{n_2}\to (-\infty, +\infty]$ are proper closed convex functions. The augmented Lagrangian function of \eqref{gup} is defined by:
$$
\Lcal_\beta (x,y,\lambda) = \theta_1(x) +  \theta_2(y)-\iprod{\lambda}{Ax+By}+\frac{\beta}{2}\|Ax+By\|^2,
$$
where $\lambda$ is the corresponding Lagrange multiplier of the linear constraints and $\beta > 0$ is a penalty parameter. The iPADMM for \eqref{gup} in \cite{gu2015indefinite} is given as:

\vspace{4pt}
\hspace{-6mm}
\begin{boxedminipage}{1\textwidth}
\indent \textbf{iPADMM procedure for \eqref{gup}:}
\noindent
Choose the symmetric indefinite matrices $S$ and $T$. For given $(x^k, y^k, \lambda^k)$, the new iterate $(x^{k+1}, y^{k+1}, \lambda^{k+1})$ is generated by:
\begin{subnumcases}{\label{ipspr}}
x^{k+1} = \argmin_{x\in \X} \mathcal{L}_{\beta}(x,y^k,\lambda^k)+\frac 12 \|x-x^k\|_S^2,\label{ipspr-1}\\
y^{k+1} = \argmin_{y\in \Y} \mathcal{L}_{\beta}(x^{k+1},y,\lambda^k)  + \frac12\|y - y^k\|_T^2, \label{ipspr-2}\\
\lambda^{k+1} = \lambda^k -\beta (A x^{k+1} + By^{k+1}). \label{ipspr-3}
\end{subnumcases}
\end{boxedminipage}
\vspace{4pt}

Let $\Sigma_1$ and $\Sigma_2$ be the symmetric positive semidefinite matrices related to  $\theta_1$ and $\theta_2$, respectively; see \eqref{cm} for details.
 The sequence $\{(x^k, y^k, \lambda^k)\}$ is denoted as $\{w^k\}$. Now we present the convergence results of iPADMM.
\begin{lemma}\cite[Theorem 3.2]{gu2015indefinite}\label{guth1}
Let the sequence $\{w^k\}$ be generated by iPADMM \eqref{ipspr}. If the proximal terms $S$ and $T$ are chosen such that
\be\label{gucon1}
S+\frac 12 \Sigma_1 \succeq 0, \quad S+\frac 12 \Sigma_1 +\beta A^{\tran}A \succ 0
\ee
and
\be\label{gucon2}
T+\Sigma_2 +\beta B^{\tran}B \succ 0,\quad T+\frac 12 \Sigma_2 +\kappa_1(-2T_-+\Sigma_2)+\kappa_2 \beta B^{\tran}B \succ 0,
\ee
where $\kappa_1=1, \kappa_2 \in (0, \frac34)$,  and $T_-$ comes from one DC decomposition of $T$,
then $\{w^k\}$ converges to an optimal solution of \eqref{gup}.
\end{lemma}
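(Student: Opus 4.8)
The statement is imported verbatim from \cite{gu2015indefinite}, so what follows is only the route I would take to establish it. The plan is to use the variational-inequality framework standard for ADMM convergence: characterize both the iterates and a saddle point $w^\star=(x^\star,y^\star,\lambda^\star)$ of \eqref{gup} through first-order optimality, then show that the reduced sequence is Fej\'er-monotone with respect to a suitable quadratic energy functional, from which summability of successive differences and convergence of the whole sequence follow.

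First I would record the subproblem optimality conditions. From \eqref{ipspr-1}, \eqref{ipspr-2} and the multiplier update \eqref{ipspr-3} (substituting $\beta(Ax^{k+1}+By^{k+1})=\lambda^k-\lambda^{k+1}$), one obtains
\[
0 \in \partial\theta_1(x^{k+1}) - A^\tran\lambda^{k+1} + \beta A^\tran B(y^k - y^{k+1}) + S(x^{k+1}-x^k),
\]
\[
0 \in \partial\theta_2(y^{k+1}) - B^\tran\lambda^{k+1} + T(y^{k+1}-y^k).
\]
Pairing these with $x^{k+1}-x^\star$ and $y^{k+1}-y^\star$, and pairing the saddle-point inclusions $A^\tran\lambda^\star\in\partial\theta_1(x^\star)$, $B^\tran\lambda^\star\in\partial\theta_2(y^\star)$, $Ax^\star+By^\star=0$ against the same increments, the monotonicity refinement \eqref{cm} produces a crossing inequality whose right-hand side carries the curvature terms $\|x^{k+1}-x^\star\|_{\Sigma_1}^2$ and $\|y^{k+1}-y^\star\|_{\Sigma_2}^2$. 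This is exactly the mechanism by which the half-curvatures $\tfrac12\Sigma_1$, $\tfrac12\Sigma_2$ appearing in \eqref{gucon1}--\eqref{gucon2} get absorbed, which is what permits $S$ and $T$ to be taken indefinite in the first place.

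Next I would rearrange the crossing inequality into the canonical ADMM form $\iprod{w^{k+1}-w^\star}{Q(w^k-w^{k+1})}\ge(\text{nonnegative terms})$ and, via the polarization identity $2\iprod{a}{Hb}=\|a\|_H^2+\|b\|_H^2-\|a-b\|_H^2$ applied to the reduced variable $(y,\lambda)$, convert it into a descent relation $\Phi_{k+1}\le\Phi_k-\|w^k-w^{k+1}\|_G^2$. Here the energy $\Phi_k$ is built from $\tfrac1\beta\|\lambda^k-\lambda^\star\|^2$ and a $T$-weighted distance $\|y^k-y^\star\|^2_{T+\frac12\Sigma_2+\beta B^\tran B}$; the nondegeneracy halves $S+\frac12\Sigma_1+\beta A^\tran A\succ0$ and $T+\Sigma_2+\beta B^\tran B\succ0$ guarantee well-posedness of the two subproblems and ensure $\Phi_k$ genuinely controls the distance to $w^\star$, while the $x$-increment enters only through the residual block, handled by $S+\frac12\Sigma_1\succeq0$.

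I expect the main obstacle to be the indefiniteness of $T$, which makes both the naive energy and the residual form $G$ potentially sign-indefinite. To deal with this I would invoke the DC decomposition $T=T_+-T_-$ and augment $\Phi_k$ with a delayed coupling proportional to $\|y^k-y^{k-1}\|^2_{T_-}$, so that the negative part absorbed at step $k$ is compensated by the positive quantity stored from step $k-1$. The descent then leaves a quadratic form jointly in the two consecutive gaps $y^{k-1}-y^k$ and $y^k-y^{k+1}$, coupled through the indefinite block $-2T_-+\Sigma_2$; applying a weighted Young inequality $2\iprod{u}{v}\ge -\varepsilon\|u\|^2-\varepsilon^{-1}\|v\|^2$ to the cross term and requiring the resulting diagonal coefficients to remain positive definite is precisely what pins down $\kappa_1=1$ and forces the threshold $\kappa_2\in(0,\tfrac34)$, the value $\tfrac34$ being the largest for which $G\succ0$ survives the optimization over $\varepsilon$. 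Once $G\succ0$ is secured, the descent yields $\sum_k\|w^k-w^{k+1}\|^2<\infty$ and boundedness of $\{w^k\}$; any subsequential limit is a saddle point by closedness of the subdifferentials in the limiting variational inequality, and Fej\'er monotonicity of $\Phi_k$ upgrades this to convergence of the entire sequence $\{w^k\}$ to an optimal $w^\star$.
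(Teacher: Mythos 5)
The paper does not prove this lemma at all---it is quoted verbatim from \cite[Theorem 3.2]{gu2015indefinite}---and your sketch reconstructs essentially the argument of that cited source: the variational-inequality characterization of the iterates, a Fej\'er-monotone energy in $(y,\lambda)$ augmented by the delayed compensation term $\|y^k-y^{k-1}\|_{T_-}^2$ from the DC decomposition, and a weighted Young inequality on the cross term that produces the threshold $\kappa_2<\tfrac34$. Your route is therefore the same as the reference's, with the only soft spot being the glossed-over boundedness of $\{x^k\}$ when $S+\Sigma_1$ is merely positive semidefinite (there one must route the $x$-control through $S+\tfrac12\Sigma_1+\beta A^{\tran}A\succ 0$ and the vanishing residual, as the cited proof does).
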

\begin{lemma} \cite[Theorem 4.1]{gu2015indefinite} \label{guth2}
Let the sequence $\{w^k\}$ be generated by iPADMM \eqref{ipspr}. If the proximal terms $S$ and $T$ are chosen such that \eqref{gucon1} and \eqref{gucon2} hold, and
\be\label{gucon3}
S+\frac 12 \Sigma_1 \succeq \frac c2 \Sigma_1,
\ee
with $c>0$,
 then we have
$$
\min\limits_{1\leq i\leq k}\|w^i-w^{i+1}\|_{\hat G}^2 = o(1/k),
$$
in which
\begin{equation*}
\hat G=\begin{pmatrix}S+\Sigma_1 &&\\ & T+\Sigma_2+\beta B^{\tran}B & \\ && \frac{1}{\beta}I_m \end{pmatrix}.
\end{equation*}
\end{lemma}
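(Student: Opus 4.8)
The plan is to deduce the $o(1/k)$ rate from two ingredients: a Fej\'er-type contraction inequality that makes the sequence $\{\|w^k - w^{k+1}\|_{\hat G}^2\}$ summable, and an elementary fact that the running minimum of a summable nonnegative sequence is $o(1/k)$. To obtain the first ingredient I would revisit the machinery behind Lemma \ref{guth1}. Writing out the optimality conditions of the subproblems \eqref{ipspr-1}--\eqref{ipspr-2}, substituting the multiplier update \eqref{ipspr-3}, testing the resulting variational inequalities against a saddle point $w^\star = (x^\star, y^\star, \lambda^\star)$, and applying the generalized monotonicity \eqref{cm} of $\theta_1$ and $\theta_2$ (which is what injects $\Sigma_1$ and $\Sigma_2$), one collects the cross terms into a contraction of the form
\[
\|w^k - w^\star\|_H^2 - \|w^{k+1} - w^\star\|_H^2 \;\geq\; \|w^k - w^{k+1}\|_{G}^2 \;\geq\; 0
\]
for a symmetric $H \succeq 0$ and a symmetric decrease matrix $G \succeq 0$. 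The delicate step, inherited from the proof of Lemma \ref{guth1}, is that $T$ is indefinite: the cross terms carrying $B(y^k - y^{k+1})$ and $\lambda^k - \lambda^{k+1}$ must be absorbed using $-2T_- + \Sigma_2$, and the admissible range $\kappa_1 = 1$, $\kappa_2 \in (0, 3/4)$ in \eqref{gucon2} is exactly what keeps both $H$ and $G$ positive semidefinite.

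Next I would reconcile the natural decrease matrix $G$ with the target $\hat G$. Their $y$- and $\lambda$-blocks already agree, being $T + \Sigma_2 + \beta B^{\tran} B$ and $\frac1\beta I_m$; the only discrepancy is in the $x$-block, where the contraction produces $S + \frac12\Sigma_1$ rather than the $S + \Sigma_1$ appearing in $\hat G$. This is precisely the role of the extra hypothesis \eqref{gucon3}: from $S + \frac12\Sigma_1 \succeq \frac c2 \Sigma_1$ with $c > 0$ we get $\Sigma_1 \preceq \frac2c (S + \frac12 \Sigma_1)$, hence
\[
S + \Sigma_1 = \Big(S + \tfrac12 \Sigma_1\Big) + \tfrac12 \Sigma_1 \;\preceq\; \Big(1 + \tfrac1c\Big)\Big(S + \tfrac12 \Sigma_1\Big),
\]
so that $\|x^k - x^{k+1}\|_{S+\Sigma_1}^2 \leq (1 + \frac1c)\|x^k - x^{k+1}\|_{S + \frac12\Sigma_1}^2$. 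Telescoping the contraction over $k$ gives $\sum_k \|w^k - w^{k+1}\|_G^2 \leq \|w^0 - w^\star\|_H^2 < \infty$, and the displayed comparison upgrades this to $\sum_k \|w^k - w^{k+1}\|_{\hat G}^2 < \infty$.

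Finally, set $a_i := \|w^i - w^{i+1}\|_{\hat G}^2 \geq 0$, so that $\sum_i a_i < \infty$. For any fixed $N$ and any $k > 2N$,
\[
\min_{1 \leq i \leq k} a_i \;\leq\; \frac{1}{k - N + 1}\sum_{i=N}^{k} a_i \;\leq\; \frac{2}{k}\sum_{i \geq N} a_i,
\]
whence $k \min_{1 \leq i \leq k} a_i \leq 2 \sum_{i \geq N} a_i$; taking $\limsup_{k \to \infty}$ and then $N \to \infty$, the vanishing tail of the convergent series forces $k\min_{1 \leq i \leq k} a_i \to 0$, which is the claimed $o(1/k)$. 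I expect the main obstacle to be not this last elementary step but the first one: rigorously producing the contraction inequality with genuinely positive semidefinite $H$ and $G$ while controlling the indefiniteness of $T$, which is the technical core already underlying Lemma \ref{guth1}.
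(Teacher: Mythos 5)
The paper contains no proof of this lemma at all---it is imported verbatim from \cite[Theorem 4.1]{gu2015indefinite}---and your reconstruction follows exactly the route that cited reference takes: the Fej\'er-type contraction $\|w^k-w^\star\|_H^2-\|w^{k+1}-w^\star\|_H^2\geq\|w^k-w^{k+1}\|_G^2$ established in the convergence analysis behind Lemma \ref{guth1}, condition \eqref{gucon3} used precisely as you use it to trade the $S+\frac12\Sigma_1$ block of the decrease matrix for the $S+\Sigma_1$ block of $\hat G$, and then summability combined with the running-minimum argument. Your two explicit steps are both correct---from \eqref{gucon3} one indeed gets $S+\Sigma_1\preceq\left(1+\frac1c\right)\left(S+\frac12\Sigma_1\right)$, and for $k>2N$ the bound $k\min_{1\leq i\leq k}a_i\leq 2\sum_{i\geq N}a_i$ forces $\min_{1\leq i\leq k}a_i=o(1/k)$ as the tail vanishes---so the sketch is sound, with the acknowledged technical core (positive semidefiniteness of $H$ and $G$ under indefinite $T$, governed by $\kappa_1=1$, $\kappa_2\in(0,\frac34)$ in \eqref{gucon2}) being the content of the cited analysis rather than a gap in your argument.
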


\begin{lemma} \cite[Theorem 4.2]{gu2015indefinite} \label{guth3}
Let the sequence $\{w^k\}$ be generated by iPADMM \eqref{ipspr}. If the proximal terms $S$ and $T$ are chosen such that  \eqref{gucon1}, \eqref{gucon2}, and \eqref{gucon3} hold, and
$
T+\frac 12 \Sigma_2 \succ 0,
$
then we have
$$
\|w^k-w^{k+1}\|_{\hat G}^2 = o(1/k),
$$
where $\hat G$ is  defined in Lemma \ref{guth2}.
\end{lemma}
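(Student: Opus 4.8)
The plan is to combine two ingredients: the summability that is already implicit in the proof of Lemma \ref{guth2}, and a new \emph{monotonicity} property of the quantity $a_k:=\|w^k-w^{k+1}\|_{\hat G}^2$. The latter is precisely where the extra hypothesis $T+\frac12\Sigma_2\succ 0$ will be used, and it is what upgrades the bound on the minimum in Lemma \ref{guth2} to a bound on the full sequence.

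First I would recall the variational-inequality machinery behind Lemmas \ref{guth1} and \ref{guth2}. Writing the first-order optimality conditions of the two subproblems \eqref{ipspr-1}--\eqref{ipspr-2} together with the update \eqref{ipspr-3}, one obtains, for all $w$,
$$\theta(u)-\theta(u^{k+1})+\iprod{w-w^{k+1}}{\Fsf(w^{k+1})}\ge \iprod{w-w^{k+1}}{Q(w^k-w^{k+1})}-\tfrac12\|u-u^{k+1}\|_{\diag(\Sigma_1,\Sigma_2)}^2,$$
where $u=(x,y)$, $\Fsf$ is the (monotone, affine) KKT map of \eqref{gup}, and $Q$ is a block matrix tied to $\hat G$ through the same identity that yields the descent estimate. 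Conditions \eqref{gucon1}--\eqref{gucon2} guarantee the contraction $\|w^{k+1}-w^\star\|_H^2\le\|w^k-w^\star\|_H^2-\|w^k-w^{k+1}\|_{\hat G}^2$, and summing over $k$ gives $\sum_{k\ge 1}a_k<\infty$. This summability is exactly what produces the $o(1/k)$ bound for $\min_{1\le i\le k}a_i$ in Lemma \ref{guth2}, and I would reuse it verbatim.

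The crucial new step is to show $a_k$ is nonincreasing. I would write the variational inequality above at iterations $k$ and $k+1$, test the iteration-$k$ inequality at $w=w^{k+2}$ and the iteration-$(k+1)$ inequality at $w=w^{k+1}$, and add. Using monotonicity of $\Fsf$ together with the curvature inequality \eqref{cm} for $\theta_1,\theta_2$, the $\theta$- and $\Fsf$-terms cancel or acquire a favorable sign, leaving a relation of the type
$$\iprod{d^{k+1}}{Q(d^k-d^{k+1})}\ge \tfrac12\|u^{k+1}-u^{k+2}\|_{(\cdot)}^2\ge 0,\qquad d^k:=w^k-w^{k+1},$$
where the matrix decorating the right-hand side involves $T+\frac12\Sigma_2$; this is exactly where the hypothesis $T+\frac12\Sigma_2\succ 0$ keeps the term nonnegative. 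Applying the algebraic identity linking $Q$ and $\hat G$ (the same one used to derive the contraction) then converts this into $a_{k+1}=\|d^{k+1}\|_{\hat G}^2\le\|d^k\|_{\hat G}^2=a_k$.

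Finally I would invoke the elementary fact that a nonnegative, nonincreasing, summable sequence satisfies $a_k=o(1/k)$: by monotonicity $a_k=\min_{1\le i\le k}a_i$, and since $a_k\le a_i$ for $\lceil k/2\rceil\le i\le k$ one gets $\tfrac{k}{2}\,a_k\le\sum_{i=\lceil k/2\rceil}^{k}a_i\to 0$ by summability of the tail, whence $\|w^k-w^{k+1}\|_{\hat G}^2=o(1/k)$. The main obstacle is the monotonicity step: extracting the correct cross-term inequality requires carefully matching the test points across the two consecutive variational inequalities and identifying the exact quadratic form controlled by $T+\frac12\Sigma_2$, since a naive estimate would only deliver boundedness of $a_k$ rather than genuine decrease.
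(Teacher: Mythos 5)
Your proposal is correct and follows essentially the same route as the paper's source for this result: the paper itself gives no proof, citing \cite[Theorem 4.2]{gu2015indefinite}, and the argument there is exactly your three-step scheme --- summability of $a_k=\|w^k-w^{k+1}\|_{\hat G}^2$ from the contraction inequality underlying Lemma \ref{guth2}, monotone nonincrease of $a_k$ obtained by testing the variational inequalities at two consecutive iterations against each other (with $T+\frac12\Sigma_2\succ 0$ supplying the positivity of the residual quadratic form), and the elementary fact that a nonnegative, nonincreasing, summable sequence is $o(1/k)$. Your closing caveat about the monotonicity step being the delicate point is also accurate; that is precisely where the extra hypothesis beyond Lemma \ref{guth2} is consumed.
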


\section{The Preconditioned PDHG and its Convergence} \label{section:vmpdhg}

We first present the PrePDHG with practical stopping criterion for convex-concave min-max optimization \eqref{PD} in Algorithm \ref{alg:vmpdhg}.  We shall first establish an equivalence between PrePDHG and iPADMM, which is key to analyzing the algorithm,  in Section \ref{subsection:equivalence} and deduce the global convergence of Algorithm \ref{alg:vmpdhg}  in Section \ref{subsection:convergence}. Section \ref{subsection:tight} provides counter-examples to show the tightness of condition \eqref{equ:M1M2:condtion}.
The sublinear convergence rate in both ergodic and non-ergodic sense is investigated in Section \ref{subsection:sublinear}.

The PrePDHG is given in Algorithm \ref{alg:vmpdhg}. Note that the stopping criterion $\Rcal(x^{k+1},y^{k+1}) \leq \epsilon$ can be replaced by $\Rcal(x^{k+1},y^{k}) \leq \epsilon$.
 \begin{algorithm}[!h]
\caption{PrePDHG: Preconditioned PDHG for solving (\ref{PD}).}
\label{alg:vmpdhg}
Initialization: Choose the   initial points $x^{0} \in \Rbb^{n}$, $y^{0} \in \Rbb^{m}$, and   set the tolerance $\epsilon \geq 0$.  Choose
  the matrices $M_{1} \in \Rbb^{n \times n}$ and $M_{2} \in \Rbb^{m \times m}$ satisfying  \eqref{equ:M1M2:condtion}.

\For{$k = 0, 1, \ldots, $}{
Update  $x^{k+1}$ and $y^{k+1}$ as follows:
\begin{subnumcases}{\label{vmpdhg}}
x^{k+1}=\argmin_{x\in\Rbb^{n}}\; f(x)+ \iprod{\K x}{y^k} + \frac{1}{2}\left\|x-x^k\right\|_{M_1}^2 \label{pre-pdhg:x},\\
y^{k+1}=\argmin_{y\in\Rbb^{m}}\; g^*(y)  - \iprod{\K(2x^{k+1} - x^k)}{y} +\frac{1}{2}\|y-y^k\|_{M_2}^2 \label{pre-pdhg:y},
\end{subnumcases}

\If{$\Rcal(x^{k+1},y^{k+1}) \leq \epsilon$}{
break.}
}
\end{algorithm}

\begin{remark} \label{remark:upperbound:R}
By \eqref{equ:thm:subconvergence:dist0} and \eqref{equ:thm:subconvergence:dist1},   we have $\Rcal(x^{k+1},y^{k+1}) \leq \hat \Rcal(x^{k+1}, y^{k+1})$ with
 $$
 \begin{aligned}
 \hat \Rcal(x^{k+1},y^{k+1}): = \max\big\{&\left\|\K^{\tran} (y^{k+1} - y^{k})-M_{1}(x^{k+1} - x^{k})\right\|,\\
& \left\|\K(x^{k+1} - x^{k}) - M_{2}(y^{k+1} - y^{k})\right\|\big\},
\end{aligned}$$
which can be easily computed. Therefore, if $\Rcal(x^{k+1},y^{k+1})$ is difficult to compute,  we can use the stopping criterion $\hat \Rcal(x^{k+1},y^{k+1}) \leq \epsilon$.  Similarly,  by  the first inequalities in \eqref{equ:thm:subconvergence:dist1:2}  and \eqref{equ:thm:subconvergence:dist1:3}, we can also replace $\Rcal(x^{k+1},y^{k})$ by its upper bound  as
$$
\begin{aligned}
\hat
 \Rcal(x^{k+1},y^{k})  := 
 \max\big\{&\left\|M_1(x^{k+1} - x^{k})\right\|, \\ 
  {}&\left\|\K(x^{k} - x^{k-1}) + \K(x^{k} - x^{k+1}) - M_{2}(y^{k} - y^{k-1}) \right\|\big\}.
\end{aligned}
$$
Note that for some special cases, such as $g^
*$ is a linear function, a more compact upper bound of  $\Rcal(x^{k+1},y^{k+1})$ or $\Rcal(x^{k+1},y^{k})$ can be obtained,  see Remark \ref{remark:R:upperbound} for instance.
\end{remark}

\subsection{Equivalence of PrePDHG and iPADMM} \label{subsection:equivalence}
We first show that PrePDHG \eqref{vmpdhg} can be understood as an iPADMM applied on  the equivalent formulation of problem \eqref{P}:
\begin{equation}\label{P:1}\tag{P1}
\begin{array}{cl}
\min\limits_{x \in \Rbb^{n},\;u\in \Rbb^{m}} & g(u) +  f(x)\\
\mathrm{s.t.} & M_2^{-\frac12}(\K x-u)=0,
\end{array}
\end{equation}
where $M_2 \succ 0$.
Let
$$\mathcal{L}_1(u,x,\lambda) = g(u) + f(x) + \iprod{\lambda}{M_2^{-\frac12}(\K x-u)} + \frac12 \|\K x-u\|_{M_2^{-1}}^2$$
be the augmented Lagrangian function of problem \eqref{P:1}, where $\lambda$ is the corresponding Lagrange multiplier of the linear constraints.
Given the initial points $x^0 \in \Rbb^{n}$ and $\lambda^{0} \in \Rbb^{m}$, the main iterations of the iPADMM are given as
\begin{subnumcases}{\label{ipadmm}}
u^{k+1} = \argmin_{u\in \Rbb^m} \mathcal{L}_1(u,x^k,\lambda^k),\label{equ:ipadmm:a:0}\\
x^{k+1} = \argmin_{x\in \Rbb^n} \mathcal{L}_1(u^{k+1},x,\lambda^k)  + \frac12\|x - x^k\|_{M_1 - \K^{\tran}M_2^{-1}\K}^2, \label{equ:ipadmm:b:0}\\
\lambda^{k+1} = \lambda^k + M_2^{-\frac12} (\K x^{k+1} - u^{k+1}), \label{equ:ipadmm:c:0}
\end{subnumcases}
where the proximal matrix $M_1 - \K^{\tran}M_2^{-1}\K$ could be indefinite.
Note that in \eqref{ipadmm}, there is only an additional proximal term in the second subproblem. Using the notations of \eqref{equ:proxfM} and \eqref{equ:f:prox:2},  we can equivalently formulate \eqref{ipadmm} as
\begin{subnumcases}{\label{proxadmm}}
u^{k+1} =  \prox_{g}^{M_2^{-1}}\Big(M_2^{\frac12}\lambda^k + \K x^k\Big),\label{equ:proxadmm:a}\\
x^{k+1} =   \prox_{\tilde f}^{M_1 + \Sigma_{f}}\Big( (M_{1} + \Sigma_{f})^{-1} M_{1} x^k - \\
  \quad \quad \quad\quad\quad\quad \quad\quad \quad (M_{1} + \Sigma_{f})^{-1} \K^{\tran}M_2^{-1}\big(M_2^{\frac12} \lambda^k + \K x^k - u^{k+1}\big)\!  \Big), \label{equ:proxadmm:b}\\
\lambda^{k+1} = \lambda^k + M_2^{-\frac12} (\K x^{k+1} - u^{k+1}), \label{equ:proxadmm:c}
\end{subnumcases}
where $\tilde f(\cdot) := f(\cdot) - \frac12 \|\cdot\|_{\Sigma_f}^2$ is defined in Section \ref{S2}.
Similarly, we can reformulate the iterations of PrePDHG \eqref{vmpdhg} as
\begin{subnumcases}{\label{vmpdhg:2}}
x^{k+1} = \prox_{\tilde f}^{M_1 + \Sigma_{f}}\left( (M_{1} + \Sigma_{f})^{-1} M_{1} x^k - (M_1+\Sigma_{1})^{-1} \K^{\tran}y^k \right), \label{vmpdhg:2:a}\\
y^{k+1} = \prox_{g^*}^{M_2} \left(y^k + M_2^{-1} \K(2x^{k+1} - x^k)\right). \label{vmpdhg:2:b}
\end{subnumcases}

We are now ready to  deduce the equivalence between PrePDHG \eqref{vmpdhg} and  iPADMM \eqref{ipadmm}.

\begin{lemma}\label{lemma:equiv}
PrePDHG \eqref{vmpdhg} (or \eqref{vmpdhg:2}) and iPADMM \eqref{ipadmm} (or \eqref{proxadmm}) are equivalent in the sense that the sequence generated by either algorithm can explicitly recover the sequence generated by the other.
\end{lemma}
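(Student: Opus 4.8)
The plan is to exhibit an explicit, invertible change of variables that turns the three iPADMM updates \eqref{proxadmm} into the two PrePDHG updates \eqref{vmpdhg:2} and back. Since the $x$-variable is shared by both schemes, the whole problem reduces to relating the dual variable $y^k$ of PrePDHG to the multiplier $\lambda^k$ (and the auxiliary $u^{k+1}$) of iPADMM. I would read off the correct relation by forcing the two $x$-subproblems to coincide: comparing \eqref{vmpdhg:2:a} with \eqref{equ:proxadmm:b} shows that the iterates agree as soon as $\K^{\tran} y^k = \K^{\tran} M_2^{-1}\big(M_2^{\frac12}\lambda^k + \K x^k - u^{k+1}\big)$. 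The single tool that makes everything fit is the generalized Moreau identity of Lemma \ref{lemma:MI}, which is exactly what converts the primal prox $\prox_g^{M_2^{-1}}$ appearing in the $u$-update \eqref{equ:proxadmm:a} into the dual prox $\prox_{g^*}^{M_2}$ appearing in the PrePDHG $y$-update.

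For the iPADMM $\to$ PrePDHG direction I would define $y^k := \prox_{g^*}^{M_2}\big(M_2^{-\frac12}\lambda^k + M_2^{-1}\K x^k\big)$ and prove by induction that $\{(x^k,y^k)\}$ obeys \eqref{vmpdhg}. Applying Lemma \ref{lemma:MI} with $M = M_2^{-1}$ to the point $z = M_2^{\frac12}\lambda^k + \K x^k$ gives $z = u^{k+1} + M_2 y^k$, i.e. $y^k = M_2^{-1}\big(M_2^{\frac12}\lambda^k + \K x^k - u^{k+1}\big)$; this is precisely the relation that makes the $x$-subproblems agree, settling the first recursion \eqref{vmpdhg:2:a}. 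Substituting the multiplier update \eqref{equ:proxadmm:c} into the definition of $y^{k+1}$ then collapses, after cancelling the $u^{k+1}$ terms, to $y^{k+1} = \prox_{g^*}^{M_2}\big(y^k + M_2^{-1}\K(2x^{k+1}-x^k)\big)$, which is exactly \eqref{vmpdhg:2:b}.

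For the converse direction I would invert the relation. Solving the two displayed identities for $\lambda$ yields the clean formula $\lambda^{k+1} = M_2^{\frac12} y^k + M_2^{-\frac12}\K(x^{k+1}-x^k)$, and the multiplier update then recovers $u^{k+1} = \K x^{k+1} - M_2^{\frac12}(\lambda^{k+1}-\lambda^k)$; feeding these back and invoking Lemma \ref{lemma:MI} once more shows the three iPADMM recursions hold. The only genuinely delicate point is the base case: PrePDHG admits an arbitrary $y^0$, whereas reproducing it as $\prox_{g^*}^{M_2}(M_2^{-\frac12}\lambda^0 + M_2^{-1}\K x^0)$ forces $M_2^{\frac12}\lambda^0 + \K x^0 - M_2 y^0 \in \partial g^*(y^0)$, so a matching $\lambda^0$ exists iff $\partial g^*(y^0)\neq\emptyset$; since every subsequent $y^k$ is itself a prox image this is automatic for $k\ge 1$, and I would simply record this mild requirement (or directly prescribe such a $\lambda^0$) alongside the statement.

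I expect the main obstacle to be bookkeeping rather than any deep estimate: keeping the interleaving of the updates straight (iPADMM computes $u^{k+1}$ before $x^{k+1}$, PrePDHG computes $x^{k+1}$ before $y^{k+1}$) so that the index shift in the $y \leftrightarrow \lambda$ correspondence comes out correctly, and applying Moreau's identity with the right metric ($M_2^{-1}$ for $g$ versus $M_2$ for $g^*$) at each occurrence. Once the correspondence $y^k \leftrightarrow \prox_{g^*}^{M_2}(M_2^{-\frac12}\lambda^k + M_2^{-1}\K x^k)$ is fixed, all remaining steps are routine substitutions.
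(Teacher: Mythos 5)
Your proposal is correct and follows essentially the same route as the paper: you use the identical change of variables $y^k = M_2^{-1}\big(M_2^{\frac12}\lambda^k + \K x^k - u^{k+1}\big) = \prox_{g^*}^{M_2}\big(M_2^{-\frac12}\lambda^k + M_2^{-1}\K x^k\big)$ (merely defining $y^k$ by the prox form and deriving the transform form, where the paper does the reverse), the same single application of the generalized Moreau identity of Lemma \ref{lemma:MI} with metric $M_2^{-1}$, the same collapse of \eqref{equ:proxadmm:c} into \eqref{vmpdhg:2:b}, and algebraically the same inverse transforms $\lambda^{k+1} = M_2^{\frac12}y^k + M_2^{-\frac12}\K(x^{k+1}-x^k)$ and $u^{k+1} = M_2^{\frac12}\lambda^k + \K x^k - M_2 y^k$ for the converse. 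Your added observation that reproducing an arbitrary PrePDHG initial point $y^0$ by some $\lambda^0$ requires $\partial g^*(y^0)\neq\emptyset$ (automatic for every $k\geq 1$ since each subsequent $y^k$ is a prox image) is in fact a careful refinement of the initialization step that the paper dispatches with a prescribed initial multiplier and the phrase \emph{we omit the details for brevity}.
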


\begin{proof}
Let  the sequence $\{(u^k, x^k, \lambda^k)\}$ be generated by iPADMM  \eqref{proxadmm} with initial points $x^{0}\in \Rbb^{n}$ and $\lambda^{0} \in \Rbb^{m}$. Consider the transform
\be\label{equ:transform:y}
y^k = M_2^{-1}\Big(M_2^{\frac12} \lambda^k + \K x^k - u^{k+1}\Big).
\ee
First,  substituting  \eqref{equ:transform:y} into  \eqref{equ:proxadmm:b} yields \eqref{vmpdhg:2:a}.
By Lemma \ref{lemma:MI}, we have from \eqref{equ:proxadmm:a}  that
$$M_2^{\frac12}\lambda^k + \K x^k   =    u^{k+1}+  M_2 \prox_{g^*}^{M_2} \Big(M_2^{-\frac12}\lambda^k + M_2^{-1}\K x^k   \Big),$$
which with the transform  \eqref{equ:transform:y} implies
$
 y^k  =  \prox_{g^*}^{M_2} \Big(M_2^{-\frac12}\lambda^k + M_2^{-1}\K x^k\Big).$
This also tells
\be \label{equ:equiv:y:b1}
y^{k+1} = \prox_{g^*}^{M_2} \Big(M_2^{-\frac12}\lambda^{k+1} + M_2^{-1}\K x^{k+1}   \Big).
\ee
Besides, with \eqref{equ:proxadmm:c} and \eqref{equ:transform:y}, we have
$M_2^{-\frac12} \lambda^{k+1}  = M_2^{-1} \K(x^{k+1} - x^k) + y^k $. Substituting this relation into \eqref{equ:equiv:y:b1} yields \eqref{vmpdhg:2:b}.
Now we can conclude that the sequence $\{(x^{k}, y^k)\}$ is exactly the sequence generated by PrePDHG \eqref{vmpdhg:2} with initial points $x^{0}$ and $y^0 = M_2^{-1}(M_2^{\frac12} \lambda^0 + \K x^0 - u^{1})$.

On the other hand, let the sequence $\{(x^k, y^k)\}$ be generated by PrePDHG \eqref{vmpdhg:2} with given initial points $x^{0} \in \Rbb^{n}$ and $y^{0} \in \Rbb^{m}$. Consider the transforms
$$
\lambda^{k+1} = M_2^{-\frac12}\K(x^{k+1} - x^k) + M_2^{\frac12}y^k, \quad u^{k+1} = M_2^{\frac12} \lambda^k + \K x^k - M_2 y^k.
$$
Using a similar argument, we can show that $\{(u^k, x^k, \lambda^k)\}$ is exactly the same sequence generated by  iPADMM \eqref{proxadmm} and the initial points of $x$ and $\lambda$ are taken as $x^0$ and $M_2^{-\frac12}\K(x^{1} - x^0) + M_2^{\frac12}y^0$, respectively. We omit the details for brevity.  The proof is completed.
\qed
\qed \end{proof}

\begin{remark}
If $M_1=\K^{\tran} M_2^{-1} \K$, then iPADMM \eqref{ipadmm} reduces to the classical ADMM \cite{glowinski1975approximation,gabay1976dual}.  In this case,  PrePDHG \eqref{vmpdhg} is equivalent to the classical ADMM.
\end{remark}

Based on the key observation that  PrePDHG and iPADMM are equivalent, we next investigate the convergence of PrePDHG \eqref{vmpdhg}, namely, Algorithm \ref{alg:vmpdhg},  via the well-established convergence results of iPADMM; see \cite{he2020optimally,gu2015indefinite, li2016majorized, chen2019unified, zhang2020linearly, ma2021majorized} for instance.  Here, we mainly use the global and sublinear convergence rate results developed in \cite{gu2015indefinite}.

It should be mentioned that Liu et al. \cite{liu2021acceleration} also showed that PrePDHG \eqref{vmpdhg} is equivalent to a proximal ADMM applied on the equivalent formulation of dual problem \eqref{D} as:
\begin{equation}\label{D:1}
\begin{array}{cl}
\min\limits_{y \in \Rbb^{m},\;v\in \Rbb^{n}} &  g^{*}(y)+f^{*}(v)\\
\mathrm{s.t.} & M_1^{-\frac12}(\K^{\tran}y+v)=0,
\end{array}
\nn
\end{equation}
where they require \eqref{condition:LiuYin} holds.  The recursion of the  proximal ADMM therein is given as
\begin{subnumcases}{\label{ipadmm:dual}}
y^{k+1} = \argmin_{y\in \Rbb^m} \mathcal{\widetilde L}_1(y,v^{k},\lambda^k) + \frac12 \|y - y^{k}\|_{M_{2} - \K M_{1}^{-1}\K^{\tran}},  \\
v^{k+1} = \argmin_{v\in \Rbb^n} \mathcal{\widetilde L}_1(y^{k+1},v,\lambda^k),  \\
\tilde \lambda^{k+1} = \tilde \lambda^k +  M_1^{-\frac12}(\K^{\tran}y^{k+1}+v^{k+1}),
\end{subnumcases}
where $$\mathcal{\widetilde L}_1(y,v,\tilde \lambda) = g^{*}(y) + f^{*}(v)+ \iprod{\lambda}{M_1^{-\frac12}(\K^{\tran}y+v)} + \frac12 \|\K^{\tran}y+v\|_{M_1^{-1}}^2,$$
in which $\tilde \lambda$ is the corresponding Lagrange multiplier of the linear constraints.  A main difference between \eqref{ipadmm} and \eqref{ipadmm:dual} lies in that the proximal term of \eqref{ipadmm} is in the second subproblem other than in the first subproblem as done by \eqref{ipadmm:dual}.  It is this key point that  makes our condition on $M_{1}$ and $M_{2}$ weaker than that in \cite{liu2021acceleration} since the iPADMM can always allow more indefiniteness of the proximal term in the second subproblem other than that in the first subproblem.

\subsection{Global Convergence} \label{subsection:convergence}
It is clear that condition \eqref{condition:LiuYin} implies $M_1-\K^{\tran} M_2^{-1} \K \succeq 0$, which further means that the proximal matrix in \eqref{equ:ipadmm:b:0} is positive semidefinite. However, the well-explored convergence results of iPADMM tell that the proximal matrix  $M_1-\K^{\tran} M_2^{-1}\K$ could be indefinite.
Therefore, we could further improve the convergence condition of   PrePDHG from the perspective of iPADMM.

\begin{lemma}\label{lemma:M1M2}
Suppose condition \eqref{equ:M1M2:condtion} holds, that is,
\begin{equation*}
M_{1}  + \frac12 \Sigma_{f}\succ 0,\quad  M_{2}\succ 0,\quad \left\|M_2^{-\frac12}\K \Big(M_1 + \frac12 \Sigma_{f}\Big)^{-\frac12}\right\|^{2}< \frac43.
\end{equation*}
Then the sequence generated by iPADMM \eqref{ipadmm} converges to an optimal solution of \eqref{P:1}.
\end{lemma}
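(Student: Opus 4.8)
The plan is to recognize iPADMM \eqref{ipadmm} as a concrete instance of the generic scheme \eqref{ipspr} for problem \eqref{gup} and then to apply Lemma \ref{guth1} after checking its hypotheses \eqref{gucon1}--\eqref{gucon2}. Matching \eqref{P:1} against \eqref{gup}, the first block is $u$ with $\theta_1=g$ and the second block is $x$ with $\theta_2=f$, so $\Sigma_1=\Sigma_{g}$ and $\Sigma_2=\Sigma_{f}$; the constraint $M_2^{-\frac12}(\K x-u)=0$ gives $A=-M_2^{-\frac12}$ and $B=M_2^{-\frac12}\K$, and the penalty $\frac12\|\K x-u\|_{M_2^{-1}}^2=\frac12\|Au+Bx\|^2$ forces $\beta=1$. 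Since \eqref{ipadmm} has no proximal term in the $u$-update and the term $\frac12\|x-x^k\|^2_{M_1-\K^\tran M_2^{-1}\K}$ in the $x$-update, the proximal matrices are $S=0$ and $T=M_1-\K^\tran M_2^{-1}\K$. (The opposite sign of the multiplier in \eqref{ipadmm} relative to \eqref{ipspr} amounts to replacing $\lambda$ by $-\lambda$ and is immaterial for convergence.) Noting $A^\tran A=M_2^{-1}$ and $B^\tran B=\K^\tran M_2^{-1}\K=:Q$, the whole lemma reduces to verifying \eqref{gucon1} and \eqref{gucon2} for this data.

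The first conditions are immediate. Because $S=0$, $\Sigma_{g}\succeq 0$ and $M_2\succ 0$, we get $S+\frac12\Sigma_1=\frac12\Sigma_{g}\succeq 0$ and $S+\frac12\Sigma_1+\beta A^\tran A=\frac12\Sigma_{g}+M_2^{-1}\succ 0$, so \eqref{gucon1} holds. The first inequality in \eqref{gucon2} is $T+\Sigma_2+\beta B^\tran B=(M_1-Q)+\Sigma_{f}+Q=M_1+\Sigma_{f}\succ 0$, which follows from $M_1+\frac12\Sigma_{f}\succ 0$ and $\Sigma_{f}\succeq 0$. Next I would restate the third part of \eqref{equ:M1M2:condtion} in matrix form: exactly as in the Schur-complement argument of Lemma \ref{equi14}, with $P:=M_1+\frac12\Sigma_{f}\succ 0$ one has $\|M_2^{-\frac12}\K P^{-\frac12}\|^2<\frac43$ iff $P^{-\frac12}QP^{-\frac12}\prec\frac43 I$, i.e. $Q\prec\frac43 P$ by congruence with $P^{\frac12}$.

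The crux is the second inequality of \eqref{gucon2}, which is where the indefiniteness of $T$ and the constant $4/3$ enter. Take $\kappa_1=1$ and the DC decomposition $T=T_+-T_-$ with $T_-=\frac14 Q+\frac12\Sigma_{f}$ and $T_+=T+T_-=P-\frac34 Q$; this is legitimate, since $T_-\succeq 0$ trivially and $T_+=P-\frac34 Q\succ 0$ is precisely the rewriting of $Q\prec\frac43 P$. Substituting $\Sigma_2=\Sigma_{f}$, $\beta=1$ and $B^\tran B=Q$, the left-hand side of the second inequality collapses to
\[
T+\tfrac12\Sigma_{f}+\kappa_1\big(-2T_-+\Sigma_{f}\big)+\kappa_2 Q
= P+\Big(\kappa_2-\tfrac32\Big)Q .
\]
Since $\kappa_2-\frac32<0$, multiplying the strict relation $Q\prec\frac43 P$ by this negative scalar yields $P+(\kappa_2-\frac32)Q\succ(\frac43\kappa_2-1)P$; at $\kappa_2=\frac34$ the right-hand side is $0$, so the expression is positive definite there, and using the strict gap $Q\preceq\rho P$ with $\rho:=\|M_2^{-\frac12}\K P^{-\frac12}\|^2<\frac43$ one even gets the explicit admissible range $\kappa_2\in\big(\frac32-\frac1\rho,\,\frac34\big)$, which is nonempty because $\rho<\frac43$ forces $\frac32-\frac1\rho<\frac34$. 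Choosing such a $\kappa_2$ makes \eqref{gucon2} hold, and Lemma \ref{guth1} then gives convergence of the iPADMM sequence to an optimal solution of \eqref{P:1}.

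I expect the main obstacle to be exactly this last step: one must select a DC splitting of the indefinite matrix $T=M_1-\K^\tran M_2^{-1}\K$ that keeps $T_+\succeq 0$ while controlling the $-2T_-$ penalty in \eqref{gucon2} simultaneously, and then observe that the threshold $\frac43$ is precisely what renders the interval for $\kappa_2\in(0,\frac34)$ nonempty, so that any larger constant would close it. Everything else is bookkeeping in the dictionary between \eqref{ipadmm} and \eqref{ipspr}.
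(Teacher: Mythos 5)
Your proposal is correct, and it reaches the conclusion through the same outer skeleton as the paper---identify \eqref{ipadmm} as an instance of \eqref{ipspr} with $S=0$, $T=M_1-\K^{\tran}M_2^{-1}\K$, $\beta=1$, and invoke Lemma \ref{guth1}---but the way you verify the crucial second inequality of \eqref{gucon2} is genuinely different and simpler. The paper reduces that inequality to \eqref{equ:M1:M2:condition:2} and then argues in two cases: if $T\succeq 0$ it takes $T_-=0$, and otherwise it eigendecomposes the scaled matrix $M=(M_1+2\Sigma_f)^{-\frac12}T(M_1+2\Sigma_f)^{-\frac12}$, builds $T_\pm$ from the positive and negative spectral parts, and chooses $\rho=\frac{1-2|\sigma_n|}{1+|\sigma_n|}$ (equivalently $\kappa_2=1-\rho$) adaptively from the most negative eigenvalue $\sigma_n\in(-\frac13,0)$. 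You instead exhibit a single closed-form DC decomposition, $T_-=\frac14\K^{\tran}M_2^{-1}\K+\frac12\Sigma_f$ and $T_+=\big(M_1+\frac12\Sigma_f\big)-\frac34\K^{\tran}M_2^{-1}\K$, whose validity ($T_+\succ 0$) is exactly the Schur-complement restatement of \eqref{equ:M1M2:condtion}; the left side of \eqref{gucon2} then collapses to $P+(\kappa_2-\frac32)Q$ with $P=M_1+\frac12\Sigma_f$, $Q=\K^{\tran}M_2^{-1}\K$, and a one-line scalar estimate using $Q\preceq\rho P$, $\rho<\frac43$, yields the explicit admissible range for $\kappa_2$ (to be intersected with $(0,\frac34)$ when $\frac32-\frac1\rho\le 0$, and with the trivial remark that $\K=0$ needs no bound---cosmetic points only). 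Your route buys uniformity (no case split, no eigendecomposition), an explicit $\kappa_2$, and it makes transparent why $\frac43$ is the threshold: it is precisely what keeps the interval for $\kappa_2$ nonempty, consistent with the tightness established in Lemma \ref{lemma:tight:1}; the paper's spectral choice of $T_-$ is the canonical minimal one but confers no advantage for the convergence claim itself. Your bookkeeping of the block order ($\theta_1=g$, $\theta_2=f$), the sign conventions for $A$, $B$, and $\lambda$, and the use of $\Sigma_1=\Sigma_g$ in place of the paper's $\Sigma_1=0$ are all legitimate and immaterial.
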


\begin{proof}
Let the sequence $\{(u^k,x^k,\lambda^k)\}$ be generated by iPADMM \eqref{ipadmm}.
In problem \eqref{gup} and Lemma \ref{guth1}, we take $n_1 := m$, $n_2 := n$, $\theta_1 := g$, $\theta_2 := f$, $A:= M_{2}^{-1/2}$, $B:= -M_{2}^{-1/2}\K$, $\beta = 1$, $S := 0, T := M_{1} - \K^{\tran}M_2^{-1} \K$, $\Sigma_{1} := 0, \Sigma_{2} := \Sigma_{f}$, and the parameter $\kappa_{2}:= 1 - \rho$ with $\rho \in (\frac14,1)$. Then we immediately know that  $\{(u^k,x^k,\lambda^k)\}$ converges to an optimal solution of \eqref{P:1} as long as there exists a DC decomposition of $M_1 - \K^{\tran} M_2^{-1} \K$  and $\rho \in (1/4,1)$ such that
\be \label{equ:M1:M2:condition:2}
M_1 + \Sigma_f \succ0, ~ H:= M_1  + \frac32 \Sigma_f - 2 \left(M_1 - \K^{\tran} M_2^{-1} \K \right)_{-} - \rho \K^{\tran} M_2^{-1} \K \succ 0.
\ee
Now we only need to show the correctness of \eqref{equ:M1:M2:condition:2} under the condition \eqref{equ:M1M2:condtion}.

If $M_{1} - \K^{\tran} M_2^{-1} \K \succeq 0$, we can take its DC decomposition as $(M_{1} - \K^{\tran} M_2^{-1} \K)_{+} = M_{1} - \K^{\tran} M_2^{-1} \K$ and $(M_{1} - \K^{\tran} M_2^{-1} \K)_{-} = 0$. Hence, for any $\rho\in (\frac14,1)$, we have
$$H = M_{1} + \frac32 \Sigma_{f} - \rho \K^{\tran} M_2^{-1} \K = \rho(M_{1}-\K^{\tran} M_2^{-1} \K) + (1 - \rho)M_{1} + \frac32 \Sigma_{f}\succ 0,$$
where the last inequality is due to $M_{1} + \frac12 \Sigma_{f} \succ 0$ which comes from \eqref{equ:M1M2:condtion}.

Now, suppose $M_{1} - \K^{\tran} M_2^{-1} \K \not \succeq 0$. By \eqref{equ:M1M2:condtion} and the Schur complement theorem, we have
$\frac43\left(M_1+\frac12\Sigma_f\right) \succ \K^{\tran}M_2^{-1}\K,$
namely,
\begin{equation}
M_{1} -  \K^{\tran} M_2^{-1} \K + \frac13\left(M_{1} + 2\Sigma_{f}\right)\succ 0. \nn
\end{equation}
Let
\be\label{equ:lemma:M1M2:M}
M:=  \left(M_1 + 2 \Sigma_f\right)^{-\frac12} \left(M_{1} - \K^{\tran} M_2^{-1} \K\right)\left(M_1 + 2 \Sigma_f\right)^{-\frac12},
\ee
then obviously we have $ M  + \frac13 I_{n} \succ 0. $
Set  $M = U\Sigma U^{\tran}$ be the eigenvalue decomposition of $M$ with $U^{\tran}U = UU^{\tran} = I_n$ and the diagonal matrix $\Sigma = \mathrm{diag}(\sigma_1, \ldots, \sigma_n)$ with $\sigma_1 \geq \cdots \geq \sigma_{p} \geq 0 > \sigma_{p+1} \geq \cdots \geq \sigma_n$.
Then we have  $\sigma_n \in (-\frac13,0)$.

Consider a DC decomposition of $M$ as
$M_{+} = U \max(0, \Sigma) U^{\tran}$ and $M_{-} = U \max(0, -\Sigma) U^{\tran},$
where the max-operator  $\max(\cdot,\cdot)$  takes the maximum of the two matrices entry-wisely.
 It is clear that   $M_{-} \prec |\sigma_n| I_n$.
Recalling \eqref{equ:lemma:M1M2:M}, we  thus obtain a DC decomposition of $M_1 - \K^{\tran}M_2^{-1}\K$ as
\begin{equation*}
(M_1 - \K^{\tran}M_2^{-1}\K)_+ = \left(M_1 + 2 \Sigma_f\right)^{\frac12}M_{+} \left(M_1 + 2 \Sigma_f\right)^{\frac12}
\end{equation*}
and
\begin{equation}\label{temp3}
(M_1 - \K^{\tran}M_2^{-1}\K)_- = \left(M_1 + 2 \Sigma_f\right)^{\frac12} M_{-} \left(M_1 + 2 \Sigma_f\right)^{\frac12}.
\end{equation}
Choosing $\rho = \frac{1 - 2|\sigma_n|}{1 + |\sigma_n|} \in (\frac14,1)$, with \eqref{temp3} and $M_{-} \prec |\sigma_n| I_n$, we thus  have
\begin{align}
(2 + \rho)(M_1 - \K^{\tran}M_2^{-1}\K)_- {}& \prec (2 + \rho)|\sigma_n| (M_1 + 2\Sigma_f) = (1 - \rho)(M_1 + 2\Sigma_f) \nn \\
{} &   \preceq (1 - \rho) M_1 + \frac32\Sigma_f + \rho\big(M_1 -  \K^{\tran}M_2^{-1} \K \big)_+.\nn
\end{align}
Substituting $\big(M_1 -  \K^{\tran}M_2^{-1} \K \big)_+ =  M_1 -  \K^{\tran}M_2^{-1} \K  +   \big(M_1 -  \K^{\tran}M_2^{-1} \K \big)_{-}$ into the above assertion, by some easy calculations, we get \eqref{equ:M1:M2:condition:2}. The proof is completed.
\qed \end{proof}

Now we are ready to establish the convergence of PrePDHG (Algorithm \ref{alg:vmpdhg}).
\begin{theorem}\label{thm:convergence}
Let $\{(x^k, y^k)\}$ be the sequence generated by Algorithm \ref{alg:vmpdhg} with $\epsilon = 0$ and $M_1, M_2$ satisfying \eqref{equ:M1M2:condtion}.
Then $\{(x^k, y^k)\}$ converges to a  saddle point of \eqref{PD}.
\end{theorem}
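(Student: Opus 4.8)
The plan is to obtain Theorem~\ref{thm:convergence} as a direct consequence of the two structural results already in place: the algorithmic equivalence of Lemma~\ref{lemma:equiv} and the iPADMM convergence of Lemma~\ref{lemma:M1M2}. The strategy is to transport the PrePDHG iterates into iPADMM iterates, apply the convergence lemma there, transport the limit back, and finally translate the optimality of that limit for~\eqref{P:1} into the saddle-point condition~\eqref{equ:opt:condition} for~\eqref{PD}.

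First I would invoke Lemma~\ref{lemma:equiv} to attach to the PrePDHG sequence $\{(x^k,y^k)\}$ generated by Algorithm~\ref{alg:vmpdhg} the corresponding iPADMM sequence $\{(u^k,x^k,\lambda^k)\}$ for the reformulated problem~\eqref{P:1}, using the explicit transforms recorded in the proof of that lemma; crucially, the $x$-component is common to both sequences. Since $M_1,M_2$ satisfy~\eqref{equ:M1M2:condtion}, Lemma~\ref{lemma:M1M2} guarantees that $\{(u^k,x^k,\lambda^k)\}$ converges to an optimal solution $(u^\star,x^\star,\lambda^\star)$ of~\eqref{P:1}. In particular $x^k\to x^\star$, and passing to the limit in the transform~\eqref{equ:transform:y}, namely $y^k=M_2^{-1}\bigl(M_2^{\frac12}\lambda^k+\K x^k-u^{k+1}\bigr)$ (and noting $u^{k+1}\to u^\star$), gives $y^k\to y^\star:=M_2^{-1}\bigl(M_2^{\frac12}\lambda^\star+\K x^\star-u^\star\bigr)$. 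Thus the full PrePDHG sequence converges to some $(x^\star,y^\star)$.

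It then remains to check that $(x^\star,y^\star)$ is a saddle point, i.e.\ that it satisfies~\eqref{equ:opt:condition}. I would write the KKT system of~\eqref{P:1} at $(u^\star,x^\star,\lambda^\star)$: primal feasibility yields $\K x^\star=u^\star$ since $M_2\succ0$; stationarity in $u$ gives $M_2^{-\frac12}\lambda^\star\in\partial g(u^\star)$; and stationarity in $x$ gives $-\K^{\tran}M_2^{-\frac12}\lambda^\star\in\partial f(x^\star)$. Feasibility collapses the definition of $y^\star$ to the clean identity $y^\star=M_2^{-\frac12}\lambda^\star$. Substituting this together with $u^\star=\K x^\star$ into the two stationarity inclusions, and using the conjugacy relation~\eqref{equ:f:closed} to rewrite $y^\star\in\partial g(\K x^\star)$ as $\K x^\star\in\partial g^*(y^\star)$, recovers precisely $0\in\partial f(x^\star)+\K^{\tran}y^\star$ and $0\in\partial g^*(y^\star)-\K x^\star$, which are the inclusions in~\eqref{equ:opt:condition} characterizing a saddle point of~\eqref{PD}.

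The genuine mathematical content is carried entirely by Lemmas~\ref{lemma:equiv} and~\ref{lemma:M1M2}, so no new estimate is required and the theorem is essentially a corollary. I expect the only real care to lie in the last step: keeping the transforms consistent so that the limit $y^\star$ is identified correctly, and applying the feasibility $\K x^\star=u^\star$ at exactly the right moment to reduce $y^\star$ to $M_2^{-\frac12}\lambda^\star$. This bookkeeping, rather than any hard analysis, is the main (and I believe the only) obstacle.
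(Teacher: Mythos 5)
Your proposal is correct and follows essentially the same route as the paper's own proof: both arguments combine Lemma~\ref{lemma:equiv} and Lemma~\ref{lemma:M1M2} to get convergence of the iPADMM iterates $(u^k,x^k,\lambda^k)$ to an optimal $(u^\star,x^\star,\lambda^\star)$ of \eqref{P:1}, identify the limit $y^\star=M_2^{-\frac12}\lambda^\star$ via the transform \eqref{equ:transform:y} and primal feasibility $\K x^\star=u^\star$, and then convert $y^\star\in\partial g(\K x^\star)$ into $\K x^\star\in\partial g^*(y^\star)$ using \eqref{equ:f:closed} to recover \eqref{equ:opt:condition}. Your only (harmless) variation is bookkeeping: you start from the PrePDHG sequence and attach the iPADMM sequence, and you spell out the limit passage in \eqref{equ:transform:y}, where the paper states $y^k\to M_2^{-\frac12}\lambda^\star$ directly.
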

\begin{proof}
 Let the sequence $\{(u^k,x^k,\lambda^k)\}$ be  generated by iPADMM \eqref{ipadmm}.   Since $M_{1}$ and $M_{2}$ satisfy  \eqref{equ:M1M2:condtion},  we know from Lemma \ref{lemma:M1M2}  that  $\{(u^k,x^k,\lambda^k)\}$ converges to an optimal solution $(u^{\star}, x^{\star}, \lambda^{\star})$ of \eqref{P:1}, namely, 
\be\label{equ:convergence:a0}
0 \in \partial f(x^{\star}) + \K^{\tran} M_2^{-\frac12} \lambda^{\star}, \quad   0 \in \partial g(u^\star) - M_2^{-\frac12} \lambda^{\star}, \quad  \K x^{\star} - u^{\star} = 0.
\ee
Recalling the transform \eqref{equ:transform:y}, we know from the proof of Lemma \ref{lemma:equiv} that $\{(x^k, y^k)\}$ is exactly the sequence generated by PrePDHG \eqref{vmpdhg:2} with $x^0$ and $y^0 = M_2^{-1}(M_2^{\frac12} \lambda^0 + \K x^0 - u^{1})$. Since $\{(u^k,x^k,\lambda^k)\}$ converges to  $(u^{\star}, x^{\star}, \lambda^{\star})$, we know from \eqref{equ:convergence:a0} that $x^k \rightarrow x^{\star}$ and $y^k \rightarrow y^{\star}:= M_2^{-\frac12} \lambda^{\star}$ and
$$0 \in \partial f(x^{\star}) + \K^{\tran} y^{\star}, \quad 0 \in \partial g(\K x^{\star}) - y^{\star},$$
which with  the fact that $g$ is proper closed convex and \eqref{equ:f:closed} shows
$$
0 \in \partial f(x^{\star}) + \K^{\tran} y^{\star}, \quad  0 \in \partial g^{*}(y^{\star}) - \K x^{\star}.
$$
This means that $(x^{\star}, y^{\star})$ is a  saddle point  of \eqref{PD}.   The proof is completed.
\qed \end{proof}

\subsection{Tightness of  Condition \eqref{equ:M1M2:condtion}} \label{subsection:tight}
We first claim that condition \eqref{equ:M1M2:condtion}  is tight in the sense that the constant ``$4/3$'' can not be replaced by any number larger than it, namely, the sign ``$<$'' can not be improved to ``$\leq$''.
\begin{lemma}\label{lemma:tight:1}
Let $\{(x^k, y^k)\}$ be the sequence generated by Algorithm \ref{alg:vmpdhg} with $\epsilon = 0$.
Suppose condition  \eqref{equ:M1M2:condtion} is replaced by
\be \label{equ:M1M2:condtion:example1}
M_{1}  + \frac12 \Sigma_{f}\succ 0,\quad  M_{2}\succ 0,\quad \left\|M_2^{-\frac12}\K\Big(M_1 + \frac12 \Sigma_{f}\Big)^{-\frac12}\right\|^2 \leq \rho_{1}.
\ee
\begin{itemize}[leftmargin=*]
\item[](a). If $\rho_{1} \in (0,4/3)$, then $\{(x^k, y^k)\}$ converges to a saddle point of \eqref{PD}.
\item[](b). If $\rho_{1} \geq 4/3$, then $\{(x^k, y^k)\}$ is not necessarily convergent.
\end{itemize}
 \end{lemma}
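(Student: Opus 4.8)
The plan is to dispose of part (a) in one line and then spend the real effort on an explicit counterexample for part (b).

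For part (a), I would simply note that when $\rho_{1} \in (0,4/3)$ the third condition in \eqref{equ:M1M2:condtion:example1} gives $\|M_2^{-\frac12}\K(M_1+\frac12\Sigma_f)^{-\frac12}\|^2 \leq \rho_1 < \frac43$, so \eqref{equ:M1M2:condtion:example1} implies \eqref{equ:M1M2:condtion} verbatim. Convergence to a saddle point of \eqref{PD} then follows directly from Theorem \ref{thm:convergence}, with no additional argument required. The whole content of the lemma thus sits in part (b), where I need to produce a single instance satisfying \eqref{equ:M1M2:condtion:example1} with $\rho_1\geq 4/3$ whose PrePDHG iterates do not converge.

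For part (b) my construction is the scalar problem $n=m=1$ with $f\equiv 0$ and $g\equiv \iota_{\{0\}}$ (so that $g^{*}\equiv 0$, and both are proper closed convex affine functions with $\Sigma_f=0$), together with $\K=1$; the unique saddle point is the origin. I would choose $M_1=3/4$ and $M_2=1$, which saturates the bound since $\|M_2^{-\frac12}\K M_1^{-\frac12}\|^2 = 4/3$, so \eqref{equ:M1M2:condtion:example1} holds for every $\rho_1\geq 4/3$. With these choices both subproblems in \eqref{vmpdhg} are unconstrained quadratics, so their optimality conditions are linear; eliminating the intermediate quantities reduces the whole iteration to the linear recursion
\[
\begin{pmatrix} x^{k+1} \\ y^{k+1}\end{pmatrix}
= T\begin{pmatrix} x^{k} \\ y^{k}\end{pmatrix},
\qquad
T=\begin{pmatrix} 1 & -4/3 \\ 1 & -5/3\end{pmatrix}.
\]

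The crux is then a spectral analysis of $T$. A direct computation gives $\mathrm{tr}\,T = -2/3$ and $\det T = -1/3$, whence the eigenvalues are $1/3$ and $-1$, and $(2,3)^{\tran}$ is an eigenvector for $-1$. Initializing at $(x^0,y^0)=(2,3)$ therefore yields $(x^k,y^k)=(-1)^k(2,3)$, which oscillates with period two and never converges; this settles the case $\rho_1=4/3$, and the same instance works for any $\rho_1>4/3$ because $4/3<\rho_1$. Alternatively, scaling so that $\|M_2^{-\frac12}\K M_1^{-\frac12}\|^2=\rho_1>4/3$ pushes an eigenvalue strictly outside the unit disk and produces outright divergence. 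I expect the main obstacle to be not the algebra but the bookkeeping: confirming that the chosen $f,g,g^{*}$ are genuinely proper closed convex with $\Sigma_f=0$, checking that the norm condition is attained with equality, and arguing cleanly that an eigenvalue equal to $-1$ (rather than merely lying on the unit circle) rules out convergence for the exhibited initialization.
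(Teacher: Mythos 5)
Your proposal is correct and follows essentially the same route as the paper: part (a) by direct appeal to Theorem \ref{thm:convergence}, and part (b) via the scalar game $\min_x\max_y xy$ (with $f\equiv 0$, $g^*\equiv 0$, $\K=1$, $\Sigma_f=0$) reduced to a linear recursion whose iteration matrix has eigenvalues $-1$ and $1/3$ when the bound $4/3$ is attained. The only difference is cosmetic: the paper keeps $\tau\sigma=4/3$ general and diagonalizes fully to show divergence for \emph{every} initialization outside the one-dimensional eigenspace of $1/3$, whereas you fix $(M_1,M_2)=(3/4,1)$ and initialize on the $(-1)$-eigenvector $(2,3)^{\tran}$ to exhibit a period-two oscillation, which equally suffices for the ``not necessarily convergent'' claim.
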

\begin{proof}
The assertion of (a) comes from Theorem \ref{thm:convergence} and the fact that \eqref{equ:M1M2:condtion} is true if \eqref{equ:M1M2:condtion:example1} holds for any fixed $\rho_{1} \in (0,4/3)$.
To prove (b),
consider a simple instance of problem \eqref{PD} as
\begin{equation}\label{equ:counter:example}
\min_{x\in\Rbb} \max_{y \in \Rbb} \ xy.
\end{equation}
Note that such an example is a special case of the one in \cite[Section 3.2]{li2022improved} by setting $n = m = 1$ and $A = 1$ therein.
It is easy to see \eqref{equ:counter:example} has a unique saddle point $(0,0)$.   For this problem, $\Sigma_{f} = 0, \K = 1$, and $M_{1}, M_{2}$ take the form of   $M_{1} = 1/\tau, M_{2} = 1/\sigma$ with $\tau, \sigma > 0$. In this case,  condition \eqref{equ:M1M2:condtion:example1}  becomes $\tau, \sigma > 0$ and $\tau \sigma \leq \rho_{1}$.
We next  show that  if
$$
\tau > 0,\  \sigma > 0,\ \tau \sigma = \frac43, \quad
\begin{pmatrix}
x^{0} \\ y^{0}
\end{pmatrix}
\not \in
\mathcal{S}:= \left\{
a
\begin{pmatrix}
2 \\ \sigma
\end{pmatrix}: a \in \Rbb
\right\},
 $$
 then the sequence generated by PrePDHG  diverges, which is enough to finish the proof.

Specifically, by some easy calculations, the PrePDHG recursion \eqref{vmpdhg} for problem \eqref{equ:counter:example} reads as
$$
\left\{
\begin{aligned}
x^{k+1} ={}& x^k- \tau y^k,  \\
y^{k+1} ={}&  \sigma x^k+\left(1- 2\tau \sigma\right)y^k,
\end{aligned}
\right.
\nn
$$
which can be reformulated as
\begin{equation}\label{equ:counter:example:00}
\begin{pmatrix}
x^{k+1} \\ y^{k+1}
\end{pmatrix}
= G
\begin{pmatrix}
x^{k} \\ y^{k}
\end{pmatrix} \quad
\mbox{with}\quad
G := \begin{pmatrix}
1 & -\tau\\
\sigma & 1 - 2 \tau \sigma
\end{pmatrix}.
\end{equation}
Since $\tau\sigma = 4/3$,  it is easy to verify that the two eigenvalues of $G$ is $-1$ and $1/3$ and
\begin{equation} \label{equ:counter:example:01}
G = V \begin{pmatrix} 1/3 & 0 \\ 0 & - 1\end{pmatrix}V^{-1} \quad \mbox{with} \quad  V = \begin{pmatrix} {2}/{\sigma} & {\tau}/2 \\ 1 & 1 \end{pmatrix}.
\end{equation}
We have from \eqref{equ:counter:example:00} and \eqref{equ:counter:example:01} that
\begin{equation}
\begin{pmatrix}
x^{k+1} \\ y^{k+1}
\end{pmatrix}
= G^{k+1}
\begin{pmatrix}
x^{0} \\ y^{0}
\end{pmatrix}
= V \begin{pmatrix} 3^{-k} & 0 \\ 0 & (-1)^{k}\end{pmatrix}V^{-1}
\begin{pmatrix}
x^{0} \\ y^{0}
\end{pmatrix}.
\nn
\end{equation}
It is obvious that
$$
\begin{aligned}
&\left\{\begin{pmatrix}
x^{k+1} \\ y^{k+1}
\end{pmatrix}
\right\} ~\mbox{is convergent}\\
\iff{}&
V^{-1} \begin{pmatrix}
x^{0} \\ y^{0}
\end{pmatrix}
=\begin{pmatrix}
a \\ 0
\end{pmatrix}
\ \mbox{for some $a \in \Rbb$}
\iff
\begin{pmatrix}
x^{0} \\ y^{0}
\end{pmatrix} \in \mathcal{S}.
\end{aligned}
$$
Hence, if $\begin{pmatrix}
x^{0} \\ y^{0}
\end{pmatrix} \not\in \mathcal{S}$,  then $\left\{\begin{pmatrix}
x^{k+1} \\ y^{k+1}
\end{pmatrix}
\right\}$ diverges and  certainly will not converge to $\begin{pmatrix}
0\\ 0
\end{pmatrix}.$ The proof is completed.
\qed \end{proof}

We next claim that condition \eqref{equ:M1M2:condtion}  is tight in the sense that the constant ``$1/2$'' can not be replaced by any number larger than it.
\begin{lemma}\label{lemma:tight:2}
Let $\{(x^k, y^k)\}$ be the sequence generated by Algorithm \ref{alg:vmpdhg} with $\epsilon = 0$.
Suppose condition  \eqref{equ:M1M2:condtion} is replaced by
\be \label{equ:M1M2:condtion:example2}
M_{1}  + \rho_{2} \Sigma_{f}\succ 0,\quad  M_{2}\succ 0,\quad \left\|M_2^{-\frac12}\K\left(M_1 +  \rho_{2} \Sigma_{f}\right)^{-\frac12}\right\|^{2} <  \frac43.
\ee
\begin{itemize}[leftmargin=*]
\item[](a). If $\rho_{2} \in (0,1/2]$, then $\{(x^k, y^k)\}$ converges to a saddle point of \eqref{PD}.
\item[](b). If $\rho_{2} > 1/2$, then $\{(x^k, y^k)\}$ is not necessarily convergent.
\end{itemize}
 \end{lemma}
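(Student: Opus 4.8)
The plan is to treat the two parts separately: part (a) follows from a monotonicity (comparison) argument showing that the hypothesis only gets stronger as $\rho_2$ shrinks, and part (b) from an explicit one-dimensional counter-example built on a strongly convex $f$ so that $\Sigma_f$ is active.

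For part (a), the key observation is that decreasing the coefficient in front of $\Sigma_f$ strengthens the convergence hypothesis. Since $\Sigma_f \succeq 0$ and $\rho_2 \le 1/2$, whenever $M_1 + \rho_2\Sigma_f \succ 0$ we also have $M_1 + \tfrac12\Sigma_f = (M_1 + \rho_2\Sigma_f) + (\tfrac12 - \rho_2)\Sigma_f \succ 0$. Moreover, by the Schur-complement equivalence used in Lemma \ref{equi14}, the norm bound in \eqref{equ:M1M2:condtion:example2} is equivalent to $\tfrac43(M_1 + \rho_2\Sigma_f) \succ \K^{\tran} M_2^{-1}\K$; combining this with $M_1 + \tfrac12\Sigma_f \succeq M_1 + \rho_2\Sigma_f$ gives $\tfrac43(M_1 + \tfrac12\Sigma_f) \succ \K^{\tran} M_2^{-1}\K$, which is precisely condition \eqref{equ:M1M2:condtion}. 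Theorem \ref{thm:convergence} then yields convergence to a saddle point, so (a) is routine.

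For part (b), I would reuse the scalar template of Lemma \ref{lemma:tight:1}(b), but now with a genuinely strongly convex $f$. Take $n=m=1$, $\K=1$, $f(x)=\tfrac{c}{2}x^2$ with $c>0$ (so the sharp modulus is $\Sigma_f = c$), $g^*\equiv 0$, and $M_1 = 1/\tau$, $M_2 = 1/\sigma$. The saddle point is again $(0,0)$, and a direct evaluation of the two prox steps in \eqref{vmpdhg} gives the linear recursion $(x^{k+1},y^{k+1})^{\tran} = G\,(x^k,y^k)^{\tran}$ with
\[
G = \begin{pmatrix} \dfrac{1}{1+c\tau} & -\dfrac{\tau}{1+c\tau} \\ \sigma\,\dfrac{1-c\tau}{1+c\tau} & 1 - \dfrac{2\sigma\tau}{1+c\tau}\end{pmatrix}.
\]
The heart of the argument is to separate the two conditions exactly at $\rho_2 = 1/2$. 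Writing $t := c\tau$ and $s := \sigma\tau$, condition \eqref{equ:M1M2:condtion:example2} reads $s/(1+\rho_2 t) < 4/3$, whereas the true condition \eqref{equ:M1M2:condtion} reads $s/(1+t/2) < 4/3$. I would fix $s = \tfrac43(1+t/2)$ for an arbitrary $t>0$, so the true bound is met with equality (the iteration sits on the boundary) while $s/(1+\rho_2 t) < 4/3$ holds precisely because $\rho_2 > 1/2$, the elementary inequality $4+2t < 4(1+\rho_2 t)$ being equivalent to $\rho_2 > 1/2$. With this choice one computes $\det G = (1-s)/(1+t)$ and $\operatorname{tr} G = 1 + (1-2s)/(1+t)$, verifies $1 + \operatorname{tr} G + \det G = 0$ so that $\lambda = -1$ is an eigenvalue of $G$, and finds the companion eigenvalue $(1+2t)/(3(1+t)) \in (1/3,2/3)$. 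Exactly as in Lemma \ref{lemma:tight:1}(b), an eigenvalue equal to $-1$ forces the iterates to oscillate without converging to $(0,0)$ for any starting point outside the one-dimensional eigenspace of the contracting eigenvalue, which establishes non-convergence.

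The main obstacle is the construction in part (b): one must design the example so that the two conditions become distinguishable only for $\rho_2 > 1/2$, which requires $\Sigma_f$ to be active (hence the strongly convex quadratic) and the stepsizes tuned to the boundary $s = \tfrac43(1+t/2)$. The remaining verification, namely that this boundary is exactly where $G$ acquires the eigenvalue $-1$ while the companion eigenvalue stays strictly inside the unit disk, is a short but slightly delicate $2\times 2$ spectral computation; the divergence conclusion itself is then inherited verbatim from the diagonalization argument already carried out in Lemma \ref{lemma:tight:1}(b).
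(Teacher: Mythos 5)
Your proposal is correct and follows essentially the same route as the paper: part (a) is the same monotonicity-plus-Theorem~\ref{thm:convergence} argument, and part (b) builds the same scalar strongly convex quadratic counter-example (the paper's \eqref{equ:counter:example2} is your example with $c=1$) and analyzes the eigenvalues of the $2\times 2$ iteration matrix. The only difference is the tuning: the paper takes $\sigma = \frac43(1/\tau+\rho_3)$ with $\rho_3\in(1/2,\rho_2)$ strictly inside the violating region, so $p(-1)<0$ and an eigenvalue drops strictly below $-1$, whereas you sit exactly on the boundary $s=\frac43(1+t/2)$, obtaining the eigenvalue $-1$ and concluding via the same diagonalization/oscillation argument as Lemma~\ref{lemma:tight:1}(b) --- your computations ($\det G=(1-s)/(1+t)$, $\operatorname{tr}G = 1+(1-2s)/(1+t)$, $1+\operatorname{tr}G+\det G=0$, companion eigenvalue $(1+2t)/(3(1+t))\in(1/3,2/3)$) all check out, and either choice suffices for ``not necessarily convergent.''
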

\begin{proof}
The assertion of (a) comes from Theorem \ref{thm:convergence} and the fact that \eqref{equ:M1M2:condtion} is true if \eqref{equ:M1M2:condtion:example2} holds for any fixed $\rho_{2} \in (0,1/2]$. To prove (b),
consider a simple instance of problem  \eqref{PD} as
\be\label{equ:counter:example2}
\min_{x\in\Rbb} \max_{y \in \Rbb} \ \frac12 x^{2} + xy.
\ee
It is easy to see \eqref{equ:counter:example2} has a unique saddle point $(0,0)$.   For this problem, $\Sigma_{f} = 1, \K = 1$, and $M_{1}, M_{2}$ take the form of   $M_{1} = 1/\tau, M_{2} = 1/\sigma$ with $1/\tau + \rho_{2} > 0, \sigma > 0$. In this case,  condition \eqref{equ:M1M2:condtion:example2} becomes $0 < \sigma < \frac43 (1/\tau + \rho _{2})$.
We  only need to show that for any $\rho_{2}\in (1/2,1]$ and $\rho_{3} \in \left(1/2, \rho_{2}\right)$ if
 $$0 < \sigma = \frac43(1/\tau + \rho_{3}) < \frac43(1/\tau + \rho_{2}),$$
then the sequence generated by  PrePDHG is not necessarily convergent.

First, it is not hard to verify that the PrePDHG recursion \eqref{vmpdhg} for problem \eqref{equ:counter:example2} reads as
$$
\left\{
\begin{aligned}
x^{k+1} ={}& (x^k- \tau y^k)/(1 + \tau),  \\
y^{k+1} ={}&  \left(\sigma (1 - \tau) x^k+\left(1 + \tau- 2\tau \sigma\right)y^k\right)/(1 + \tau),
\end{aligned}
\nn
\right.
$$
which can be reformulated as
\be \label{equ:counter:example2:00}
\begin{pmatrix}
x^{k+1} \\ y^{k+1}
\end{pmatrix}
= \widetilde G
\begin{pmatrix}
x^{k} \\ y^{k}
\end{pmatrix} \quad
\mbox{with}\quad
\widetilde G := \frac{1}{1 + \tau}\begin{pmatrix}
1 & -\tau\\
\sigma(1 - \tau) & 1 + \tau - 2 \tau \sigma
\end{pmatrix}.
\ee
The characteristic polynomial of $\widetilde G$ is given as
\begin{equation}
p(\mu) = \mu^{2} -\frac{1}{1+ \tau} \left(\tau - \frac{2(1 + 4 \rho_{3} \tau)}{3}\right) \mu  - \frac{1 + 4 \rho_{3} \tau}{3(1+ \tau)}. \nn
\end{equation}
Noting $\frac{1 + \tau}{\tau} \geq \frac{1}{\tau} + \rho_{2} > 0$,  we have  $$p(-1) =  \frac{2(1 - 2\rho_{3})\tau}{1 + \tau}<0,$$
which tells that at least one eigenvalue of $\widetilde G$ is  less than $-1$, i.e., $\|\widetilde G\|>  1$. Therefore, the sequence $\{(x^{k}, y^{k})\}$ generated by \eqref{equ:counter:example2:00} is not necessarily convergent. The proof is completed.
\qed \end{proof}

\subsection{Sublinear Convergence Rate} \label{subsection:sublinear}
We now investigate the sublinear convergence rate of PrePDHG (Algorithm \ref{alg:vmpdhg}).
\begin{theorem}\label{thm:subconvergence}
Let $\{(x^k, y^k)\}$ be the sequence generated by  Algorithm \ref{alg:vmpdhg} with $\epsilon = 0$ and  $M_1,   M_2$ satisfying  \eqref{equ:M1M2:condtion}.  Then we have
\be \label{equ:lemma:rate:000}
\min_{1 \leq k \leq t}  \Rcal(x^{k}, y^{k}) = o\left(\frac{1}{\sqrt{t}}\right) \quad \mbox{and} \quad \min_{1 \leq k \leq t}  \Rcal(x^{k}, y^{k-1}) = o\left(\frac{1}{\sqrt{t}}\right).
\ee
Moreover,  if condition \eqref{equ:M1M2:condtion} is replaced by
\begin{equation}\label{equ:M1M2:condtion:2}
M_{1}  + \frac12 \Sigma_{f}\succ 0,\quad  M_{2}\succ 0,\quad \left\|M_2^{-\frac12}\K\Big(M_1 + \frac12 \Sigma_{f}\Big)^{-\frac12}\right\|< 1,
\end{equation}
then  we have
\be \label{equ:lemma:rate:111}
\Rcal(x^t, y^t) = o\left(\frac{1}{\sqrt{t}}\right)\quad   \mbox{and}\quad \Rcal(x^{t}, y^{t-1}) = o\left(\frac{1}{\sqrt{t}}\right),\quad \forall t \geq 1.
\ee
\end{theorem}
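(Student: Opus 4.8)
The plan is to transport the sublinear rate entirely through the equivalence of Lemma \ref{lemma:equiv}: since the PrePDHG sequence $\{(x^k,y^k)\}$ is, up to the affine transform \eqref{equ:transform:y}, the iPADMM sequence $\{w^k\}=\{(u^k,x^k,\lambda^k)\}$ applied to \eqref{P:1}, I would invoke the iPADMM rate estimates of Lemma \ref{guth2} and Lemma \ref{guth3}. With the same parameter identification used in the proof of Lemma \ref{lemma:M1M2} (namely $\theta_1:=g$, $\theta_2:=f$, $A:=M_2^{-1/2}$, $B:=-M_2^{-1/2}\K$, $\beta=1$, $S:=0$, $T:=M_1-\K^{\tran}M_2^{-1}\K$, $\Sigma_1:=0$, $\Sigma_2:=\Sigma_f$), the matrix $\hat G$ collapses to the block-diagonal form $\hat G=\mathrm{diag}(0,\,M_1+\Sigma_f,\,I_m)$, so that $\|w^k-w^{k+1}\|_{\hat G}^2=\|x^k-x^{k+1}\|_{M_1+\Sigma_f}^2+\|\lambda^k-\lambda^{k+1}\|^2$. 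Conditions \eqref{gucon1}, \eqref{gucon2}, \eqref{gucon3} are already available (the first two from the proof of Lemma \ref{lemma:M1M2}, and \eqref{gucon3} trivially since $S=\Sigma_1=0$), so Lemma \ref{guth2} yields $\min_{1\le i\le k}\|w^i-w^{i+1}\|_{\hat G}^2=o(1/k)$; under the stronger condition \eqref{equ:M1M2:condtion:2}, which is exactly $T+\tfrac12\Sigma_2=M_1-\K^{\tran}M_2^{-1}\K+\tfrac12\Sigma_f\succ0$, Lemma \ref{guth3} upgrades this to $\|w^k-w^{k+1}\|_{\hat G}^2=o(1/k)$ for every $k$.

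The bridge from these $\hat G$-norm decrements to the KKT residual is built from the first-order optimality conditions of the two subproblems in \eqref{vmpdhg}. The $x$-update gives $\K^{\tran}(y^{k+1}-y^k)-M_1(x^{k+1}-x^k)\in\partial f(x^{k+1})+\K^{\tran}y^{k+1}$ as well as $-M_1(x^{k+1}-x^k)\in\partial f(x^{k+1})+\K^{\tran}y^{k}$, while the $y$-update gives $\K(x^{k+1}-x^k)-M_2(y^{k+1}-y^k)\in\partial g^*(y^{k+1})-\K x^{k+1}$, with the one-step-back $y$-update controlling the $g^*$-component at $(x^{k+1},y^k)$. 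These are precisely the inclusions behind the bounds $\Rcal\le\hat\Rcal$ recorded in Remark \ref{remark:upperbound:R}. I would then rewrite each $\hat\Rcal$ in iPADMM variables using the transform \eqref{equ:transform:y} from Lemma \ref{lemma:equiv}, which yields $M_2y^k=M_2^{1/2}\lambda^{k+1}-\K(x^{k+1}-x^k)$. The pleasant outcome is that the residual at $(x^{k+1},y^k)$ reduces to a \emph{single} successive difference: its $f$-part is $\|M_1(x^{k+1}-x^k)\|$ and its $g^*$-part telescopes to $\|M_2^{1/2}(\lambda^{k+1}-\lambda^k)\|$, whence $\Rcal(x^{k+1},y^k)\le c\,\|w^k-w^{k+1}\|_{\hat G}$ for a constant $c$ depending only on $\|M_1(M_1+\Sigma_f)^{-1/2}\|$ and $\|M_2^{1/2}\|$ (both finite, since \eqref{equ:M1M2:condtion} forces $M_1+\Sigma_f\succ0$).

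With these bounds the two statements follow by taking square roots. For \eqref{equ:lemma:rate:000} I combine $\Rcal(x^k,y^{k-1})\le c\,\|w^{k-1}-w^k\|_{\hat G}$ with $\min_{1\le i\le k}\|w^i-w^{i+1}\|_{\hat G}^2=o(1/k)$ from Lemma \ref{guth2}; for \eqref{equ:lemma:rate:111} I combine it with the per-iterate $o(1/k)$ of Lemma \ref{guth3}. The residual at $(x^{k+1},y^{k+1})$ is handled analogously, except that its $f$-component unavoidably carries $y^{k+1}-y^k$, hence \emph{two} consecutive differences $\|w^{k}-w^{k+1}\|_{\hat G}$ and $\|w^{k+1}-w^{k+2}\|_{\hat G}$; under \eqref{equ:M1M2:condtion:2} this is harmless because Lemma \ref{guth3} makes each term $o(1/k)$ individually, giving $\Rcal(x^t,y^t)=o(1/\sqrt t)$.

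The main obstacle is exactly this two-difference coupling when proving the running-minimum rate for $\Rcal(x^k,y^k)$ in \eqref{equ:lemma:rate:000}, where only the weaker Lemma \ref{guth2} (a bound on the \emph{minimum} of $\|w^i-w^{i+1}\|_{\hat G}^2$, not on each term) is at hand: the index attaining the minimum of $b_k:=\|w^k-w^{k+1}\|_{\hat G}^2$ need not have a small predecessor $b_{k-1}$. I would resolve this with the summability $\sum_k b_k<\infty$ that underlies Lemma \ref{guth2}, through a tail-averaging estimate $\min_{t<k\le 2t}(b_{k-1}+b_k)\le\frac1t\sum_{k=t+1}^{2t}(b_{k-1}+b_k)\le\frac2t\sum_{k\ge t}b_k=o(1/t)$, and then relabel $2t$ as $t$. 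This delivers $\min_{1\le k\le t}\Rcal(x^k,y^k)=o(1/\sqrt t)$ and completes the argument.
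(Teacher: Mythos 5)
Your proposal is correct and follows the paper's overall strategy---transporting the rates of Lemmas \ref{guth2} and \ref{guth3} through the equivalence of Lemma \ref{lemma:equiv}, with the same parameter identification as in Lemma \ref{lemma:M1M2} and the same collapse of $\hat G$ to $\diag(0,\, M_1+\Sigma_f,\, I_m)$---but it deviates in two places, both to your credit. First, your treatment of $\Rcal(x^{k+1},y^k)$ is cleaner than the paper's: where the paper retains the three-iterate bound \eqref{equ:lemma:rate:c00:half} (built from \eqref{equ:thm:subconvergence:dist1:2}, involving $x^{k-1},x^k,x^{k+1}$ and $y^{k-1},y^k$), you substitute the transform \eqref{equ:transform:y} and note that everything telescopes---equivalently, the $u$-subproblem optimality gives $y^k\in\partial g(u^{k+1})$, hence $u^{k+1}\in\partial g^*(y^k)$ and $\K x^{k+1}-u^{k+1}=M_2^{1/2}(\lambda^{k+1}-\lambda^k)$---so the half-step residual is bounded by the \emph{single} difference $c\,\|w^k-w^{k+1}\|_{\hat G}$; I checked that the telescoping is exact, and this makes the half-step rates immediate from Lemmas \ref{guth2} and \ref{guth3} with no coupling issue at all. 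Second, you explicitly confront the two-consecutive-difference coupling in $\Rcal(x^k,y^k)$ under the weaker condition \eqref{equ:M1M2:condtion}, a point the paper's own proof passes over silently: the paper combines the running-minimum statement of Lemma \ref{guth2} with \eqref{equ:lemma:rate:c00}, but the index attaining that minimum need not have a small neighboring difference, so strictly speaking the stated combination does not suffice, and your tail-averaging repair $\min_{t<k\le 2t}(b_{k-1}+b_k)\le \tfrac{2}{t}\sum_{k\ge t}b_k=o(1/t)$ is the right fix. The one caveat is that the summability $\sum_k \|w^k-w^{k+1}\|_{\hat G}^2<\infty$ does \emph{not} follow from the statement of Lemma \ref{guth2} (a running-minimum $o(1/k)$ bound does not imply summability), so you must extract it from the proof of \cite[Theorem 4.1]{gu2015indefinite}, where it is the standard descent estimate; make that dependence explicit rather than citing the lemma as a black box. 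With that citation sharpened, your argument is complete; in particular your identification $T+\tfrac12\Sigma_2=M_1+\tfrac12\Sigma_f-\K^{\tran}M_2^{-1}\K\succ 0$ under \eqref{equ:M1M2:condtion:2} (via the Schur complement, as in Lemma \ref{equi14}) correctly triggers Lemma \ref{guth3} for the per-iterate rates \eqref{equ:lemma:rate:111}.
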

\begin{proof}
First, let us  bound  the KKT residual $\Rcal^{k+1}:= \Rcal(x^{k+1},y^{k+1})$ and  $\Rcal^{k+1/2}:= \Rcal(x^{k+1},y^{k})$ for $k \geq 0$.
From the optimality condition of \eqref{pre-pdhg:x},  we have
\begin{equation}\label{equ:optimality:x:00}
- M_{1}(x^{k+1} - x^{k}) \in \partial f(x^{k+1}) + \K^{\tran} y^{k} ,
\end{equation}
which implies
\be\label{equ:optimality:x}
\K^{\tran} (y^{k+1} - y^{k})-M_{1}(x^{k+1} - x^{k}) \in \partial f(x^{k+1}) + \K^{\tran} y^{k+1}
\ee and thus
\be \label{equ:thm:subconvergence:dist0}
\dist\left(0, \partial f(x^{k+1}) + \K^{\tran} y^{k+1}\right) \leq \left\|\K^{\tran} (y^{k+1} - y^{k})-M_{1}(x^{k+1} - x^{k})\right\|.
\ee
Similarly, using the optimality condition of \eqref{pre-pdhg:y},  we have
\be \label{equ:optimality:y}
\K(x^{k+1} - x^{k}) - M_{2}(y^{k+1} - y^{k}) \in \partial g^{*}(y^{k+1})  - \K x^{k+1}
\ee
and thus
\be \label{equ:thm:subconvergence:dist1}
\dist\left(0, \partial g^{*}(y^{k+1})  - \K x^{k+1}\right) \leq  \left\|\K(x^{k+1} - x^{k}) - M_{2}(y^{k+1} - y^{k})\right\|.
\ee

Let 
$$
\begin{aligned}
\hat \Rcal^{k+1} ={}&  \left\|\K^{\tran} (y^{k+1} - y^{k})-M_{1}(x^{k+1} - x^{k})\right\| \\
{}& +\left\|\K(x^{k+1} - x^{k}) - M_{2}(y^{k+1} - y^{k})\right\|.
\end{aligned}$$
 By the Cauchy-Schwarz inequality, we  have
\begin{equation*}
\begin{aligned}
\hat \Rcal^{k+1}
\leq \left(\|\K\|+\|M_1\|\right)\|x^{k+1} - x^{k}\|+\|\K\|  \cdot \|y^{k+1} - y^{k}\|+\|M_{2}(y^{k+1} - y^{k})\|.
\end{aligned}
\end{equation*}
Since $M_2 \succ 0$,  for any $z \in \Rbb^m$,  we have $\iprod{z}{M_2 z} \geq \lambda_{\min}(M_2) \iprod{z}{z}$ and $\|M_2 z\|^2 = \iprod{z}{M_2 M_2 z} \leq \|M_2\| \iprod{z}{M_2 z}$. Therefore, we have $\|y^{k+1} - y^{k}\| \leq  \frac{1}{\sqrt{\lambda_{\min}(M_{2})}}\|y^{k+1} - y^{k}\|_{M_2}~\textnormal{and}~
\|M_{2}(y^{k+1} - y^{k})\| \leq \sqrt{\|M_{2}\|}\|y^{k+1} - y^{k}\|_{M_2}.$
Then we immediately have
\begin{align}\label{equ:thm:subconvergence:R:hat}
 \hat \Rcal^{k+1}
 \leq c_{1}\|x^{k+1} - x^{k}\| + c_{2}\|y^{k+1} - y^{k}\|_{M_{2}},
\end{align}
where the constants
\be\label{equ:c1:c2}
c_{1} = \|\K\| + \|M_{1}\|,  \quad c_{2}= \frac{\|\K\|}{\sqrt{\lambda_{\min}(M_{2})}} + \sqrt{\|M_{2}\|}.
\ee
By the definition \eqref{equ:kkt:residual} of $\Rcal(x,y)$, it is not hard to obtain from \eqref{equ:thm:subconvergence:dist0}, \eqref{equ:thm:subconvergence:dist1}, and \eqref{equ:thm:subconvergence:R:hat} that
\be\label{equ:thm:subconvergence:R}
 \Rcal^{k+1} \leq \hat \Rcal^{k+1}
 \leq c_{1}\|x^{k+1} - x^{k}\| + c_{2}\|y^{k+1} - y^{k}\|_{M_{2}},
\ee
Using \eqref{equ:optimality:y} for $k: = k- 1$, we have $\K(x^{k} - x^{k-1}) - M_{2}(y^{k} - y^{k-1}) \in \partial g^{*}(y^{k})  - \K x^{k}$ and thus
$$ 
\K(x^{k} - x^{k-1}) + \K(x^{k} - x^{k+1}) - M_{2}(y^{k} - y^{k-1}) \in \partial g^{*}(y^{k})  - \K x^{k+1}.\nn
$$
Hence, we have
\be\label{equ:thm:subconvergence:dist1:2}
\begin{aligned}
{}&\dist\left(0, \partial g^{*}(y^{k})  - \K x^{k+1}\right) \\
\leq{}&  \left\|\K(x^{k} - x^{k-1}) + \K(x^{k} - x^{k+1}) - M_{2}(y^{k} - y^{k-1}) \right\|  \\
\leq{}& \|K\|(\|x^{k} - x^{k-1}\| + \|x^{k} - x^{k+1}\|) + \sqrt{\|M_{2}\|} \|y^{k} - y^{k-1}\|_{M_{2}},
\end{aligned}
\ee
where the second inequality uses
$\| M_{2}(y^{k} - y^{k-1})\| \leq \sqrt{\|M_{2}\|} \|y^{k} - y^{k-1}\|_{M_{2}}$.
On the other hand, \eqref{equ:optimality:x:00} implies
\be \label{equ:thm:subconvergence:dist1:3}
\dist(0, \partial f(x^{k+1}) + \K^{\tran} y^{k}) \leq \|M_{1}(x^{k+1} - x^{k})\| \leq \|M_1\| \|x^{k+1} - x^{k}\|.
\ee
Combining  \eqref{equ:thm:subconvergence:dist1:2} and \eqref{equ:thm:subconvergence:dist1:3} together, and by the definition \eqref{equ:kkt:residual} of $\Rcal(x,y)$,  we have
\begin{align}\label{equ:thm:subconvergence:R:half}
 \Rcal^{k+1/2}
 \leq  c_{1}\|x^{k+1} - x^{k}\|  + \|\K\| \cdot \|x^{k} - x^{k-1}\| + \sqrt{\|M_{2}\|} \|y^{k} - y^{k-1}\|_{M_{2}}.
\end{align}

Second, we  estimate the upper bound of $\|y^{k+1} - y^{k}\|_{M_2}$.
From   \eqref{equ:ipadmm:c:0} and \eqref{equ:transform:y}, we have
\begin{equation}
M_2^{\frac12}y^{k} = \lambda^{k} + M_2^{-\frac12}(\K x^{k} - u^{k+1})    = \lambda^{k+1} + M_2^{-\frac12}\K(x^{k} - x^{k+1}), \nn
\end{equation}
which again with \eqref{equ:transform:y} for $k:= k+1$ yields
\begin{equation}
\begin{aligned}\label{equ:thm:subconvergence:00}
M_2^{\frac12}(y^{k+1} - y^{k}) ={}& M_2^{-\frac12}(\K x^{k+2} - u^{k+2}) - M_2^{-\frac12}\K(x^{k} - x^{k+1})   \\
 {}& + M_2^{-\frac12}\K(x^{k+1} - x^{k+2}).
\end{aligned}
\end{equation}
Condition  \eqref{equ:M1M2:condtion} or \eqref{equ:M1M2:condtion:2} tells $\K^{\tran} M_2^{-1} \K   \prec \frac43 \left(M_1 + \frac12 \Sigma_f \right)$. Thus, for any $z \in \Rbb^n$, we have
\begin{align}\label{equ:lemma:rate:a00}
\Big\| M_2^{-\frac12} \K z\Big\|  = \Big\|M_2^{-1}\K z\Big\|_{M_2} = \|z\|_{\K^TM_2^{-1}\K}
\leq  \frac{2}{\sqrt{3}} \|z\|_{M_1 + \frac12\Sigma_f} \leq c_{3}\|z\|,
\end{align}
where $c_{3} =  2 \sqrt{\lambda_{\max}(M_1 + \frac12\Sigma_f)/3}$.
Hence, noticing that $\|M_2^{1/2} v\| = \|v\|_{M_2}$ for any $v\in\Rbb^m$, we have from \eqref{equ:thm:subconvergence:00}  and \eqref{equ:lemma:rate:a00} that
\begin{align}\label{equ:lemma:rate:b02}
\|y^{k+1} - y^{k}\|_{M_2}
\leq{}&  \Big\|M_2^{-\frac12}(\K x^{k+2} - u^{k+2})\Big\|  \nn \\ 
& +  c_{3}\left(\big\|x^{k+1} - x^{k}\big\| +  \big\|x^{k+1} - x^{k+2}\big\|  \right).
\end{align}
Combining  \eqref{equ:thm:subconvergence:R}  and \eqref{equ:lemma:rate:b02}, we obtain
\begin{align}\label{equ:lemma:rate:c00}
\Rcal^{k+1} \leq{}&   \left(c_1 + c_{2}c_{3}\right)  \big\|x^{k+1} - x^{k}\big\|  + c_{2}c_{3}\big\|x^{k+1} - x^{k+2}\big\|  \nn\\ 
& +   c_{2}\Big\|M_2^{-\frac12}(\K x^{k+2} - u^{k+2})\Big\|.
\end{align}
Combining  \eqref{equ:thm:subconvergence:R:half}  and \eqref{equ:lemma:rate:b02} with $k:= k - 1$, we have
\begin{align}\label{equ:lemma:rate:c00:half}
\Rcal^{k+1/2} \leq{}&  \left(c_1 + \sqrt{\|M_{2}\|}c_{3}\right)  \big\|x^{k+1} - x^{k}\big\|  +  \left(\|\K\| + \sqrt{\|M_{2}\|}c_{3}\right)\big\|x^{k} - x^{k-1}\big\| \nn \\
 {}& +  \sqrt{\|M_{2}\|}\,\big\|M_2^{-\frac12}(\K x^{k+1} - u^{k+1})\big\|.
\end{align}

Finally, similar to the proof of Theorem \ref{thm:convergence}, if condition  \eqref{equ:M1M2:condtion}  holds, it is easy to see that the conditions of Lemma \ref{guth2} for iPADMM \eqref{ipadmm} are satisfied. Thus, by applying Lemma \ref{guth2}, we have
\begin{equation}
\min_{0 \leq  k\leq t} \left\{\|x^{k} - x^{k+1}\|_{M_1 + \Sigma_f}^2 + \|M_2^{-\frac12}(\K x^{k+1} - u^{k+1})\|^2  \right\}  = o(1/t), \nn
\end{equation}
which with  $\|x^{k} - x^{k+1}\|_{M_1 + \Sigma_f} \geq \lambda_{\min}(M_1 + \Sigma_f) \|x^{k} - x^{k+1}\|$, \eqref{equ:lemma:rate:c00} and \eqref{equ:lemma:rate:c00:half} lead to  \eqref{equ:lemma:rate:000}.
If condition \eqref{equ:M1M2:condtion:2} holds, it is easy to see that the conditions of Lemma \ref{guth3} for iPADMM \eqref{ipadmm} are satisfied. Thus, by applying Lemma \ref{guth3}, we have 
$$\|x^{t} - x^{t+1}\|_{M_1 + \Sigma_f}^2 + \|M_2^{-\frac12}(\K x^{t+1} - u^{t+1})\|^2  =  o(1/t), $$
which with \eqref{equ:lemma:rate:c00} and \eqref{equ:lemma:rate:c00:half} leads to  \eqref{equ:lemma:rate:111}.
The proof is completed.
\qed \end{proof}

It is immediate to establish the iteration complexity of Algorithm \ref{alg:vmpdhg}.
\begin{corollary}
If  $\epsilon > 0$,  then Algorithm \ref{alg:vmpdhg} stops in $O(1/\epsilon^{2})$ iterations.
\end{corollary}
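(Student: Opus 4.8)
The plan is to read off the complexity directly from the non-ergodic sublinear rate established in Theorem \ref{thm:subconvergence}. First I would invoke the first estimate in \eqref{equ:lemma:rate:000}, namely $\min_{1\le k\le t}\Rcal(x^k,y^k)=o(1/\sqrt{t})$. Since any $o(1/\sqrt{t})$ quantity is in particular $O(1/\sqrt{t})$, there exists a constant $C>0$, depending on $M_1$, $M_2$, $\K$, and the starting point but independent of $t$, such that $\min_{1\le k\le t}\Rcal(x^k,y^k)\le C/\sqrt{t}$ for every $t\ge 1$.

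Next I would turn this into a first-hitting-time statement matching the stopping rule. Algorithm \ref{alg:vmpdhg} breaks the loop at the first index $k$ with $\Rcal(x^{k},y^{k})\le\epsilon$, i.e.\ at the first iterate that is an $\epsilon$-solution in the sense of Definition \ref{def:epsilon:solution}. Setting $t:=\lceil C^2/\epsilon^2\rceil$, the displayed bound gives $\min_{1\le k\le t}\Rcal(x^k,y^k)\le C/\sqrt{t}\le\epsilon$, so at least one index $k\le t$ already satisfies the stopping criterion. Consequently the algorithm terminates after at most $t=O(1/\epsilon^2)$ iterations. The same argument applied to the second estimate in \eqref{equ:lemma:rate:000}, $\min_{1\le k\le t}\Rcal(x^k,y^{k-1})=o(1/\sqrt{t})$, covers the alternative stopping rule $\Rcal(x^{k+1},y^{k})\le\epsilon$ mentioned immediately after Algorithm \ref{alg:vmpdhg}.

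I do not anticipate a genuine obstacle, as the corollary is a routine consequence of the rate already proved. The only two points needing mild care are conceptual rather than technical: the guarantee is stated as a minimum over the iterates, which is precisely what a first-hitting stopping criterion exploits, so no per-iterate monotonicity of $\Rcal$ is required; and the little-$o$ must be relaxed to big-$O$ to extract the explicit constant $C$ used above. One could in fact record the slightly sharper conclusion that termination occurs within $o(1/\epsilon^2)$ iterations, but the stated $O(1/\epsilon^2)$ bound suffices for the claim.
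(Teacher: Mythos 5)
Your proposal is correct and follows exactly the route the paper intends: the corollary is stated as an immediate consequence of the first estimate in \eqref{equ:lemma:rate:000} of Theorem \ref{thm:subconvergence}, with the little-$o$ relaxed to a big-$O$ bound $C/\sqrt{t}$ and the minimum over $1\le k\le t$ combined with the first-hitting stopping rule to give termination within $\lceil C^2/\epsilon^2\rceil$ iterations. Your remarks on the alternative criterion $\Rcal(x^{k+1},y^{k})\le\epsilon$ and on not needing per-iterate monotonicity are accurate and consistent with the paper.
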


Revisiting the optimality condition \eqref{equ:PD:optimality},  instead of using the KKT residual, we can also measure the quality of approximate solution $(\hat x, \hat y)$ by giving an upper bound of the function value residual $\Lcal(\hat x,y) - \Lcal(x,\hat y)$ for any $x\in \Rbb^{n}$ and $y \in \Rbb^{m}$, see \cite{chambolle2016ergodic,chambolle2011first,liu2021acceleration, jiang2021approximate,jiang2021first,rasch2020inexact} and the references therein for instance.   However, the existing results for PDHG and PrePDHG under condition \eqref{condition:PC1}
 or \eqref{condition:LiuYin}
  are all ergodic,   which always have the bound:
\be \label{equ:residual:L}
\Lcal(\bar x^{t},y) - \Lcal(x, \bar y^{t}) \leq   \frac{\varphi_{1}(x,x^{0}) + \varphi_{2}(y,y^{0})}{t}, \quad \forall x \in \Rbb^{n},\ \forall y \in \Rbb^{m},
\ee
or
\be \label{equ:residual:L:2}
\Lcal(\bar x^{t},y^\star) - \Lcal(x^\star, \bar y^{t}) \leq   \frac{\varphi_{1}(x^\star,x^{0}) + \varphi_{2}(y^\star,y^{0})}{t},
\ee
where $\bar x^{t} = \frac1t \sum_{i = 1}^{t} x^{i}$, $\bar y^{t} = \frac1t \sum_{i = 1}^{t} y^{i}$, and $\varphi_{1}(\cdot, \cdot), \varphi_{2}(\cdot, \cdot)$ are some nonnegative functions and $(x^\star,y^\star)$ is a saddle point.

 Here, we aim to investigate some non-ergodic results with the help of our established bounds for the KKT residual.
\begin{lemma}\label{thm:rate2}
Let $\{(x^k, y^k)\}$ be the sequence generated by Algorithm \ref{alg:vmpdhg} with $\epsilon = 0$ and  $M_1,   M_2$ satisfying  \eqref{equ:M1M2:condtion}.   Let $(x^{\infty}, y^{\infty})$ be the limit point of  $\{(x^k, y^k)\}$. Define a constant $\bar c = \sup_{k \geq 0}\{\|x^k - x^\infty\| + \|y^k - y^\infty\|\}$ and denote
\be\label{equ:kt}
k(t) :=\argmin_{1 \leq k \leq t} \left\{c_{1}\|x^{k+1} - x^{k}\|_{M_{1} + \frac12 \Sigma_{f}} + c_{2}\|y^{k+1} - y^{k}\|_{M_{2}}\right\},
\ee
where $c_1$ and $c_2$ are defined \eqref{equ:c1:c2}.
Then for $t \geq 1$,
\be  \label{equ:thm:rate2:00}
\begin{aligned}
{}&\Lcal(x^{k(t)}, y) - \Lcal(x,y^{k(t)}) \\
\leq{}&  o(1/\sqrt{t})\left(\bar c + \|x^\infty - x\| + \|y^\infty - y\|\right), \quad \forall x \in \Rbb^{n}, \quad \forall y \in \Rbb^{m}
\end{aligned}
\ee
and
\begin{equation}\label{equ:thm:rate2:00:v2}
\Lcal(x^{k(t)}, y^{\infty}) - \Lcal(x^{\infty},y^{k(t)})  \leq   o(1/\sqrt{t}).
\end{equation}
Moreover,  if condition \eqref{equ:M1M2:condtion} is replaced by \eqref{equ:M1M2:condtion:2},  then for any  $t \geq 1$
\be  \label{equ:thm:rate2:11}
\begin{aligned}
{}&\Lcal(x^{t}, y) - \Lcal(x,y^{t})\\
 \leq{}& o(1/\sqrt{t}) \left(\bar c + \|x^{\infty} - x\| + \|y^{\infty} - y\|\right), \quad \forall x \in \Rbb^{n}, \quad \forall y \in \Rbb^{m}
\end{aligned}
\ee
and
\begin{equation} \label{equ:thm:rate2:11:v2}
\Lcal(x^{t}, y^{\infty}) -  \Lcal(x^{\infty}, y^{t})  \leq o(1/\sqrt{t}).
\end{equation}
\end{lemma}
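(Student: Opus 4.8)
The plan is to derive a single ``fundamental inequality'' that upper bounds the function value gap $\Lcal(x^{k+1},y)-\Lcal(x,y^{k+1})$ by the very same quantities already controlled in Theorem \ref{thm:subconvergence}, and then to feed in the $o(1/t)$ decay of the successive differences supplied by Lemma \ref{guth2} (respectively Lemma \ref{guth3}). First I would reuse the subgradient inclusions \eqref{equ:optimality:x} and \eqref{equ:optimality:y}, writing $d_f^{k+1}:=\K^{\tran}(y^{k+1}-y^k)-M_1(x^{k+1}-x^k)\in\partial f(x^{k+1})+\K^{\tran}y^{k+1}$ and $d_g^{k+1}:=\K(x^{k+1}-x^k)-M_2(y^{k+1}-y^k)\in\partial g^{*}(y^{k+1})-\K x^{k+1}$. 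Applying the subgradient inequality for the convex functions $f$ and $g^{*}$ at $x^{k+1}$ and $y^{k+1}$ and expanding $\Lcal(x^{k+1},y)-\Lcal(x,y^{k+1})$ through its definition, the bilinear terms $\iprod{\K x^{k+1}}{y}$, $\iprod{\K x}{y^{k+1}}$ together with the coupling terms produced by the two subgradients cancel exactly, leaving
\[
\Lcal(x^{k+1},y)-\Lcal(x,y^{k+1})\leq \iprod{d_f^{k+1}}{x^{k+1}-x}+\iprod{d_g^{k+1}}{y^{k+1}-y}.
\]
Verifying this cancellation is the one genuinely delicate computation, and I regard it as the heart of the argument.

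Next I would estimate the right-hand side. By Cauchy--Schwarz and the triangle inequality $\|x^{k+1}-x\|\leq\|x^{k+1}-x^{\infty}\|+\|x^{\infty}-x\|\leq \bar c+\|x^{\infty}-x\|$ (and its $y$-analogue, both finite because $\{(x^k,y^k)\}$ converges by Theorem \ref{thm:convergence}, so $\bar c<+\infty$), the bound becomes $(\|d_f^{k+1}\|+\|d_g^{k+1}\|)(\bar c+\|x^{\infty}-x\|+\|y^{\infty}-y\|)$, in which $\|d_f^{k+1}\|+\|d_g^{k+1}\|$ is precisely $\hat\Rcal^{k+1}$. Invoking \eqref{equ:thm:subconvergence:R} I would replace $\hat\Rcal^{k+1}$ by $c_1\|x^{k+1}-x^k\|+c_2\|y^{k+1}-y^k\|_{M_2}$ and, using $M_1+\tfrac12\Sigma_f\succ0$ to pass from the Euclidean norm to the $(M_1+\tfrac12\Sigma_f)$-norm on the $x$-step, match the quantity defining $k(t)$ in \eqref{equ:kt}. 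This already delivers the multiplicative structure appearing on the right of \eqref{equ:thm:rate2:00}.

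Finally I would supply the rate. Under \eqref{equ:M1M2:condtion} the hypotheses of Lemma \ref{guth2} hold for the equivalent iPADMM \eqref{ipadmm}, so $\min_{1\le k\le t}\{\|x^{k}-x^{k+1}\|_{M_1+\Sigma_f}^2+\|M_2^{-1/2}(\K x^{k+1}-u^{k+1})\|^2\}=o(1/t)$; taking square roots shows the minimizing index $k(t)$ makes $c_1\|x^{k(t)+1}-x^{k(t)}\|_{M_1+\frac12\Sigma_f}+c_2\|y^{k(t)+1}-y^{k(t)}\|_{M_2}=o(1/\sqrt t)$, which combined with the previous two steps yields \eqref{equ:thm:rate2:00}; specializing $(x,y)=(x^{\infty},y^{\infty})$ annihilates the two distance terms and gives \eqref{equ:thm:rate2:00:v2}. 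For \eqref{equ:thm:rate2:11} and \eqref{equ:thm:rate2:11:v2} I would repeat the argument verbatim but replace Lemma \ref{guth2} by Lemma \ref{guth3}, whose conclusion holds for \emph{every} $t$ rather than only for the running minimum, so no selection of $k(t)$ is needed. Beyond the cancellation in the first step, the remaining obstacles are purely bookkeeping: keeping the index shift between the residual and the successive difference consistent, and tracking the norm-conversion constants, neither of which affects the final $o(1/\sqrt t)$ order.
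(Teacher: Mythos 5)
Your proposal is correct and matches the paper's own proof essentially step for step: the ``cancellation'' you isolate is exactly the paper's summation of the two convexity inequalities built from the inclusions \eqref{equ:optimality:x} and \eqref{equ:optimality:y} (applied to $f(x)+\iprod{x}{\K^{\tran}y^{k+1}}$ and $g^{*}(y)-\iprod{y}{\K x^{k+1}}$), followed by Cauchy--Schwarz with the bound \eqref{equ:thm:subconvergence:R:hat}, the $\bar c$-triangle inequality \eqref{equ:xk:yk:bound}, the $o(1/\sqrt{t})$ rates imported from the proof of Theorem \ref{thm:subconvergence} (via Lemmas \ref{guth2} and \ref{guth3}), and the specialization $(x,y)=(x^{\infty},y^{\infty})$. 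Your explicit Euclidean-to-$\big(M_{1}+\frac12\Sigma_{f}\big)$-norm conversion is, if anything, slightly more careful than the paper's display \eqref{equ:thm:rate2:L}, which writes the weighted norm with the unadjusted constant $c_{1}$.
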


\begin{proof}
By the convexity of $f(x) + \iprod{x}{\K^{\tran} y^{k+1}}$ and  \eqref{equ:optimality:x}, we have
\be \label{equ:lemma:rate2:f}
\begin{aligned}
&f(x^{k+1}) + \iprod{x^{k+1}}{\K^{\tran}y^{k+1}}- f(x) - \iprod{x}{\K^{\tran} y^{k+1}}  \\
\leq{}& \iprod{x^{k+1} - x}{\K^{\tran} (y^{k+1} - y^{k})-M_{1}(x^{k+1} - x^{k})}, \quad \forall x \in \Rbb^{n}.
\end{aligned}
\ee
Similarly, by  the convexity of $g^{*}(y) - \iprod{y}{\K x^{k+1}}$ and the optimality condition \eqref{equ:optimality:y}, we have
\be \label{equ:lemma:rate2:g}
\begin{aligned}
&  \left(g^{*}(y^{k+1})- \iprod{y^{k+1}}{\K x^{k+1}}\right)- \left(g^{*}(y) - \iprod{y}{\K x^{k+1}}\right)  \\
\leq{}& \iprod{y^{k+1}-y}{\K(x^{k+1} - x^{k}) - M_{2}(y^{k+1} - y^{k})}, \quad \forall y \in \Rbb^{m}.
\end{aligned}
\ee
Summing up \eqref{equ:lemma:rate2:f} and \eqref{equ:lemma:rate2:g}, for any  $x \in \Rbb^{n}$ and $y \in \Rbb^{m}$,  we have
\begin{align}\label{equ:thm:rate2:L}
&\Lcal(x^{k+1}, y) - \Lcal(x,y^{k+1})\nn \\
 \leq{}&  \iprod{x^{k+1} - x}{\K^{\tran} (y^{k+1} - y^{k})-M_{1}(x^{k+1} - x^{k})}  \nn \\
 {}& + \iprod{y^{k+1}-y}{\K(x^{k+1} - x^{k}) - M_{2}(y^{k+1} - y^{k})} \nn \\
 \leq{}& \left(c_{1}\|x^{k+1} - x^{k}\|_{M_{1} + \frac12 \Sigma_{f}} + c_{2}\|y^{k+1} - y^{k}\|_{M_{2}}\right) \left(\|x^{k+1} - x\| + \|y^{k+1} - y\|\right),
\end{align}
where the second inequality uses the Cauchy-Schwarz inequality and \eqref{equ:thm:subconvergence:R:hat}.

Suppose $M_1$ and $M_2$ satisfy \eqref{equ:M1M2:condtion}. From the proof of Theorem \ref{thm:subconvergence}, we know that
\be \label{equ:thm:rate2:L:222}
\min_{1 \leq k \leq t} \left\{c_{1}\|x^{k+1} - x^{k}\|_{M_{1} + \frac12 \Sigma_{f}} + c_{2}\|y^{k+1} - y^{k}\|_{M_{2}}\right\} = o\left( \frac{1}{\sqrt{t}} \right).
\ee
Note that for any $k$, by the Cauchy-Schwarz inequality and the definition of $\bar c$, we have
\be\label{equ:xk:yk:bound}
\begin{aligned}
{}& \|x^{k+1} - x\| + \|y^{k+1} - y\|  \\ 
\leq{} &
\|x^{k+1} - x^\infty\| +   \|y^{k+1} - y^\infty\| + \|x^{\infty} - x\| + \|y^{\infty} - y\| \\
 \leq{} & \bar c +  \|x^{\infty} - x\| + \|y^{\infty} - y\|,
\end{aligned}
\ee
which together with the definition of  $k(t)$ in \eqref{equ:kt},  \eqref{equ:thm:rate2:L}, and \eqref{equ:thm:rate2:L:222} implies \eqref{equ:thm:rate2:00}.

Suppose  $M_1$ and $M_2$ satisfy \eqref{equ:M1M2:condtion:2}.   From the proof of Theorem \ref{thm:subconvergence}, we know that
$$c_{1}\|x^{k+1} - x^{k}\|_{M_{1} + \frac12 \Sigma_{f}} + c_{2}\|y^{k+1} - y^{k}\|_{M_{2}} = o\left( \frac{1}{\sqrt{k}} \right),$$
which with \eqref{equ:thm:rate2:L} and \eqref{equ:xk:yk:bound} implies \eqref{equ:thm:rate2:11}.

By \eqref{thm:convergence}, we know that $(x^\infty, y^\infty)$ is a saddle point of \eqref{PD}.
Thus,  \eqref{equ:thm:rate2:00:v2} and \eqref{equ:thm:rate2:11:v2} follow directly from  \eqref{equ:thm:rate2:00}
and \eqref{equ:thm:rate2:11}, respectively,  by setting $x = x^\infty$ and $y = y^\infty$.
The proof is completed.
\qed \end{proof}

Some remarks on our results about the sublinear convergence rate of PrePDHG are listed below.
First, to the best of our knowledge,  the sublinear rate based on the KKT residual $\Rcal(x^{k+1}, y^{k+1})$ or $\Rcal(x^{k+1},y^{k})$ is new for PDHG like methods since the existing results mainly focus on \eqref{equ:residual:L}. Compared with \eqref{equ:residual:L}, the upper bounds of the KKT residual  $\Rcal(x^{k+1}, y^{k+1})$ or $\Rcal(x^{k+1}, y^{k})$ are always computable, see Remark \ref{remark:upperbound:R} ahead.
Our sublinear rate result for the KKT residual also tells that Algorithm \ref{alg:vmpdhg} can return an $\epsilon$-solution in $O(1/\epsilon^{2})$ iterations.
Second, for the function value residual measurement, our sublinear rate result is the first non-ergodic result since the existing results are all ergodic,  see \cite{chambolle2011first,chambolle2016ergodic,jiang2021first,jiang2021approximate} for instance. It should be clear that our non-ergodic results are $o(1/\sqrt{t})$ while the existing ergodic results are $O(1/t)$ both under the condition that  $(x,y)$ is in a compact set. It remains unknown whether the non-ergodic result can be improved to $O(1/t)$.

To end this section,  we briefly discuss a dual formulation of the PrePDHG recursion \eqref{vmpdhg} in the following remark.
\begin{remark} \label{remark:dual:pdhg}
In Section \ref{S2}, we assume that problem \eqref{PD} has a saddle point, which means that solving \eqref{PD} is equivalent to solving the following problem:
 \begin{equation}\label{DP}
\min_{y\in \Rbb^m} \max_{x\in \Rbb^n}\;   g^*(y)  - \iprod{y}{\K x} - f(x).
\end{equation}
Using PrePDHG \eqref{vmpdhg} to solve \eqref{DP} and based on the symmetry of the primal and dual variables between \eqref{PD} and \eqref{DP} (the primal variable $x$ in \eqref{PD} is the dual variable in \eqref{DP} and vice versa), we can obtain the other PrePDHG recursion, which can also be used to solve \eqref{PD}:
\begin{subnumcases}{\label{pre-pdhg:dual}}
x^{k+1}=\argmin_{x \in \Rbb^n}\;f(x)+ \iprod{\K x}{2y^k-y^{k-1}} + \frac{1}{2}\left\|x-x^k\right\|_{Q_1}^2\label{pre-pdhg1:x},\\
y^{k+1}=\argmin_{y \in \Rbb^m}\;g^*(y)  - \iprod{\K x^{k+1}}{y} +\frac{1}{2}\|y-y^k\|_{Q_2}^2\label{pre-pdhg1:y},
\end{subnumcases}
where the symmetric matrices $Q_1\in \Rbb^{n \times n}$  and  $Q_2 \in \Rbb^{m \times m}$ satisfy
$$
\begin{pmatrix}
Q_1 &  -\K^\tran\\
-\K &  \frac43\left(Q_2 + \frac12 \Sigma_{g^*}\right)
\end{pmatrix}
\succ 0.
$$
Consider an equivalent formulation of problem \eqref{D} (note that \eqref{D} is also the primal formulation of problem \eqref{DP})
\begin{equation}\label{D:1}\tag{D1}
\begin{array}{cl}
\min\limits_{z\in \Rbb^{n},\ y\in \Rbb^{m}}  &  f^*(z)+g^*(y)\\
\mathrm{s.t.} &  Q_1^{-\frac12}(z+\K^{\tran}y)=0.
\end{array}
\end{equation}
The iPADMM recursion for \eqref{D:1} is given as
\begin{subnumcases}{\label{iPADMM:dual}}
z^{k+1} = \argmin_{z\in \Rbb^n} \bar \Lcal_1(z,y^k,\lambda^k),\label{equ:proxadmm1:a:0}\\
y^{k+1} = \argmin_{y\in \Rbb^m} \bar \Lcal_1(z^{k+1},y,\lambda^k)  + \frac12\|y - y^k\|_{Q_2 - \K Q_1^{-1}\K^{\tran}}^2, \label{equ:proxadmm1:b:0}\\
\lambda^{k+1} = \lambda^k + Q_1^{-\frac12} (z^{k+1} + \K^{\tran}y^{k+1}), \label{equ:proxadmm1:c:0}
\end{subnumcases}
where
$\bar \Lcal_1(z,y,\lambda)=f^*(z)+g^*(y)+\big\langle \lambda, Q_1^{-1/2}(z+\K^{\tran}y)\big\rangle +\frac12\|z+\K^{\tran}y\|_{Q_1^{-1}}^2
$
 is the augmented Lagrangian function of \eqref{D:1}. Using the same process in Lemma \ref{lemma:equiv}, we can show the equivalence between \eqref{pre-pdhg:dual}  and the iPADMM  \eqref{iPADMM:dual}.   The convergence results of \eqref{pre-pdhg:dual} can thus be established similar to that in Sections \ref{subsection:convergence} and \ref{subsection:sublinear}. We omit the details for brevity.
\end{remark}

\section{Revisit on the Choices of $M_{1}$ and $M_{2}$}\label{S4}
In this section, we revisit PrePDHG and discuss the choices of $M_{1}$ and $M_{2}$. Specifically, with the choices in Sections \ref{section:ipdhg} and \ref{section:diagonal}, PrePDHG gives improved versions of the original PDHG and PDHG with diagonal preconditioners, respectively. In Section \ref{section:general}, we investigate the choice of $M_{1} =  \tau^{-1}I_{n}$, $M_{2} = \gamma \tau \K\K^{\tran} + P$ and its extensions. In Section \ref{subsection:by}, we consider a special case when $g^{*}(y) = \iprod{b}{y}$ and discuss an enhanced BALM (eBALM) and an eBALM with symmetric Gauss-Seidel iterations (eBALM-sGS).

\subsection{$M_1 = \tau^{-1}I_n$, $M_2 = \sigma^{-1}I_m$}\label{section:ipdhg}
If the proximal operators of $f$ and $g^{*}$ are both easy to compute, we can simply take $M_1 = \tau^{-1}I_n$, $M_2 = \sigma^{-1}I_m$ with $\tau, \sigma > 0$.  In this case,  PrePDHG \eqref{vmpdhg:2}  reduces to the original  PDHG \eqref{alg:pdhg}, which can be reformulated as
 \begin{equation}\label{ipdhg}
\left\{
\begin{aligned}
x^{k+1}={}&\prox_{\tau f}\left(x^{k} - \tau \K^{\tran}y^{k}\right),\\
y^{k+1}={}&\prox_{\sigma g^{*}}\left(y^{k} + \sigma \K(2x^{k+1} - x^k)\right),
\end{aligned}
\right.
\end{equation}
where $\prox_{\tau f}(x)$ is defined in Section \ref{S2}.
Define a constant $\lambda_{\min}^f:= \lambda_{\min}(\Sigma_f) \geq 0$.
For such choices of $M_1$ and $M_2$, we have
$$
\left\|M_2^{-\frac12}\K \Big(M_1 + \frac12 \Sigma_{f}\Big)^{-\frac12}\right\|^{2} \leq \frac{\sigma \|K\|^2}{1/\tau+ (1/2)\lambda_{\min}^f} = \frac{\tau\sigma\|K\|^2}{1 + (\tau/2) \lambda_{\min}^f}.
$$
To make condition \eqref{equ:M1M2:condtion} hold, we obtain the convergence condition of the  PDHG \eqref{alg:pdhg} or \eqref{ipdhg} as
\begin{equation}\label{equ:phdg:condition:new}
\tau, \sigma > 0, \quad \tau\sigma \|\K\|^{2} < \frac43\left(1 + \frac{\tau \lambda_{\min}^f}{2}\right),
\end{equation}
which can imply \eqref{condition:43}.  Besides, we also know from Lemma \ref{lemma:tight:1} and Lemma \ref{lemma:tight:2} that \eqref{equ:phdg:condition:new} is tight in the sense that the constant $4/3$ could not be enlarged.

\begin{remark}
 If $\lambda_{\min}^f$ is not easy to estimate or $f$ has no more property beyond convexity, we can set $\lambda_{\min}^f$ as zero. Moreover,  in the following part of this section, to make the discussion precise, we choose $\Sigma_f = 0$ and $\lambda_{\min}^f = 0$.  We refer to Section \ref{subsection:Birkhoff} for one exception, wherein there holds that $\Sigma_f = I_{n^2}$ and $\lambda_{\min}^f = 1$.
\end{remark}

\subsection{Diagonal $M_{1}$ and $M_{2}$} \label{section:diagonal}
If both $f$ and $g$ take the separable structures, namely, $f(x): = \sum_{j=1}^{n} f_{j}(x_{j})$, $g^{*}(y) = \sum_{i = 1}^{m} g_{i}^{*}(y_{i})$, and the proximal operators of $f_{j}$ and $g_{i}^{*}$ are all easy to compute, we can consider the following choices of diagonal $M_{1}$ and $M_{2}$, which were first proposed in \cite{pock2011diagonal}.
\begin{proposition}\label{prop-diag}
For any $\alpha \in [0,2]$ and $\gamma_{1}, \gamma_{2} > 0$, let
\be\label{equ:diag:M1}
M_{1} = \gamma_{1} \diag(\tau_{1}, \ldots, \tau_{n})\ \mbox{with}\ \tau_{j} = \delta +  \sum_{i=1}^{m} |\K_{ij}|^{2 - \alpha}, j = 1, \ldots, n,
\ee
\be \label{equ:diag:M2}
M_{2} = \gamma_{2} \diag(\sigma_{1}, \ldots, \sigma_{m})\ \mbox{with}\ \sigma_{i} = \delta +  \sum_{j =1}^{n} |\K_{ij}|^{\alpha}, i = 1, \ldots, m,
\ee
where $\delta \geq 0$ is chosen such that $\tau_{j}, \sigma_{i}$ are positive.    If $\gamma_{1}\gamma_{2} > \frac34$, then such $M_{1}$ and $M_{2}$ satisfy
\eqref{equ:M1M2:condtion}.
\end{proposition}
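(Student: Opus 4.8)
The plan is to verify the three requirements in \eqref{equ:M1M2:condtion} one at a time, using the convention adopted earlier in this section that $\Sigma_f = 0$, so that what must be checked is simply $M_1 \succ 0$, $M_2 \succ 0$, and $\|M_2^{-\frac12}\K M_1^{-\frac12}\|^2 < \frac43$. The first two are immediate: since $\delta$ is chosen so that every $\tau_j > 0$ and $\sigma_i > 0$, and $\gamma_1,\gamma_2 > 0$, both $M_1$ and $M_2$ are positive diagonal matrices, hence positive definite. The real content is the spectral-norm bound, and my plan is to recast it as a quadratic-form comparison. Since
\[
\left\|M_2^{-\frac12}\K M_1^{-\frac12}\right\|^2 = \lambda_{\max}\!\left(M_1^{-\frac12}\K^{\tran} M_2^{-1}\K M_1^{-\frac12}\right),
\]
it suffices to show that for every $x \in \Rbb^n$ one has $x^{\tran}\K^{\tran} M_2^{-1}\K x \le (\gamma_1\gamma_2)^{-1}\, x^{\tran} M_1 x$, because $\gamma_1\gamma_2 > \frac34$ then pushes the Rayleigh quotient strictly below $\frac43$.

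The heart of the argument is the estimate of the left-hand side, which I would carry out entrywise following the Pock--Chambolle splitting. Writing $u_i = (\K x)_i = \sum_j \K_{ij} x_j$, the left side equals $\gamma_2^{-1}\sum_i u_i^2/\sigma_i$. The key step is to factor $\K_{ij} x_j = |\K_{ij}|^{\alpha/2}\cdot\bigl(|\K_{ij}|^{(2-\alpha)/2}\,\mathrm{sgn}(\K_{ij})\,x_j\bigr)$ and apply the Cauchy--Schwarz inequality to the inner sum, which gives
\[
u_i^2 \le \Bigl(\sum_j |\K_{ij}|^{\alpha}\Bigr)\Bigl(\sum_j |\K_{ij}|^{2-\alpha} x_j^2\Bigr).
\]
Since $\sum_j |\K_{ij}|^{\alpha} = \sigma_i - \delta \le \sigma_i$, dividing by $\sigma_i$ cancels the first factor; then summing over $i$, interchanging the order of summation, and using $\sum_i |\K_{ij}|^{2-\alpha} = \tau_j - \delta \le \tau_j$, I obtain $\sum_i u_i^2/\sigma_i \le \sum_j \tau_j x_j^2 = \gamma_1^{-1}\, x^{\tran} M_1 x$. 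This is exactly the desired quadratic-form comparison with constant $(\gamma_1\gamma_2)^{-1}$.

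Assembling these yields $\|M_2^{-\frac12}\K M_1^{-\frac12}\|^2 \le (\gamma_1\gamma_2)^{-1}$, and the hypothesis $\gamma_1\gamma_2 > \frac34$ gives $(\gamma_1\gamma_2)^{-1} < \frac43$, establishing the strict bound in \eqref{equ:M1M2:condtion}. The one point requiring care, and the step I expect to be the main (if modest) obstacle, is the bookkeeping of the parameter $\delta$: the factors $\sum_j|\K_{ij}|^{\alpha}$ and $\sum_i|\K_{ij}|^{2-\alpha}$ are precisely $\sigma_i$ and $\tau_j$ with $\delta$ deleted, so I must confirm that dropping the nonnegative $\delta$ always relaxes the estimate in the favorable direction (it does, since it only enlarges the denominators $\sigma_i$ and the targets $\tau_j$). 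A secondary subtlety worth flagging is that Cauchy--Schwarz only delivers the non-strict bound $\le (\gamma_1\gamma_2)^{-1}$; the \emph{strict} inequality in the conclusion comes not from that estimate but from the strict gap $\gamma_1\gamma_2 > \frac34$, which explains why the hypothesis is stated with a strict sign.
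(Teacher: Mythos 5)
Your proof is correct and follows the same route as the paper: both reduce the claim to the bound $\left\|M_2^{-\frac12}\K M_1^{-\frac12}\right\|^2 \leq (\gamma_1\gamma_2)^{-1}$ (with $\Sigma_f = 0$ per the section's convention) and then conclude from $\gamma_1\gamma_2 > \frac34$, with the strict inequality supplied by the hypothesis exactly as you note. The only difference is one of self-containment: the paper obtains the normalized bound $\left\|(M_2/\gamma_2)^{-\frac12}\K (M_1/\gamma_1)^{-\frac12}\right\| \leq 1$ by citing Lemma 2 of \cite{pock2011diagonal}, whereas you reprove it via the Cauchy--Schwarz splitting $|\K_{ij}|^2 = |\K_{ij}|^{\alpha}\,|\K_{ij}|^{2-\alpha}$ together with the correct observation that dropping $\delta \geq 0$ from $\sigma_i$ and $\tau_j$ only relaxes the estimate favorably --- which is precisely the argument behind the cited lemma.
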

\begin{proof}
By \cite[Lemma 2]{pock2011diagonal}, we know $\|(M_{2}/\gamma_{2})^{-1/2}\K (M_{1}/\gamma_{1})^{-1/2}\| \leq 1$,  which implies that
$\|M_{2}^{-\frac12}\K M_{1}^{-\frac12}\|^2\leq \frac{1}{\gamma_{1}\gamma_{2}} < \frac43.$
The proof is completed.
\qed \end{proof}

With  choices \eqref{equ:diag:M1} and \eqref{equ:diag:M2},  PrePDHG \eqref{vmpdhg:2}  becomes
 \[
\left\{
\begin{aligned}
x_{j}^{k+1}={}&\prox_{\tau_{j} f_{j}}\left(x_{j}^{k} - \tau_{j} (\K^{\tran}y^{k})_{j}\right),\ j = 1, \ldots, n, \\
y_{i}^{k+1}={}&\prox_{\sigma_{i} g_{i}^{*}}\left(y_{i}^{k} + \sigma_{i} (\K(2x^{k+1} - x^k))_{i}\right), \ i = 1, \ldots, m.
\end{aligned}
\right.
\]
\begin{remark}
Taking $\gamma_{1} = \gamma_{2} = 1$ in \eqref{equ:diag:M1}  and \eqref{equ:diag:M2} yields the diagonal preconditioners in \cite{pock2011diagonal}. Lemma \ref{lemma:tight:1} tells that $\gamma_{1}\gamma_{2} > \frac34$ in Proposition \ref{prop-diag} is tight in the sense that ``$>$'' can not be improved to ``$\geq$''.
\end{remark}

\subsection{$M_{1} =  \tau^{-1}I_{n}$, $M_{2} = \gamma \tau \K\K^{\tran} + P$ and Extensions}\label{section:general}
Another choice is $M_{1} = \tau^{-1} I_{n}$ and $M_{2} = \tau \K\K^{\tran} + \theta I_{m}$ with $\tau, \theta > 0$, which was proposed   in \cite{liu2021acceleration}. Here, we consider a relaxed version of such choices.
\begin{proposition}\label{prop:M1M2:general}
Let $P \in \Rbb^{m\times m}$ be  a nonzero symmetric  positive semidefinite matrix such that $\K\K^{\tran} + P \succ 0$. Choose
\be \label{M1M2:general}
M_{1} =  \tau^{-1}I_{n}, \quad M_{2} = \gamma \tau \K\K^{\tran} + P, \quad \tau > 0,\quad \gamma \geq  \frac34,
\ee
then \eqref{equ:M1M2:condtion} holds.
\end{proposition}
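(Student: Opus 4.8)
The plan is to verify the three requirements in \eqref{equ:M1M2:condtion} directly, using that $\Sigma_f = 0$ throughout this section (by the remark in Section \ref{section:ipdhg}), so that the target condition collapses to $M_1 \succ 0$, $M_2 \succ 0$, and $\|M_2^{-\frac12}\K M_1^{-\frac12}\|^2 < \frac43$. Since $M_1 = \tau^{-1}I_n$ with $\tau > 0$, the first requirement is immediate, and $M_1^{-\frac12} = \sqrt{\tau}\,I_n$ reduces the $M_1$-dependence to a scalar that I will absorb via a Schur-complement reformulation.

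First I would settle $M_2 \succ 0$. Because $\gamma \ge \frac34 > 0$ and $\tau > 0$, both $\gamma\tau\K\K^{\tran}$ and $P$ are positive semidefinite, so $M_2 \succeq 0$. To upgrade this to strict definiteness, take any $v$ with $v^{\tran}M_2 v = 0$; then $\gamma\tau\|\K^{\tran}v\|^2 + \iprod{v}{Pv} = 0$ forces $\K^{\tran}v = 0$ and $Pv = 0$, whence $\iprod{v}{(\K\K^{\tran}+P)v} = 0$, and the hypothesis $\K\K^{\tran}+P \succ 0$ gives $v = 0$.

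The core of the argument is the norm bound, which I would recast as a positive-definiteness statement. By Lemma \ref{equi14}, condition \eqref{equ:M1M2:condtion} (with $\Sigma_f = 0$) is equivalent to $\begin{pmatrix}\frac43 M_1 & \K^{\tran} \\ \K & M_2\end{pmatrix}\succ 0$. Taking the Schur complement with respect to the $(1,1)$-block $\frac43 M_1 = \frac{4}{3\tau}I_n \succ 0$, this is in turn equivalent to $M_2 - \frac{3\tau}{4}\K\K^{\tran} \succ 0$. Substituting $M_2 = \gamma\tau\K\K^{\tran}+P$ yields the clean target
\[
M_2 - \tfrac{3\tau}{4}\K\K^{\tran} = \tau\Big(\gamma - \tfrac34\Big)\K\K^{\tran} + P,
\]
and for $\gamma \ge \frac34$ both summands are positive semidefinite, so the sum is at least positive semidefinite; the remaining—and delicate—point is strictness.

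I expect the strict inequality to be the main obstacle. For $\gamma > \frac34$ it is painless: for $v \ne 0$ one has $\iprod{v}{[\tau(\gamma-\frac34)\K\K^{\tran}+P]v} = \tau(\gamma-\frac34)\|\K^{\tran}v\|^2 + \iprod{v}{Pv}$, and since $\K\K^{\tran}+P \succ 0$ at least one of $\|\K^{\tran}v\|^2$ and $\iprod{v}{Pv}$ is strictly positive while both weights are strictly positive, so the expression is positive. The borderline $\gamma = \frac34$ is where I would concentrate attention, since there the $\K\K^{\tran}$ term vanishes and the reduced condition collapses to $P \succ 0$, which is genuinely stronger than the standing hypothesis $\K\K^{\tran}+P \succ 0$; so either one restricts to $\gamma > \frac34$ or one must separately invoke positive definiteness of $P$ to close the gap. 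The remaining bookkeeping—writing out the induced subproblem and matching it to the PrePDHG recursion—follows the earlier subsections without new ideas.
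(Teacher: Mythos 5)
Your argument is correct, and it takes a genuinely different route from the paper's: the paper verifies \eqref{equ:M1M2:condtion} directly via the eigenvalue identity $\|M_2^{-\frac12}\K M_1^{-\frac12}\|^2 = \frac{1}{\gamma}\,\lambda_{\max}\big((\gamma\tau \K\K^{\tran}+P)^{-1}(\gamma\tau \K\K^{\tran})\big)$ and then asserts that this quantity is strictly below $\frac1\gamma \le \frac43$ ``due to $P\succeq 0$ but $P\neq 0$'', whereas you pass through Lemma \ref{equi14} and a Schur complement to the equivalent statement $\tau\big(\gamma-\frac34\big)\K\K^{\tran}+P\succ 0$. The two reformulations carry the same content, but yours makes the borderline case transparent, and your suspicion there is vindicated: the paper's strict-inequality step is actually false when $P$ is singular. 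Indeed, $\lambda_{\max}\big((\gamma\tau \K\K^{\tran}+P)^{-1}(\gamma\tau \K\K^{\tran})\big)<1$ holds if and only if $\ker P=\{0\}$, since $Pw=0$ with $w\neq 0$ gives $(\gamma\tau \K\K^{\tran}+P)w=\gamma\tau \K\K^{\tran}w$, making $w$ an eigenvector with eigenvalue $1$. A concrete counterexample at $\gamma=\frac34$: take $m=n=2$, $\K=\diag(0,1)$, $P=\diag(1,0)$, which is nonzero, positive semidefinite, and satisfies $\K\K^{\tran}+P=I_2\succ 0$; then $M_2=\diag(1,\gamma\tau)$ and $\|M_2^{-\frac12}\K M_1^{-\frac12}\|^2=1/\gamma=\frac43$ exactly, so \eqref{equ:M1M2:condtion} fails. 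Hence Proposition \ref{prop:M1M2:general} as stated is only correct for $\gamma>\frac34$, or at $\gamma=\frac34$ under the strengthened hypothesis $P\succ 0$ --- precisely the dichotomy you identify. Two minor remarks: your kernel argument for $M_2\succ 0$ coincides with what the paper asserts without detail, and your closing sentence about matching the induced subproblem to the PrePDHG recursion is unnecessary, since the proposition is purely a statement about the matrices $M_1$ and $M_2$.
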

\begin{proof}
It is easy to see that $M_{2}\succ 0$ from $\K\K^{\tran} + P\succ 0$ with $\K\K^{\tran}\succeq 0$ and $P\succeq 0$. Hence,
We have
\begin{align}
\left\|M_{2}^{-\frac12}\K M_{1}^{-\frac12}\right\|^{2}  ={} & \tau  \lambda_{\max}\left(\K^{\tran} \left(\gamma \tau \K\K^{\tran} + P\right)^{-1}  \K\right)  \nn \\
={}&  \frac{1}{\gamma}  \lambda_{\max}\left(\left(\gamma \tau \K\K^{\tran} + P\right)^{-1}  (\gamma \tau \K\K^{\tran}) \right) \nn\\
< {}&  \frac{1}{\gamma}  \lambda_{\max}\left(\left(\gamma \tau \K\K^{\tran} + P\right)^{-1} \left(\gamma \tau \K\K^{\tran}  + P\right)\right) \leq  \frac43, \nn
\end{align}
where the first inequality is due to $P \succeq 0$ but $P \neq 0$, and the second one relies on $\gamma \geq 3/4$. The proof is completed.
\qed \end{proof}

\begin{remark}
Similar to \eqref{M1M2:general}, letting $\hat P \in \Rbb^{m\times m}$ be  a  nonzero symmetric  positive semidefinite matrix such that $\K^{\tran}\K + \hat P \succ 0$, we can choose
\be \label{M1M2:general:2}
\quad M_{1} = \gamma \sigma \K^{\tran}\K + \hat P, \quad  M_{2} =  \sigma^{-1}I_{n},  \quad \sigma > 0,\quad \gamma \geq \frac34
\ee
such that condition \eqref{equ:M1M2:condtion} holds.  Note that very recently Bai \cite{bai2021new} considered  \eqref{M1M2:general}  and
\eqref{M1M2:general:2} with $\gamma = 1$ and symmetric positive definite $P$ and $\hat P$.
\end{remark}

In some problem, such as CT reconstruction in Section \ref{subsection:CT}, $g^{*}(y)$ takes a separable structure as $g^{*}(y) = g_{1}^{*}(y_{1}) + g_{2}^{*}(y_{2})$  with $y = \begin{pmatrix}y_{1} \\ y_{2}\end{pmatrix}$, $y_{1} \in \Rbb^{m_{1}}, y_{2} \in \Rbb^{m_{2}}$, in which the proximal of $g_{1}$ takes a closed form solution. In this case, motivated by  \cite{liu2021acceleration}, we can partition $\K$ as $\K = \begin{pmatrix} \K_1 \\ \K_2 \end{pmatrix}$ with $\K_1\in \Rbb^{m_1 \times n}$, $\K_2 \in \Rbb^{m_2\times n}$  and  choose
 \begin{equation} \label{M1M2:general:2block}
 M_1 = \frac{2\gamma}{ \tau}I_n,\quad  M_2 = \begin{pmatrix}
\sigma^{-1}  I_{m_1} & 0 \\ 0 & \tau  \K_2 \K_2^{\tran} + P_{2}\end{pmatrix}\quad \mbox{with}\quad \tau, \sigma > 0.
\end{equation}
We have the following result.
\begin{proposition}\label{prop:M1M2:general:2block}
Let $P_{2} \in \Rbb^{m_{2}\times m_{2}}$ be  a nonzero symmetric  positive semidefinite matrix such that $\K_{2}\K_{2}^{\tran} + P_{2} \succ 0$.  Let $\tau, \sigma > 0$. If $( \tau \sigma \|\K_{1}\|^{2} + 1)/\gamma\leq 8/3$ and  $M_{1}$ and $M_{2}$ are chosen according to \eqref{M1M2:general:2block},
then \eqref{equ:M1M2:condtion} holds.
\end{proposition}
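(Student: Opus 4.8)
The plan is to reduce condition \eqref{equ:M1M2:condtion} (in the regime $\Sigma_f = 0$ used throughout this section) to a single L\"owner-order inequality and then exploit the block-diagonal form of $M_2$. Concretely, \eqref{equ:M1M2:condtion} requires $M_1 \succ 0$, $M_2 \succ 0$, and $\|M_2^{-\frac12}\K M_1^{-\frac12}\|^2 < \frac43$. The definiteness of $M_1 = \frac{2\gamma}{\tau}I_n$ is immediate, since the hypothesis $(\tau\sigma\|\K_1\|^2 + 1)/\gamma \le \frac83$ forces $\gamma > 0$. For $M_2$, the block $\sigma^{-1}I_{m_1}$ is positive definite, while for every nonzero $v$ one has $v^{\tran}(\tau\K_2\K_2^{\tran} + P_2)v = \tau\|\K_2^{\tran}v\|^2 + v^{\tran}P_2 v > 0$ because $\K_2\K_2^{\tran} + P_2 \succ 0$ and $\tau > 0$; hence the second block is positive definite and $M_2 \succ 0$.

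By Lemma \ref{equi14} applied with $M_1$ replaced by $\frac43 M_1$, the bound $\|M_2^{-\frac12}\K M_1^{-\frac12}\|^2 < \frac43$ is equivalent (given $M_1,M_2 \succ 0$) to $\K^{\tran}M_2^{-1}\K \prec \frac43 M_1 = \frac{8\gamma}{3\tau}I_n$. Using $M_2^{-1} = \diag(\sigma I_{m_1}, (\tau\K_2\K_2^{\tran}+P_2)^{-1})$ and the partition $\K = (\K_1^{\tran},\K_2^{\tran})^{\tran}$, I would compute
\[
\K^{\tran}M_2^{-1}\K = \sigma\K_1^{\tran}\K_1 + \K_2^{\tran}(\tau\K_2\K_2^{\tran}+P_2)^{-1}\K_2.
\]
The first summand obeys $\sigma\K_1^{\tran}\K_1 \preceq \sigma\|\K_1\|^2 I_n$. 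For the second I would mimic the eigenvalue chain in the proof of Proposition \ref{prop:M1M2:general}: since $\K_2^{\tran}M\K_2$ and $M\K_2\K_2^{\tran}$ share the same largest eigenvalue,
\[
\lambda_{\max}\!\big(\K_2^{\tran}(\tau\K_2\K_2^{\tran}+P_2)^{-1}\K_2\big) = \tfrac1\tau\,\lambda_{\max}\!\big((\tau\K_2\K_2^{\tran}+P_2)^{-1}(\tau\K_2\K_2^{\tran})\big) < \tfrac1\tau,
\]
where the strict inequality is intended to follow from $P_2 \succeq 0$, $P_2 \neq 0$, whence $\K_2^{\tran}(\tau\K_2\K_2^{\tran}+P_2)^{-1}\K_2 \prec \frac1\tau I_n$.

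Combining the two estimates, and using that $X \preceq aI_n$ together with $Y \prec bI_n$ yields $X+Y \prec (a+b)I_n$, I obtain $\K^{\tran}M_2^{-1}\K \prec (\sigma\|\K_1\|^2 + \frac1\tau)I_n$; the hypothesis $(\tau\sigma\|\K_1\|^2+1)/\gamma \le \frac83$ rearranges to $\sigma\|\K_1\|^2 + \frac1\tau \le \frac{8\gamma}{3\tau}$, so that $\K^{\tran}M_2^{-1}\K \prec \frac{8\gamma}{3\tau}I_n = \frac43 M_1$, which is exactly the required inequality.

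I expect the delicate point to be the strictness of the bound on the $\K_2$-block: both the operator-norm estimate of the $\K_1$-term and the hypothesis are only non-strict (``$\preceq$'' and ``$\le$''), so all of the strictness in \eqref{equ:M1M2:condtion} must be supplied by the term $P_2 \neq 0$, exactly as in Proposition \ref{prop:M1M2:general}. Making this precise --- ensuring that $\lambda_{\max}((\tau\K_2\K_2^{\tran}+P_2)^{-1}(\tau\K_2\K_2^{\tran})) < 1$ and that the resulting slack is not destroyed when the $\sigma\K_1^{\tran}\K_1$ term is added --- is where the argument has to be carried out with care.
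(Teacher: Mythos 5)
Your proposal is essentially the paper's proof. The paper likewise checks $M_2 \succ 0$ blockwise, writes $\left\|M_2^{-\frac12}\K M_1^{-\frac12}\right\|^2 = \frac{\tau}{2\gamma}\,\lambda_{\max}\left(\sigma\K_1^{\tran}\K_1 + \K_2^{\tran}\left(\tau\K_2\K_2^{\tran}+P_2\right)^{-1}\K_2\right)$, splits off the $\K_1$-term via $\lambda_{\max}(X+Y)\leq\lambda_{\max}(X)+\lambda_{\max}(Y)$, converts the $\K_2$-term by the same nonzero-spectrum identity into $\frac{1}{2\gamma}\lambda_{\max}\left(\left(\tau\K_2\K_2^{\tran}+P_2\right)^{-1}\left(\tau\K_2\K_2^{\tran}\right)\right)$, and draws strictness from $P_2\neq 0$ exactly as in Proposition \ref{prop:M1M2:general}; your repackaging of the norm bound as the L\"owner inequality $\K^{\tran}M_2^{-1}\K \prec \frac43 M_1$ via Lemma \ref{equi14} is cosmetic.

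Two caveats, one yours and one shared with the paper. Yours: the hypothesis $(\tau\sigma\|\K_1\|^2+1)/\gamma\leq 8/3$ does \emph{not} force $\gamma>0$ --- any $\gamma<0$ satisfies it, since the numerator is positive --- so positivity of $\gamma$ must be taken as a standing assumption accompanying \eqref{M1M2:general:2block}, not derived. Shared: the point you flag as delicate is genuinely thin in the paper as well, which disposes of it with the same one-line appeal to $P_2\succeq 0$, $P_2\neq 0$. That appeal gives $\lambda_{\max}\left(\left(\tau\K_2\K_2^{\tran}+P_2\right)^{-1}\left(\tau\K_2\K_2^{\tran}\right)\right)<1$ only when $P_2\succ 0$: if $P_2$ is singular, pick $w\neq 0$ with $P_2w=0$ and set $Q:=\tau\K_2\K_2^{\tran}+P_2$; then $\left(\tau\K_2\K_2^{\tran}\right)w = Qw$, so $Q^{\frac12}w$ is an eigenvector of $Q^{-\frac12}\left(\tau\K_2\K_2^{\tran}\right)Q^{-\frac12}$ with eigenvalue exactly $1$, and the strict inequality fails. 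The failure is not cosmetic: with $\K_1=0$, $\K_2=I_2$, $P_2=\diag(1,0)$, $\tau=\sigma=1$, $\gamma=3/8$ (the boundary of the hypothesis), one computes $\left\|M_2^{-\frac12}\K M_1^{-\frac12}\right\|^2 = \frac43$, not $<\frac43$, so \eqref{equ:M1M2:condtion} is violated. Your instinct that all the strictness must be supplied by $P_2$ is exactly right, but neither your outline nor the paper's own proof secures it for merely nonzero, singular $P_2$; a clean repair (for both) is to assume $P_2\succ 0$, or to make the scalar hypothesis strict.
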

\begin{proof}
It is easy to see that $M_{2}\succ 0$ from $\K_2\K_2^{\tran} + P_2\succ 0$ with $\K_2\K_2^{\tran}\succeq 0$ and $P_2\succeq 0$.  We thus have
\begin{align}
{}&\left\|M_{2}^{-\frac12}\K M_{1}^{-\frac12}\right\|^{2} \nn\\
 ={} &\frac{\tau}{2\gamma}  \lambda_{\max}\left(\sigma \K_{1} \K_{1}^{\tran} + \left(\tau \K_{2}\K_{2}^{\tran} + P_{2}\right)^{-1/2}  \K_{2} \left(\tau \K_{2}\K_{2}^{\tran} + P_{2}\right)^{-1/2}\right)  \nn \\
\leq{}& \frac{ \tau}{2\gamma} \sigma \|\K_{1}\|^{2} + \frac{1}{2\gamma}  \lambda_{\max}\left(\left(\tau \K_{2}\K_{2}^{\tran} + P_{2}\right)^{-1}  (\tau \K_{2}\K_{2}^{\tran}) \right) \nn\\
< {}& \frac{\tau \sigma \|\K_{1}\|^{2} + 1}{2\gamma} \leq \frac43. \nn
\end{align}
 The proof is completed.
\qed \end{proof}
\begin{remark}\label{remark:M1M2:general:2block}
A particular choice in \eqref{M1M2:general:2block}  is $\tau>0, \sigma>0$, and $\tau \sigma \|\K_{1}\|^{2} = 1$. In this case, Proposition \ref{prop:M1M2:general:2block} yields $ \gamma \geq \frac34.$
\end{remark}

\subsection{A Special Case  $g^{*} = \iprod{b}{y}$ and Beyond} \label{subsection:by}
In this subsection, we mainly consider the case when $g^{*}$ is a linear function, for which
with choice \eqref{M1M2:general}, the $y$-subproblem in PrePDHG \eqref{vmpdhg:2}  can be efficiently solved. Some more general cases of $g^*$ are also discussed at the end of this subsection.

Given a vector $b \in \Rbb^{m}$, we consider
 $$ 
 g(y) = \mathbb{I}_{\{b\}}\quad \mbox{and}\quad  g^*(y)=\sup_{z \in \Rbb^{m}}\{\iprod{z}{y}-g(z)\}= \iprod{b}{y},
 $$
 where $\mathbb{I}_{\{b\}}$ is the indicator function of the singleton  $\{b\}$. Hence, problem \eqref{PD} becomes
 \begin{equation}\label{PD:2} 
\min_{x\in \Rbb^n} \max_{y\in \Rbb^m}\; \Lcal(x,y):= f(x)+ \iprod{y}{\K x} - \iprod{b}{y}.
\end{equation}
The recursions of PrePDHG \eqref{vmpdhg} for \eqref{PD:2} are given as
\be \label{alg:ebalm}
\left\{
\begin{aligned}
x^{k+1}={}&\prox_{\tau f}\left(x^{k} - \tau \K^{\tran}y^{k}\right), \\
y^{k+1}={}&y^k+ (\gamma \tau \K\K^{\tran} + P)^{-1}\left(\K(2x^{k+1}-x^k)-b\right).
\end{aligned}
\right.
\ee

\begin{remark} \label{remark:R:upperbound}
For $g^*(y) = \langle b, y\rangle$,  compared with the results in Remark \ref{remark:upperbound:R}, we can obtain more compact upper bounds of $\Rcal(x^{k+1}, y^{k+1})$ and $\Rcal(x^{k+1}, y^{k})$.
By  \eqref{equ:thm:subconvergence:dist0} and \eqref{equ:kkt:residual}, we have
$$
\Rcal(x^{k+1}, y^{k+1}) \leq \max\{\|\K^{\tran} (y^{k+1} - y^{k})- \tau^{-1} (x^{k+1} - x^{k})\|, \|Kx^{k+1} - b\|\}.
$$
Besides, by \eqref{equ:optimality:x:00}  and \eqref{equ:kkt:residual},   we have
\be \label{equ:Rxkyk-1:Kxb}
\Rcal(x^{k+1}, y^k)  \leq \max\{\|\tau^{-1} (x^{k+1} - x^{k})\|, \|Kx^{k+1} - b\|\}.
\ee
\end{remark}
We next consider two choices of $P$, where the $y$-subproblem in \eqref{alg:ebalm} is easy to solve.

The first one is to choose $\gamma = 1$ and $P = \theta I_{m}$ for some  $\theta>0$. Then \eqref{alg:ebalm} reduces to the balanced ALM (BALM) \cite{he2021balanced} for solving  the following convex optimization problem
\begin{equation}\label{P1}
\min_{x\in\Rbb^{n}} \quad  f(x)\quad  \mst  \quad \K x=b,
\end{equation}
which corresponds to the primal formulation of \eqref{PD:2} (see Sections \ref{subsection:Birkhoff} and  \ref{section:EMD} for  two instances of \eqref{P1}).

\vspace{4pt}
\hspace{-6mm}
\begin{boxedminipage}{1\textwidth}
\noindent \textbf{BALM procedure:} Let $\tau > 0$ and $\theta > 0$. For given $(x^k, y^k)$, the new iterate $(x^{k+1}, y^{k+1})$ is generated by:
\begin{equation}\label{balm}
\left\{
\begin{aligned}
x^{k+1}={}&\prox_{\tau f}\left(x^{k} - \tau \K^{\tran}y^{k}\right),\\
 y^{k+1}={}&y^k+(\tau \K\K^{\tran}+\theta I_{m})^{-1}\left(\K(2x^{k+1}-x^k)-b\right).
\end{aligned}
\right.
\end{equation}
\end{boxedminipage}
\vspace{4pt}

\noindent In \cite{he2021balanced}, He and Yuan proved the convergence of BALM \eqref{balm}  in an elegant way by using the framework of variational inequalities.  Note that the parameters  $\tau$ and $\theta$ can be arbitrary  positive constants. By applying the results in Section \ref{section:vmpdhg}, we  obtain an enhanced BALM (eBALM), with global convergence and sublinear convergence rate, as follows:

\vspace{4pt}
\hspace{-6mm}
\begin{boxedminipage}{1\textwidth}
\noindent \textbf{eBALM procedure:} Let $\tau > 0$, $\theta > 0$ and $\gamma \geq 3/4$. For given $(x^k, y^k)$, the new iterate $(x^{k+1}, y^{k+1})$ is generated by:
\begin{equation}\label{ebalm}
\left\{
\begin{aligned}
x^{k+1}={}&\prox_{\tau f}\left(x^{k} - \tau \K^{\tran}y^{k}\right),\\
y^{k+1}={}&y^k+\gamma^{-1}(\tau \K\K^{\tran}+\theta I_{m})^{-1}\left(\K(2x^{k+1}-x^k)-b\right).
\end{aligned}
\right.
\end{equation}
\end{boxedminipage}
\vspace{4pt}

\begin{remark}
Note that $\gamma^{-1}$ is taken as 1 in \eqref{balm} and can be any number in $(0, 4/3]$ in \eqref{ebalm}. Therefore, compared with BALM, the stepsize of $y$-subproblem in eBALM can be enlarged to $4/3$ from 1  Moreover, $4/3$ is a tight upper bound of $\gamma^{-1}$ according to Lemma \ref{lemma:tight:1}.
\end{remark}

Next, we discuss the case when the inverse of the matrix in \eqref{ebalm} does not take a closed form or solving the corresponding linear system is difficult; see the earth mover's distance problem in Section \ref{section:EMD} for instance. In this case, we can use the block Gauss-Seidel method or the conjugate gradient method to inexactly solve the corresponding linear system. However,  the convergence issues are beyond the scope of this paper, and we refer the readers to \cite{jiang2021approximate, liu2021acceleration, jiang2021first} and the reference therein for some discussion on the inexact PDHG. As an alternative, we can adopt one block symmetric Gauss-Seidel  (sGS) iteration to solve the linear system inexactly. By the sGS decomposition theorem developed by  Li et al. \cite{li2019block}, this approach corresponds to taking $P$ as a specific positive definite matrix in \eqref{alg:ebalm}.   More specifically,  let $Q = \gamma \tau \K\K^{\tran}+\theta I_{m}$. Suppose that $Q$ takes the block structure
$$
Q = \begin{pmatrix} Q_{1,1} & \cdots & Q_{1,s} \\
\vdots & \vdots & \vdots \\
Q_{1,s}^{\tran} & \cdots & Q_{s,s}
\end{pmatrix},
$$
where $Q_{i,j} \in \Rbb^{m_{i}\times n_{j}}$ for $1\leq i,j\leq s$ and  $Q_{i,i}$ is symmetric positive definite and its inverse is easy to compute. Note that if $(\gamma \tau \K\K^{\tran})_{i,i}$ is positive definite, then $\theta$ can be chosen to be zero. Let
$$
U = \begin{pmatrix} 0 & Q_{1,2} &  \cdots & Q_{1,s} \\
  & \ddots &  & \vdots \\
 &  &  \ddots & Q_{s-1,s}\\
& & & 0
\end{pmatrix},
\quad D =  \begin{pmatrix} Q_{1,1} & &    &  \\
  & Q_{2,2} &  &  \\
 &  &  \ddots &  \\
& & & Q_{s,s}
\end{pmatrix}.
$$
Suppose $U \neq 0$, otherwise, the $y$-subproblem in \eqref{ebalm} takes closed form solution since the inverse of $Q_{i,i}$ is easy to compute. Taking $\tilde P = U D^{-1}U^{\tran}$, by \cite[Theorem 1]{li2019block}, we have $Q + \tilde P = (D+U)D^{-1}(D+U^{\tran}) \succ 0$ and that \eqref{alg:ebalm} with $$P = \theta I_{m} +  U D^{-1}U^{\tran}$$ is equivalent to the following procedure.

\vspace{4pt}
\hspace{-6mm}
\begin{boxedminipage}{1\textwidth}
\noindent \textbf{eBALM-sGS procedure:} Let $\tau > 0$, $\theta > 0$,  $\gamma \geq 3/4$ and $Q = \gamma \tau \K\K^{\tran}+\theta I_{m}$. For given $(x^k, y^k)$, the new iterate $(x^{k+1}, y^{k+1})$ is generated by:
\be \label{alg:ebalm:sGS}
\left\{
\begin{aligned}
x^{k+1}={}&\prox_{\tau f}\left(x^{k} - \tau \K^{\tran}y^{k}\right), \\
\bar b^{k+1} ={}& \K(2x^{k+1}-x^k)-b, \\
\bar y_{i}^{k+1}={}&y_{i}^k+Q_{i,i}^{-1}\bigg(\bar b_{i}^{k+1} - \sum_{j = 1}^{i-1}Q_{j,i}^{\tran} y_{j}^{k} - \sum_{j = i+1}^{s} Q_{i,j} \bar y_{j}^{k+1}\bigg), ~ i = s, \ldots, 2, \\
y_{i}^{k+1}={}&y_{i}^k + Q_{i,i}^{-1}\bigg(\bar b_{i}^{k+1} - \sum_{j = 1}^{i-1}Q_{j,i}^{\tran} y_{j}^{k+1} - \sum_{j = i+1}^{s} Q_{i,j} \bar y_{j}^{k+1}\bigg), ~ i = 1, \ldots, s, \\
\end{aligned}
\right.
\ee
where $\bar y_{i}^{k+1}, y_{i}^{k+1} \in \Rbb^{m_{i}}$ for $1 \leq i \leq s$ and $y^{k+1} = \begin{pmatrix} (y^{k+1}_{1})^\tran,  \cdots, (y^{k+1}_{s})^\tran \end{pmatrix}^\tran$.
\end{boxedminipage}
\vspace{4pt}

\noindent We name \eqref{alg:ebalm:sGS} as enhanced BALM with symmetric Gauss-Seidel iterations (eBALM-sGS) for solving problem \eqref{P1}. By Proposition \ref{prop:M1M2:general},  Theorem \ref{thm:convergence}
and Theorem \ref{thm:subconvergence}, we have the following results.

\begin{lemma}\label{lemma:sublinear:LCP}
Suppose $U\neq 0$. Let $\tau>0, \theta > 0$, and $\gamma \geq 3/4$.  Then the sequence $\{(x^{k},y^{k})\}$ generated by  eBALM-sGS \eqref{alg:ebalm:sGS} converges to an optimal solution of problem \eqref{P1}. Moreover, for $t \geq 1$, we have
\[
\begin{aligned}
&\min_{1 \leq k \leq t} \dist(0, \partial f(x^{k}) + \K^{\tran}y^{k}) = o\left(\frac{1}{\sqrt{t}}\right),  \\
 &\min_{1 \leq k \leq t} \dist(0, \partial f(x^{k}) + \K^{\tran}y^{k-1}) = o\left(\frac{1}{\sqrt{t}}\right),
\end{aligned}
\]
and
\[
 \min_{1 \leq k \leq t} \|\K x^{k} - b\| = o\left(\frac{1}{\sqrt{t}}\right). 
\]
If  $\gamma \geq 1$, $\theta > 0$,   the sublinear rate results are refined as
\[
  \dist(0, \partial f(x^{t}) + \K^{\tran}y^{t}) = o\left(\frac{1}{\sqrt{t}}\right), \quad   \dist(0, \partial f(x^{t}) + \K^{\tran}y^{t-1}) = o\left(\frac{1}{\sqrt{t}}\right)
\]
and
\[
 \|\K x^{t} - b\| = o\left(\frac{1}{\sqrt{t}}\right). 
\]
\end{lemma}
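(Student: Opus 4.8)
The plan is to treat eBALM-sGS as a concrete instance of PrePDHG and to read off every conclusion from the theorems already established, the only genuine work being the bookkeeping that connects the generic residual $\Rcal$ to the three quantities in the statement. First I would record the reduction: by the sGS decomposition recalled just above (see \cite{li2019block}), the recursion \eqref{alg:ebalm:sGS} is exactly PrePDHG \eqref{vmpdhg} for \eqref{PD:2} with $M_1 = \tau^{-1}I_n$ and $M_2 = \gamma\tau\K\K^\tran + P$, where $P = \theta I_m + UD^{-1}U^\tran$. Since $\theta > 0$ forces $P \succeq \theta I_m \succ 0$, the matrix $P$ is nonzero and positive definite, so $\K\K^\tran + P \succ 0$ and Proposition \ref{prop:M1M2:general} applies (with $\Sigma_f = 0$): for $\gamma \geq 3/4$ condition \eqref{equ:M1M2:condtion} holds. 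Theorem \ref{thm:convergence} then gives convergence of $\{(x^k, y^k)\}$ to a saddle point $(x^\star, y^\star)$ of \eqref{PD:2}. Because $g^*(y) = \iprod{b}{y}$ has $\partial g^*(y) = \{b\}$, the optimality system \eqref{equ:opt:condition} reduces to $\K x^\star = b$ and $0 \in \partial f(x^\star) + \K^\tran y^\star$, so $x^\star$ solves \eqref{P1}; this is the convergence claim.

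Next I would specialize the residual. With $\partial g^*(y) = \{b\}$ one has $\dist(0, \partial g^*(y) - \K x) = \|\K x - b\|$, so \eqref{equ:kkt:residual} collapses to $\Rcal(x,y) = \max\{\dist(0, \partial f(x) + \K^\tran y),\, \|\K x - b\|\}$. Applying the first half of Theorem \ref{thm:subconvergence}, i.e.\ \eqref{equ:lemma:rate:000}, gives $\min_{1\leq k\leq t}\Rcal(x^k,y^k) = o(1/\sqrt t)$ and $\min_{1\leq k\leq t}\Rcal(x^k,y^{k-1}) = o(1/\sqrt t)$. Since for every index $k$ each of $\dist(0, \partial f(x^k) + \K^\tran y^k)$, $\dist(0, \partial f(x^k) + \K^\tran y^{k-1})$ and $\|\K x^k - b\|$ is bounded above by the corresponding value of $\Rcal$, and since $a_k \leq b_k$ for all $k$ implies $\min_k a_k \leq \min_k b_k$, the three best-iterate estimates of the first block follow immediately.

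For the refined regime $\gamma \geq 1$ I would sharpen the estimate inside the proof of Proposition \ref{prop:M1M2:general}: with $\Sigma_f = 0$,
\[
\left\|M_2^{-1/2}\K M_1^{-1/2}\right\|^2 = \frac1\gamma\,\lambda_{\max}\!\left((\gamma\tau\K\K^\tran + P)^{-1}(\gamma\tau\K\K^\tran)\right) < \frac1\gamma \leq 1,
\]
the strict inequality coming from $P \succ 0$ and the last from $\gamma \geq 1$. This is precisely condition \eqref{equ:M1M2:condtion:2}, so the second half of Theorem \ref{thm:subconvergence}, namely \eqref{equ:lemma:rate:111}, yields the last-iterate bounds $\Rcal(x^t,y^t) = o(1/\sqrt t)$ and $\Rcal(x^t,y^{t-1}) = o(1/\sqrt t)$; dominating each component by $\Rcal$ once more delivers the three refined estimates.

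The step I expect to carry the conceptual weight is not any computation but the recognition that the single scalar $1/\gamma$ separates the two regimes: $\gamma \geq 3/4$ only secures \eqref{equ:M1M2:condtion} (the bound $4/3$) and hence the weaker best-iterate rates through Lemma \ref{guth2}, whereas $\gamma \geq 1$ secures the strictly stronger \eqref{equ:M1M2:condtion:2} (the bound $1$) required for the last-iterate rates through Lemma \ref{guth3}. The remaining care is routine: confirming that $\theta > 0$ makes $P \succ 0$, so that both strict inequalities hold independently of $U$, which also reveals that the hypothesis $U \neq 0$ serves only to make eBALM-sGS genuinely distinct from plain eBALM rather than to drive the convergence argument.
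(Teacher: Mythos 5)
Your proposal is correct and takes essentially the same route as the paper, which establishes this lemma precisely by combining Proposition \ref{prop:M1M2:general} (identifying eBALM-sGS as PrePDHG with $M_1=\tau^{-1}I_n$, $M_2=\gamma\tau\K\K^{\tran}+P$, $P=\theta I_m + UD^{-1}U^{\tran}$) with Theorem \ref{thm:convergence} and Theorem \ref{thm:subconvergence}, after specializing $\Rcal$ via $\partial g^{*}(y)=\{b\}$. Your explicit verification that $\theta>0$ gives $P\succ 0$ and hence the strict bound $\|M_2^{-1/2}\K M_1^{-1/2}\|^{2}<1/\gamma\leq 1$ for $\gamma\geq 1$ (so that \eqref{equ:M1M2:condtion:2} holds and the last-iterate rates from \eqref{equ:lemma:rate:111} apply) is exactly the step the paper leaves implicit, and your reading of the role of $U\neq 0$ matches the paper's.
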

\begin{remark}
If the $i$-th block $(\gamma \tau \K\K^{\tran})_{i,i}$  is positive definite for any $1 \leq i \leq s$, then  $\theta > 0$ in the above lemma becomes $\theta \geq 0$.
\end{remark}

To end this subsection,  we consider a more general scenario that $g^{*}$ takes the block separable structure, i.e., $y = \begin{pmatrix} y_{1}^\tran, \ldots, y_{s}^\tran\end{pmatrix}^\tran$ and $g^{*}(y) = \sum_{j = 1}^{s} g_{j}(y_{j})$.  In this case, the $y$-subproblem in  PrePDHG \eqref{vmpdhg:2}  can be efficiently solved by cyclic proximal block coordinate descent method, see \cite{liu2021acceleration} for more details.

\section{Numerical Experiments}\label{S5}
In this section, we present plenty of numerical results on the matrix game, projection onto the Birkhoff polytope, earth mover's distance, and CT reconstruction problems to verify the superiority of the larger range of the corresponding parameters in our PrePDHG.
The codes are written in MATLAB (Release 2017b) and run
in macOS 10.15.4 on a MacBook Pro with a 2.9GHz Intel Core i7 processor with 16GB memory.

\subsection{Matrix Game}\label{subsection:matrixgame}
Let $\Delta_n = \{x\in \Rbb^n\mid \sum_{i = 1}^n x_i = 1, x \geq 0\}$ be the standard unit simplex in $\Rbb^n$. Given a matrix $K \in \Rbb^{m \times n}$, we consider the min-max matrix game
\be \label{prob:matrixgame}
\min_{x \in \Delta_n} \max_{y \in \Delta_m} \iprod{Kx}{y}.
\ee
This problem is a form of problem \eqref{PD} with  $f$ and $g^*$ chosen as the indicator functions of $\Delta_n$ and $\Delta_m$.
The main iterations of PDHG \eqref{ipdhg} are thus given as
 \begin{equation}\label{ipdhg:matrixgame}
\left\{
\begin{aligned}
x^{k+1}={}&\proj_{\Delta_n}\left(x^{k} - \tau \K^{\tran}y^{k}\right),\\
y^{k+1}={}&\proj_{\Delta_m}\left(y^{k} + \sigma \K(2x^{k+1} - x^k)\right),
\end{aligned}
\right.
\end{equation}
where $\proj_{\Delta_n}(\cdot)$ is the projection operator onto the simplex. For this problem, $\lambda_{\min}^f = 0$. By \eqref{equ:phdg:condition:new},  the stepsizes $\sigma> 0$ and $\tau> 0$ satisfy $\tau \sigma \|K\|^2 < 4/3$.  In our numerical results, we consider $\tau = \tilde \tau/\|K\|$ and $\sigma =  {1/}{(\gamma \tilde \tau \|K\|)}$ with  $\gamma \in \{1, 0.9, 0.85, 0.751\}$ (the requirement on $\gamma$ is $\gamma > 3/4$). Note that $\gamma = 1$ corresponds to the original PDHG method.

 By Remark \ref{remark:upperbound:R}, we stop the algorithm when the iterations exceed $10^6$ or
$\max\{\|\K^{\tran} (y^{k+1} - y^{k})-\tau^{-1}(x^{k+1} - x^{k})\|, \|\K(x^{k+1} - x^{k}) - \sigma^{-1}(y^{k+1} - y^{k})\|\}\leq 10^{-5}$.
The starting points  are always chosen as $x^0 = \frac1n \begin{pmatrix}1, \ldots, 1 \end{pmatrix}^\tran \in \Rbb^n$ and
$y^0 = \frac1m \begin{pmatrix}1, \ldots, 1 \end{pmatrix}^\tran \in \Rbb^m$.
We follow the way in \cite{malitsky2018first,chang2022golden} to generate the matrix $K$. The corresponding Matlab commands are given as:  i) \texttt{m = 100; n = 100; A = rand(m,n)}; ii) \texttt{m = 100; n = 100; A = randn(m,n)}; iii) \texttt{m = 500; n = 100; A = 10.*randn(m,n)}; iv) \texttt{m = 1000; n = 2000; A  = sprand(m,n,0.1)}.
For each case, we randomly generate the matrix $K$ 20 times and report the average performance of each algorithm.

\begin{figure}[!t]
\centering
\subfloat[Test 1]{\includegraphics[width=2.8cm,height=2.33cm]{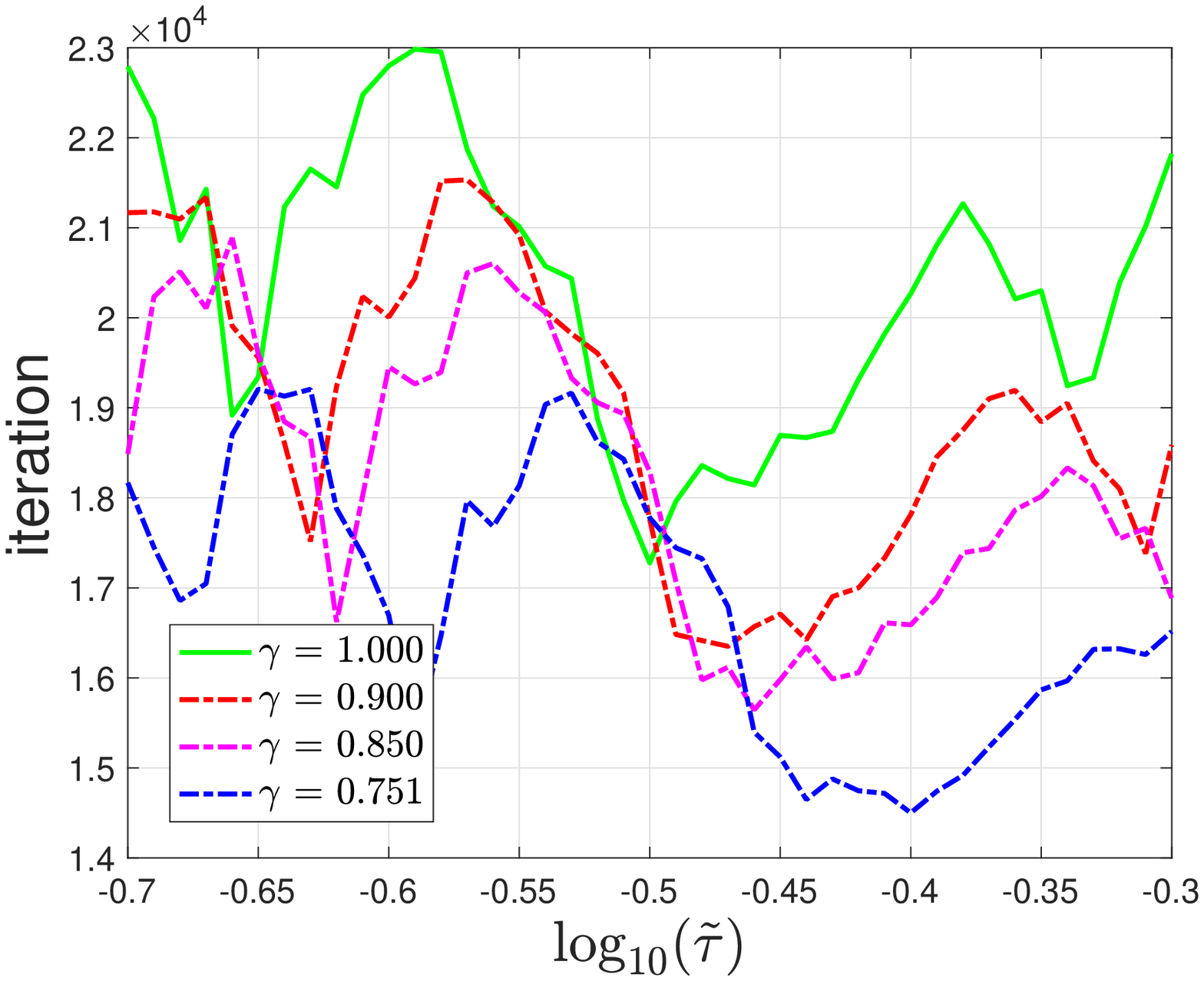}}~
\subfloat[Test 2]{\includegraphics[width=2.8cm,height=2.33cm]{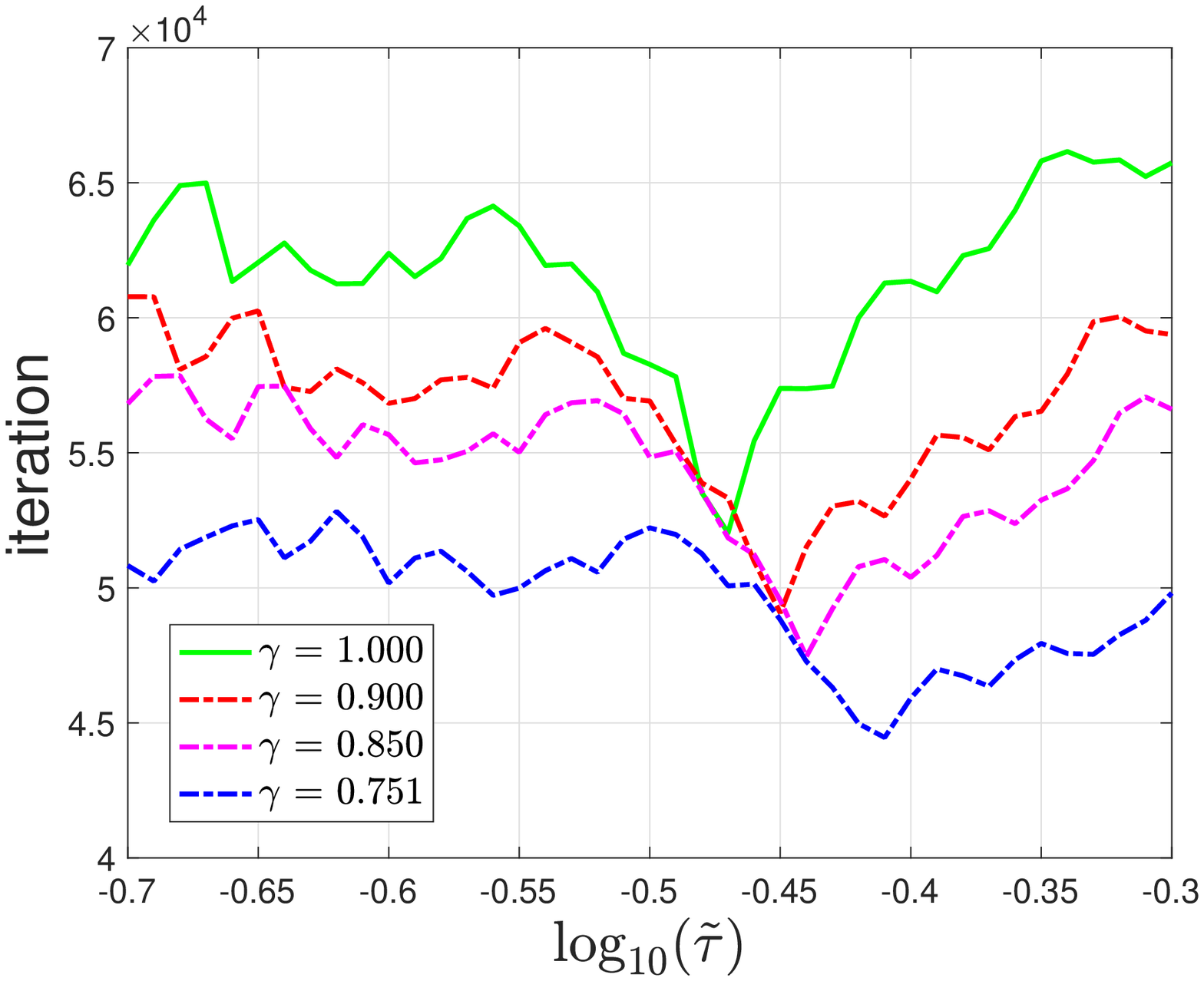}}~
\subfloat[Test 3]{\includegraphics[width=2.8cm,height=2.33cm]{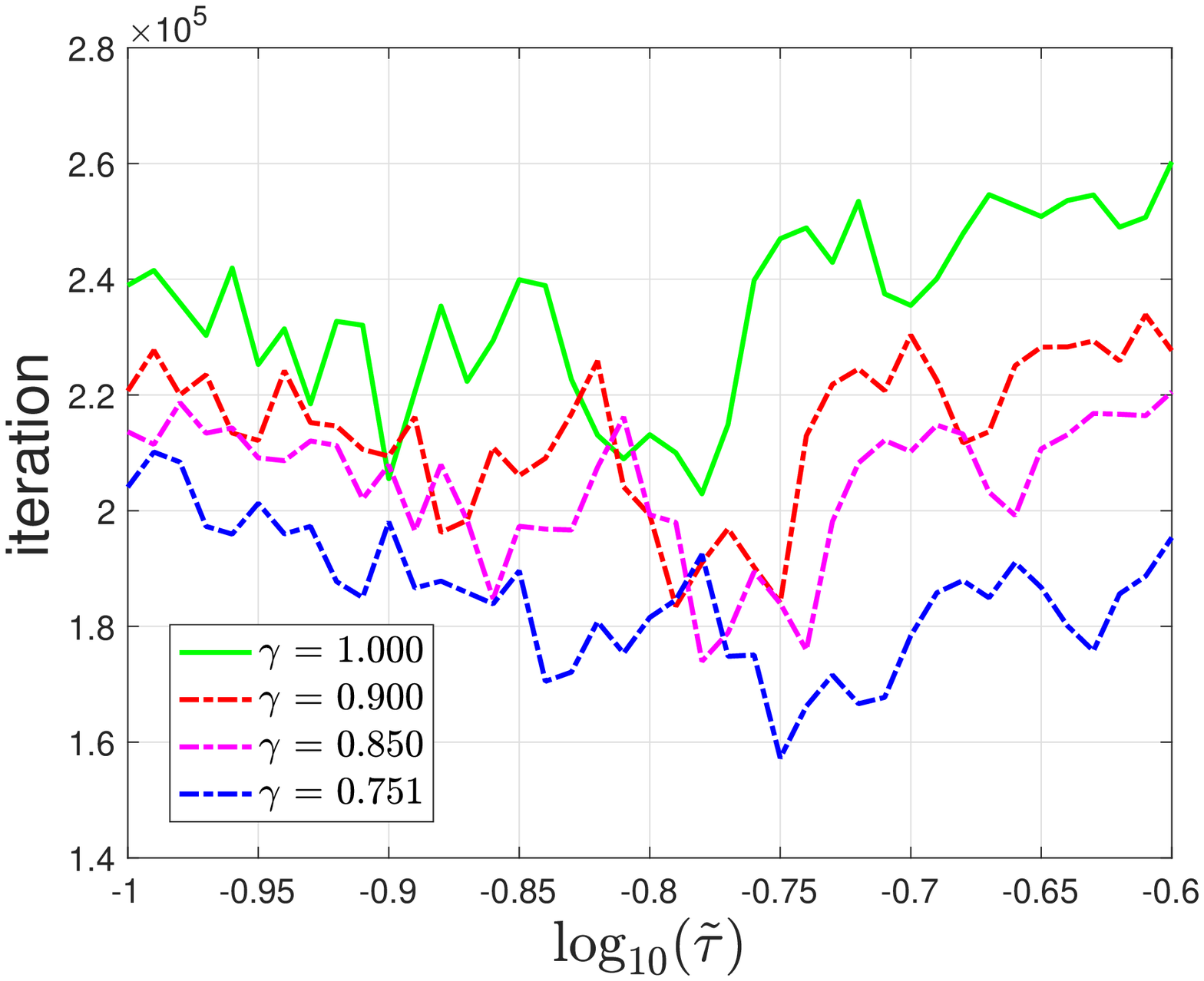}}~
\subfloat[Test 4]{\includegraphics[width=2.8cm,height=2.33cm]{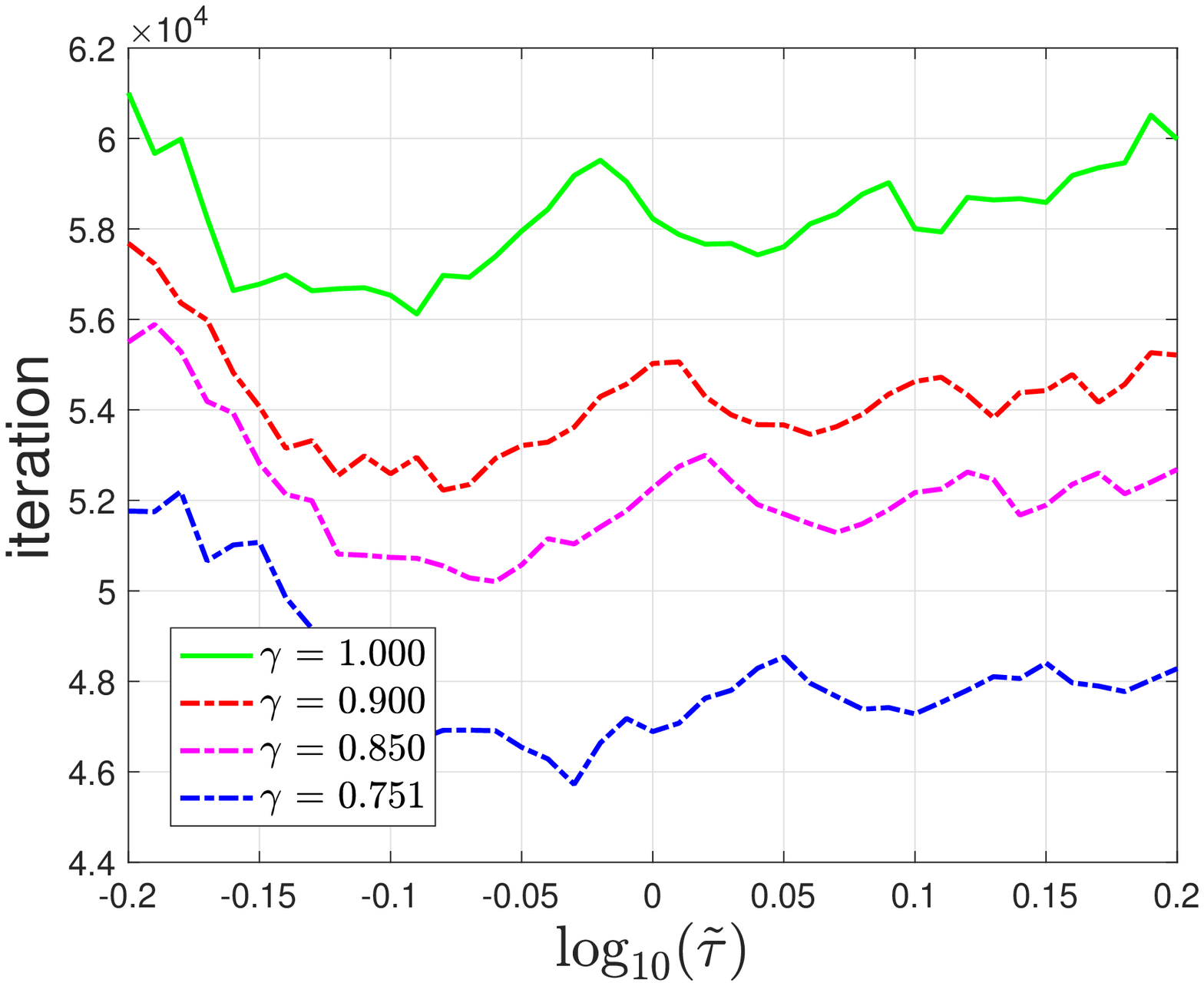}}\\
\subfloat[Test 1]{\includegraphics[width=2.8cm,height=2.33cm]{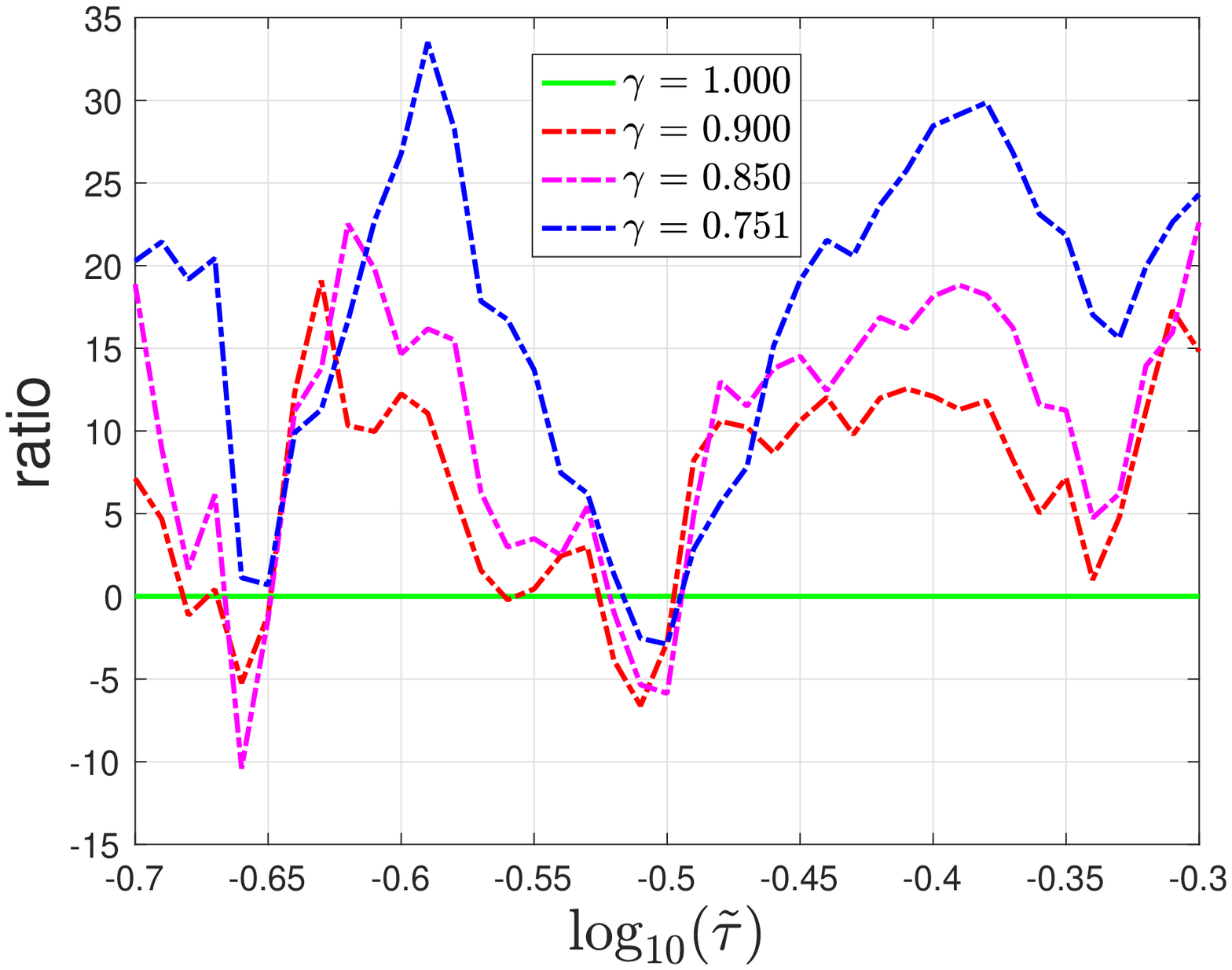}}~
\subfloat[Test 2]{\includegraphics[width=2.8cm,height=2.33cm]{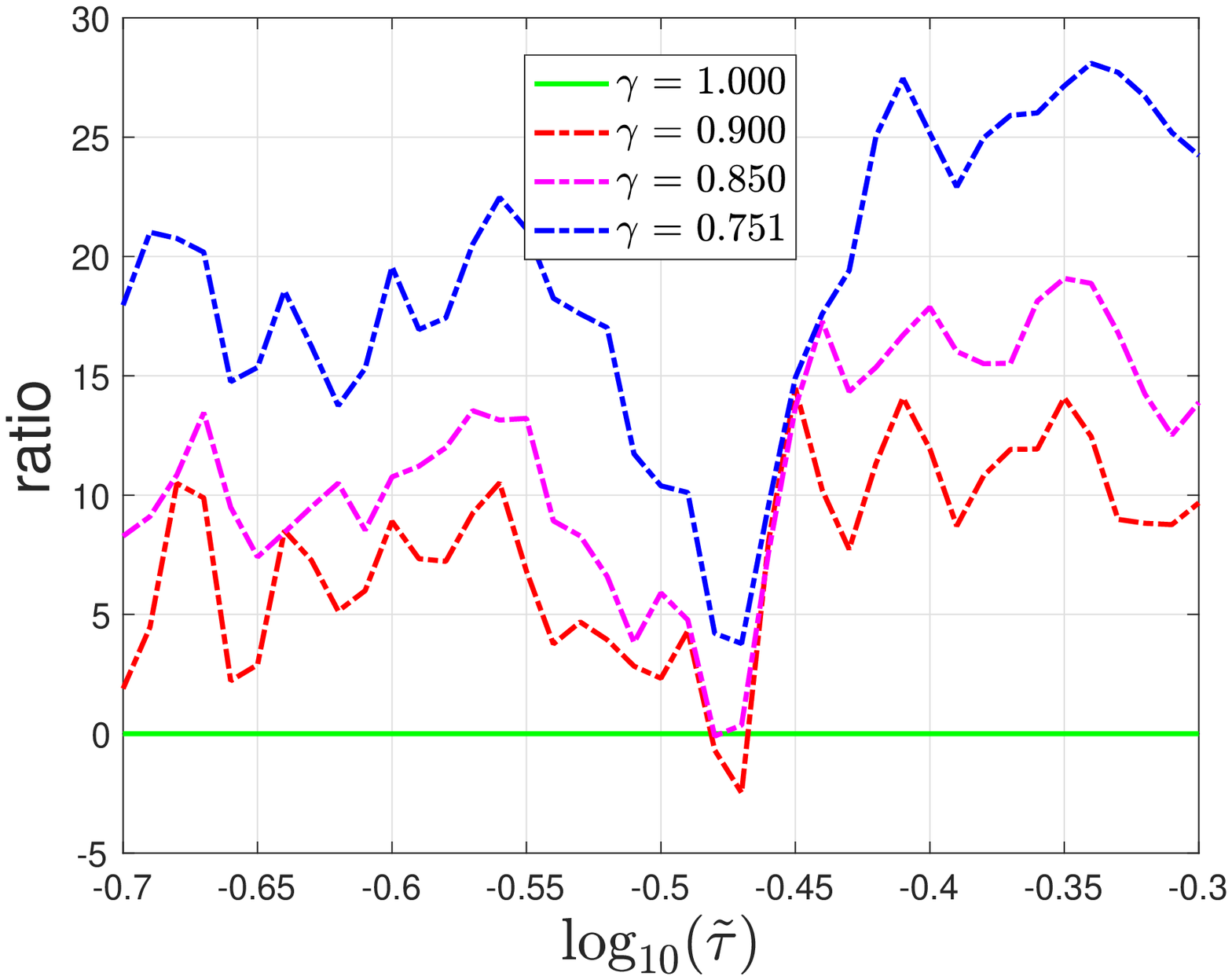}}~
\subfloat[Test 3]{\includegraphics[width=2.8cm,height=2.33cm]{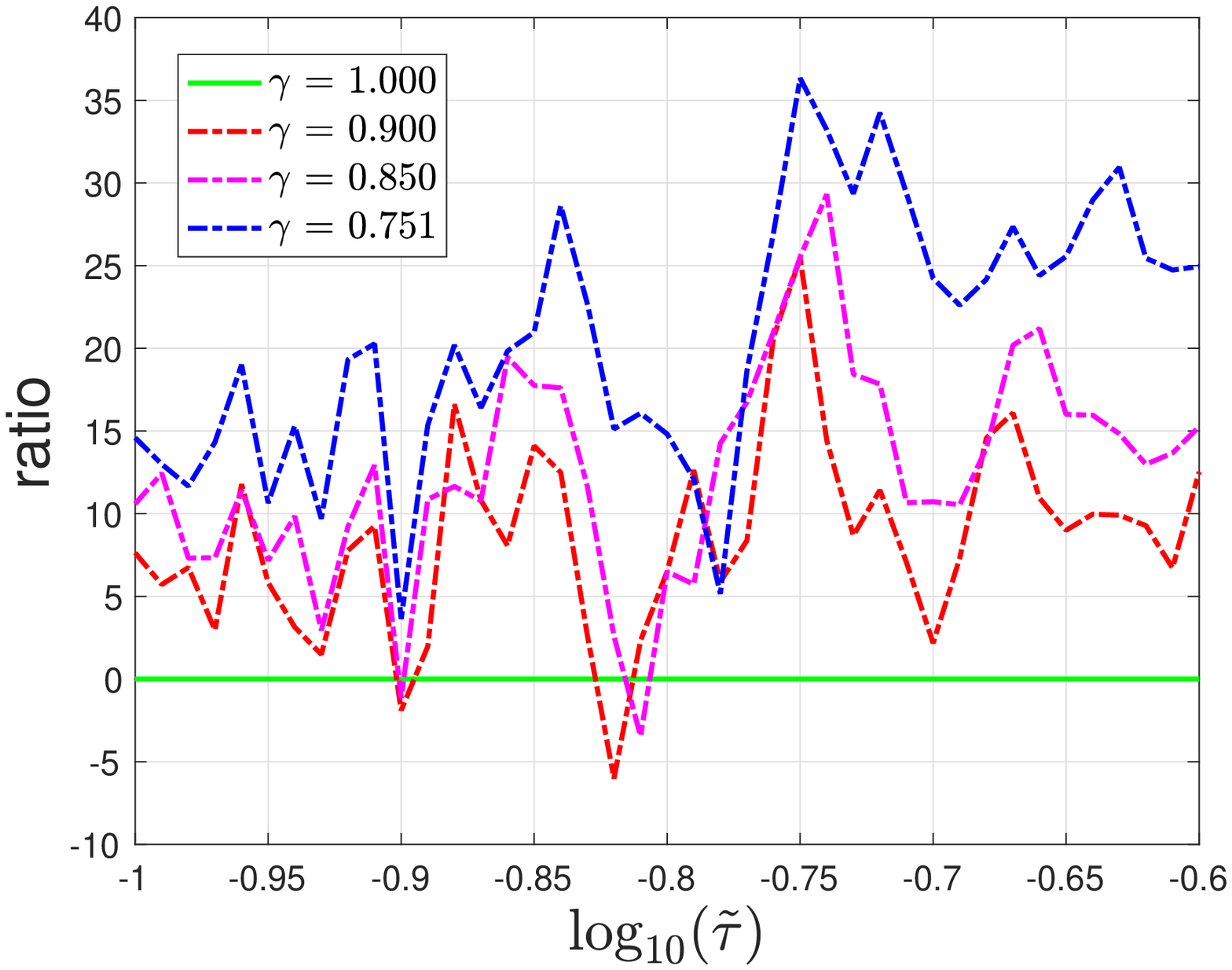}}~
\subfloat[Test 4]{\includegraphics[width=2.8cm,height=2.33cm]{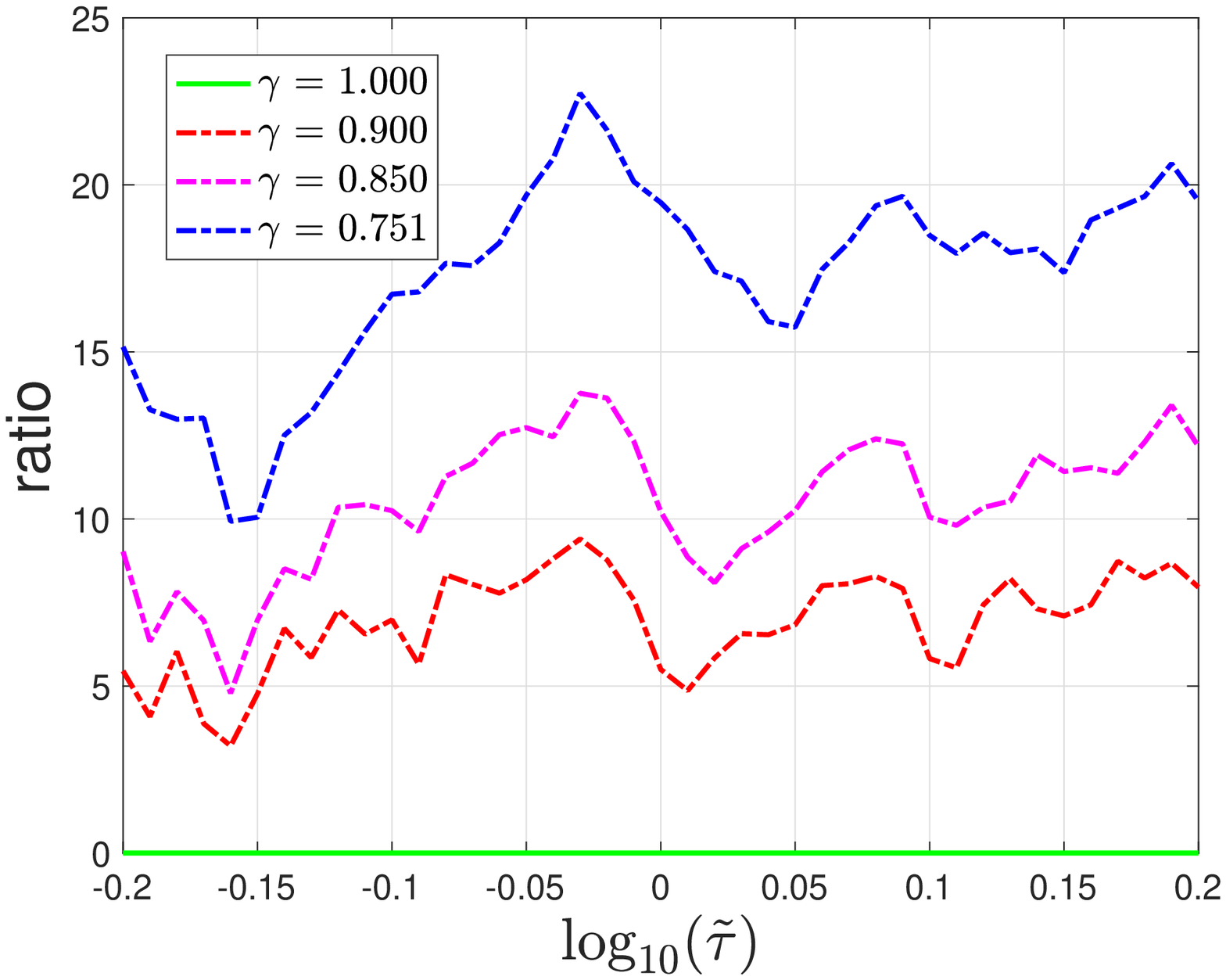}}\\
\caption{Comparison  of PDHG \eqref{ipdhg:matrixgame} with different values of $\gamma$ for matrix game problem \eqref{prob:matrixgame}.
}
\label{matrixgame:figures:ratio:iter}
\end{figure}

We test a series of  $\tilde \tau \in 10^a$ with $a = [a_1:0.01:a_1 + 0.4]$ with $a_1  = -0.7$ for Test 1 and Test 2, and $a_1 = -1.0$ for Test 3 and $a_1 = -0.2$ for Test 4. The comparison results among different
$\gamma$ are reported in Figure \ref{matrixgame:figures:ratio:iter}, wherein the saved ratio in terms of iteration number is defined as
\begin{equation}\label{equ:ratio}
\mathrm{ratio} = \left(\frac{\textnormal{\underline{iter}} - \textnormal{iter}}{\textnormal{\underline{iter}}} \times 100\right) \%,
\end{equation}
where the baseline iteration number ``$\textnormal{\underline{iter}}$'' is taken as the iteration number  of  PDHG with  $\gamma = 1$ and ``iter'' means the iteration number of PDHG with a chosen  $\gamma$.
From these figures, we can see that PDHG with smaller $\gamma$ always has better performance than the classical PDHG with $\gamma = 1$, and for a large range of $\tilde \tau$, the saved ratio is more than 20\% for Tests 1-3 and is more than 15\% for Test 4.  We also observe that the performance of PDHG with different $\gamma$ might depend on the choice of  $\tilde \tau$. Therefore, to make a fair comparison, for PDHG with fixed $\gamma$, we take the best $\tilde \tau$ (in terms of the lowest iteration number),  denoted by $\tilde \tau_{\mathrm{best}}$, from the set $10^{a}$.   The comparison results are shown in Table \ref{table:matrixgame}.  This table shows that PDHG with smaller $\gamma$ is still better than PDHG with $\gamma = 1$. For $\gamma = 0.751$, the saved ratio is always more than 22\%.
Note that such improvement only needs to change a parameter in the original PDHG without additional cost.

\begin{table}[!t]
\small
\setlength{\tabcolsep}{1.pt}
\centering
\caption{Performance of PDHG \eqref{ipdhg:matrixgame} with best $\tilde \tau$  for  problem \eqref{prob:matrixgame}.  In the table, ``a'', ``b'', ``c'',  and ``d'' stands for PDHG \eqref{ipdhg:matrixgame} with $\gamma = 1.0$, $\gamma = 0.90$, $\gamma = 0.85$, and $\gamma = 0.751$, respectively.}~\\
\label{table:matrixgame}
\begin{tabular}{@{}crrrrrrrrrrrrrrr@{}}
\toprule
&\multicolumn{4}{c}{$10 \times \log_{10}(\tilde \tau_{\mathrm{best}})$}  & \multicolumn{4}{c}{time}  & \multicolumn{4}{c}{iter} & \multicolumn{3}{c}{ratio \%}  \\
\cmidrule(lr){2-5} \cmidrule(lr){6-9} \cmidrule(lr){10-13}  \cmidrule(l){14-16}
Test & \multicolumn{1}{c}{a}  & \multicolumn{1}{c}{b}  & \multicolumn{1}{c}{c}   & \multicolumn{1}{c}{d}   &\multicolumn{1}{c}{a}   & \multicolumn{1}{c}{b}  & \multicolumn{1}{c}{c}    & \multicolumn{1}{c}{d} & \multicolumn{1}{c}{a}   & \multicolumn{1}{c}{b}   & \multicolumn{1}{c}{c}    & \multicolumn{1}{c}{d}   & \multicolumn{1}{c}{b}  & \multicolumn{1}{c}{c}   & \multicolumn{1}{c}{d} \\
\midrule
1& -5.0& -4.7& -4.6& -4.0& 1.6e-1& 1.5e-1& 1.5e-1& 1.3e-1& 17278& 16350& 15644& 14500& 10.2& 13.8& 28.5\\
2& -4.7& -4.5& -4.4& -4.1& 4.7e-1& 4.4e-1& 4.3e-1& 4.0e-1& 52040& 49075& 47458& 44458& 14.5& 17.3& 27.5\\
3& -7.8& -7.9& -7.8& -7.5& 2.9e1& 2.6e1& 2.5e1& 2.3e1& 202919& 183475& 173976& 157250& 12.6& 14.3& 36.3\\
4& -0.9& -0.8& -0.6& -0.3& 1.7e1& 1.6e1& 1.6e1& 1.4e1& 56122& 52226& 50205& 45723&  8.3& 12.5& 22.7\\
\bottomrule
\end{tabular}
\end{table}

\subsection{Projection onto the Birkhoff Polytope}\label{subsection:Birkhoff}
Given a matrix $C \in \Rbb^{n \times n}$,  computing its projection onto the Birkhoff polytope can be formulated as
\be\label{prob:Birk:proj}
\min_{X\in \mathcal{B}_n}\,  \frac12 \|X - C\|_{\Fsf}^2,
\ee
where $\mathcal{B}_n:=\{X \in \Rbb^{n \times n} \mid X \1_n = \1_n, X^{\tran}\1_n = \1_n,  X \geq 0\}$ with $\1_n \in \Rbb^n$ being the all-one vector is known as the Birkhoff polytope. Problem \eqref{prob:Birk:proj} has wide applications in solving the optimization problems involving permutations; see \cite{jiang2016lp,li2019block} and the references therein for more details. Let $x = \mathrm{vec}(X)$, problem \eqref{prob:Birk:proj} can be seen as a special instance of \eqref{P1} with
$f(x) = \frac12 \|x - \mathrm{vec}(C)\|^2 + \mathbb{I}_{\mathcal{X}}$ with $\mathcal{X} = \Rbb_+^{n^2}$ and  $\mathbb{I}_{\mathcal{X}}$ being the indicator function of the set $\mathcal{X}$, and
$K = \begin{pmatrix}
\1_n^\tran \otimes I_n \\ I_n \otimes \1_n^\tran\end{pmatrix},
b = \1_{2n}$, where $\otimes$ is the Kronecker product.  For such $K$, we have  $\|K\|^2 = 2n$ (see \cite{he2022generalized} for instance) and
$$
\left(KK^\tran+ \theta I_{2n}\right)^{-1} = \frac{1}{n + \theta} I_{2n} +
\frac{1}{2n \theta + \theta^2}\begin{pmatrix}
 \frac{n}{n + \theta} \1_n\1_n^\tran & -\1_n\1^\tran \\
-\1_n\1_n^\tran &  \frac{n}{n + \theta} \1_n\1_n^\tran
\end{pmatrix}, \quad \theta > 0.
$$

We consider two particular choices of PrePDHG \eqref{alg:ebalm}.
The first one is eBALM \eqref{ebalm}, whose main iterations  are given as:
\begin{equation}\label{pdhg:Birk:ebalm}
\left\{
\begin{aligned}
X^{k+1}={}&\frac{1}{1 + \tau}\proj_{+}\left(X^k + \tau C  - \tau \left( y_1^k \1_n^\tran + \1_n( y_2^k)^\tran\right)\right),\\
a^{k+1} ={}& \1_n^\tran (2 X^{k+1} - X^k)\1_n  + n + \theta, \\
y^{k+1}={}&  y^k +  \frac{1}{\gamma \tau (n + \theta)}
\begin{pmatrix}
   (2X^{k+1} - X^k) \1_n \\
    (2X^{k+1} - X^k)^\tran \1_n
   \end{pmatrix}
 -  \frac{a^{k+1}}{\gamma \tau (n + \theta)(2n  + \theta)} \1_{2n},
\end{aligned}
\right.
\end{equation}
where $\proj_{+}(\cdot)$ is the projection operator over $\Rbb_+^{n \times n}$ and $\theta$ is taken as $10^{-4}$,  $y_1^k \in \Rbb^n$ is the vector formulated by the first $n$ components of $y^k$ and $y_2^k \in \Rbb^n$ is the vector formulated by the last $n$ components of $y^k$.
The second one is PDHG \eqref{ipdhg} with main iterations given as:
\begin{equation}\label{pdhg:Birk:proj}
\left\{
\begin{aligned}
X^{k+1}={}&\frac{1}{1 + \tau}\proj_{+}\left(X^k + \tau C  - \tau \left( y_1^k \1_n^\tran + \1_n( y_2^k)^\tran\right)\right),\\y^{k+1}={}&  y^k +  \sigma \begin{pmatrix}
   (2X^{k+1} - X^k) \1_n \\
    (2X^{k+1} - X^k)^\tran \1_n
   \end{pmatrix}
     - \sigma \1_{2n}. \\
\end{aligned}
\right.
\end{equation}

Note that for  problem \eqref{prob:Birk:proj}, we have $\Sigma_f = I_{n^2}$.   By Lemma \ref{lemma:M1M2},   we know that the parameters $\tau > 0$ and $\gamma > 0$ in \eqref{pdhg:Birk:ebalm} should satisfy  $\gamma \geq  \frac{0.75}{1 + \tau/2}$. In our numerical results, we consider
 $\tau = \tilde \tau /\sqrt{2n}$ with $\tilde \tau>0$ and $\gamma \in \big\{1, \frac{0.75}{1 + \tau/2}\big\}$.
 In addition,  by \eqref{equ:phdg:condition:new}, the parameters  $\tau > 0$  and $\sigma > 0$ in \eqref{pdhg:Birk:proj} satisfies  $2 n \tau \sigma < \frac43 (1 + \frac{\tau}{2})$. In our numerical results, we consider $\tau = \tilde \tau /\sqrt{2n}$ and $\sigma = 1/(\gamma \tilde \tau \sqrt{2n})$ with $\gamma \in \big\{1, \frac{0.751}{1 + \tau/2}\big\}$.  For a given $n$, we follow the way in \cite{li2019block}  to randomly generate 20 matrices $C$ via  \texttt{C = rand(n); C = (C+C')./2} and report the average performance.  The initial points are always chosen as $X^0 = \frac1n \1_n\1_n^\tran$ and $y^0 = 0$.    By \eqref{equ:Rxkyk-1:Kxb}, we stop both algorithms  when the relative KKT residual
$\widetilde \Rcal^{k} := \max\{d^{k},  p^{k}\} \leq 10^{-8}$
 with $p^k = \tau^{-1}\|X^{k} - X^{k-1}\|_{\Fsf}$ and
 $d^k =
  \left\|
  \begin{pmatrix}
  X^{k}\1_n - \1_n \\
  (X^{k})^\tran\1_n - \1_n
 \end{pmatrix}
 \right\|$.

For both algorithms, we test a series of  $\tilde \tau \in 10^a$ with $a = [0.2: 0.01: 0.6]$.  The comparison results are depicted in Figure \ref{Birkhoff:figures:ratio:iter}.   In the figures (e)-(h),  the ``ratio'' is computed according to \eqref{equ:ratio} with  $\textnormal{\underline{iter}}$ taken as the iteration number of PDHG \eqref{pdhg:Birk:proj} with $\gamma = 1$, and in the figures   (i)-(l), the ``ratio'' is computed according to \eqref{equ:ratio} with  $\textnormal{\underline{iter}}$ taken as the iteration number of eBALM \eqref{pdhg:Birk:ebalm} with $\gamma = 1$.  From these figures, we can draw the following observations. (i) Both PDHG and eBALM benefit from choosing a larger stepsize, namely, a smaller $\gamma$. For a large range of $\tilde \tau$, PDHG with $\gamma = \frac{0.751}{1 + \tau/2}$ is more than 30\% faster than PDHG with $\gamma = 1$ and eBALM with $\gamma = \frac{0.75}{1 + \tau/2}$ is more than 15\% faster than eBALM with $\gamma = 1$.  (ii) eBALM with $\gamma = \frac{0.75}{1 + \tau/2}$ performs best among the four algorithms, and it is even about more than 50\% faster than the classical PDHG with $\gamma = 1$ for $\tilde \tau \in 10^{[0.35, 0.6]}$.

\begin{figure}[!t]
\centering
\subfloat[$n = 200$]{\includegraphics[width=2.8cm,height=2.33cm]{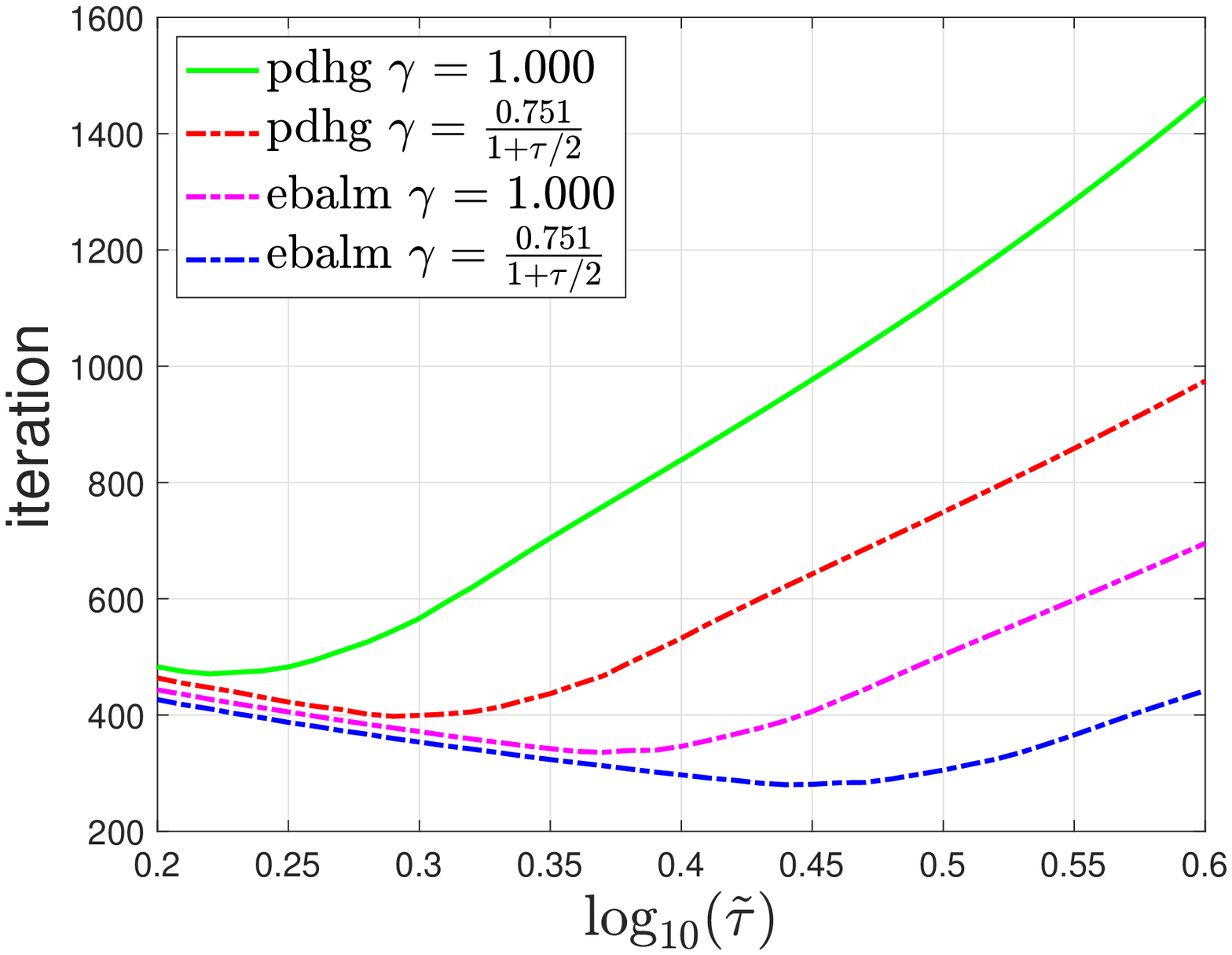}}~
\subfloat[$n = 400$]{\includegraphics[width=2.8cm,height=2.33cm]{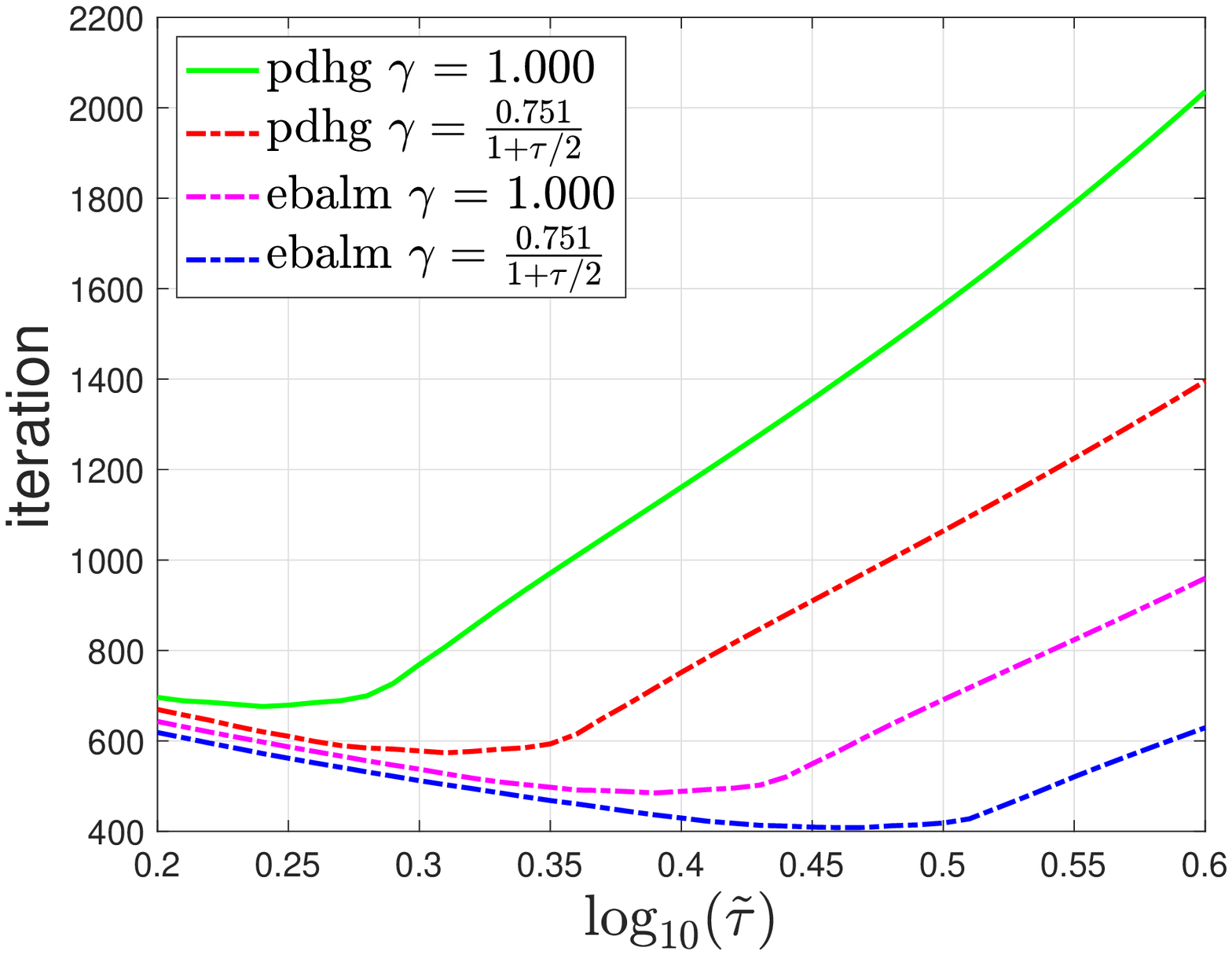}}~
\subfloat[$n = 600$]{\includegraphics[width=2.8cm,height=2.33cm]{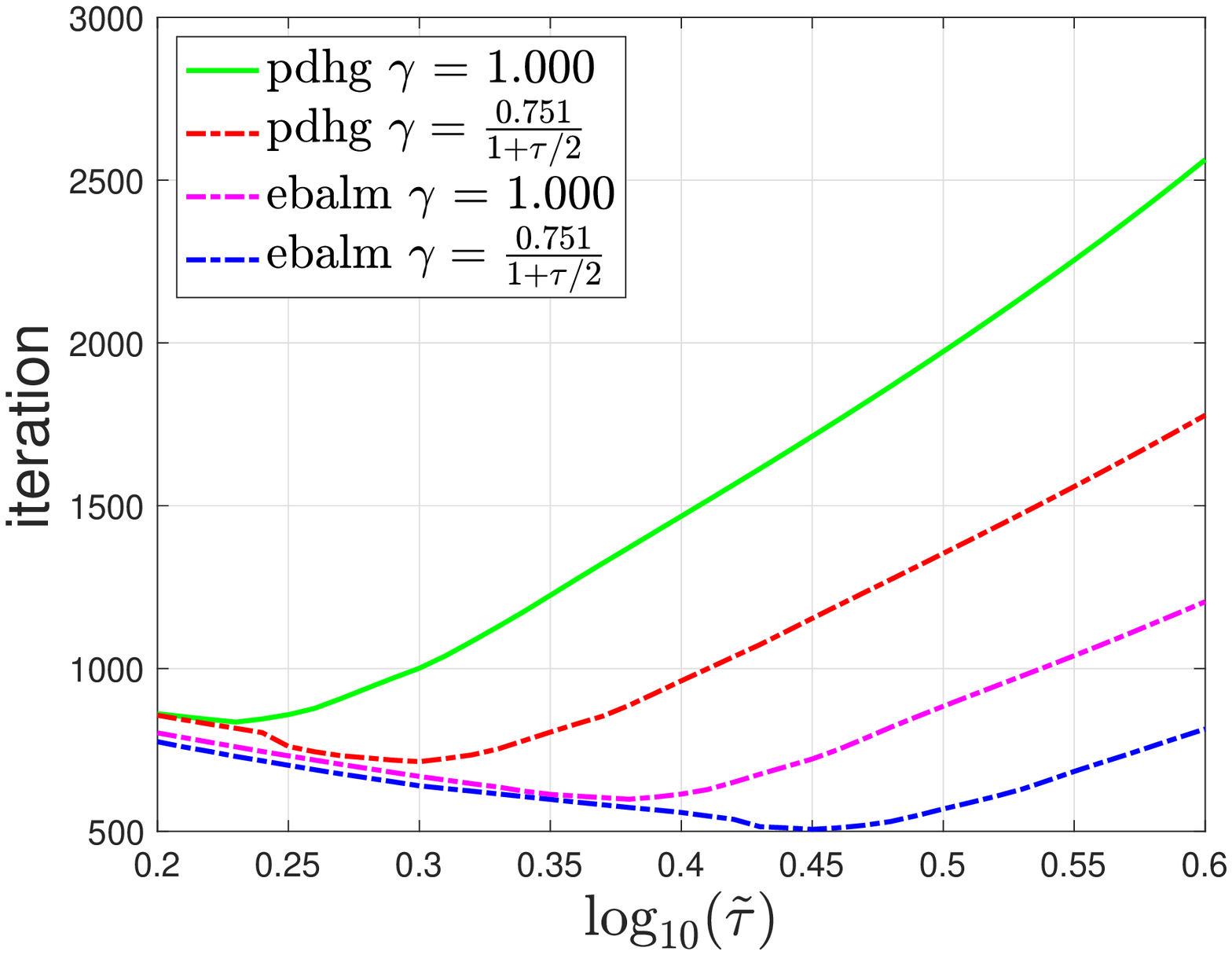}}~
\subfloat[$n = 800$]{\includegraphics[width=2.8cm,height=2.33cm]{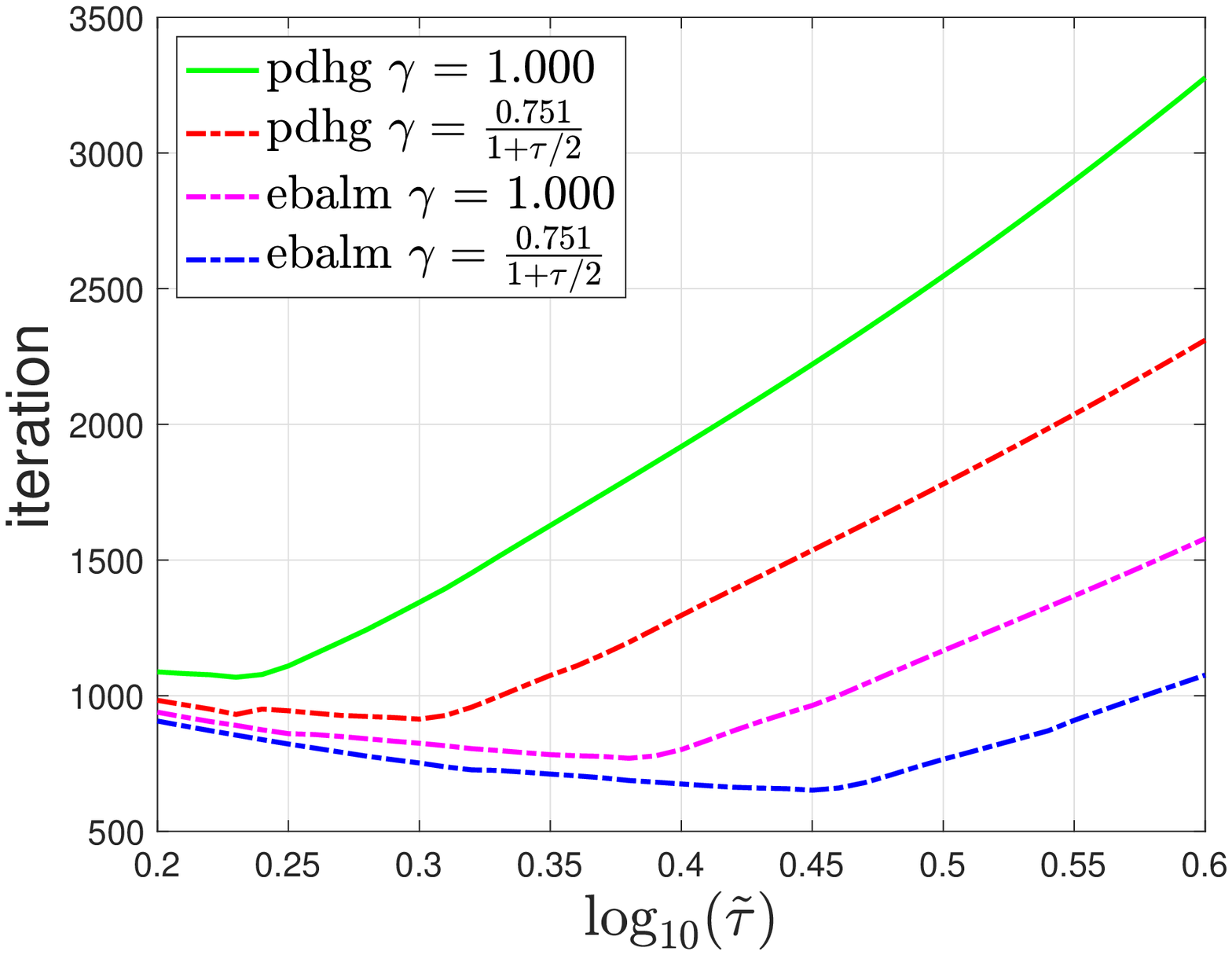}}\\
\subfloat[$n = 200$]{\includegraphics[width=2.8cm,height=2.33cm]{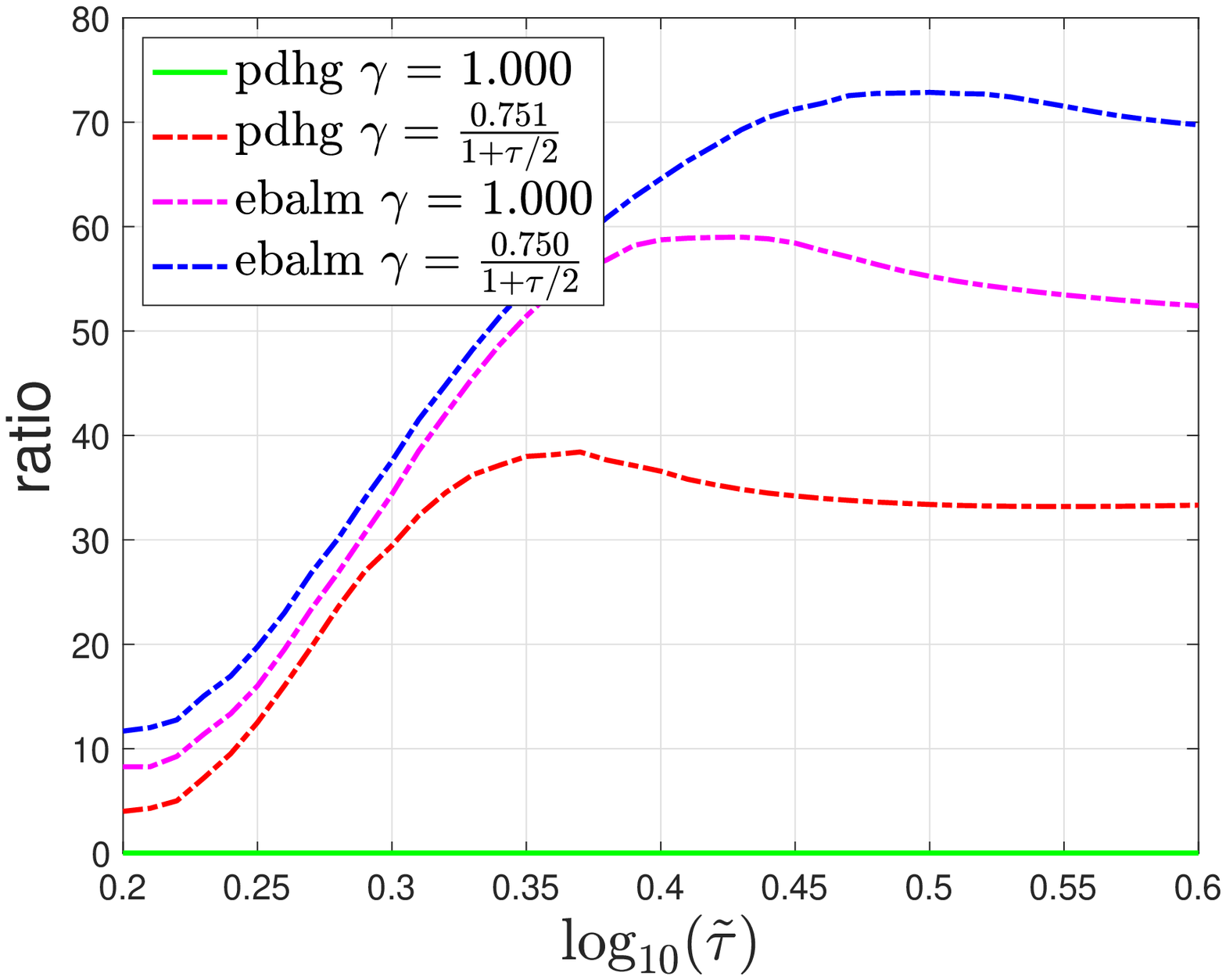}}~
\subfloat[$n = 400$]{\includegraphics[width=2.8cm,height=2.33cm]{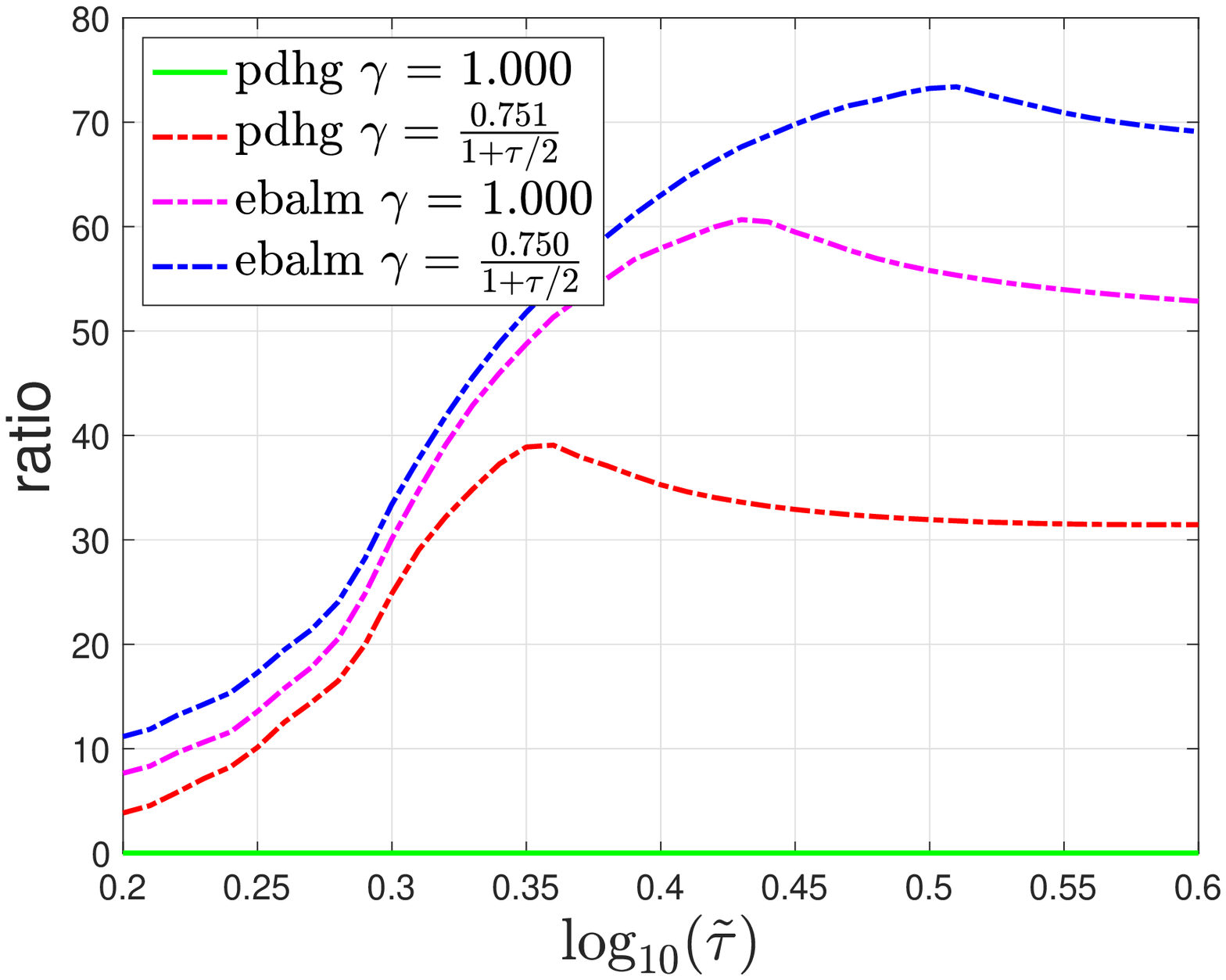}}~
\subfloat[$n = 600$]{\includegraphics[width=2.8cm,height=2.33cm]{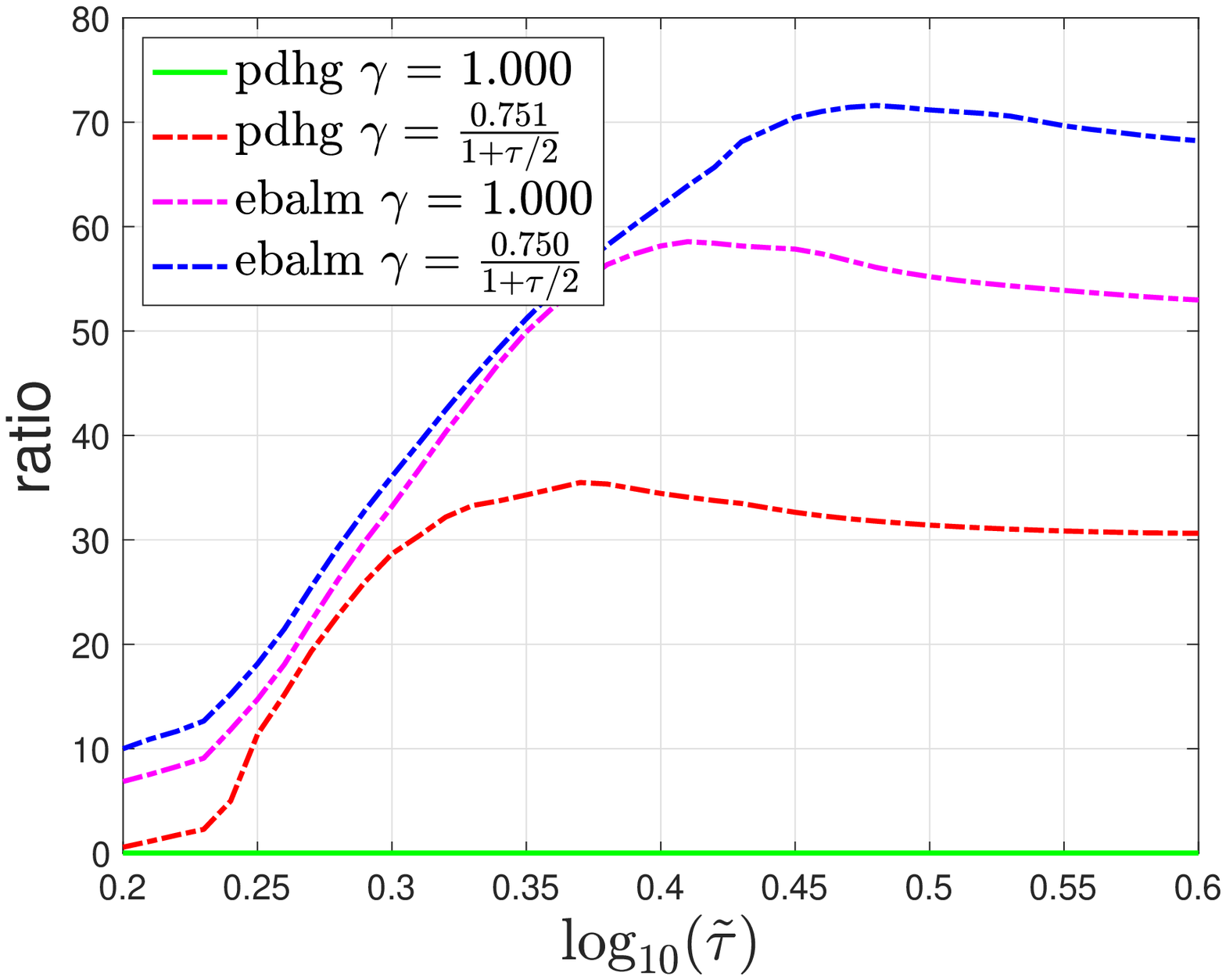}}~
\subfloat[$n = 800$]{\includegraphics[width=2.8cm,height=2.33cm]{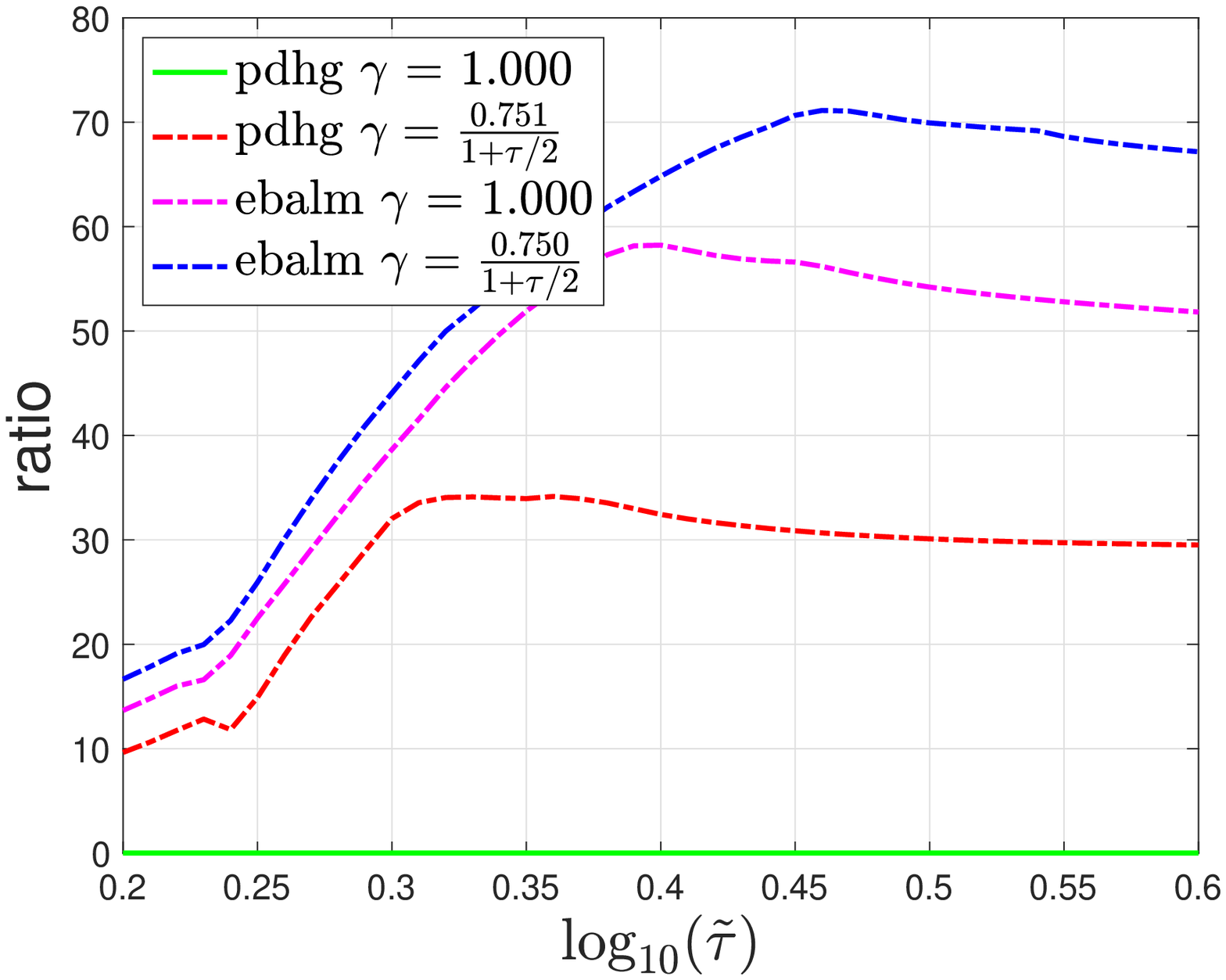}}\\
\subfloat[$n = 200$]{\includegraphics[width=2.8cm,height=2.33cm]{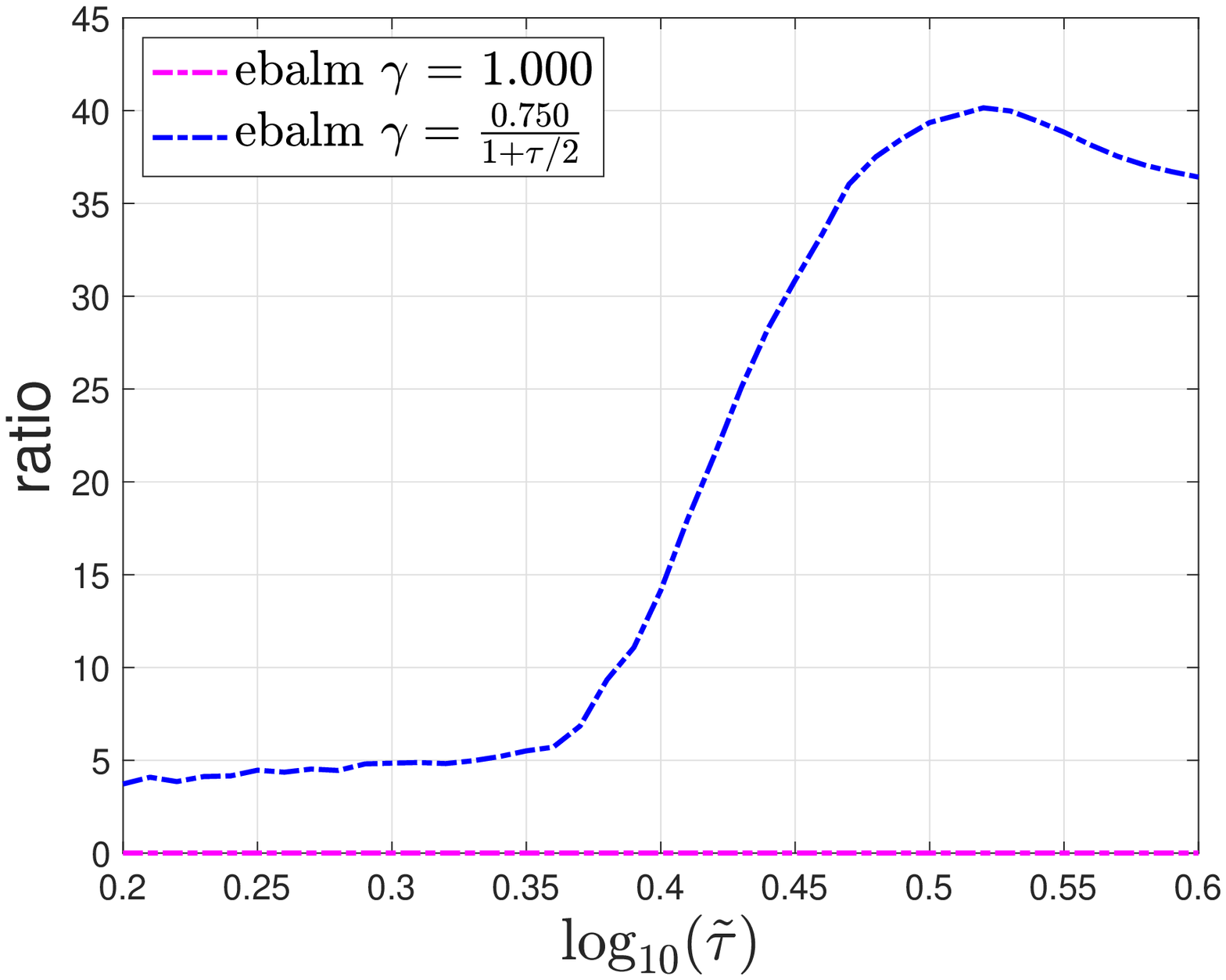}}~
\subfloat[$n = 400$]{\includegraphics[width=2.8cm,height=2.33cm]{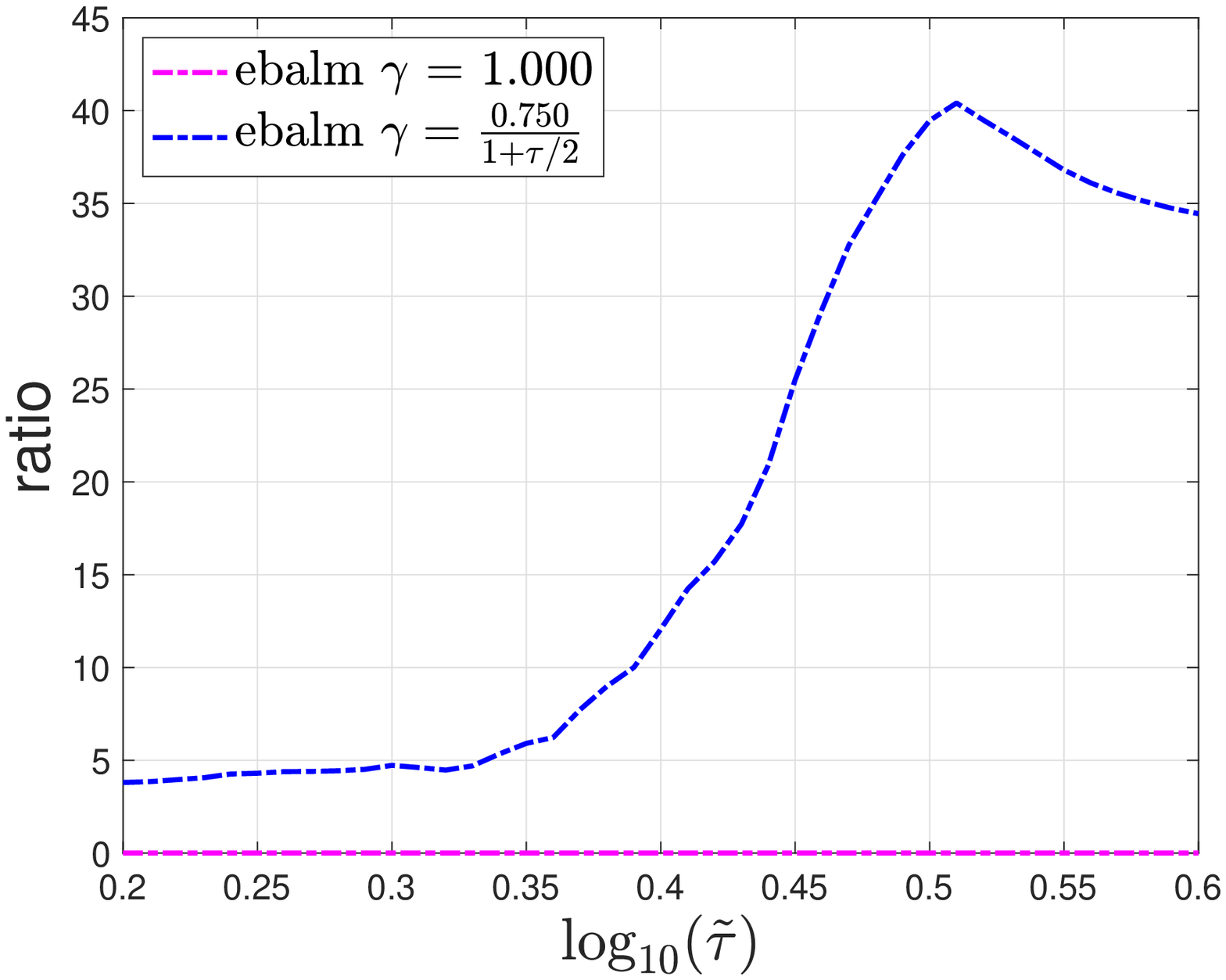}}~
\subfloat[$n = 600$]{\includegraphics[width=2.8cm,height=2.33cm]{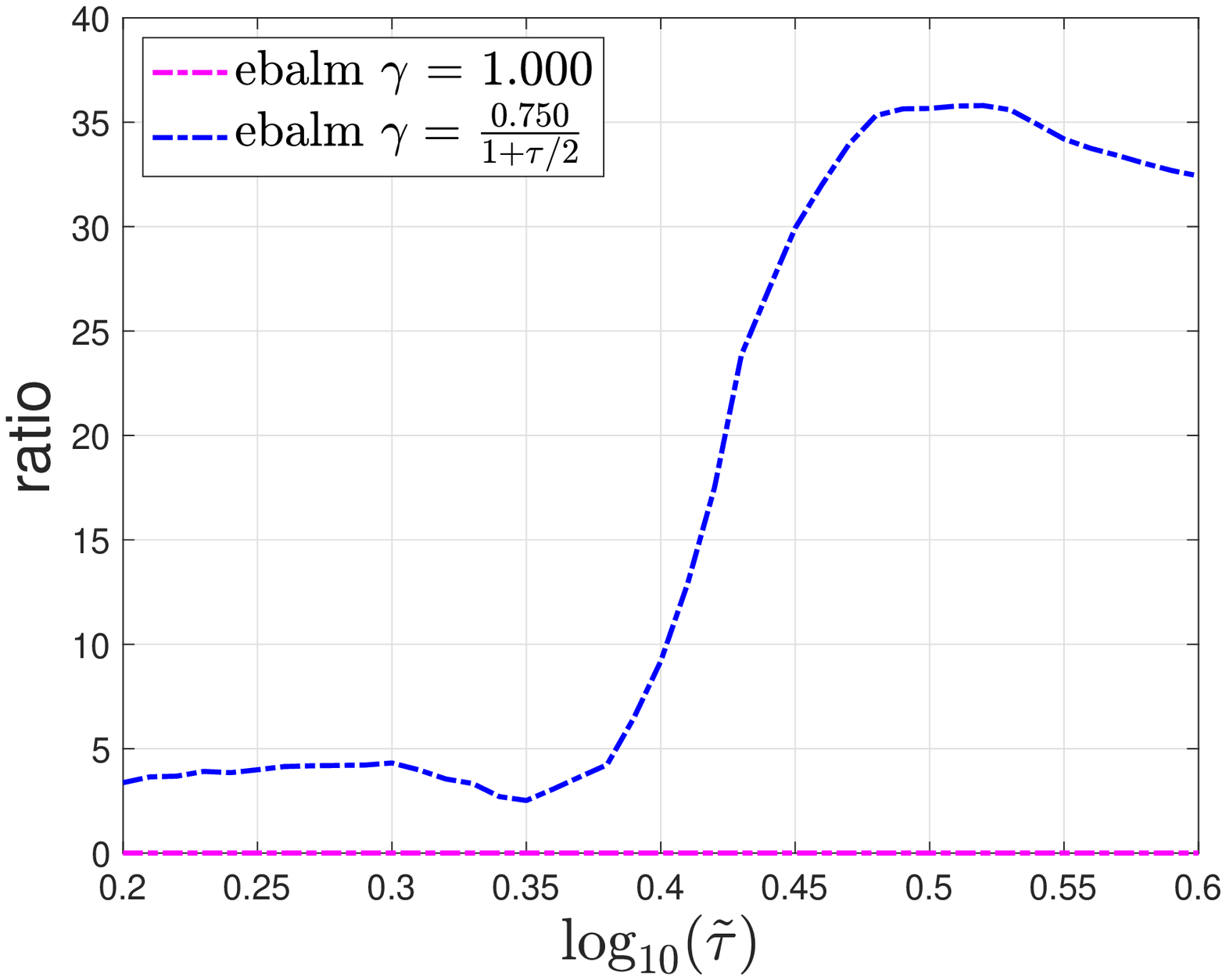}}~
\subfloat[$n = 800$]{\includegraphics[width=2.8cm,height=2.33cm]{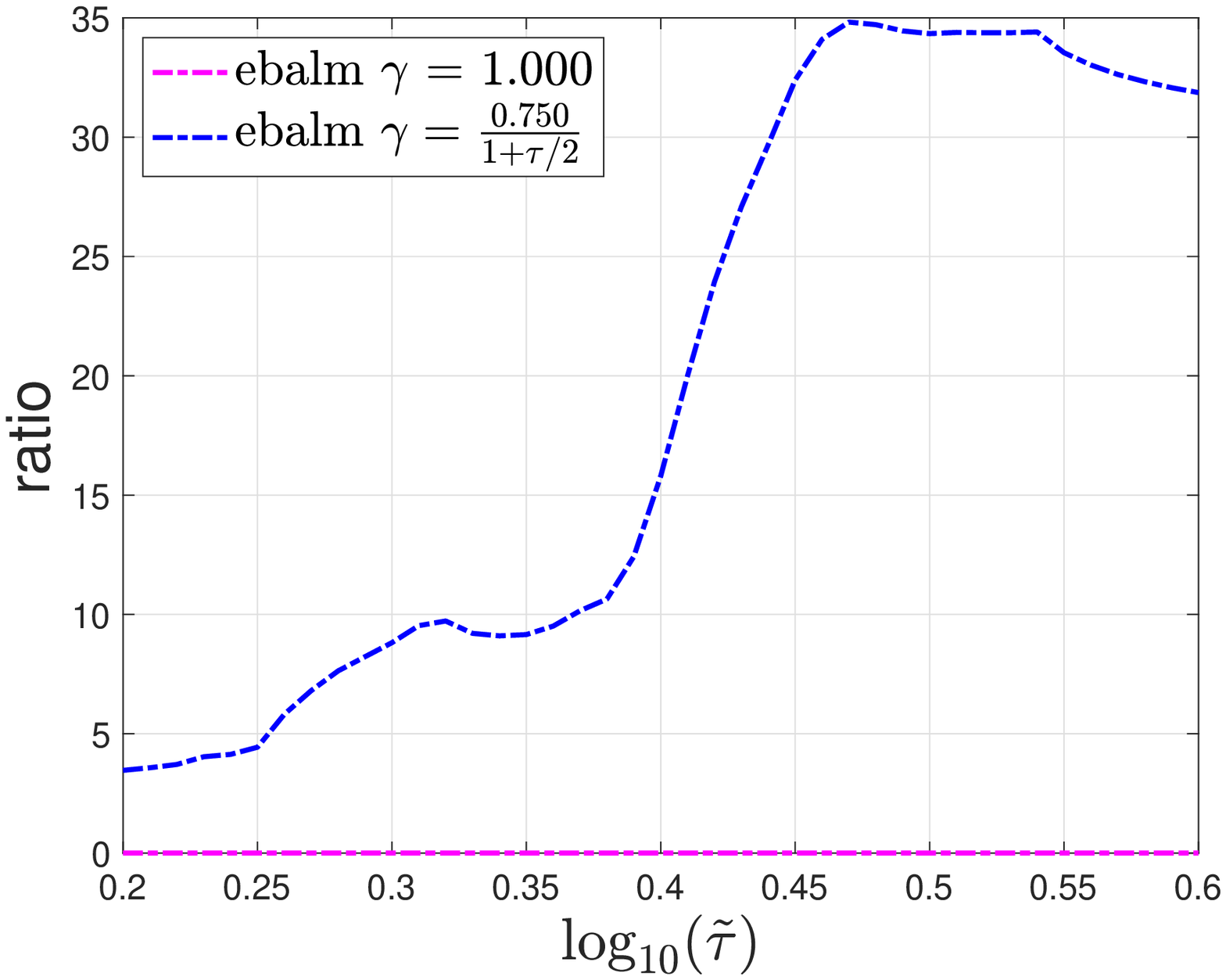}}\\
\caption{Comparison  of  eBALM \eqref{pdhg:Birk:ebalm} with  $\gamma = 1.0$ and $\gamma = \frac{0.75}{1 + \tau/2}$ and PDHG \eqref{pdhg:Birk:proj} with  $\gamma = 1.0$ and $\gamma = \frac{0.751}{1 + \tau/2}$ for problem \eqref{prob:Birk:proj}.}
\label{Birkhoff:figures:ratio:iter}
\end{figure}

To further investigate the effect of $\tilde \tau$ on the performance of different algorithms, as done in Section  \ref{subsection:matrixgame}, we present the performance of each algorithm with  $\tilde \tau_{\mathrm{best}}$ in Table \ref{table:projBirhoff:ebalm}.  From this table, we can see that even with the best possible parameter $\tilde \tau$,  both PDHG and BALM with a smaller $\gamma$ (means the larger stepsize in updating $y$) still have better performance than the corresponding algorithm with larger $\gamma$ for this problem. Besides, eBALM with $\gamma = \frac{0.75}{1 + \tau/2}$ has the best performance,  compared with the classical PDHG with $\gamma = 1$, it saves about $40\%$ of iteration numbers.

\begin{table}[!t]
\small
\setlength{\tabcolsep}{2.5pt}
\centering
\caption{
Performance of eBALM \eqref{pdhg:Birk:ebalm} and PDHG \eqref{pdhg:Birk:proj} with best $\tilde \tau$ for problem \eqref{prob:Birk:proj}.  In the table, ``a'' and ``b'' stands for PDHG \eqref{pdhg:Birk:proj} with $\gamma = 1.0$ and $\gamma = \frac{0.751}{1 + \tau/2}$, respectively; ``c'' and ``d'' stands for eBALM \eqref{pdhg:Birk:ebalm} with $\gamma = 1.0$ and $\gamma = \frac{0.75}{1 + \tau/2}$, respectively.
}~\\
\label{table:projBirhoff:ebalm}
\begin{tabular}{@{}crrrrrrrrrrrrrrr@{}}
\toprule
&  \multicolumn{4}{c}{$\log_{10}(\tilde \tau_{\mathrm{best}})$}  & \multicolumn{4}{c}{time}  & \multicolumn{4}{c}{iter} & \multicolumn{3}{c}{ratio \%}  \\
\cmidrule(lr){2-5} \cmidrule(lr){6-9} \cmidrule(lr){10-13}  \cmidrule(l){14-16}
$n$  & \multicolumn{1}{c}{a}  & \multicolumn{1}{c}{b}  & \multicolumn{1}{c}{c}   & \multicolumn{1}{c}{d}   &\multicolumn{1}{c}{a}   & \multicolumn{1}{c}{b}  & \multicolumn{1}{c}{c}    & \multicolumn{1}{c}{d} & \multicolumn{1}{c}{a}   & \multicolumn{1}{c}{b}   & \multicolumn{1}{c}{c}    & \multicolumn{1}{c}{d}   & \multicolumn{1}{c}{b}  & \multicolumn{1}{c}{c}   & \multicolumn{1}{c}{d} \\
\midrule
200& 0.22& 0.29& 0.37& 0.44& 1.1e-1& 8.6e-2& 7.5e-2& 6.2e-2&  471&  398&  336&  280& 27.0& 28.6& 40.5\\
400& 0.24& 0.31& 0.39& 0.46& 2.6e-1& 2.2e-1& 1.8e-1& 1.9e-1&  676&  574&  485&  408& 29.0& 28.3& 39.6\\
600& 0.23& 0.30& 0.38& 0.45& 5.5e-1& 4.4e-1& 3.7e-1& 3.1e-1&  835&  714&  598&  506& 28.7& 28.4& 39.4\\
800& 0.23& 0.30& 0.38& 0.45& 9.6e-1& 8.2e-1& 7.0e-1& 5.9e-1& 1068&  913&  769&  652& 32.0& 28.0& 39.0\\
\bottomrule
\end{tabular}
\end{table}

\subsection{Earth Mover's Distance}\label{section:EMD}
Given two discrete mass distributions $\rho^{0}$ and $\rho^{1}$ over the $M \times N$ grid,  computing the earth mover's distance between them can be formulated as
 the following optimization problem (see \cite{li2018parallel} for instance):
\be\label{prob:EMD}
\min_{\mbf \in  \Rbb^{2M \times N}}\,\|\mathbf{m}\|_{1,2} \quad \mst \quad \mdiv(\mathbf{m}) + \rho^{1} - \rho^{0} = 0,
\ee
where $\mathbf{m} = \begin{pmatrix} \mathbf{m}^{1} \\ \mathbf{m}^{2}\end{pmatrix}$ is the sought flux vector on the $M \times N$ grid with $\mbf^{1}, \mbf^{2} \in \Rbb^{M \times N}$ and $\mbf_{M,j}^{1}=0$ for $j = 1, \ldots, N$ and $\mbf_{i,N}^{2}=0$ for $i = 1, \ldots, M$.
Here,  $\|\mathbf{m}\|_{1,2}:= \sum_{i=1}^{M}\sum_{j=1}^{N} \sqrt{(\mathbf{m}^{1})_{i,j}^{2} + (\mathbf{m}^{2})_{i,j}^{2}}$. The 2D discrete divergence operator $\mdiv(\mathbf{m}): \Rbb^{2M \times N} \rightarrow \Rbb^{M \times N}$ is defined as
$$
\mdiv(\mbf)_{i,j} = h \left(\mbf_{i,j}^{1}-\mbf_{i-1,j}^{1} + \mbf_{i,j}^{2}-\mbf_{i,j-1}^{2}\right),
$$
where $h$ is the grid stepsize, $\mbf_{0,j}^{1} = 0$ for $j = 1, \ldots, N$ and $\mbf_{i,0}^{2} =0$ for $i = 1, \ldots, M$.
Let $x = \begin{pmatrix} \mvec(\mbf^{1}) \\ \mvec(\mbf^{2}) \end{pmatrix} \in \Rbb^{2MN}$, then problem \eqref{prob:EMD} is a form of \eqref{P1} with  $b = \mvec(\rho^{0} - \rho^{1})$  and the matrix  $\K \in \Rbb^{MN \times 2MN}$ satisfies
$\K x =  \mvec(\mdiv(\mbf))$.

We consider two versions of PrePDHG, namely,  eBALM  \eqref{ebalm} and eBALM-sGS  \eqref{alg:ebalm:sGS} to solve problem \eqref{prob:EMD}.   For eBALM  \eqref{ebalm},  due to the particular structure of $\K\K^{\tran}$ explored in   \cite[Section 4]{liu2021acceleration}, we only performed two epochs of block coordinate descent method as done in \cite{liu2021acceleration}. We name this implementation i-eBALM.  Moreover, we take $\theta = 0$ in \eqref{ebalm} since its performance is very similar to that of very small $\theta$. It should be mentioned that when $\gamma = 1$ in eBALM  \eqref{ebalm}, it becomes the iPrePDHG proposed in \cite{liu2021acceleration}.  Note that \cite{liu2021acceleration} proved the convergence of iPrePDHG under the strong convexity of the objective, which does not hold for problem \eqref{prob:EMD}. Besides, the convergence of i-eBALM \eqref{ebalm} remains unknown, although it performs well.
We consider four choices of $\gamma$. For eBALM  \eqref{ebalm}, we take $\gamma \in \{1.00, 0.90, 0.85, 0.77\}$ and for eBALM-sGS  \eqref{alg:ebalm:sGS}, we take $\gamma \in \{1.00, 0.90, 0.85, 0.75\}$.  Note that the lower bound of $\gamma$ to guarantee the convergence of eBALM-sGS \eqref{alg:ebalm:sGS} is 0.75, see Lemma \ref{lemma:sublinear:LCP}. Actually,  in our numerical tests, eBALM-sGS \eqref{alg:ebalm:sGS} with $\gamma = 0.749$ always diverges.

For this problem, we have $\|b\| \approx 0.009$. Therefore, we replace the term $\|Kx^{k+1} - b\|$    in \eqref{equ:Rxkyk-1:Kxb}  by $\|Kx^{k+1} - b\|/\|b\|$ and stop  each algorithm when the iteration number exceeds 200,000 or the relative KKT residual
 $$\widetilde \Rcal^{k} := \max\{d^{k},  p^{k}\}  \leq \mathrm{tol}:= 5 \times 10^{-5},$$ where
$p^{k}= \tau^{-1}\|x^{k} - x^{k-1}\|$ and $d^{k}= \|\K x^{k} - b\|/\|b\|$.
 The initial $x^{0}$ and $y^{0}$ are both taken as all-zero vectors.  Besides, we adopt the same problem setting  in \cite{li2018parallel,liu2021acceleration}, namely, $M = N = 256$, $h = (N-1)/4$.

The comparison results among different $\gamma$ are reported in Figure  \ref{eBALM-sGS:gamma}. In this figure, for each fixed $\tau\in \{1, 1.1, \ldots, 6.9,7\}\times 10^{-6}$, the saved ratio in terms of iteration number is defined as \eqref{equ:ratio}, where $\underline{\mathrm{iter}}$ is taken as the corresponding method with $\gamma = 1$.
From the figures, we can see that both eBALM and eBALM-sGS benefit from choosing small $\gamma$, which enlarges the stepsize in updating $y^{k+1}$ in some sense. In particular, for eBALM, when $\tau \geq 4 \times 10^{-6}$, the saved ratios of taking $\gamma = 0.77, 0.85, 0.90$ are  about  20\%, 15\% and 10\%, respectively. For eBALM-sGS, when $\tau \geq 3 \times 10^{-6}$, the saved ratios of taking $\gamma = 0.77, 0.85, 0.90$ are about  25\%, 15\% and 10\%, respectively. Besides, we also know that eBALM-sGS always perform worse than i-eBALM, although the former has a convergence guarantee while the latter does not.

\begin{figure}[!t]
\centering
\subfloat[i-eBALM: iteration versus $\tau$\label{fig:y equals x} ]{\includegraphics[width=2.8cm,height=2.33cm]{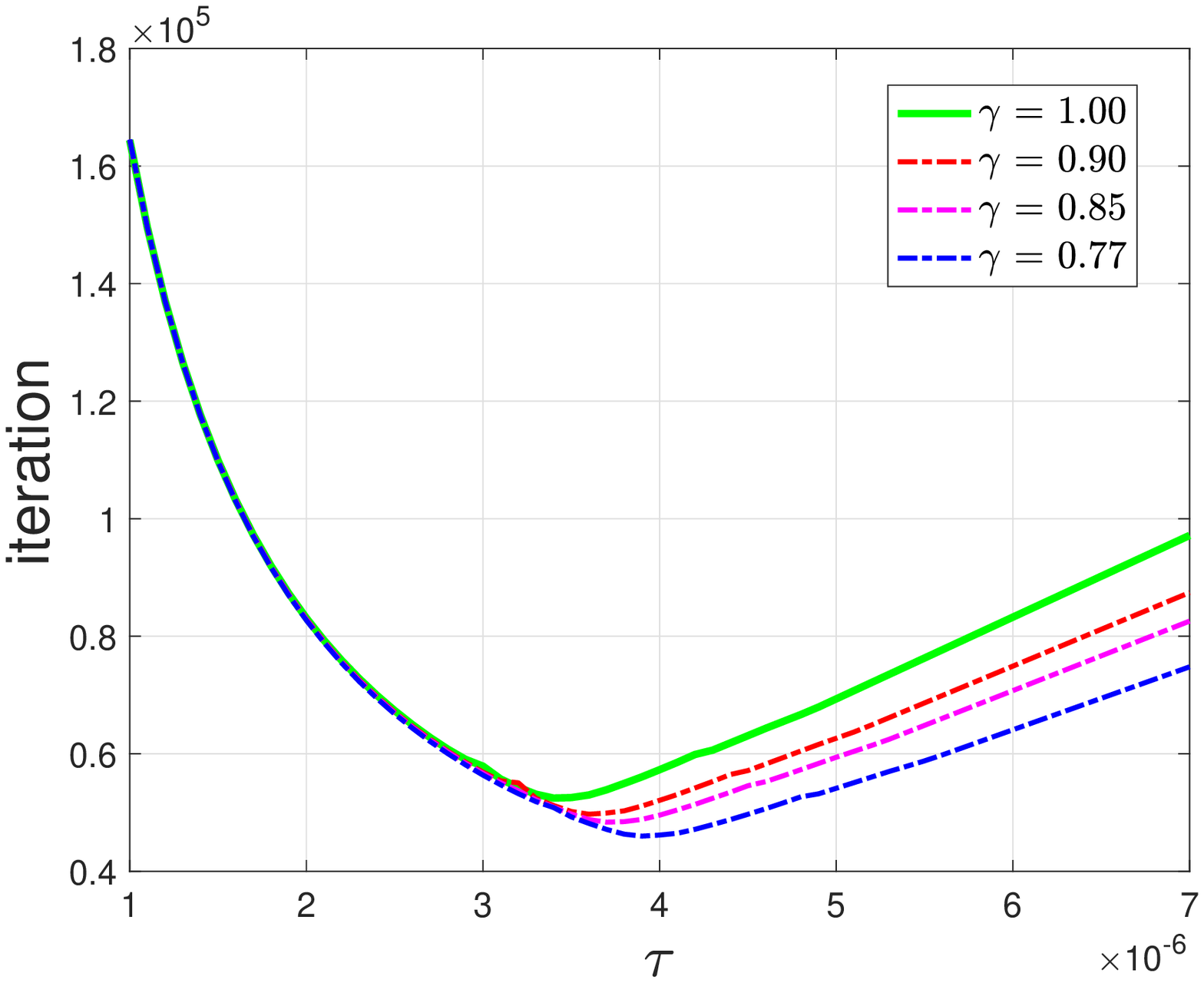}}~
\subfloat[i-eBALM: ratio versus $\tau$]{\includegraphics[width=2.8cm,height=2.33cm]{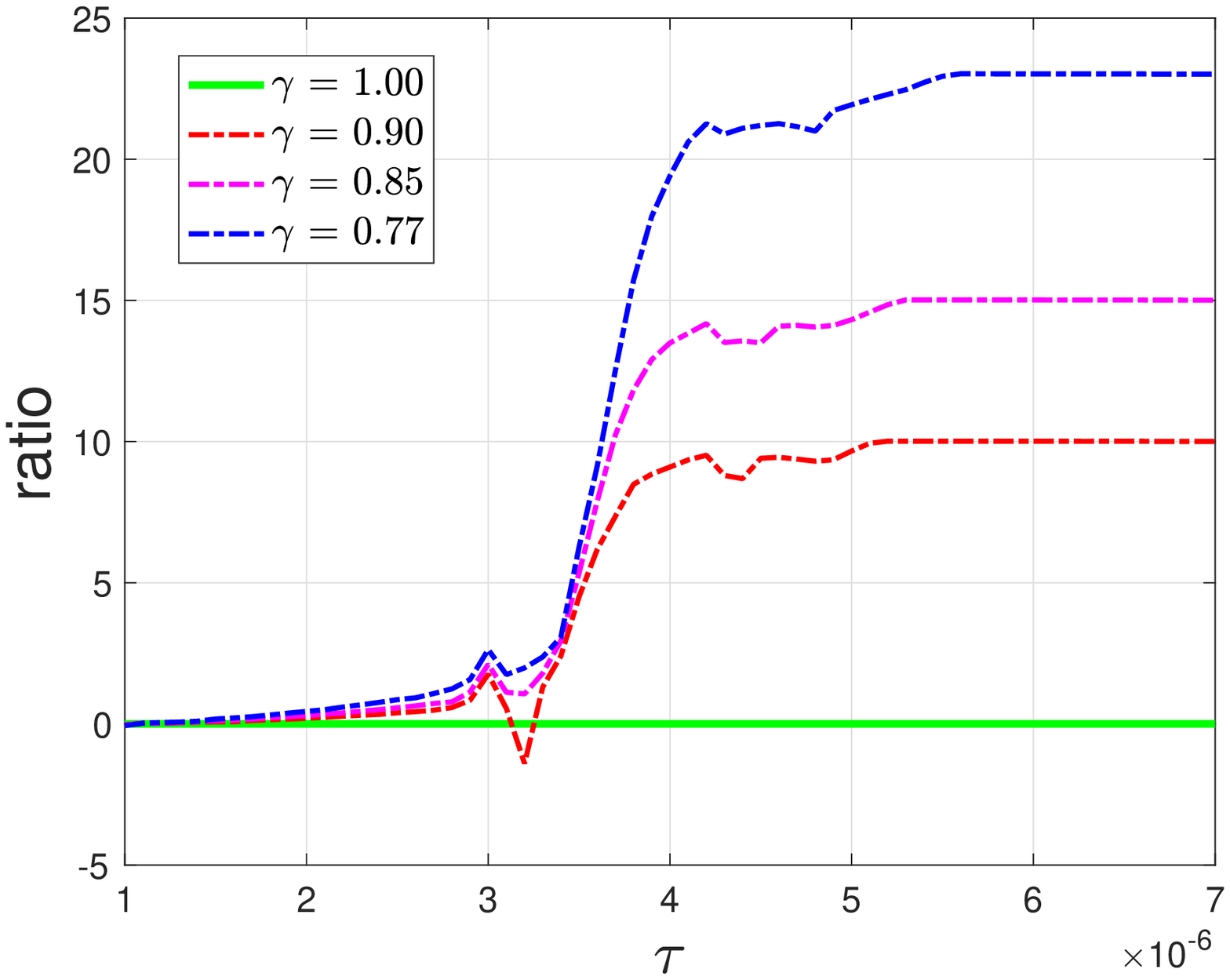}}~
\subfloat[eBALM-sGS: iteration versus $\tau$]{\includegraphics[width=2.8cm,height=2.33cm]{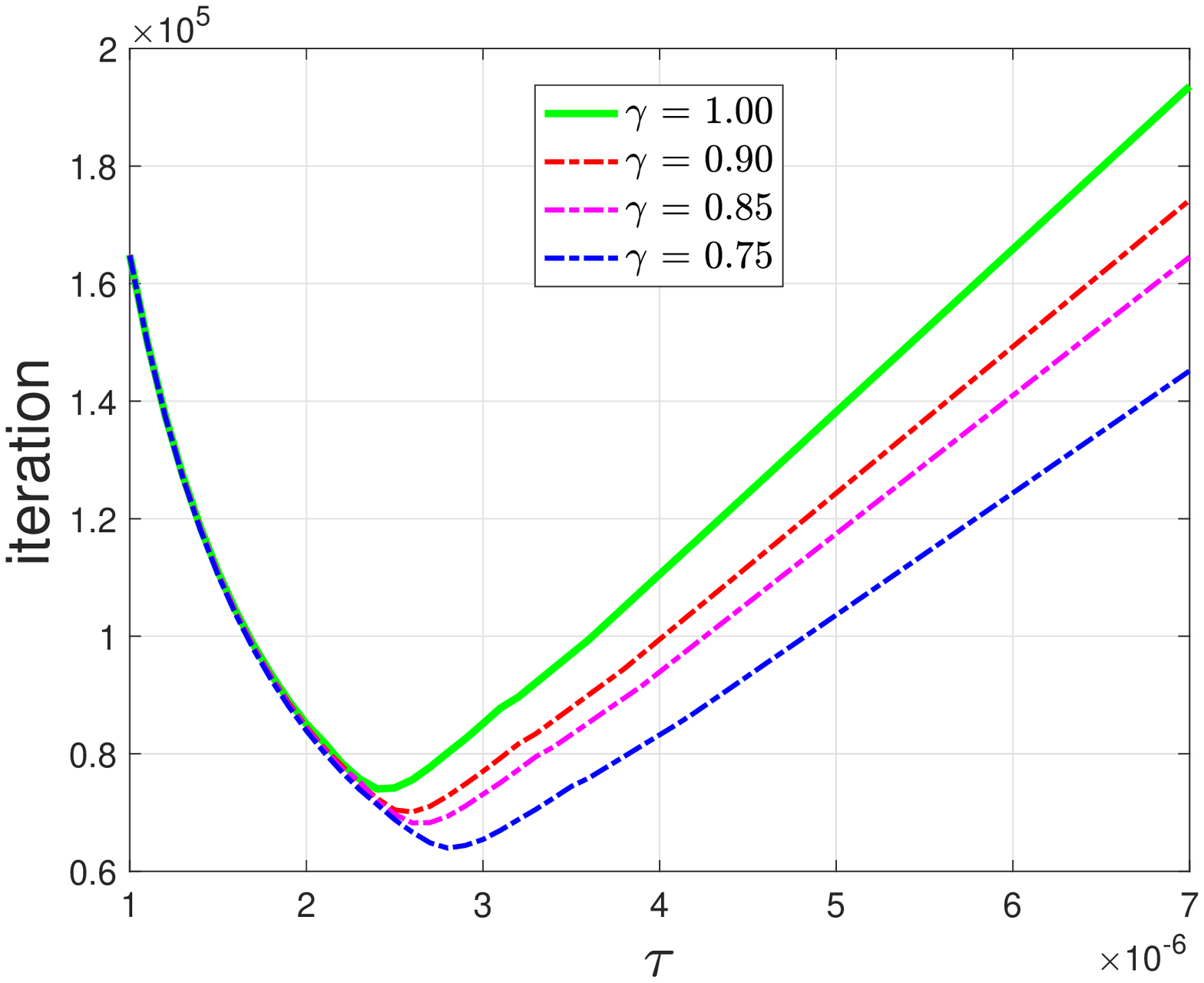}}~
\subfloat[eBALM-sGS: ratio versus $\tau$]{\includegraphics[width=2.8cm,height=2.33cm]{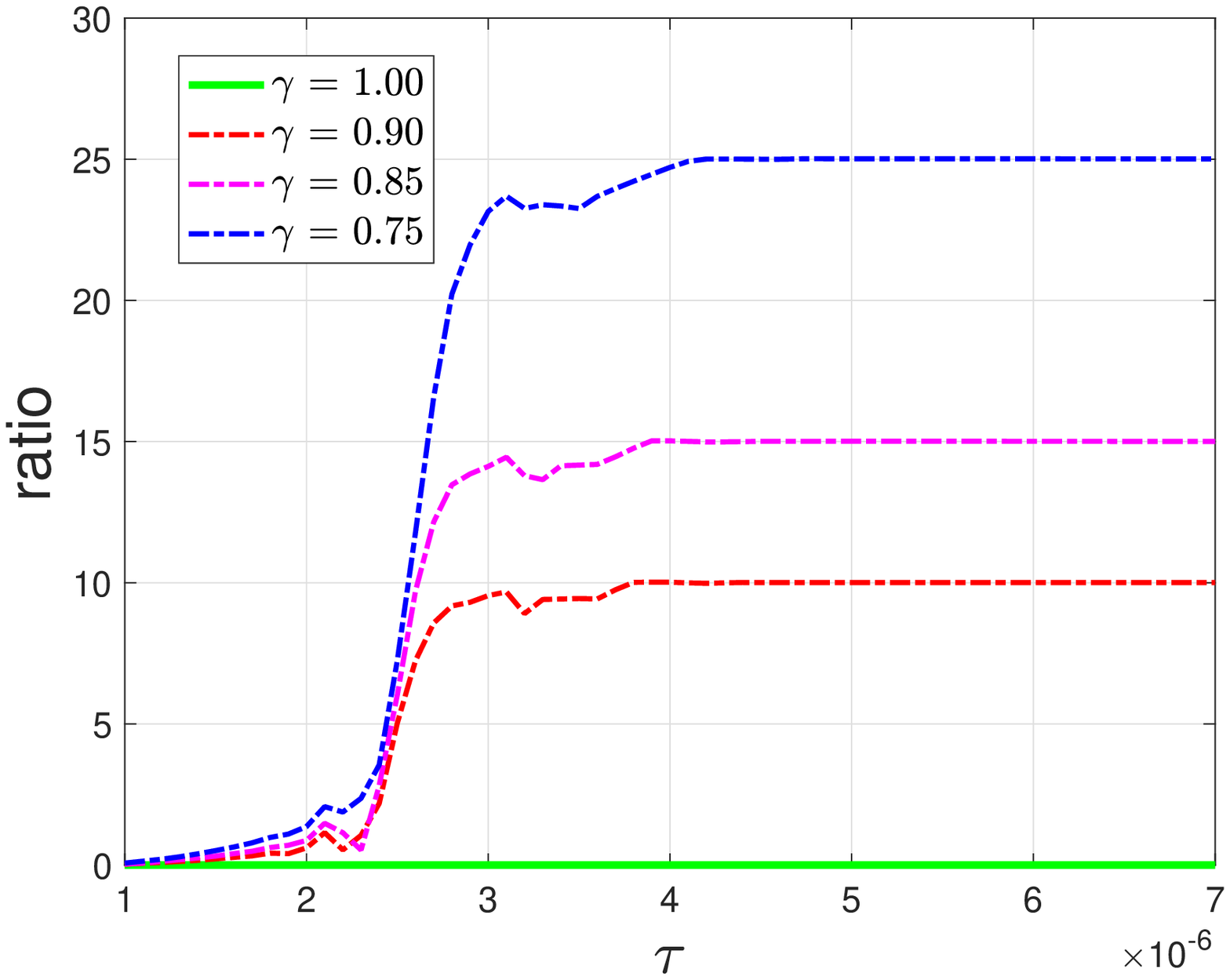}}
\caption{Comparison  on iteration and ratio of i-eBALM with $\gamma = \{1.00, 0.90, 0.85, 0.77\}$  and  eBALM-sGS with $\gamma = \{1.00, 0.90, 0.85, 0.75\}$. Note that i-eBALM with $\gamma = 1$ is exactly iPrePDHG in \cite{liu2021acceleration}.}
\label{eBALM-sGS:gamma}
\end{figure}

\begin{figure}[!t]
\centering
\subfloat[i-eBALM: $d^{k}$ versus iterations]{\includegraphics[width=2.8cm,height=2.33cm]{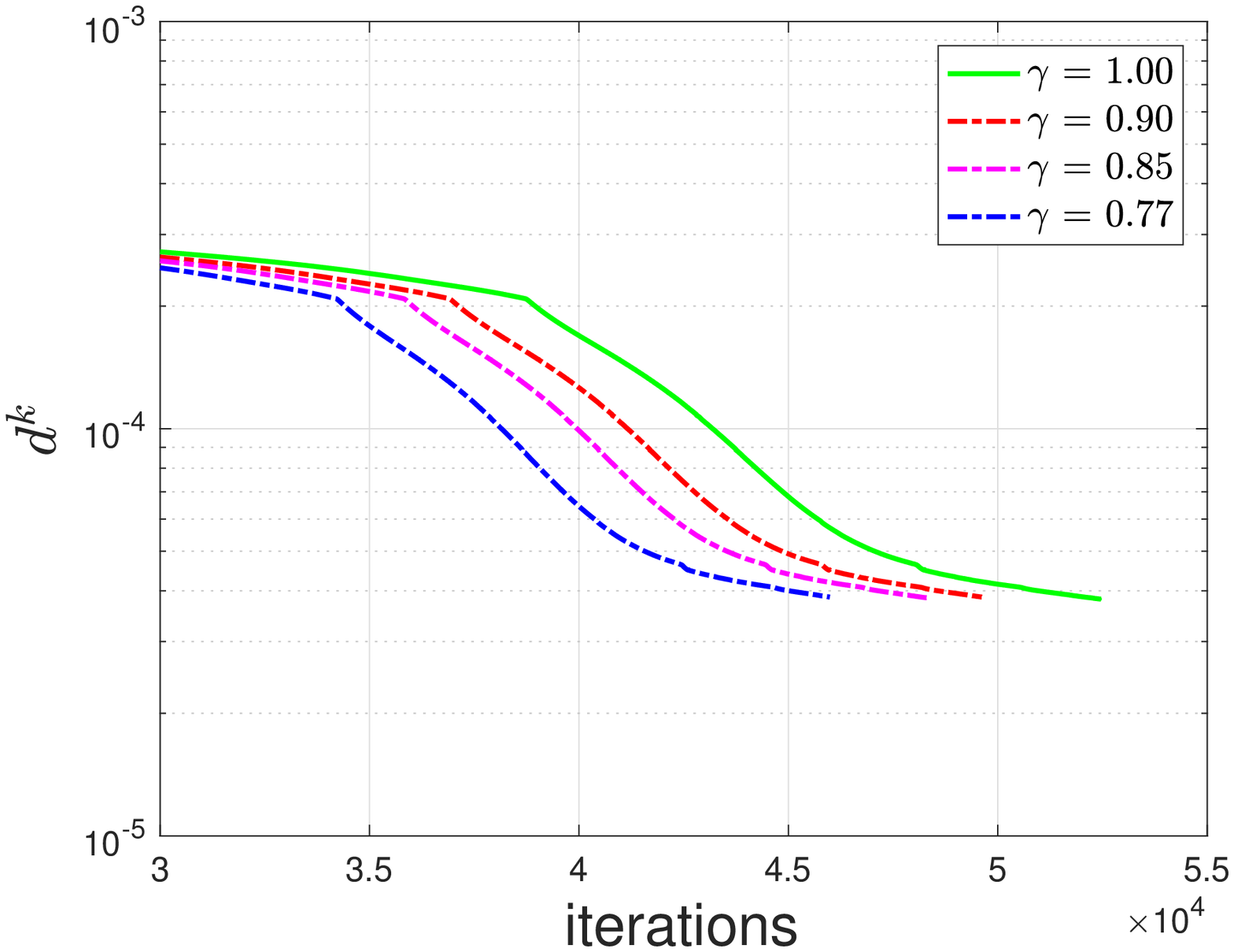}}~
\subfloat[i-eBALM: $p^{k}$ versus iterations]{\includegraphics[width=2.8cm,height=2.33cm]{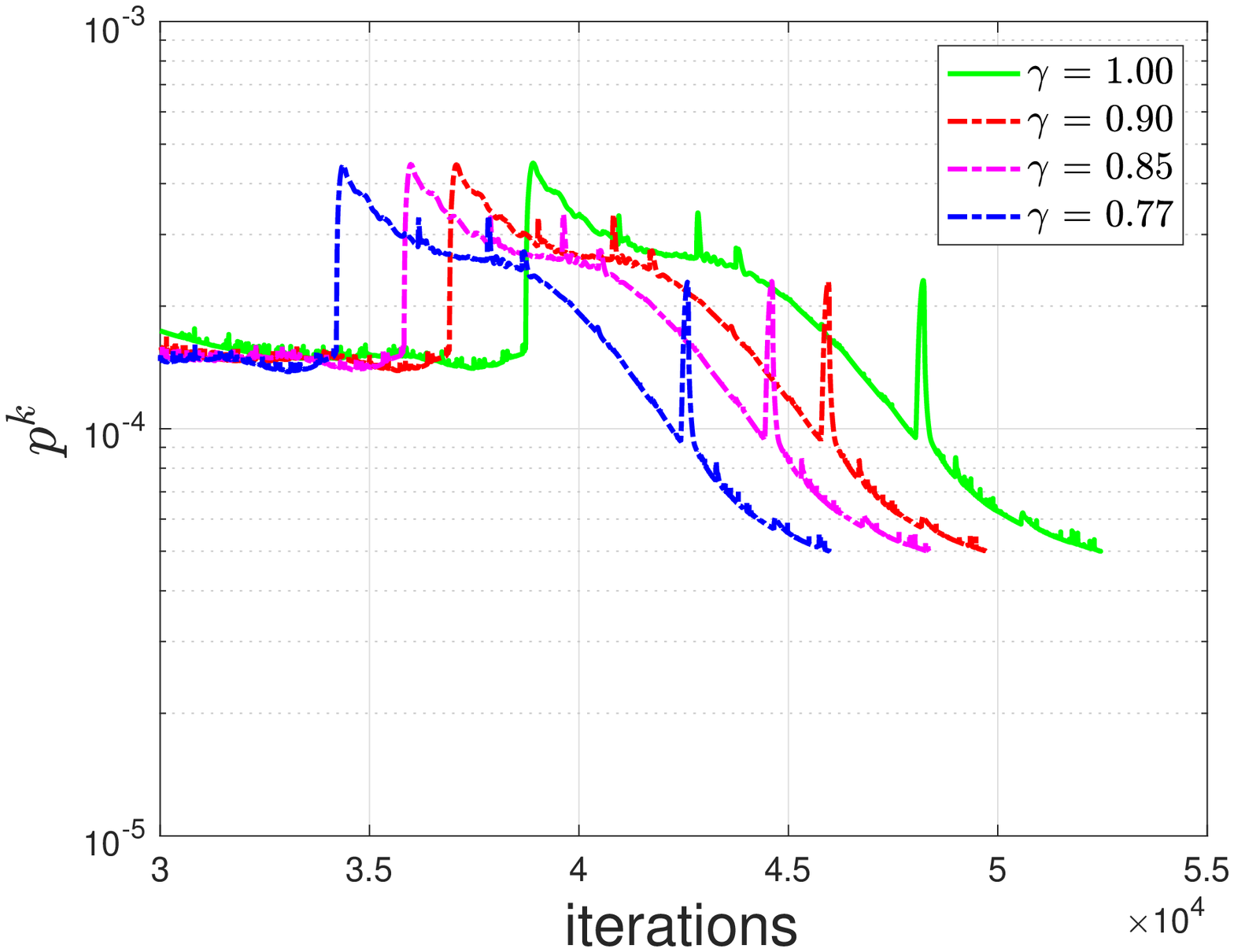}}~
\subfloat[eBALM-sGS: $d^{k}$ versus iterations]{ \includegraphics[width=2.8cm,height=2.33cm]{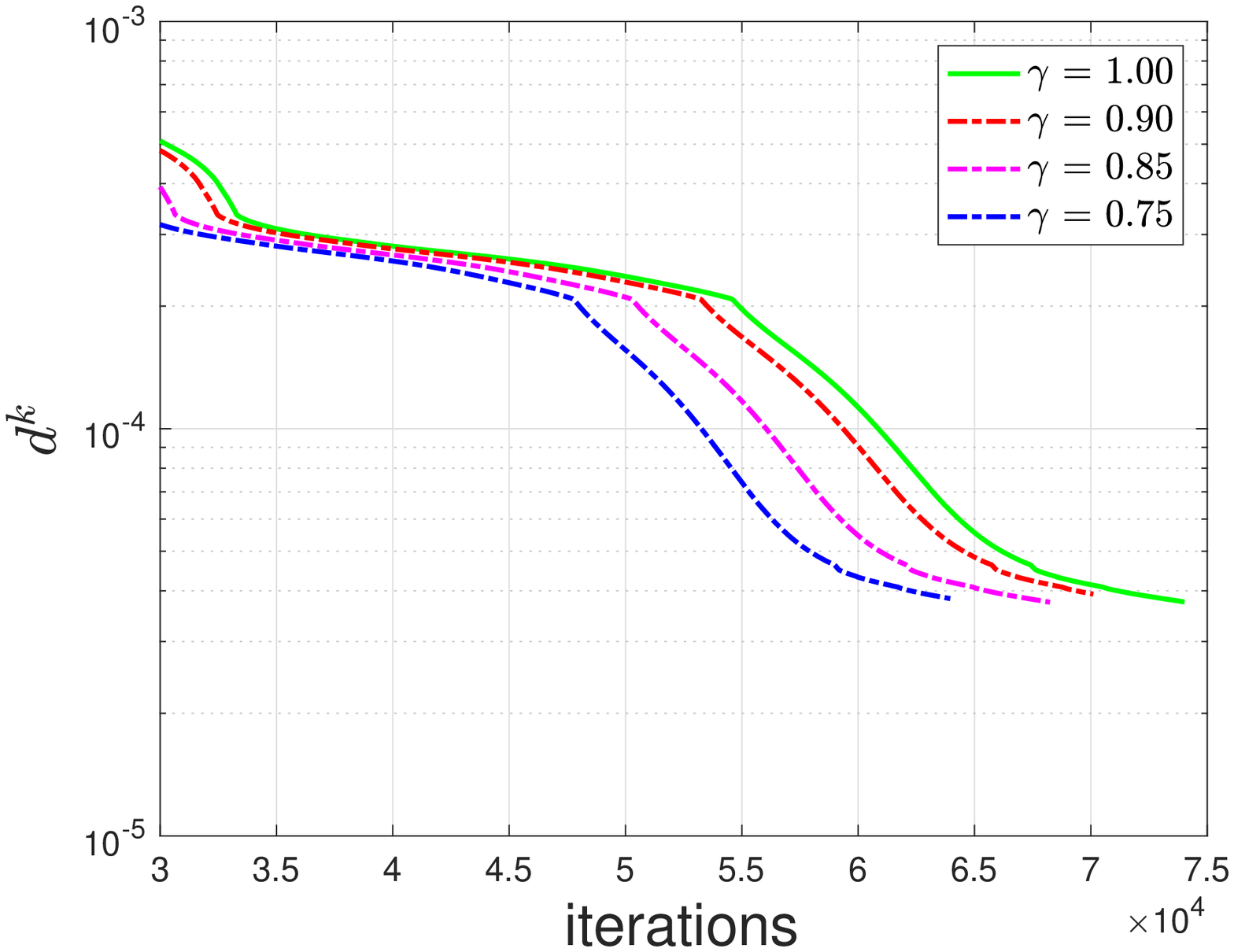}}~
\subfloat[eBALM-sGS: $p^{k}$ versus iterations]{ \includegraphics[width=2.8cm,height=2.33cm]{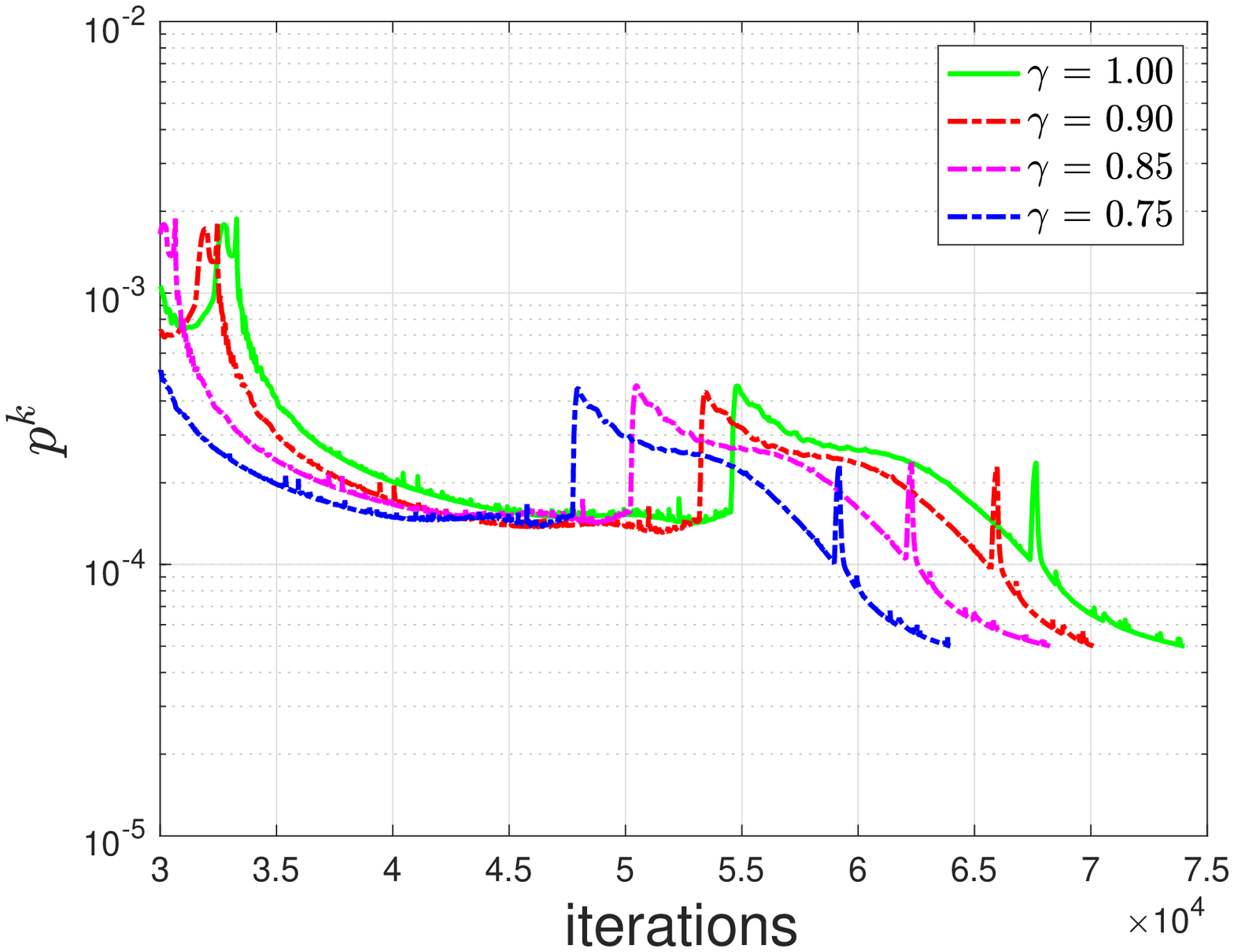}}
\caption{Comparison on $p^k$ and $d^k$ of i-eBALM with $\gamma = \{1.00, 0.90, 0.85, 0.77\}$  and  eBALM-sGS with $\gamma = \{1.00, 0.90, 0.85, 0.75\}$.  The parameter of each method is taken as $\tau_{\mathrm{best}}$ in Table \ref{table:EMD}.  Note that i-eBALM with $\gamma = 1$ is exactly iPrePDHG in \cite{liu2021acceleration}.}
\label{figure:EMD:taubest}
\end{figure}

As done in Section  \ref{subsection:matrixgame}, we present  the results corresponding to the best $\tau$ in Figure \ref{figure:EMD:taubest} and  Table \ref{table:EMD}.  From them, we observe that choosing small $\gamma$ can still accelerate the corresponding method with $\gamma = 1$, and the saved ratio is always more than 12\% when we take $\gamma = 0.77$ in eBALM and $\gamma = 0.75$ in eBALM-sGS. Again note that to achieve such improvement, we only need to change a parameter in the original method without increasing any additional cost.   We also know from Table \ref{table:EMD}  that the saved ratios shown in this table are not as large as those in Figure \ref{eBALM-sGS:gamma}.  However, choosing the best $\tau$ from a portion of candidates is time-consuming and impractical for both i-eBALM and eBALM-sGS.

\begin{table}[!t]
\small
\centering
\caption{Results of i-eBALM and eBALM-sGS with best $\tau$.
}~\\
\label{table:EMD}
\begin{tabular}{cccccr}
\toprule
$\gamma$  & $\tau_{\mathrm{best}}$  & time & iter  & $\|\mathbf{m}\|_{1,2}$  & ratio    \\
\midrule
\multicolumn{6}{c}{i-eBALM} \\
1.00 & 3.4e-6 & 139.6  &  52461  & 0.671770 & 0.00\\
0.90 & 3.6e-6 & 131.2  &  49715  & 0.671770 & 5.23\\
0.85 & 3.7e-6 & 125.5  &  48362  & 0.671770 & 7.81\\
0.77 & 3.9e-6 & 119.2  &  45990  & 0.671770 & 12.33\\
\midrule
\multicolumn{6}{c}{eBALM-sGS} \\
1.00 & 2.4e-6 & 166.1  &  74024  & 0.671770 & 0.00\\
0.90 & 2.6e-6 & 155.7  &  70105  & 0.671769 & 5.29\\
0.85 & 2.6e-6 & 153.4  &  68241  & 0.671770 & 7.81\\
0.75 & 2.8e-6 & 142.7  &  63955  & 0.671770 & 13.60\\
\bottomrule
\end{tabular}
\end{table}

\begin{figure}[!t]
\small
\centering
\subfloat[i-eBALM, tol = $5 \times 10^{-1}$, $m^k_r = 3.6\times 10^{-1}$, iter = 842]{\includegraphics[width=2.8cm]{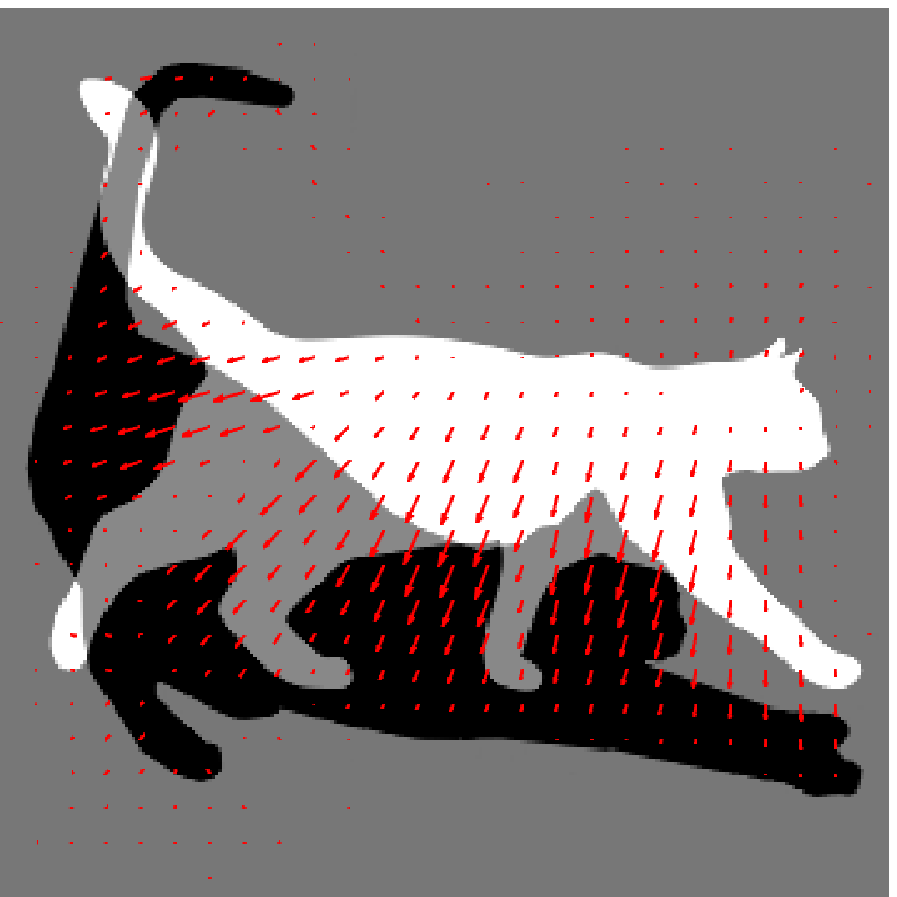}}~
\subfloat[i-eBALM, tol = $5 \times 10^{-3}$, $m_r^k = 1.7\times 10^{-2}$, iter = 8404]{\includegraphics[width=2.8cm]{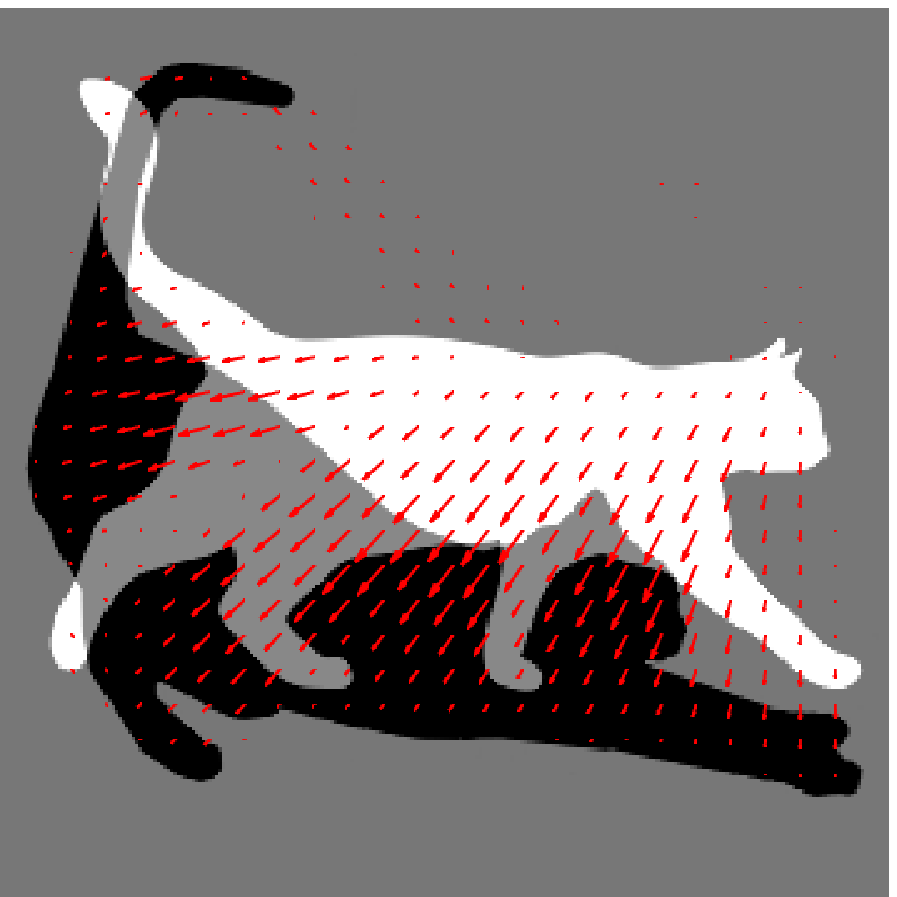}}~
\subfloat[i-eBALM, tol = $5 \times 10^{-5}$, $m_r^k = 8.9\times 10^{-4}$, iter = 45990]{\includegraphics[width=2.8cm]{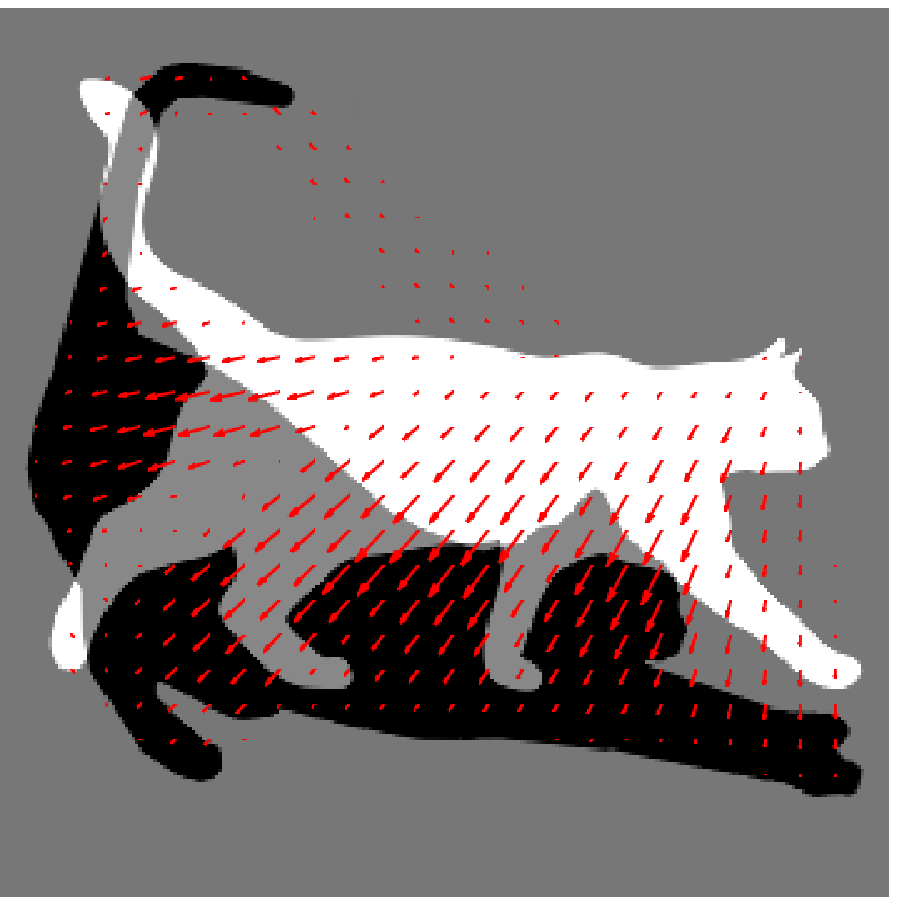}}~
\subfloat[groundtruth from \cite{liu2021acceleration}]{\includegraphics[width=2.8cm]{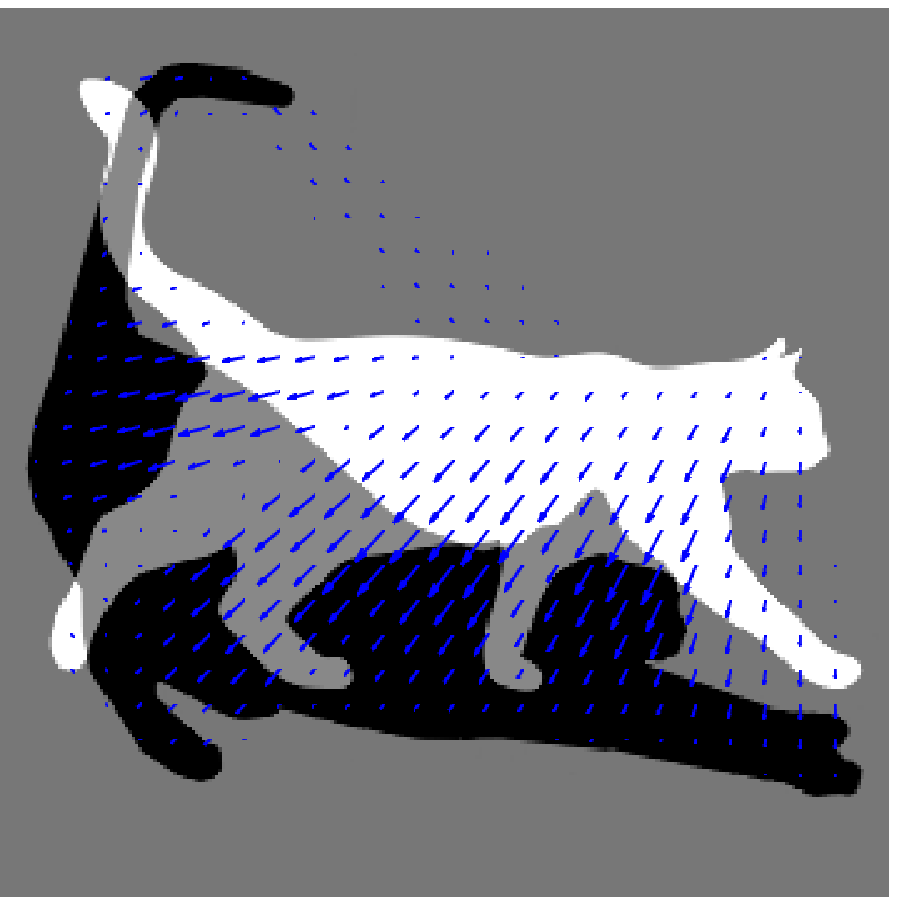}}

\subfloat[eBALM-sGS, tol = $5 \times 10^{-1}$, $m^k_r = 3.6\times 10^{-1}$, iter = 1171]{\includegraphics[width=2.8cm]{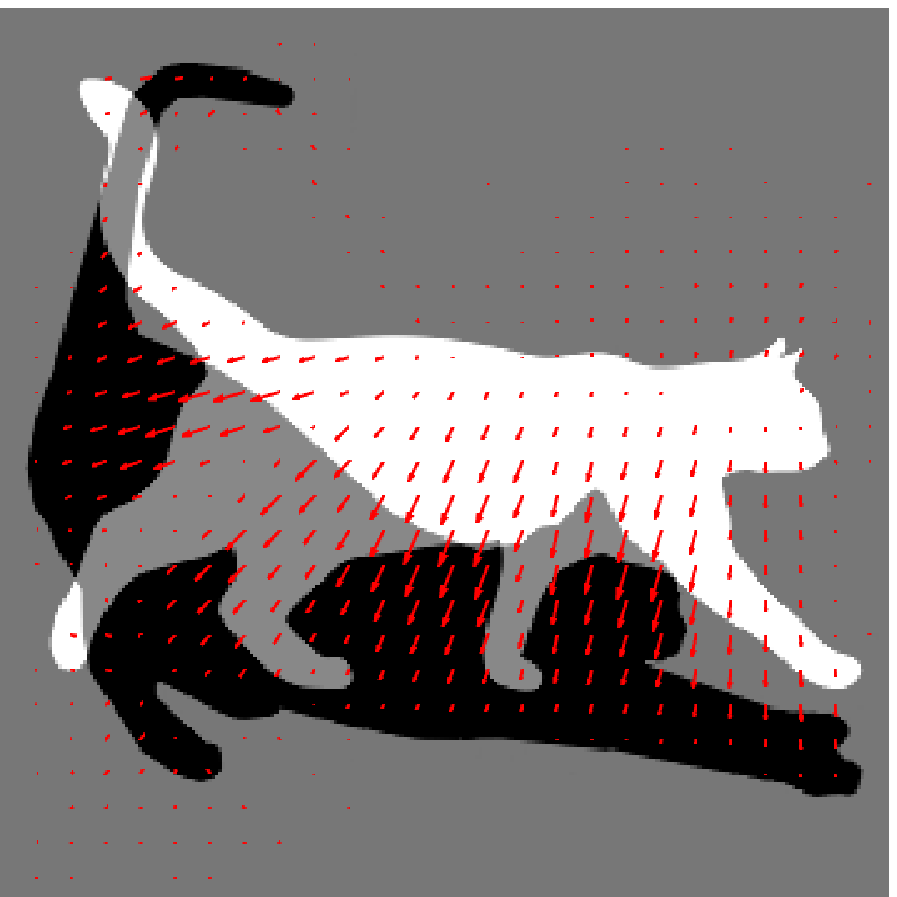}}~
\subfloat[eBALM-sGS, tol = $5 \times 10^{-3}$, $m_r^k = 1.7\times 10^{-2}$, iter = 11729]{\includegraphics[width=2.8cm]{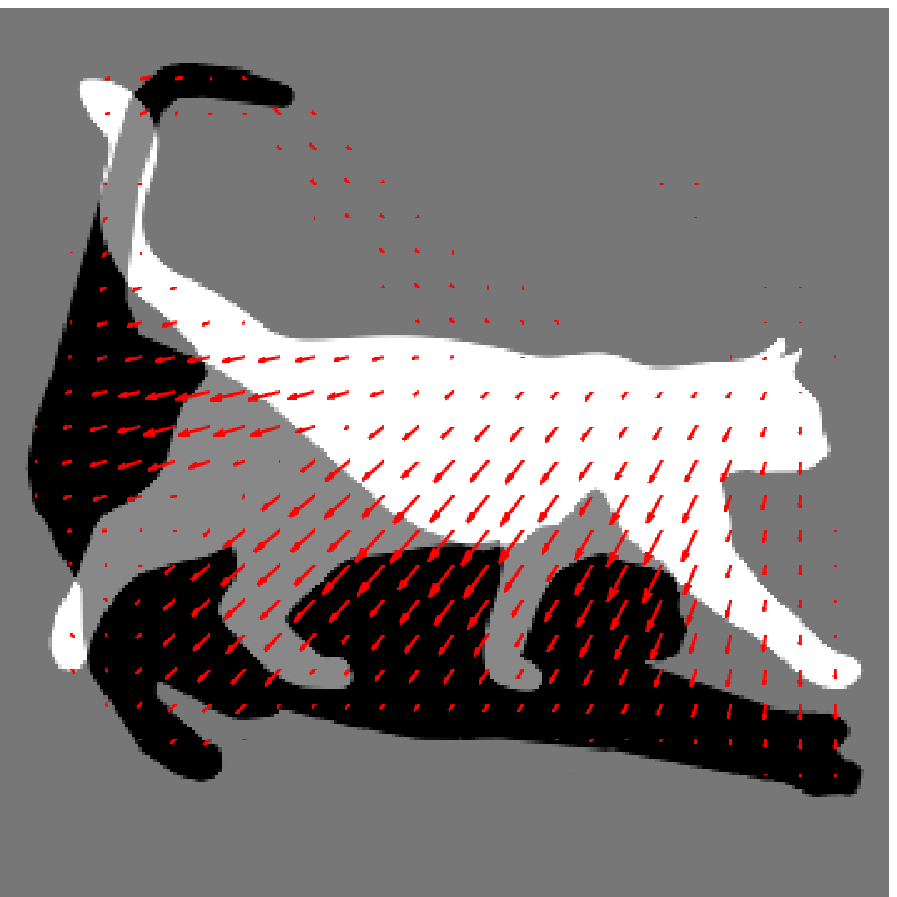}}~
\subfloat[eBALM-sGS, tol = $5 \times 10^{-5}$, $m_r^k = 8.5\times 10^{-4}$, iter = 63955]{\includegraphics[width=2.8cm]{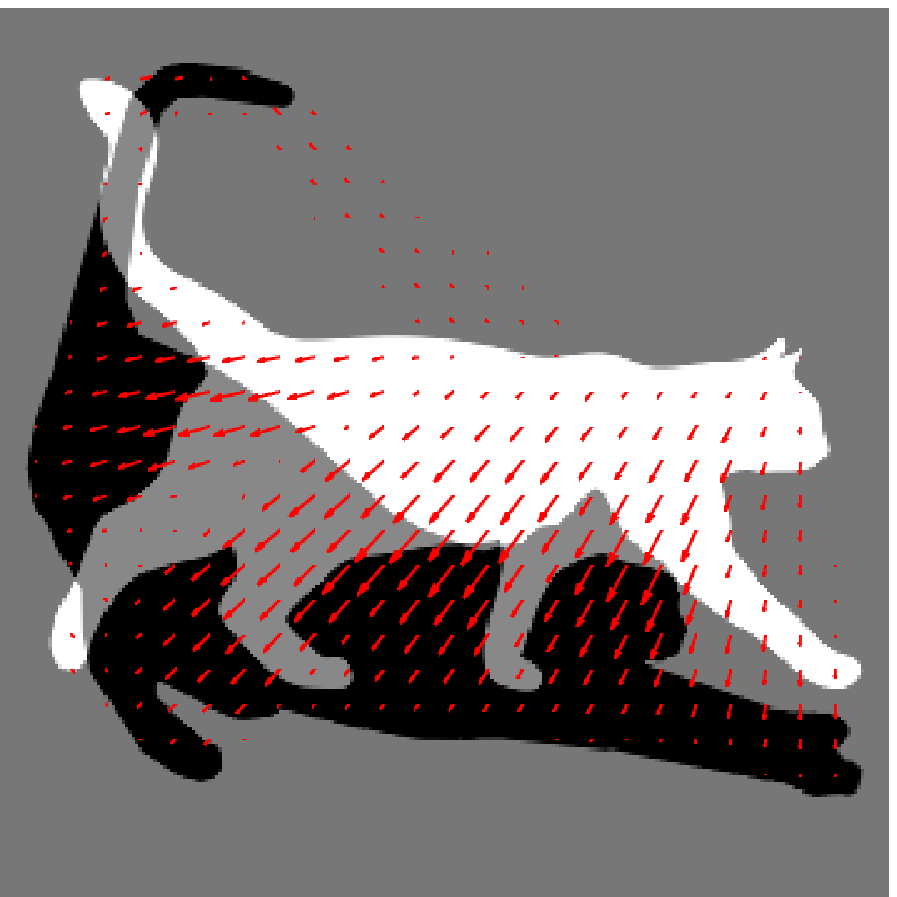}}~
\subfloat[groundtruth from \cite{liu2021acceleration}]{\includegraphics[width=2.8cm]{figs/EMD_cat_opt.eps}}

\caption{Mass distributions $\rho^{0}$ and $\rho^{1}$ are both with size $256 \times 256$. The white standing cat is $\rho^{0}$ and the black crouching cat is $\rho^{1}$. The red or blue curves are the flux that moves the standing cat $\rho^{0}$ into the crouching cat $\rho^{1}$. The ground truth flux, denoted by $m^{\mathrm{cvx}}$, is obtained by CVX after several hours. The earth mover's distance between $\rho^{0}$ and $\rho^{1}$ is 0.671783.
The term $m_r^k= \|m^{k} - m^{\mathrm{cvx}}\|/\|m^{\mathrm{cvx}}\|$, where $m^k$ is the flux obtained by each method.
The data matrices $\rho^0$, $\rho^1$, and  $m^{\mathrm{cvx}}$ are downloaded from   \url{https://github.com/xuyunbei/Inexact-preconditioning}.} \label{figure:emd:rho0rho1}
\end{figure}

Finally,  in Figure \ref{figure:emd:rho0rho1}, we show the solutions obtained by eBALM-sGS with different tolerance and the ground truth obtained by running CVX in several hours, see \cite{liu2021acceleration}. We can see that eBALM-sGS with tolerance $\mathrm{tol} = 5\times 10^{-5}$ can return a solution with satisfactory precision.

\subsection{CT Reconstruction} \label{subsection:CT}
Let $x_{\mathrm{true}} \in \Rbb^{n}$ with $n = M N$  and $M = N = 256$ be a true image. Given a vector of line-integration values  $b = R x_{\mathrm{true}} \in \Rbb^m$, where  $R \in \Rbb^{m \times n}$ is a system matrix for 2D fan-beam CT with a curved detector,  the CT image reconstruction aims to recover $x_{\mathrm{true}}$ via solving  the following optimization problem:
\be\label{prob:CT}
\min_{x \in \Rbb^{n}}\ \Phi(x):= \frac12 \|Rx - b\|^2 + \lambda \|D x \|_1,
\ee
where  $D \in \Rbb^{2n \times n}$ is the 2D discrete gradient operator with $h = 1$ (see \cite[Section 4]{liu2021acceleration} for instance) and $\lambda = 1$ is a regularization parameter.

To avoid solving the linear system related  to the matrices  $R$ and $D$, as done in \cite{liu2021acceleration} and \cite{sidky2012convex},  we understand problem \eqref{prob:CT} as a form of \eqref{P} with
$$
f(x) = 0, \quad g(z) = \frac12 \|p - b\|^2 + \lambda \|q\|_1, \quad z = \begin{pmatrix} p \\q \end{pmatrix} \in \Rbb^{m + 2n}, \quad \K =\begin{pmatrix} R \\ D \end{pmatrix}.
$$
 We  choose the variable metric matrices $M_1$ and $M_2$ via \eqref{M1M2:general:2block}, wherein $\K_{1}$ and $\K_{2}$ are $R$ and $D$, respectively and $\sigma = (\tau \|R\|^2)^{-1}$, $P_{2} = \theta I_{n}$.
The constant $\theta$ is taken as $10^{-3}$ in our experiments.  According to Remark \ref{remark:M1M2:general:2block}, we have  the parameter  $\gamma  \geq 3/4$.   Note that the dual variable $y$ can be decomposed as $y = \begin{pmatrix} y_{1} \\ y_{2} \end{pmatrix}$ with $y_{1} \in \Rbb^{m}$ and $y_{2} \in \Rbb^{2n}$. The main  iteration scheme of PrePDHG \eqref{vmpdhg} for solving problem \eqref{prob:CT}  is given as follows:
\begin{subnumcases}{\label{vm-pdhg4ct}}
x^{k+1}= x^k - \frac{\tau}{2\gamma}\left(R^{\tran} y_1^k + D^{\tran} y_2^k\right)\label{vmpdhg:ct:x},\\
y_1^{k+1} = \frac{\tau \|R\|^2 y_1^k + R(2x^{k+1} - x^k) - b}{1 + \tau \|R\|^2} \label{vmpdhg:ct:y1},\\
y_2^{k+1} = \argmin_{\|y_2\|_{\infty} \leq \lambda}\, \frac12\left\|y_2 - y_2^k\right\|_{\tau D D^{\tran} + \theta I_{2n}}^2 - \iprod{y_2}{D(2x^{k+1} - x^k)}.\label{vmpdhg:ct:y2}
\end{subnumcases}
The $y_{2}$-subproblem in \eqref{vmpdhg:ct:y2} does not take a closed-form solution. However, thanks to the special structure of $D$, \cite{liu2021acceleration} developed an efficient block coordinate descent (BCD) method to solve \eqref{vmpdhg:ct:y2}.
To guarantee the convergence of  PrePDHG \eqref{vm-pdhg4ct}, theoretically, we need to run many BCD epochs to solve \eqref{vmpdhg:ct:y2} almost exactly.  However, this may be time-consuming as observed in \cite{liu2021acceleration}.  As suggested by \cite{liu2021acceleration}, we find that running two BCD steps is enough to make the PrePDHG \eqref{vm-pdhg4ct} perform well. Hence, in our numerical experiments, we only apply two BCD steps in solving the $y_{2}$-subproblem.  Considering that  \cite{liu2021acceleration} has shown the superiority of their proposed inexact preconditioned PDHG (iPrePDHG)  over other variants of  PDHG, here we mainly compare our PrePDHG \eqref{vm-pdhg4ct}  with iPrePDHG therein. It should be mentioned that iPrePDHG corresponds to our PrePDHG  \eqref{vm-pdhg4ct} with $\gamma = 1$.  For PrePDHG \eqref{vm-pdhg4ct}, we consider three versions with $\gamma = 5/6$, $\gamma = 3/4$ and $\gamma = 1/2$, respectively. Although there is no convergence guarantee for the last one, it performs very well in our numerical experiments.

Given a vector $z = [0, \nu, 2\nu, \ldots, 360 - \nu]^{\tran}$ containing the projection angles in degrees,  we generate a test problem by using the \textsf{fancurvedtomo} function from the AIR Tools II package \cite{hansen2018air} with input $N$ and $z$.  In our numerical results, we consider $\nu \in \{6, 9, 12, 15, 18,24, 30,36\}$. The starting points of PrePDHG and iPrePDHG are both taken as $x^0 = 0$ and $y^0 = 0$.  We stop each algorithm at $(x^k, y^k)$  when the KKT residual $\Rcal(x^k,y^k) \leq 5 \times 10^{-6}$, where $\Rcal(x^k,y^k)$ is computed according to
\[
\begin{aligned}
{}&\Rcal(x^k,y^k)   \\
={}& \max\big\{\|R^\tran y_1^k + D^\tran y_2^k\|,  \|Rx^k - y_1^k -b\|, \dist\left(Dx^k, \partial \mathbb{I}_{\|y_2\|_{\infty} \leq \lambda}(y_2^k)\right)\!\big\}.
\end{aligned}
\]

\begin{figure}[!t]
\centering
\subfloat[$\nu = 6$]{\includegraphics[width=2.8cm,height=2.33cm]{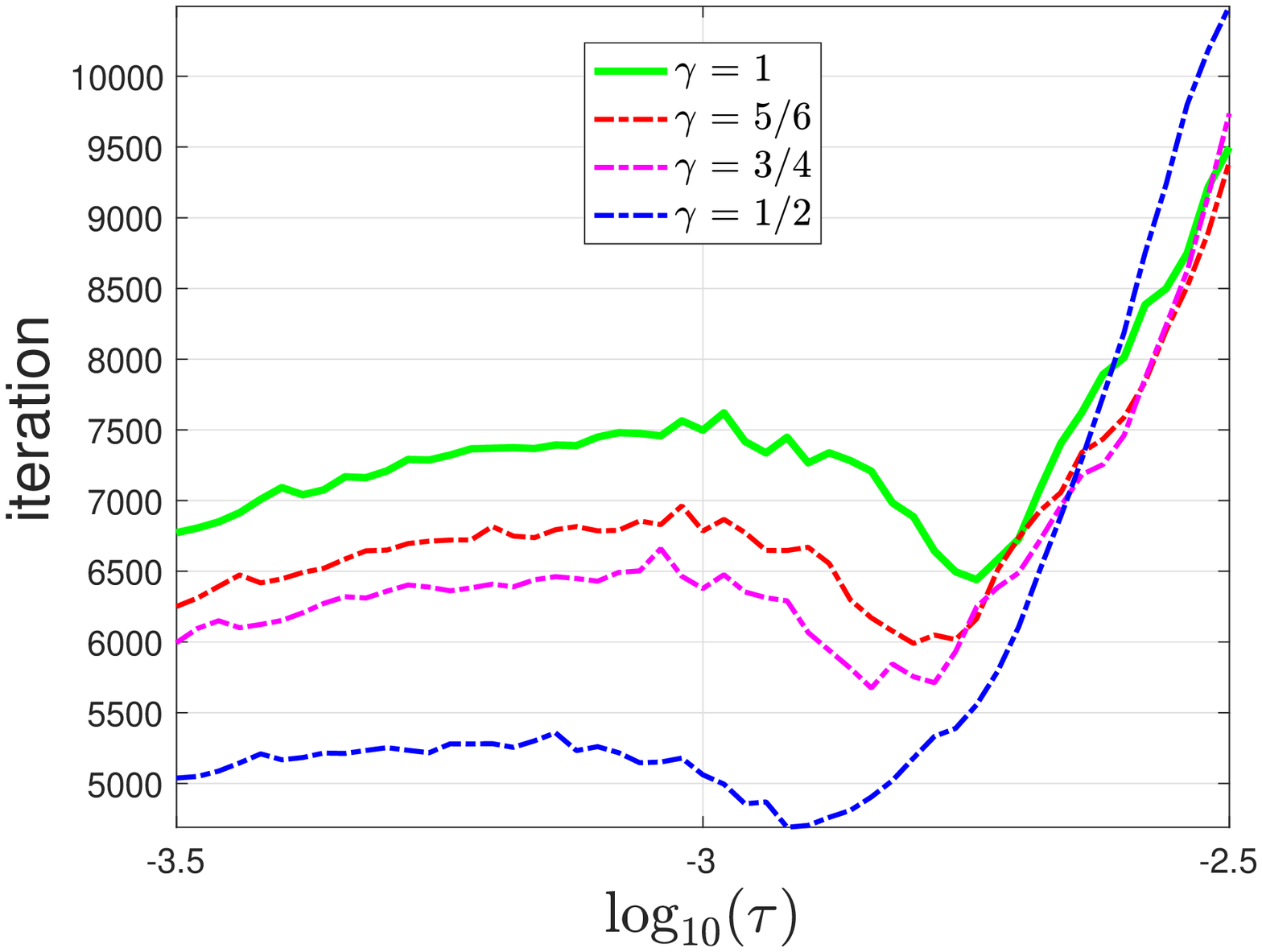}}~
\subfloat[$\nu = 9$]{\includegraphics[width=2.8cm,height=2.33cm]{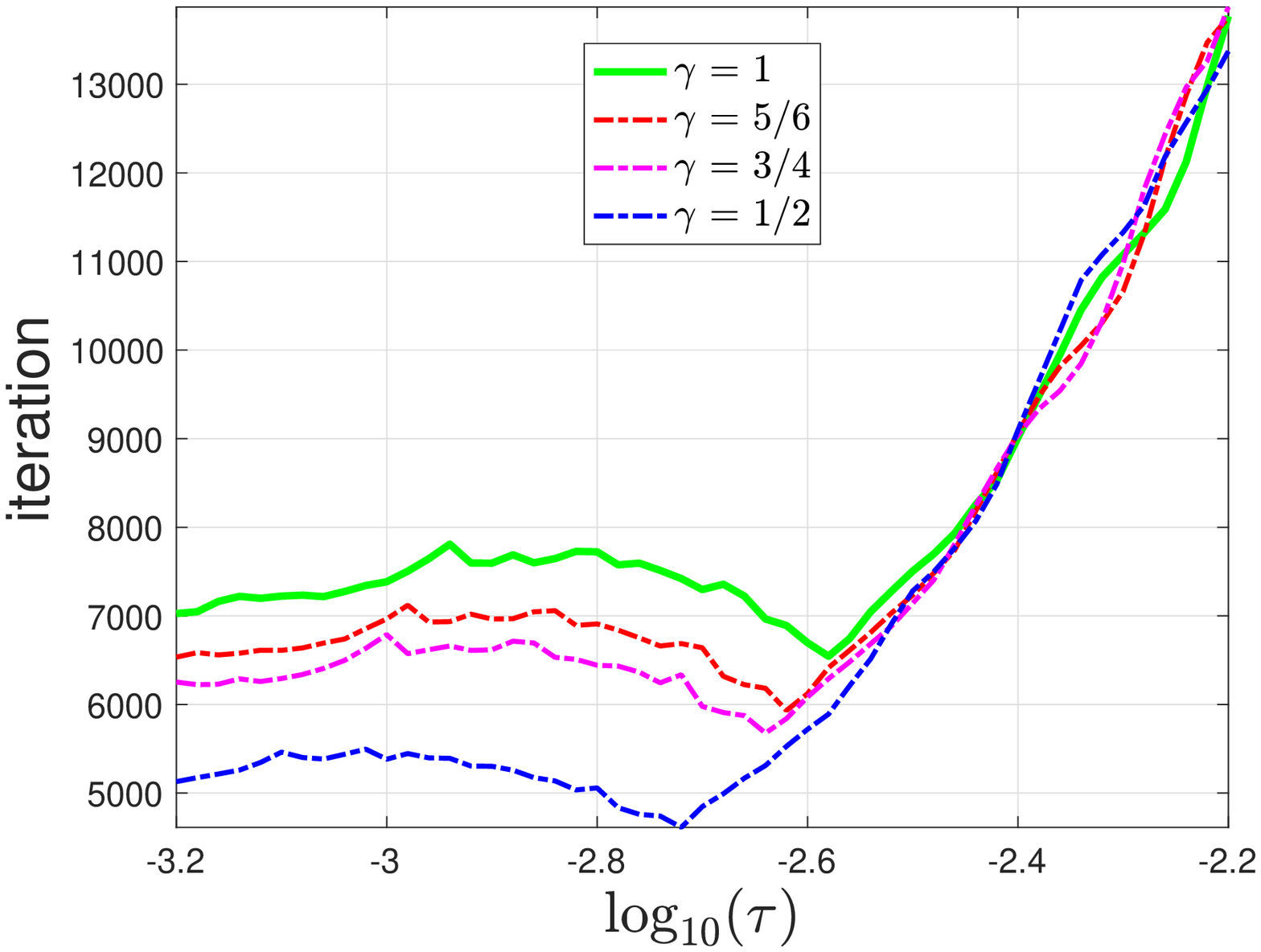}}~
\subfloat[$\nu = 12$]{\includegraphics[width=2.8cm,height=2.33cm]{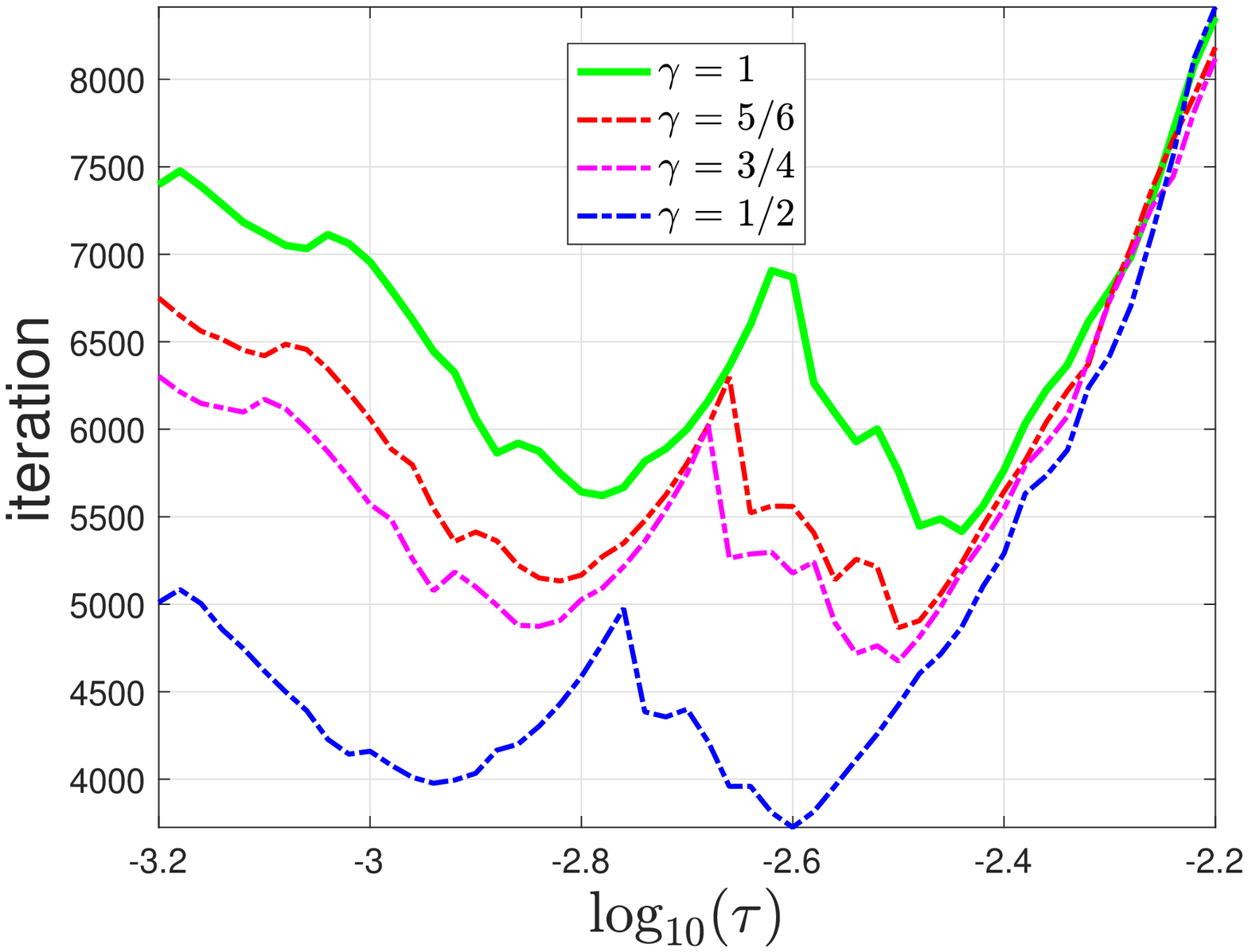}}~
\subfloat[$\nu = 15$]{\includegraphics[width=2.8cm,height=2.33cm]{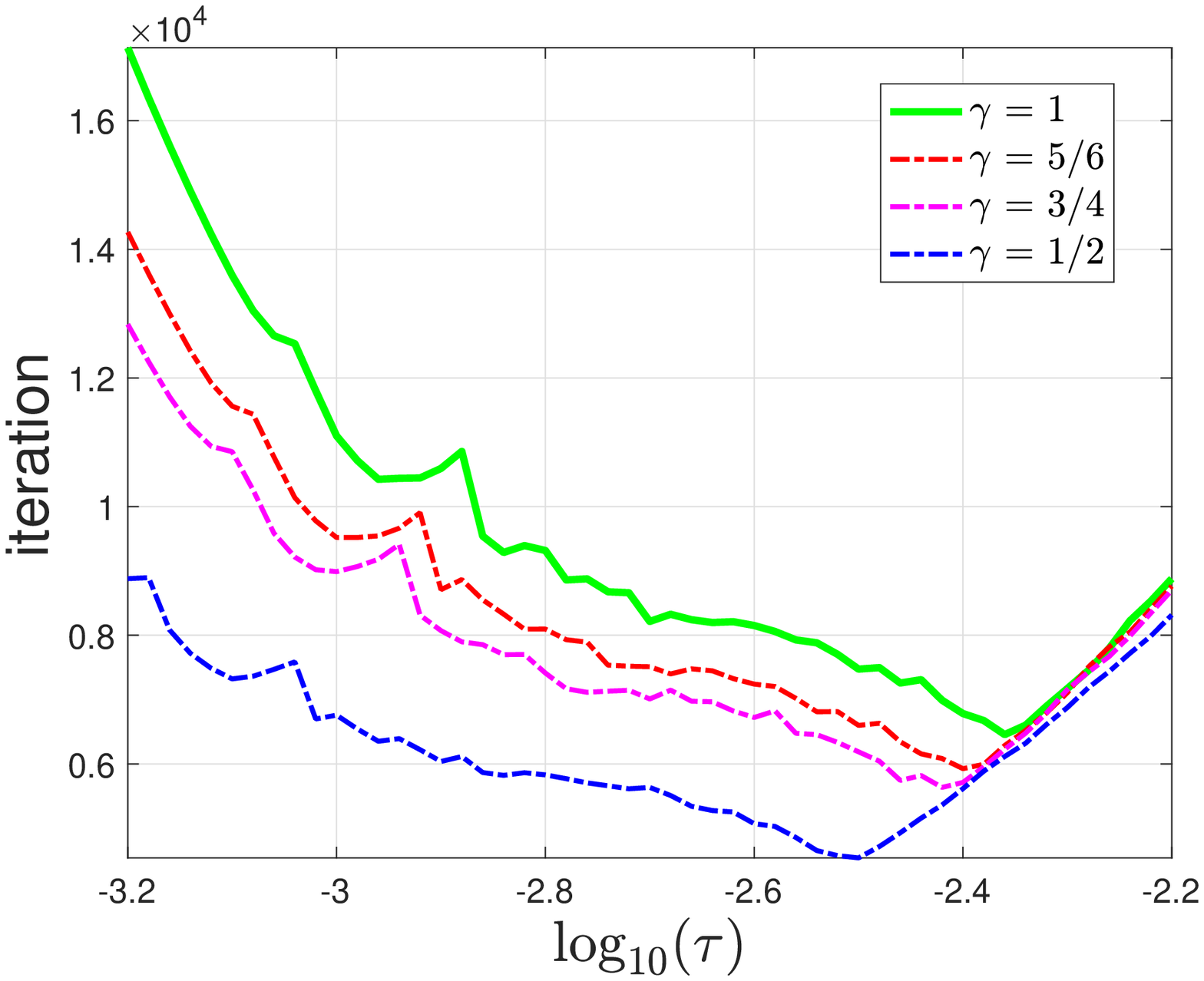}}\\
\subfloat[$\nu = 18$]{\includegraphics[width=2.8cm,height=2.33cm]{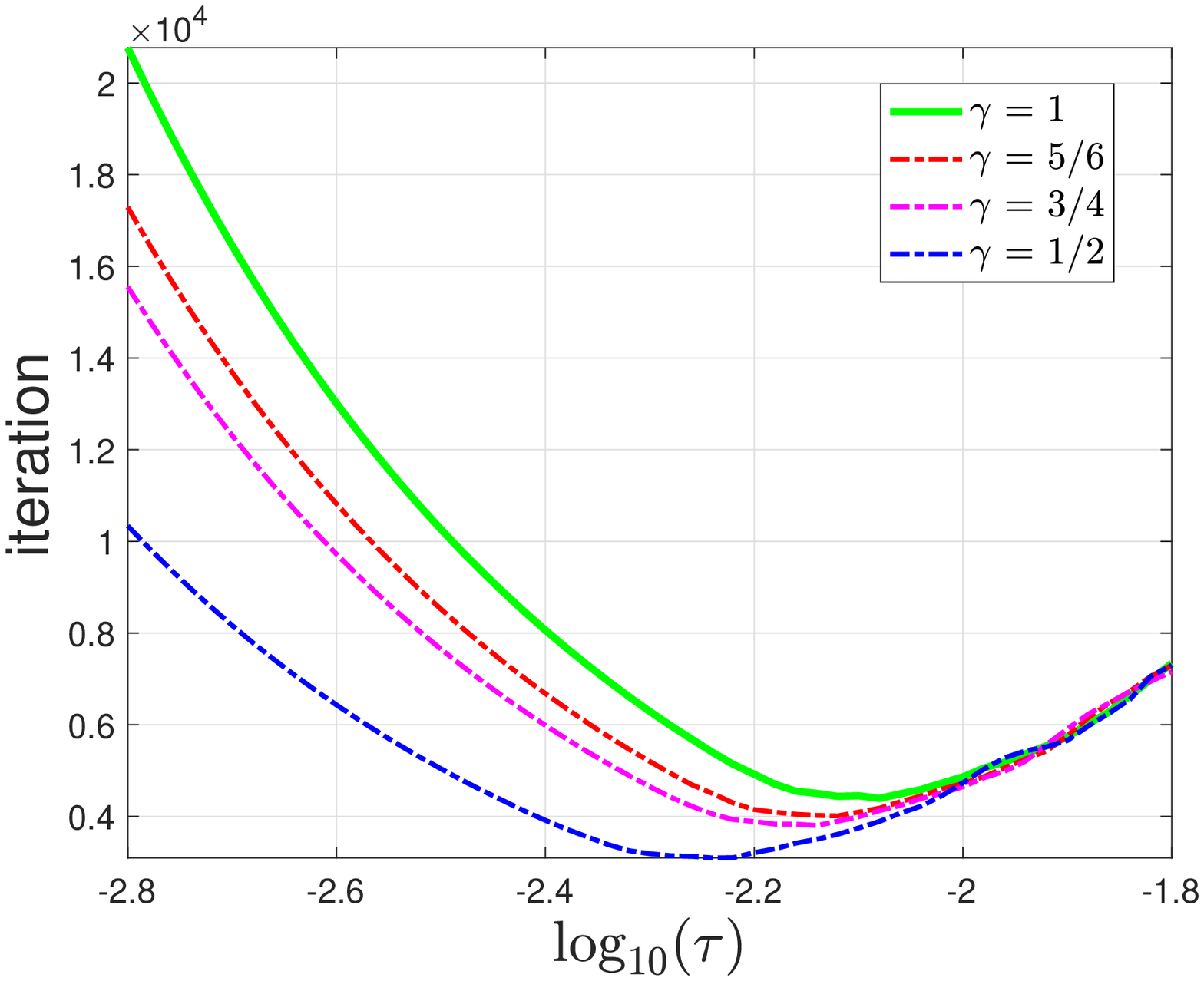}}~
\subfloat[$\nu = 24$]{\includegraphics[width=2.8cm,height=2.33cm]{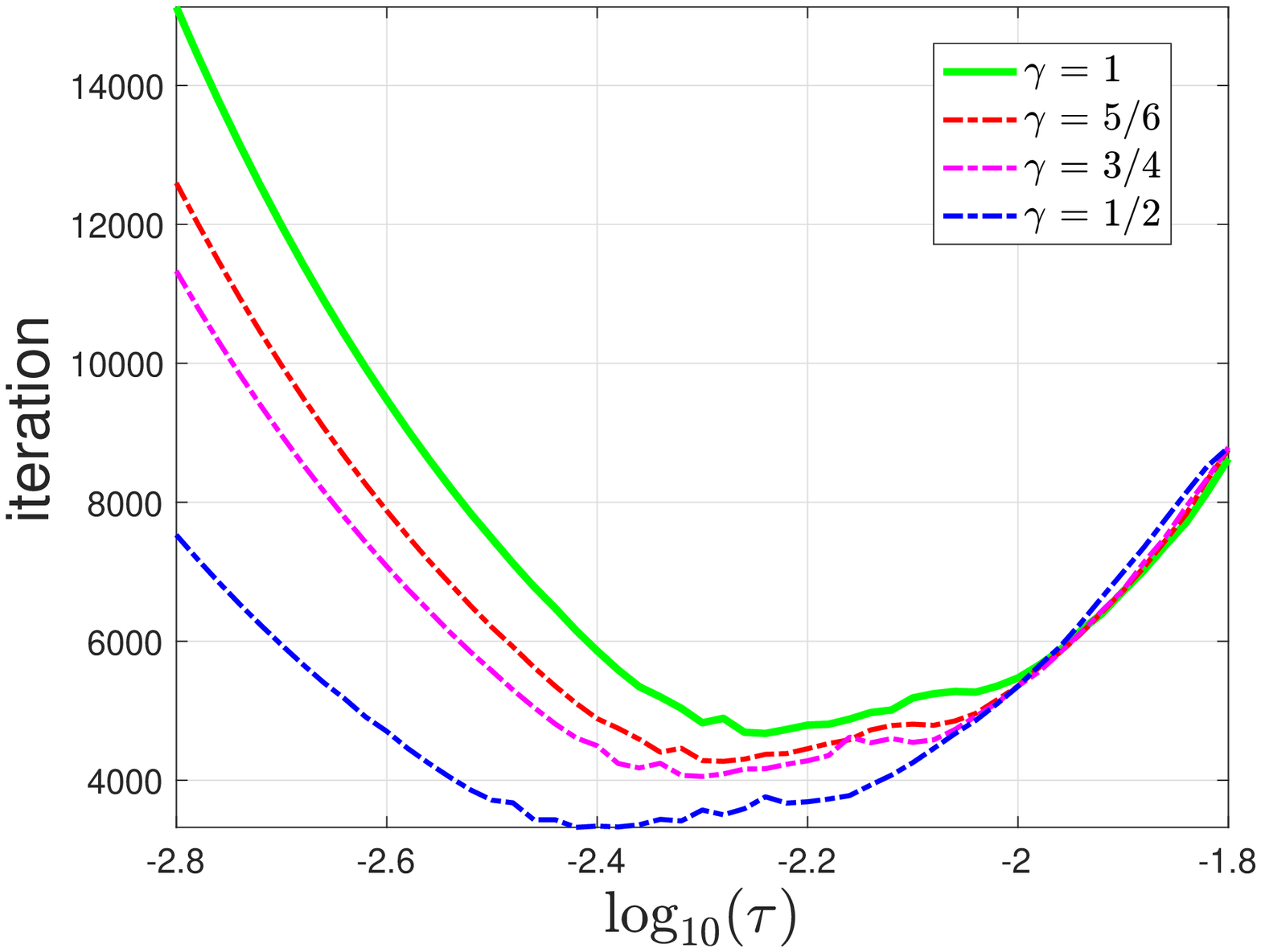}}~
\subfloat[$\nu = 30$]{\includegraphics[width=2.8cm,height=2.33cm]{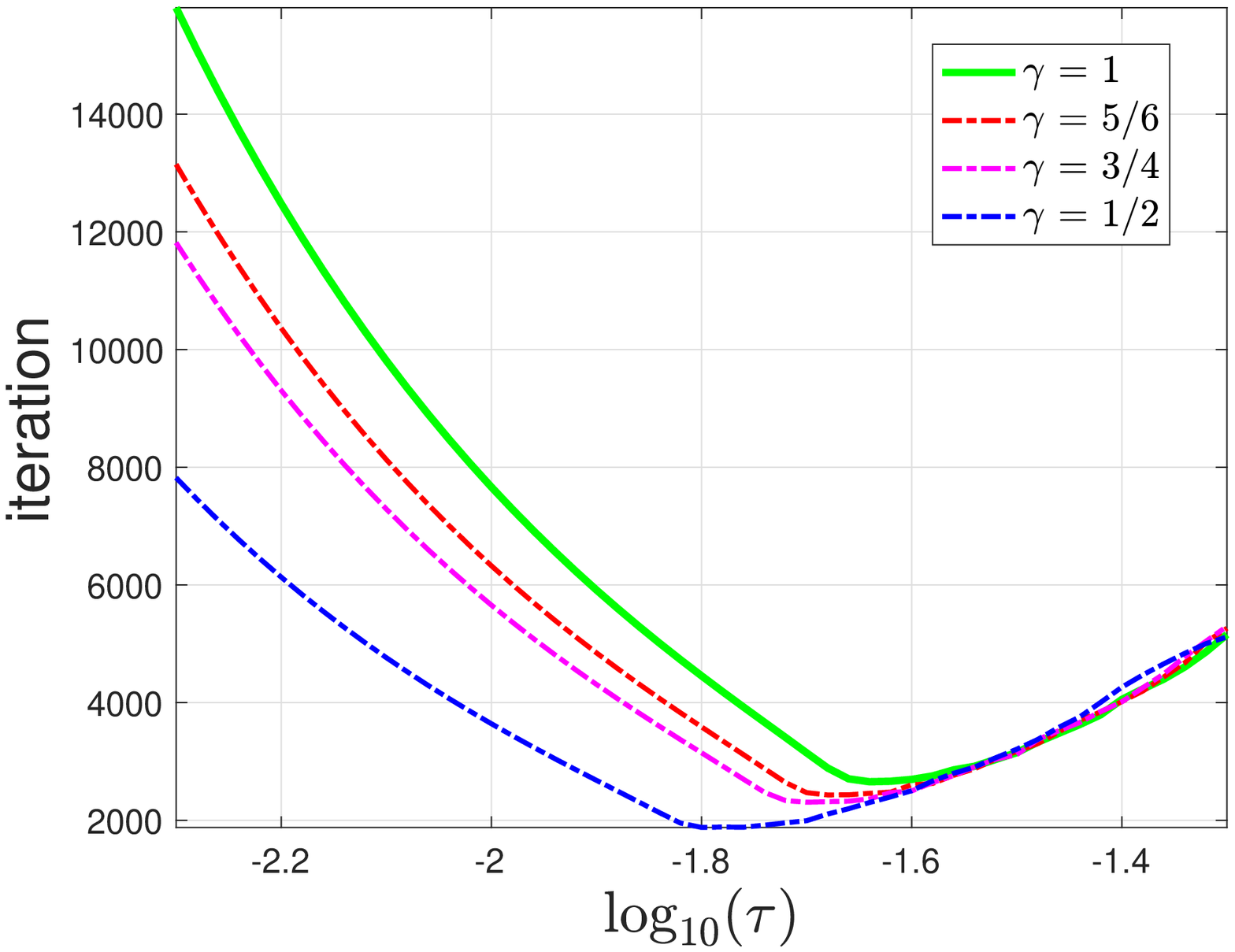}}~
\subfloat[$\nu = 36$]{\includegraphics[width=2.8cm,height=2.33cm]{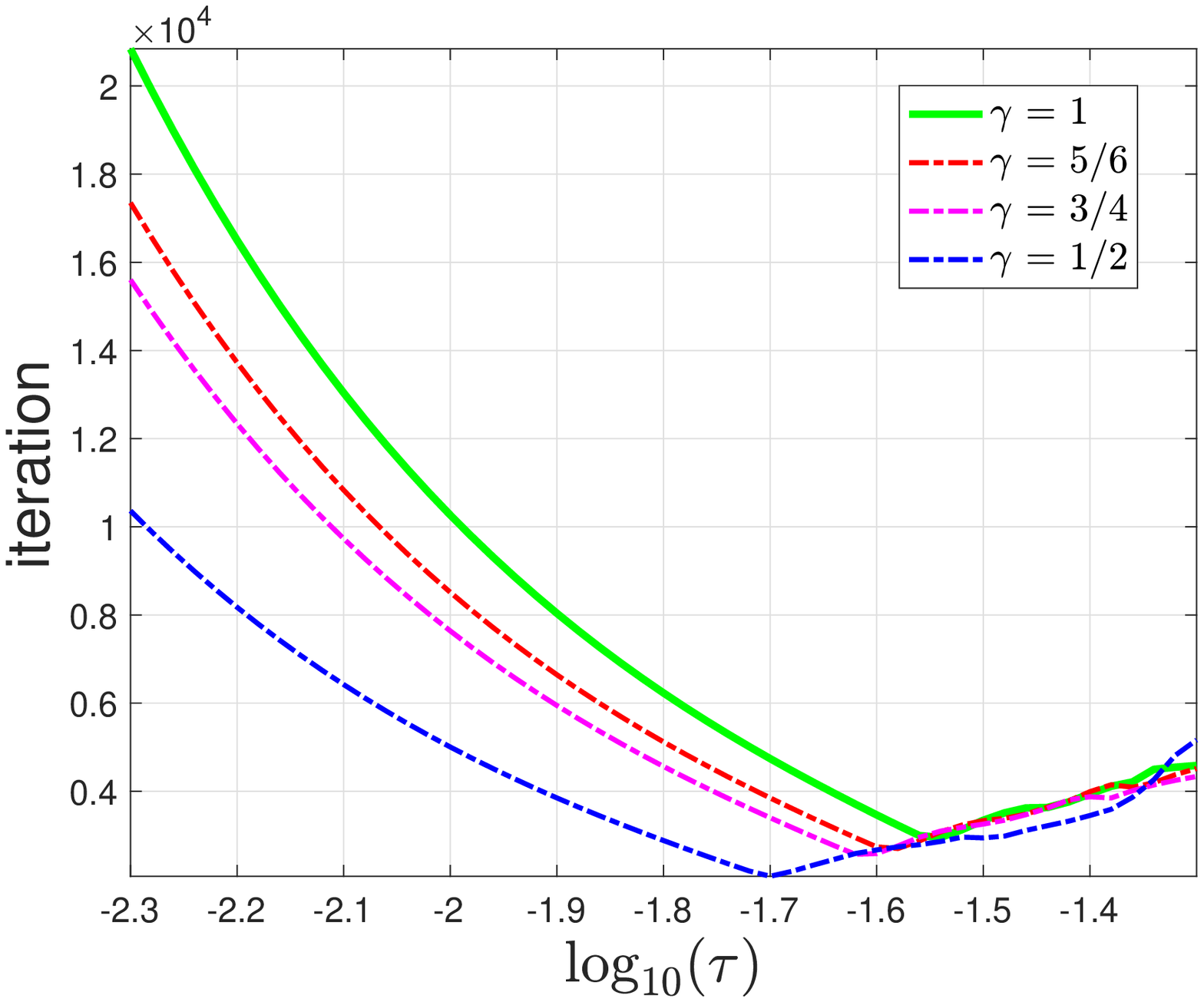}}\\
\subfloat[$\nu = 6$]{\includegraphics[width=2.8cm,height=2.33cm]{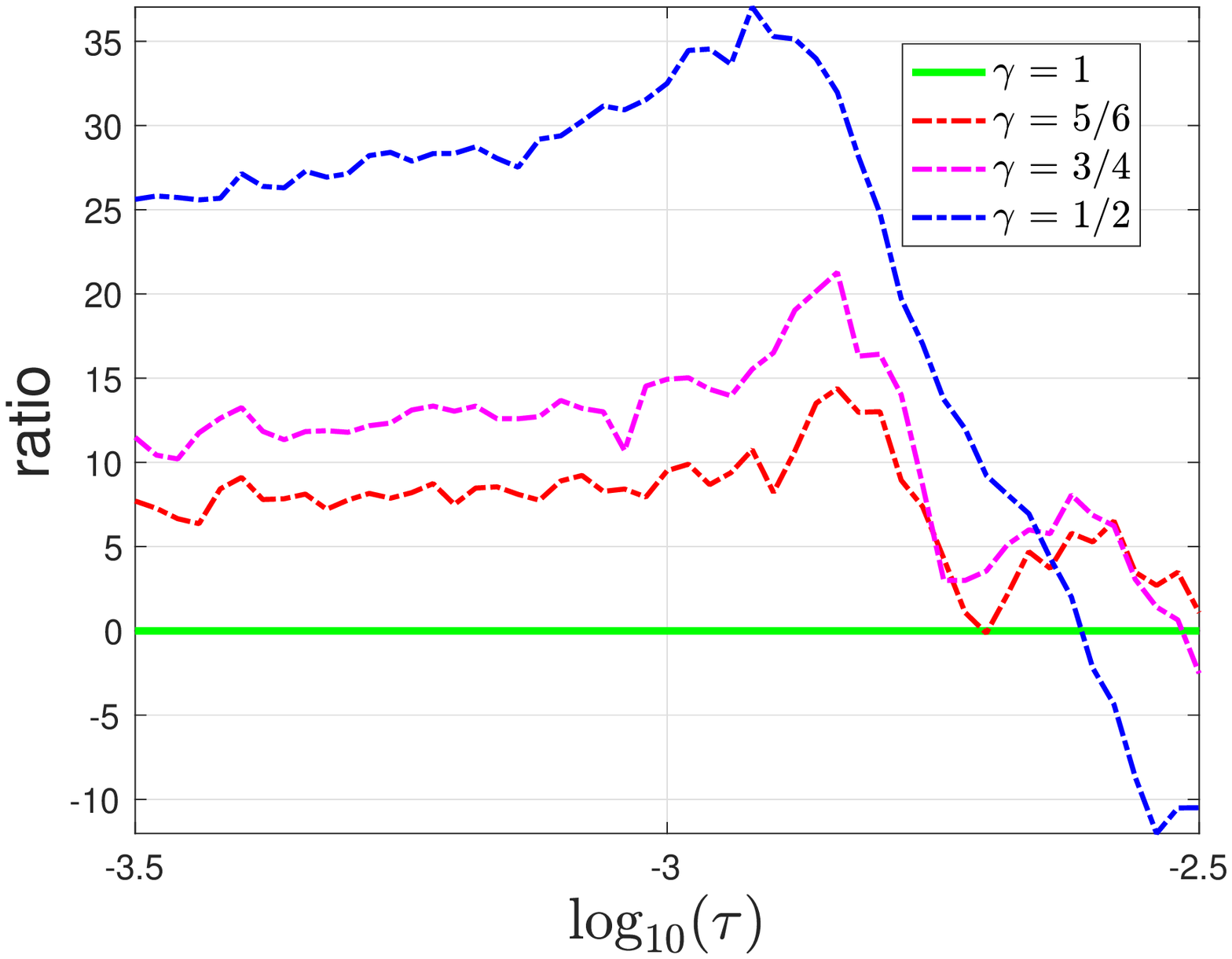}}~
\subfloat[$\nu = 9$]{\includegraphics[width=2.8cm,height=2.33cm]{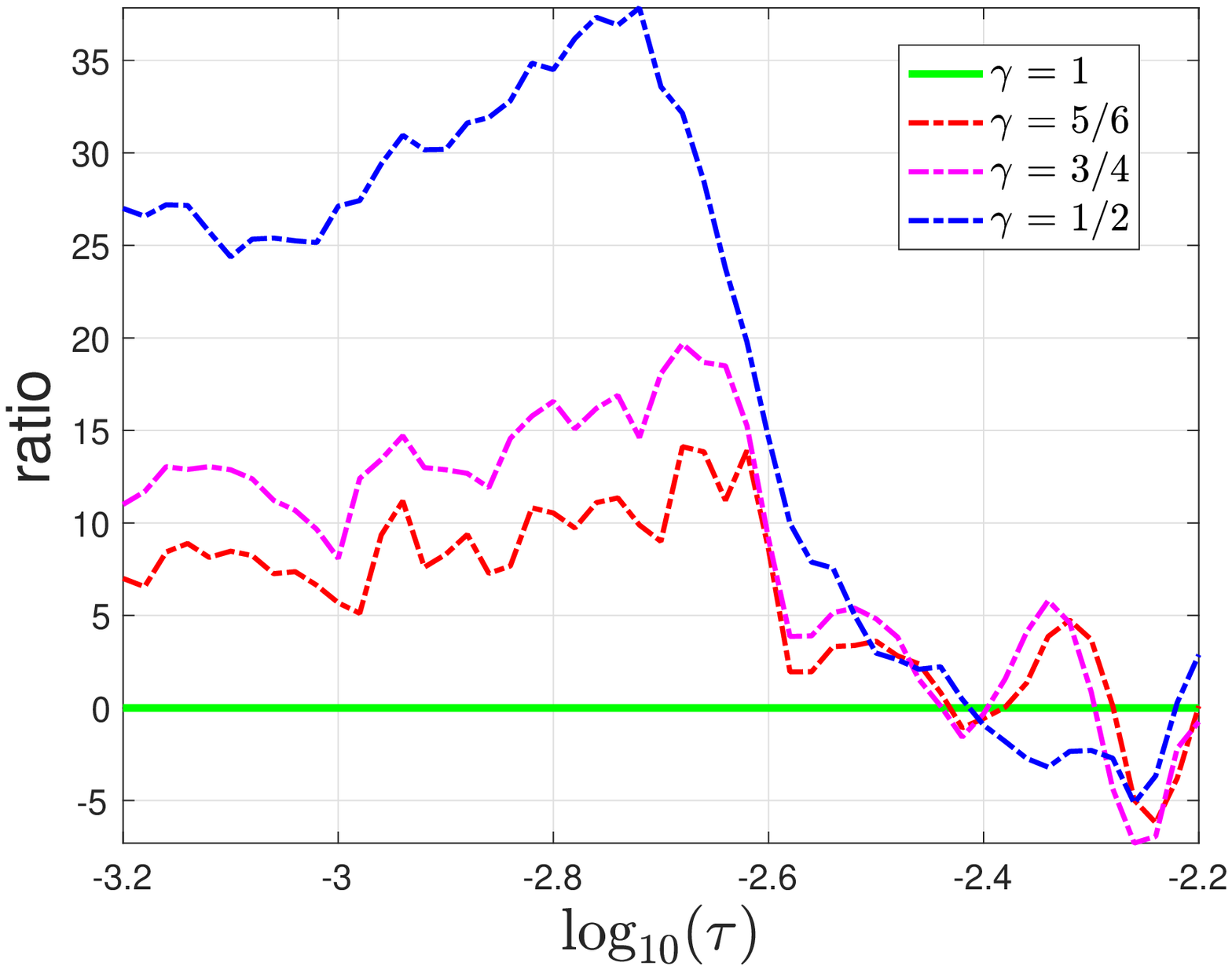}}~
\subfloat[$\nu = 12$]{\includegraphics[width=2.8cm,height=2.33cm]{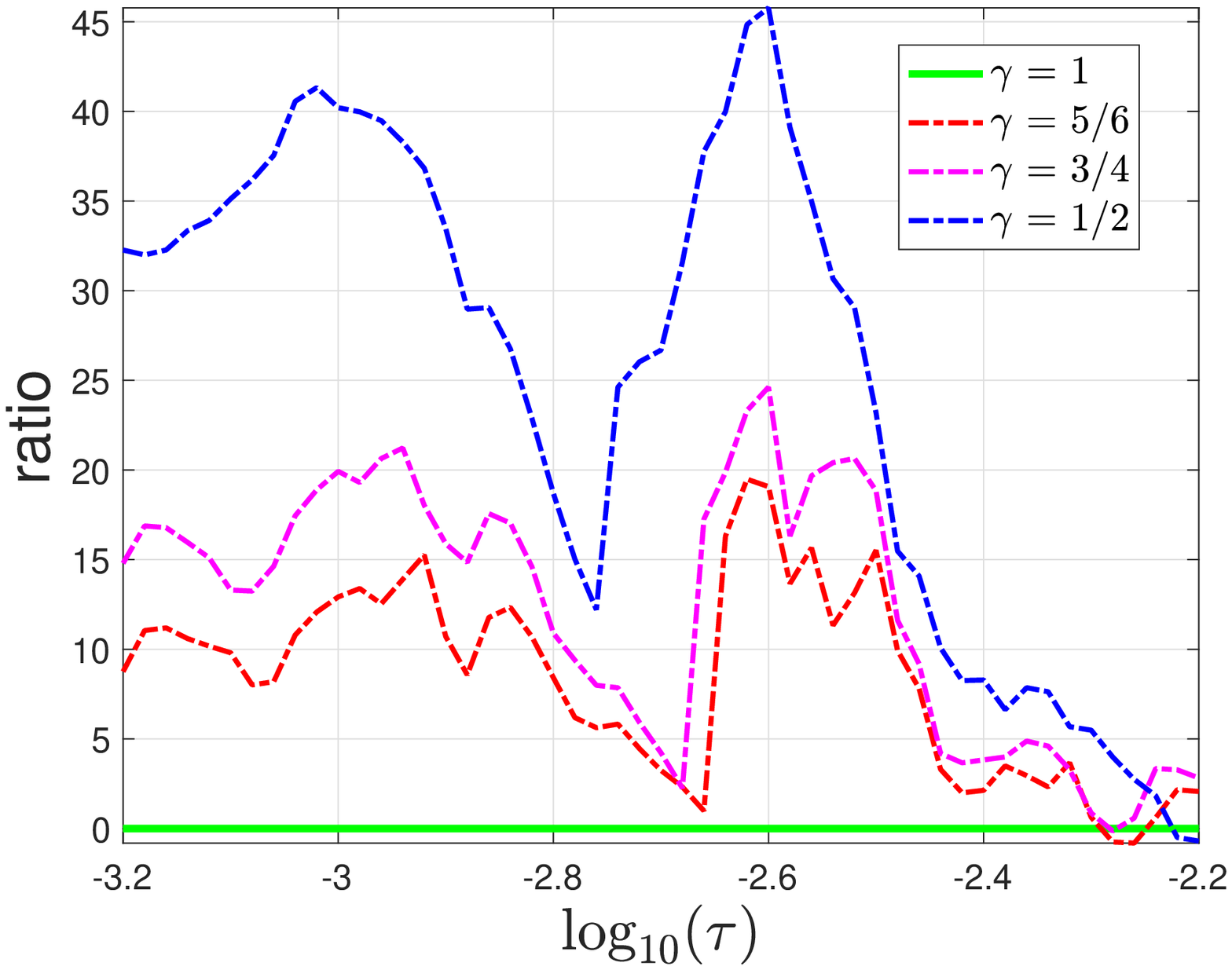}}~
\subfloat[$\nu = 15$]{\includegraphics[width=2.8cm,height=2.33cm]{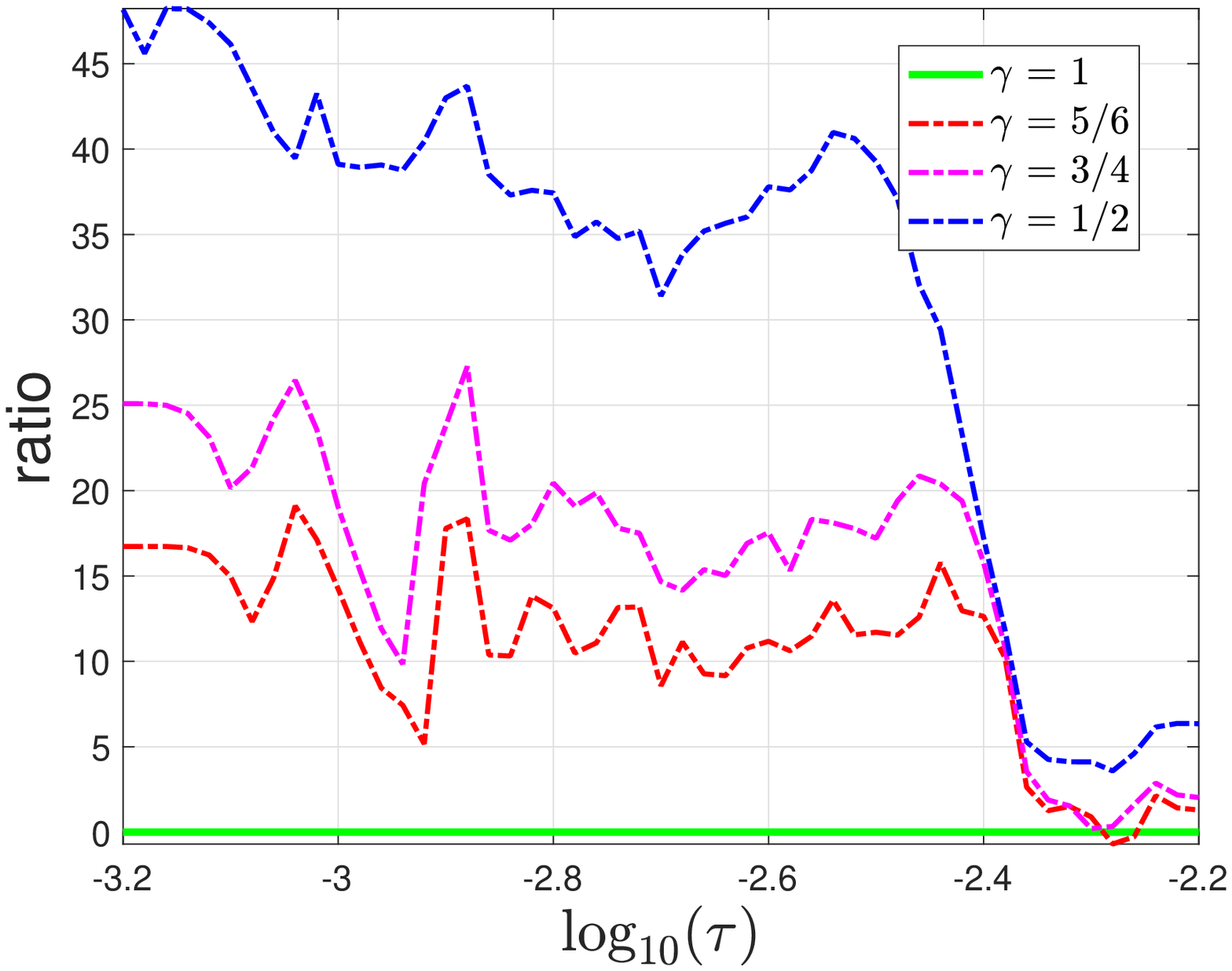}}\\
\subfloat[$\nu = 18$]{\includegraphics[width=2.8cm,height=2.33cm]{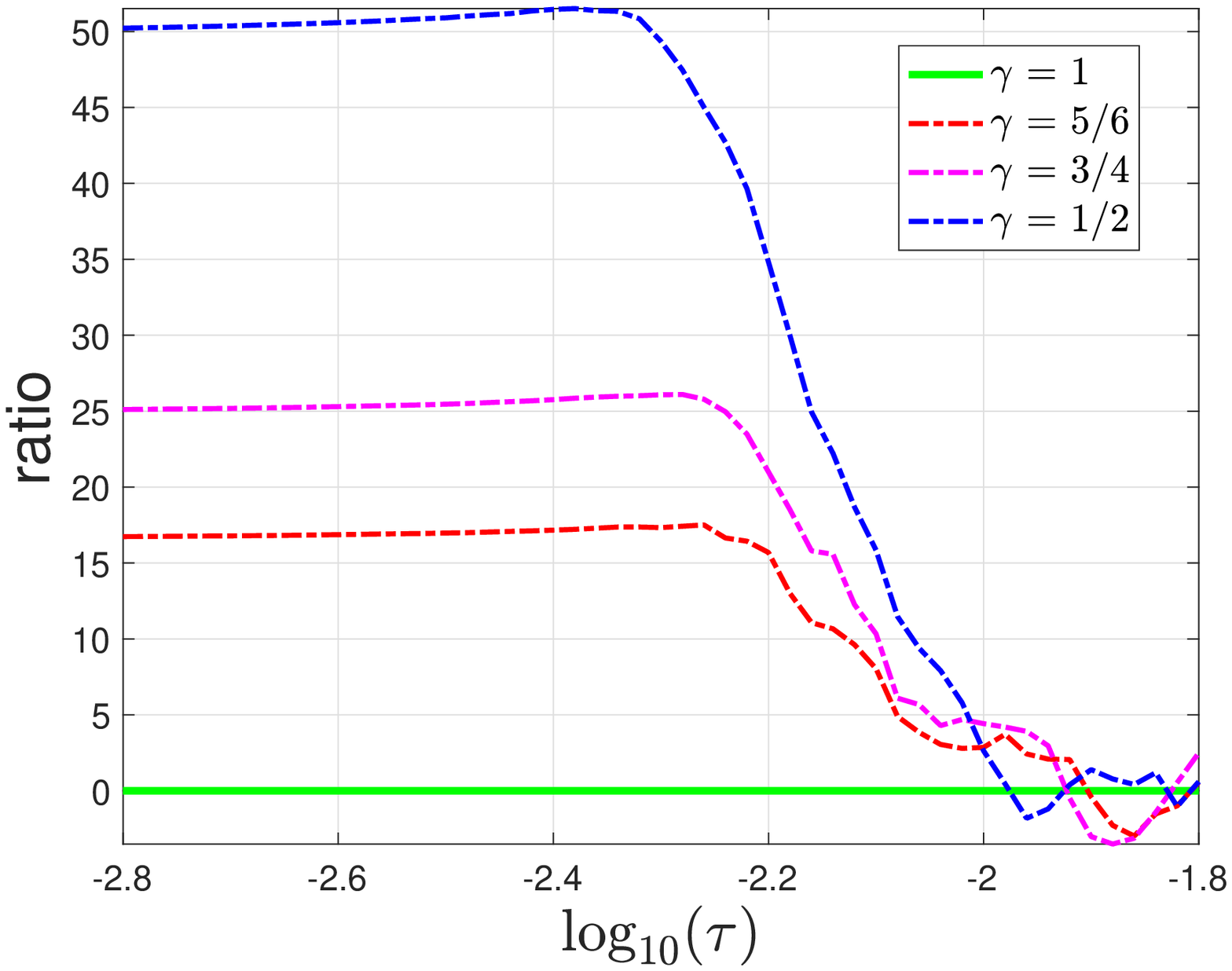}}~
\subfloat[$\nu = 24$]{\includegraphics[width=2.8cm,height=2.33cm]{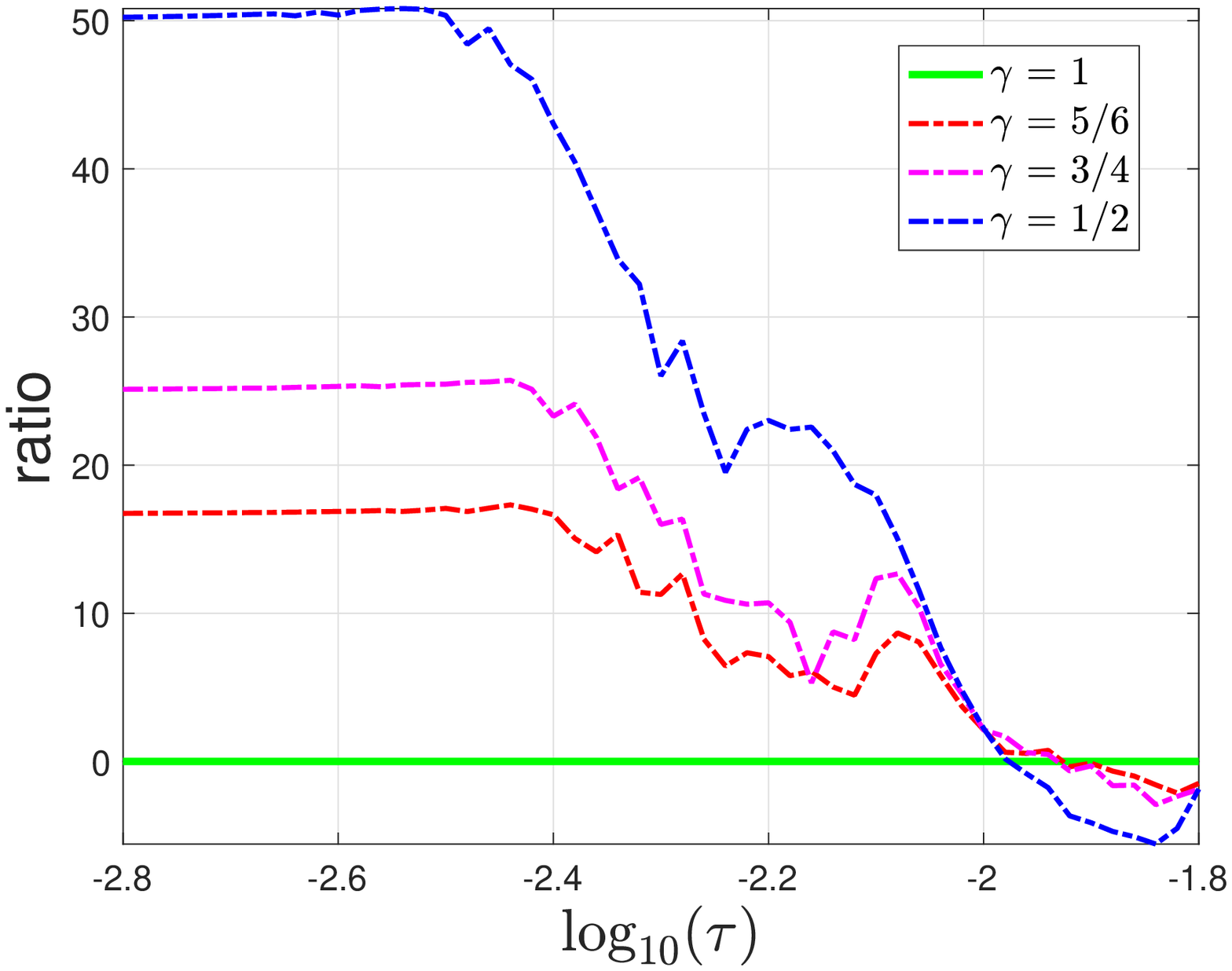}}~
\subfloat[$\nu = 30$]{\includegraphics[width=2.8cm,height=2.33cm]{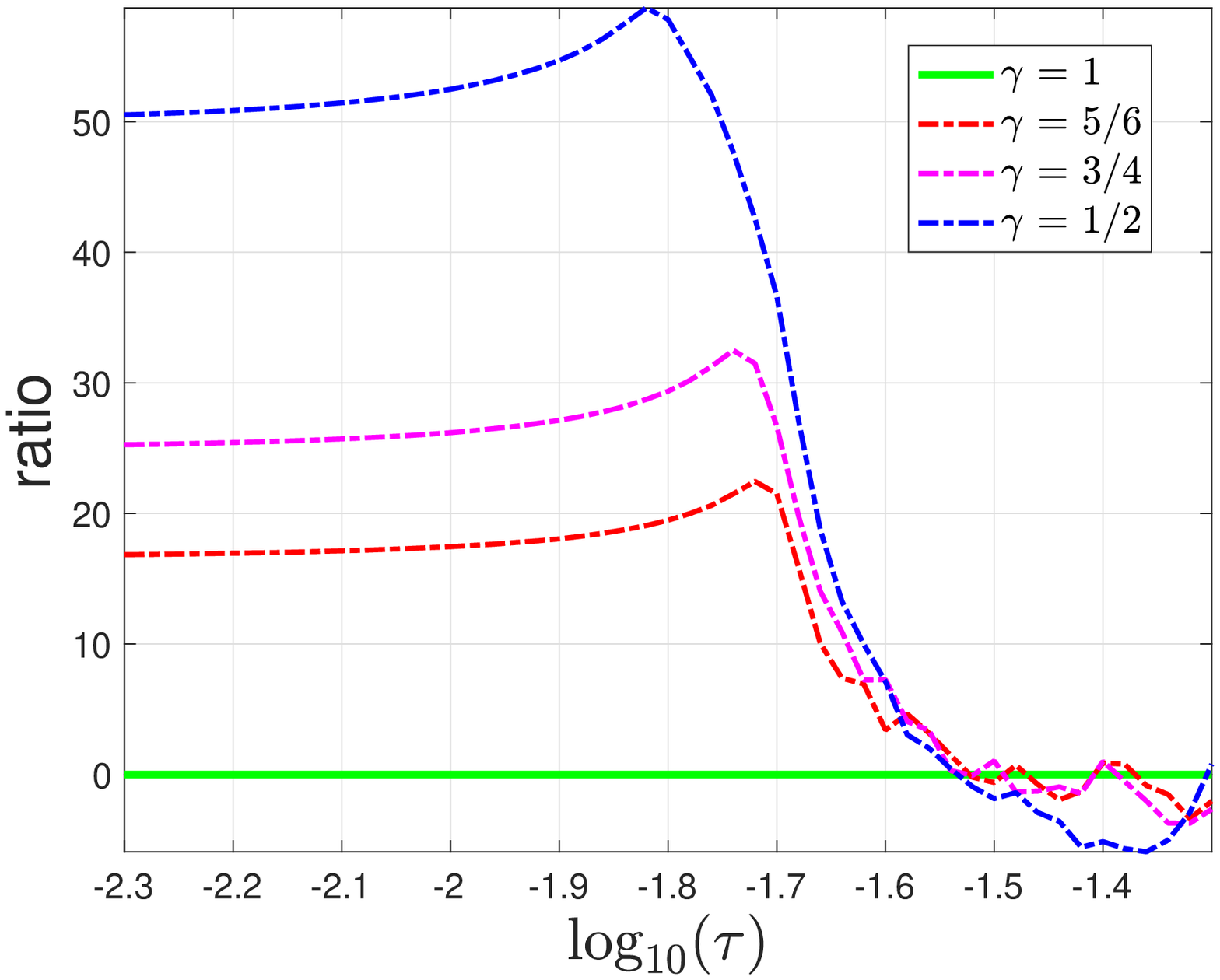}}~
\subfloat[$\nu = 36$]{\includegraphics[width=2.8cm,height=2.33cm]{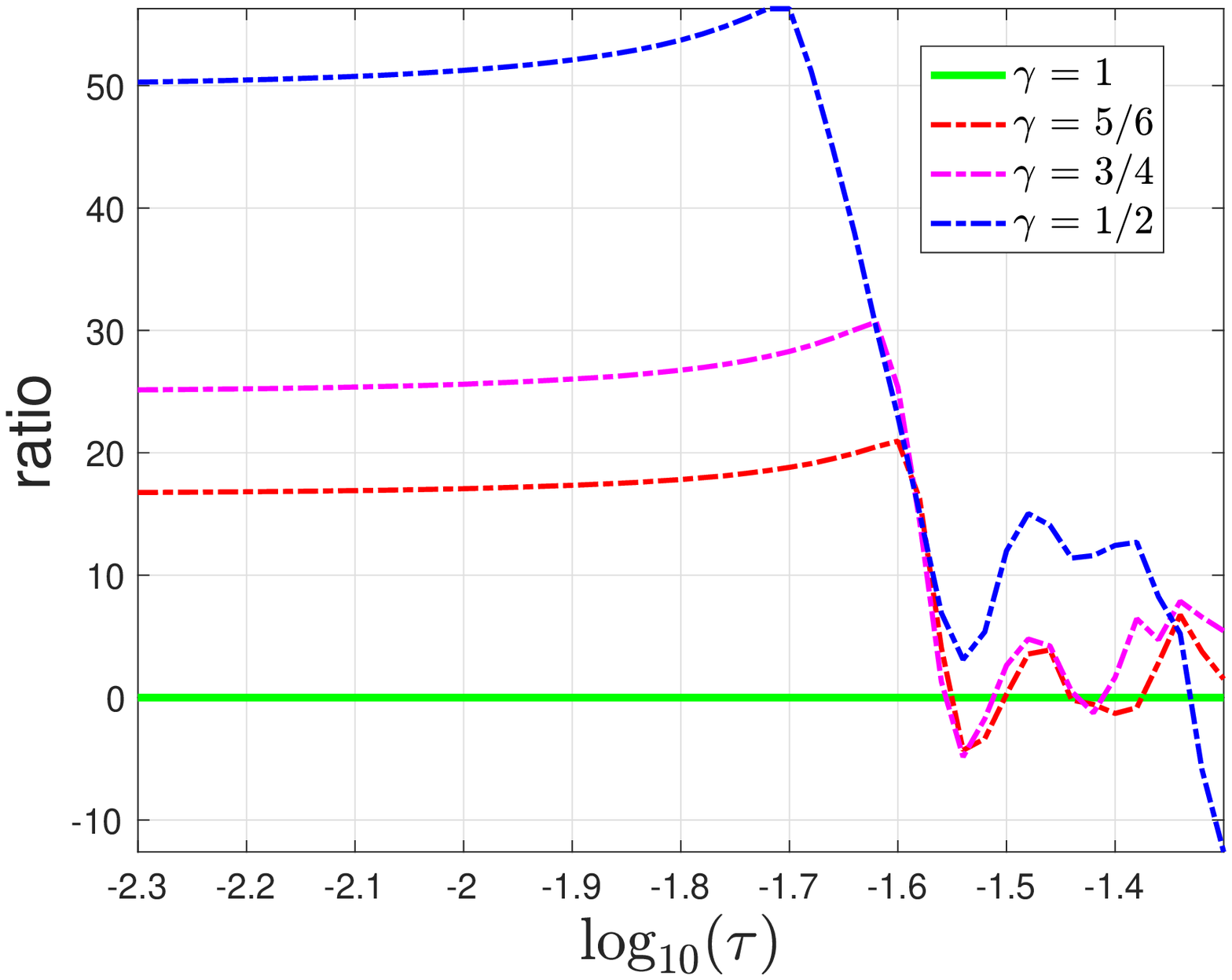}}\\
\caption{Comparison  of PrePDHG with $\gamma = \{1, 5/6, 3/4, 1/2\}$ for CT reconstruction problem \eqref{prob:CT}. Note that PrePDHG with $\gamma = 1$ is exactly iPrePDHG in \cite{liu2021acceleration}.}
\label{CT:figures:ratio:iter}
\end{figure}

For each fixed $\nu$,  we test a series of  $\tau \in 10^a$ with $a = [-a_1:0.02:-a_1 + 1]$. The parameter
 $a_1$   is 3.5 for $\nu = 6$,  $3.2$ for $\nu = 9$, 12 or 15, 2.8 for $\nu  = 18$ or 24, and 2.3 for $\nu = 30$ or 36. The results are presented in  Figure \ref{CT:figures:ratio:iter}.   In these figures, the term ratio is computed according to \eqref{equ:ratio}, wherein ``$\underline{\mathrm{iter}}$'' is the iteration number corresponding to $\gamma = 1$, namely, iPrePDHG.  From these figures, we can see that taking smaller  $\gamma$ (meaning the larger stepsize in updating the primal variable $x$, see \eqref{vmpdhg:ct:x}) can always speed up the performance of PrePDHG. More specifically, the saved ratios of taking  $\gamma = 3/4$, the theoretical lower bound, are about 13\% for $\nu =6,9,12$ and about 25\% for $\nu \geq 15$ for a large portion of $\tau$.  On the other hand,  although taking $\gamma = 1/2$ has no convergence guarantee since 1/2 is smaller than the theoretical lower bound of $\gamma$, it always has the best performance for a large portion of $\tau$. The corresponding saved ratios are more than 25\% for  $\nu =6,9,12$, more than 40\% for $\nu = 15$, and even  50\% for $\nu \geq 18$ for a large portion of $\tau$.

The numerical results corresponding to the best $\tau$, denoted by $\tau_{\mathrm{best}}$,   for each instance are reported in Table \ref{table:CT}.   From this table, we can see that, compared with iPrePDHG,  PrePDHG with smaller $\gamma$ is always faster. For $\gamma = 5/6$, it can save about $8\%$ of iteration number; for $\gamma = 3/4$, it can save about 13\% of iteration number.   More interesting,  PrePDHG with $\gamma = 1/2$ can save about 30\% of iteration number.  This tells that reducing the parameter $\gamma$  (in a reasonable range) in PrePDHG for the CT reconstruction problem can still bring some benefits even though the so-called best stepsize $\tau$ is chosen. However,  it should be emphasized again that selecting the best stepsize $\tau$ is very hard in practice.

\begin{table}[!htbp]
\small
\setlength{\tabcolsep}{2.5pt}
\centering
\caption{Performance of PrePDHG and iPre-PDHG with best $\tau$ for CT reconstruction problem \eqref{prob:CT}.  In the table, ``a'' stands for  iPrePDHG, ``b'', ``c''  and ``d'' stands for PrePDHG with $\gamma = 5/6$, $\gamma = 3/4$ and $\gamma = 1/2$, respectively.}~\\~\\
\label{table:CT}
\begin{tabular}{@{}crrrrrrrrrrrrrrr@{}}
\toprule
&\multicolumn{4}{c}{$\log_{10}(\tau_{\mathrm{best}})$}  & \multicolumn{4}{c}{time}  & \multicolumn{4}{c}{iter} & \multicolumn{3}{c}{ratio \%}  \\
\cmidrule(lr){2-5} \cmidrule(lr){6-9} \cmidrule(lr){10-13}  \cmidrule(l){14-16}
$\theta$ & \multicolumn{1}{c}{a}  & \multicolumn{1}{c}{b}  & \multicolumn{1}{c}{c}   & \multicolumn{1}{c}{d}   &\multicolumn{1}{c}{a}   & \multicolumn{1}{c}{b}  & \multicolumn{1}{c}{c}    & \multicolumn{1}{c}{d} & \multicolumn{1}{c}{a}   & \multicolumn{1}{c}{b}   & \multicolumn{1}{c}{c}    & \multicolumn{1}{c}{d}   & \multicolumn{1}{c}{b}  & \multicolumn{1}{c}{c}   & \multicolumn{1}{c}{d} \\
\midrule

6& -2.74& -2.80& -2.84& -2.92& 96.8& 89.5& 84.9& 70.3& 6441& 5990& 5674& 4690&  7.0& 11.9& 27.2\\
9& -2.58& -2.62& -2.64& -2.72& 76.0& 69.4& 66.1& 53.6& 6544& 5932& 5677& 4613&  9.4& 13.2& 29.5\\
12& -2.44& -2.50& -2.50& -2.60& 54.5& 49.2& 47.4& 37.6& 5416& 4866& 4675& 3725& 10.2& 13.7& 31.2\\
15& -2.36& -2.40& -2.42& -2.50& 59.3& 54.5& 51.7& 41.6& 6456& 5926& 5635& 4539&  8.2& 12.7& 29.7\\
18& -2.08& -2.12& -2.14& -2.24& 37.8& 34.5& 32.7& 26.4& 4393& 4010& 3800& 3094&  8.7& 13.5& 29.6\\
24& -2.24& -2.28& -2.30& -2.42& 36.5& 33.7& 32.0& 26.2& 4673& 4271& 4054& 3321&  8.6& 13.2& 28.9\\
30& -1.64& -1.68& -1.70& -1.80& 19.5& 17.8& 17.0& 13.8& 2655& 2431& 2307& 1879&  8.4& 13.1& 29.2\\
36& -1.54& -1.58& -1.62& -1.70& 21.0& 19.3& 18.3& 15.3& 2954& 2703& 2571& 2073&  8.5& 13.0& 29.8\\

\bottomrule
\end{tabular}

\end{table}

\section{Conclusions}\label{S6}
In this paper, we investigate the PrePDHG algorithm from the iPADMM point of view. We establish the equivalence between PrePDHG and iPADMM, based on which we can obtain a tight convergence condition for PrePDHG.  Some counter-examples are given to show the tightness of the convergence condition we established for PrePDHG.  This result subsumes the latest convergence condition for the original PDHG and derives an interesting by-product, namely, the dual stepsize of the BALM can be extended to $4/3$ other than $1$. Besides, based on the equivalence between PrePDHG and iPADMM, we also establish the global convergence and  the ergodic and non-ergodic sublinear convergence rate of PrePDHG.  In order to make PrePDHG practical, we also discuss the various choices of the proximal terms. A variety of numerical results on the matrix game, projection onto the Birkhoff polytope,   earth mover's distance, and  CT reconstruction show the efficiency of PrePDHG with improved convergence conditions.  Considering that the subproblems in PrePDHG are still hard to solve in some cases, it would be interesting to investigate the inexact version of PrePDHG in future work.

\section*{Data availability statements}
The authors confirm that all data generated or analyzed during this study are included in the paper.
The data matrices $\rho^0$, $\rho^1$, and  $m^{\mathrm{cvx}}$ in Section \ref{section:EMD} are from \cite{liu2021acceleration}
and  downloaded at \url{https://github.com/xuyunbei/Inexact-preconditioning}.

\bibliography{pdhg}
\bibliographystyle{spmpsci} 

\end{document}